\newtheorem{theorem}{Theorem}
\newtheorem{proposition}{Proposition}
\newtheorem{remark}{Remark}
\newtheorem*{fact}{Fact}
\newcommand*{\aut}{\mathrm{Aut}(q,n)}
\newcommand*{\autbin}{\mathrm{Aut}(2,n)}
\newcommand*{\autj}{\mathrm{Aut}(n,w)}
\newcommand*{\stabj}{\mathrm{Aut}_{W}(n,w)}
\newcommand*{\stab}{\mathrm{Aut}_{\mathbf{0}}(q,n)}
\newcommand*{\stabbin}{\mathrm{Aut}_{\mathbf{0}}(2,n)}
\newcommand*{\transp}{\mathsf{T}}
\newcommand*{\q}{\mathbf{q}}
\newcommand*{\Real}{\mathbb{R}}
\newcommand*{\Complex}{\mathbb{C}}
\newcommand*{\dd}{\overline{d}}
\newcommand*{\E}{\mathbb{E}}
\newcommand*{\roundup}[1]{\left\lceil #1\right\rceil}
\newcommand*{\rounddown}[1]{\left\lfloor #1\right\rfloor}
\newcommand*{\sumentries}[1]{\one^{\transp}#1\one}
\newcommand*{\x}{\mathbf{x}}
\newcommand*{\y}{\mathbf{y}}
\newcommand*{\w}{\mathbf{w}}
\newcommand*{\zero}{\mathbf{0}}
\newcommand*{\one}{\mathbf{1}}
\newcommand*{\diag}{\mathrm{diag}}
\newcommand*{\Diag}{\mathrm{Diag}}
\newcommand*{\trace}{\mathrm{tr}}
\newcommand*{\psd}{\succeq 0}
\renewcommand*{\P}{\mathcal{P}}
\newcommand*{\notion}[1]{\index{#1}\emph{#1}}
\providecommand{\noopsort}[1]{}
\begin{document}
%\maketitle

\begin{titlepage}
\thispagestyle{empty}
\vspace*{\fill}
\vspace*{\fill}
\begin{center}
\large \textbf{Matrix Algebras and Semidefinite Programming Techniques for Codes}
\end{center}
\vfill
\begin{center}
\large Dion Gijswijt
\end{center}
\vspace*{\fill}
\vspace*{\fill}
\pagebreak
\thispagestyle{empty}
\cleardoublepage
\thispagestyle{empty}
\begin{center}\Large\textbf{Matrix Algebras and Semidefinite Programming Techniques for Codes}\end{center}
\vfill
\begin{center}
\large \textsc{Academisch Proefschrift}
\end{center}
\vfill
\begin{center}
ter verkrijging van de graad van doctor\\ aan de Universiteit van Amsterdam\\ op gezag van de Rector Magnificus prof. mr. P.F. van der Heijden\\ten overstaan van een door het college voor promoties ingestelde\\commissie, in het openbaar te verdedigen in de Aula der Universiteit\\ \ \\op donderdag 22 september 2005, te 12.00 uur
\vfill
\end{center}
\begin{center}
door
\end{center}
\smallskip
\begin{center}
\textbf{\large Dion Camilo Gijswijt}
\end{center}
\smallskip
\begin{center}
geboren te Bunschoten.
\end{center}
\vfill
\end{titlepage}
\thispagestyle{empty}
\begin{flushleft}
\begin{tabbing}
\textbf{Promotiecommissie}\\
\\
Overige leden:\hspace{1cm}\=\kill
Promotor:\>Prof. dr. A. Schrijver\\
\\
Overige leden:\>Prof. dr. A.E. Brouwer\\
\>Prof. dr. G. van der Geer\\
\>Prof. dr. T.H. Koornwinder\\
\>Prof. dr.ir. H.C.A. van Tilborg\\
\\
Faculteit der Natuurwetenschappen, Wiskunde en Informatica
\end{tabbing}
\end{flushleft}
\vspace*{10cm}
This research was supported by the Netherlands Organisation for Scientific Research (NWO) under project number 613.000.101.
\vspace*{1cm}
\begin{flushright}
\epsfxsize=12cm
%\epsfbox{stieltjeslogo.jpg}
\end{flushright}
\vspace*{-6cm}
\begin{flushleft}
\epsfxsize=12cm
\epsfbox{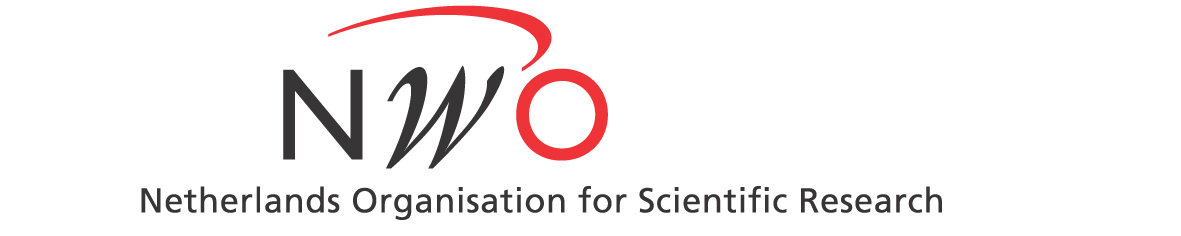}
\end{flushleft}
\newpage
\thispagestyle{empty}
\vspace*{\fill}
\begin{center}
\large \textsl{voor Violeta}
\end{center}
\vspace*{\fill}
\newpage
\thispagestyle{empty}
\cleardoublepage
\pagenumbering{roman}
\tableofcontents
\nocite{*}

\chapter{Introduction}
\pagenumbering{arabic}
In this thesis, we consider codes of length $n$ over an alphabet of $q$ symbols. We give new upper bounds on the maximum size $A_q(n,d)$ of codes with minimum distance $d$, and new lower bounds on the minimum size $K_q(n,r)$ of codes of covering radius $r$. The bounds are based on semidefinite programming and on an explicit block diagonalisation of the (non-commutative) Terwilliger algebra of the Hamming scheme. Our methods can be seen as a refinement of Delsarte's linear programming approach and are related to the theory of matrix cuts. They build upon the recent work of Schrijver \cite{Lexcodes} for binary codes.
 
\section{Codes}
A \notion{code} is a collection of \emph{words} of some fixed length, for example the collection of all six letter words in a dictionary. However, the words need not have any meaning. They are merely concatenations of symbols chosen from a fixed set called the \notion{alphabet}. Other examples of codes are: all possible strings of eight binary digits, a set of bets in a football pool, or a collection of DNA sequences. Here the alphabets are $\{0,1\}$, $\{\mathrm{Win},\mathrm{Lose},\mathrm{Tie}\}$ and $\{\mathrm{A},\mathrm{C},\mathrm{T},\mathrm{G}\}$ respectively.

It is often important to know how similar two words are. This can be measured by their \emph{Hamming distance}. By definition, this is the number of positions in which the two words differ. Suppose for example that we want to transmit information over a noisy communication channel. The letters in a transmitted word each have some small chance of being changed into a different letter. At the receiving end, we would like to be able to recover the original message (if not too many letters are erroneous). This can be achieved by using a code in which any two words have distance at least $d=2e+1$ for some integer $e$. If we only transmit words belonging to this code, it is always possible to recover the sent code word if at most $e$ errors are introduced during transmission. The received word is interpreted as the code word that is the closest match. If we aim for a highest possible information rate, we should maximize the number of words in the code, under the condition that any two code words have distance at least $d$. Geometrically, this means that we want to pack a maximum number of spheres (`balls' would be more accurate) of radius $r$ inside the \emph{Hamming space} consisting of all $q^n$ words of length $n$. Here the chosen code words correspond to the centers of the spheres. This leads to the following central question in coding theory.
\begin{center}
\emph{
What is the maximum cardinality of a code of word length $n$, in which\\
any two words have distance at least $d$?}
\end{center}
When the alphabet consists of $q$ symbols, this maximum is denoted by $A_q(n,d)$. The number $A_q(n,d)$ can also be seen as the \emph{stability number} of a graph. Let $G$ be the graph with the set of all $q^n$ words as vertices, and two words are joined by an edge if their distance is less than $d$. Then the maximum size of a set of vertices, no two of which are joined by an edge, equals $A_q(n,d)$.

The problem of determining $A_q(n,d)$ is hard in general and we will have to be satisfied with lower and upper bounds. One major field of research is to find explicit examples of (families of) good codes. In this thesis we will address the converse problem and give upper bounds on the numbers $A_q(n,d)$. In the case $d=2e+1$ the geometric idea of packing spheres already gives an upper bound. Since the spheres are disjoint, their `volumes' should add up to a number that is at most $q^n$. This gives an upper bound on $A_q(n,d)$ called the \emph{sphere packing bound}. 

\section{The Delsarte bound}
Currently, many of the best bounds known, are based on Delsarte's linear programming approach \cite{Delsarte}. When viewed from the right perspective, the work in this thesis can be seen as a refinement of this method. Let us give a very rough sketch of Delsarte's method. 

As is often the case in mathematics, we first seem to make the problem harder. Instead of optimizing the cardinality of a code directly, we associate to each code $C$ a symmetric matrix of which the rows and columns correspond to all $q^n$ possible code words. The matrix is constructed by putting a $1$ in those positions where both the row and the column of the matrix belong to $C$, and a $0$ in all other positions. The size of the code can be recovered from the matrix by dividing the total number of ones by the number of ones on the diagonal. Although we have no good grasp of the set of matrices that arise this way, they share some important and elegant abstract properties:
\begin{itemize}
\item the matrix has zeros in positions indexed by a row and column that are at distance $1,2,\ldots, d-1$,
\item the matrix is \emph{positive semidefinite}: it has no negative \emph{eigenvalues}. \end{itemize}
We enlarge our set of matrices from those associated to codes, to include \emph{all} symmetric matrices sharing the two given properties. The resulting \emph{relaxation} is much `smoother' and has a very clear description which allows more efficient optimization. Of course the magical part is, that optimizing over this larger set gives a good approximation of the original problem! This bound was given in the more general setting of bounding the stability number of a graph by Lov\'asz \cite{lovasztheta}. It can be calculated using semidefinite programming in time bounded by a polynomial in the number of vertices of the graph. 

In the coding setting, this will not suffice since the size of the matrices is prohibitively large. Even for codes of length $n=20$, we have to deal with matrices of more then a million rows and columns. However, the problem admits a very large symmetry group. It turns out that we can use these symmetries to our advantage to ---dramatically--- reduce the complexity of the problem. We may restrict ourselves to only those matrices, that are invariant under the full group of symmetries. These matrices live in a low-dimensional commutative subalgebra called the \emph{Bose-Mesner algebra} of the \emph{Hamming scheme}. Diagonalising this algebra reduces the huge optimization problem to a simple \emph{linear} program of only $n$ variables! The resulting linear programming bound (adding the constraint that the matrix is nonnegative) is due to Delsarte.

\section{Overview of the thesis}
In this thesis, we give tighter bounds for codes by essentially isolating more properties satisfied by the zero-one matrices associated to a code. More accurately, we associate to each code $C$ \emph{two} matrices. Both matrices are obtained by summing zero-one matrices corresponding to certain permutations of the code $C$. This allows to include constraints that come from \emph{triples} of code words, instead of pairs. This method was initiated recently by Schrijver \cite{Lexcodes} to obtain bounds for binary error correcting codes, resulting in a large number of improved upper bounds.

The main result in this thesis is to generalize the methods to include non-binary codes. The primary issue that we need to deal with, is how to exploit the remaining symmetries to obtain a semidefinite program of a size that is polynomially bounded by the word length $n$. This is the most technical part of the thesis and requires an explicit \emph{block diagonalisation} of the \emph{Terwilliger algebra} of the nonbinary Hamming scheme. Such a block diagonalisation is described in Chapter \ref{CH:hammingschemes}. It uses the block diagonalisation of the Terwilliger algebra of the binary Hamming scheme found by Schrijver, which we will describe as well. 

In Chapter \ref{CH:errorcodes} we apply our methods to obtain a semidefinite programming bound for nonbinary codes. Computationally, we have found a large number of improved upper bounds for $q=3,4,5$, which we have tabulated in the final section.

In Chapter \ref{CH:coveringcodes} we discuss covering codes. The problem here is to cover the Hamming space with as few spheres as possible. When the spheres have radius $r$, this minimum number of required spheres is denoted by $K_{q}(n,r)$. We give new linear and semidefinite programming bounds on $K_q(n,r)$. For $q=4,5$ we obtain several improved lower bounds on $K_q(n,r)$. 

In Chapter \ref{CH:matrixcuts} we relate our coding bounds to the general theory of matrix cuts for obtaining improved relaxations of $0$--$1$ polytopes. It is shown that the bound for error correcting codes is stronger than the bound obtained from a single iteration of the $N_+$ operator applied to the modified theta body of the graph on all words in which two words are joined by an edge if they are at distance smaller then $d$.  
  
\chapter{Preliminaries}\label{CH:Preliminaries}
This thesis is largely self-contained and most results are derived from explicit constructions. However, some theory is desirable for putting them into the right perspective and relating them to the body of mathematics to which they connect. In this chapter we give some definitions and basic facts. 
After giving some general notation in the first section, we introduce matrix $\ast$-algebras in the second section, which are an important tool throughout the thesis. The main (classical) theorem says (roughly) that any matrix $\ast$-algebra is isomorphic to a direct sum of full matrix $\ast$-algebras. In the third section we describe semidefinite programming. The bounds we derive for codes, are defined as the optimum of certain semidefinite programs and can be computed efficiently. Finally, we recall the basics of association schemes. In particular we describe the Delsarte bound on the maximum size of cliques in association schemes.  

\section{Notation}
For positive integers $n,m$ and a set $R$ (usually $R=\Complex,\Real$), we denote by $R^{n\times m}$ the set of $n$ by $m$ matrices with entries in $R$ and by $R^n$ the set of (column) vectors of length $n$. When $R$ is a ring, we define matrix addition and multiplication of matrices (with compatible dimensions) as usual. Frequently, the rows and columns correspond to the elements of some given finite sets $X$ and $Y$. When we want to explicitly index the rows and columns of the matrix using these sets, we will write $R^{X\times Y}$ for the set of matrices with rows indexed by $X$ and columns indexed by $Y$. The $i$-th row of a matrix $A$ is denoted by $A_i$ and the entry in row $i$ and column $j$ by $A_{i,j}$. The \notion{transpose} of an $X\times Y$ matrix is the $Y\times X$ matrix $A^{\transp}$, where $A^{\transp}_{i,j}=A_{j,i}$ for $i\in Y$, $j\in X$. When $|Y|=1$, we often identify the matrices in $R^{X\times Y}$ and the vectors in $R^X$.

For finite sets $X,Y$, the all-one vector in $R^X$ is denoted by $\one$. The $X\times Y$ all-one matrix is denoted by $J$, the all-zero matrix by $0$ and the $X\times X$ identity matrix by $I$. The sets $X$ and $Y$ will be clear from the context.

Given a matrix $A\in R^{X\times X}$, we define $\diag(A)$ to be the vector $a\in R^X$ of diagonal elements of $A$, that is $a_i:=A_{i,i}$ for $i\in X$. The \notion{trace} of $A$ is the sum of the diagonal elements of $A$ and is denoted $\trace A$. So $\trace A=\one^{\transp}\diag(A)$. We mention the useful fact that for matrices $A\in R^{k\times l}$ and $B\in R^{l\times k}$ the following identity holds:
\begin{equation}
\trace (AB)=\trace (BA).
\end{equation}
Given a vector $a\in R^X$, we denote by $\Diag (a)$ the diagonal matrix $A\in R^{X\times X}$ with $\diag(A)=a$.

For a subset $S\subseteq X$ we denote by $\chi^S$ the vector in $R^X$ definied by
\begin{equation}
(\chi^S)_i:=\begin{cases}1&\text{if $i\in S$}\\0&\text{otherwise}.\end{cases}
\end{equation}
For a vector $x$ and a set $S$, we define 
\begin{equation}
x(S):=\sum_{i\in S} x_i.
\end{equation}  

For a matrix $A\in \Complex^{X\times Y}$, the \notion{conjugate transpose} of $A$ is denoted by $A^{\ast}$. That is $A^{\ast}_{i,j}=\overline{A_{j,i}}$ for $i\in X$ and $j\in Y$, where $\overline{z}$ is the complex conjugate of a complex number $z$. A square matrix $A$ is called \notion{normal} if $A^{\ast}A=AA^{\ast}$, \notion{hermitian} if $A^{\ast}=A$ and \notion{unitary} if $A^{\ast}A=AA^{\ast}=I$. 

For $A,B\in \Complex^{X\times Y}$, we define 
\begin{equation}
\left<A,B\right>:=\trace (AB^{\ast})=\sum_{i\in X,j\in Y} A_{i,j}\overline{B_{i,j}}.
\end{equation}
This is the standard complex \notion{inner product} on $\Complex^{X\times Y}$. Observe that
\begin{equation}
\left<A,J\right>=\one^{\transp}A\one.
\end{equation}

For matrices $A\in \Complex^{X_1\times Y_1}$ and $B\in \Complex^{X_2\times Y_2}$, we denote by $A\otimes B$ the \notion{tensor product} of $A$ and $B$ defined as the $(X_1\times X_2)\times (Y_1\times Y_2)$ matrix given by
\begin{equation}
(A\otimes B)_{(i,i'),(j,j')}:=A_{i,j}B_{i',j'}.
\end{equation}   

\section{Matrix $\ast$-algebras}
In this section we consider algebras of matrices. For general background on linear algebra we refer the reader to \cite{HornJohnson,Langlinalg}.

A \notion{matrix $\ast$-algebra} is a nonempty set of matrices $\mathcal{A}\subseteq \Complex^{n\times n}$ that is closed under addition, scalar multiplication, matrix multiplication and under taking the conjugate transpose. A matrix $\ast$-algebra is a special case of a finite dimensional $C^{\ast}$-algebra. Trivial examples are the full matrix algebra $\Complex^{n\times n}$ and the \notion{zero algebra} $\{0\}$.

Most of the matrix $\ast$-algebras that we will encounter in this thesis are of a special type. They are the set of matrices that commute with a given set of \notion{permutation matrices}. More precisely, we have the following. 

Let $G\subseteq S_n$ be a subgroup of the symmetric group on $n$ elements. To every element $\sigma\in G$ we associate the permutation matrix $M_\sigma\in\Complex^{n\times n}$ given by
\begin{equation}
(M_\sigma)_{i,j}:=\begin{cases}1&\text{if $\sigma(j)=i$},\\0&\text{otherwise.}\end{cases}
\end{equation}
Observe that 
\begin{equation}
M_{\sigma}^{\ast}=M_{\sigma}^{\transp}=M_{\sigma^{-1}}.
\end{equation}
The map $\sigma\mapsto M_{\sigma}$ defines a \emph{representation} of $G$. This means that for all $\sigma,\tau\in G$ we have
\begin{equation}
M_{\tau\sigma}=M_{\tau}M_{\sigma}\quad\text{and}\quad M_{\sigma^{-1}}=M_{\sigma}^{-1}.
\end{equation}
Here $\tau\sigma$ denotes the permutation $(\tau\sigma) (i):=\tau(\sigma(i))$. We define the \notion{centralizer algebra} (see \cite{BannaiIto}) of $G$ to be the set $\mathcal{A}$ of matrices that are invariant under permuting the rows and columns by elements of $G$. That is,
\begin{equation}\label{centralizeralgebra}
\mathcal{A}:=\{A\in\Complex^{n\times n}\mid M_{\sigma}^{-1}AM_{\sigma}=A\quad\text{for all $\sigma\in G$}\}.
\end{equation}
If we denote by $\mathcal{B}$ the matrix $\ast$-algebra spanned by the set of permutation matrices $\{M_{\sigma},\ \sigma\in G\}$, then $\mathcal{A}$ is also called the \notion{commutant algebra} of $\mathcal{B}$: the algebra of matrices that commute with all the elements of $\mathcal{B}$. To see that the set $\mathcal{A}$ is indeed a matrix $\ast$-algebra, we first observe that it is closed under addition and scalar multiplication. That $\mathcal{A}$ is closed under matrix multiplication and taking the conjugate transpose follows from
\begin{eqnarray}
M_{\sigma}^{-1}ABM_{\sigma}&=&M_{\sigma}^{-1}AM_{\sigma}M_{\sigma}^{-1}BM_{\sigma}=AB,\\
M_{\sigma}^{-1}A^{\ast}M_{\sigma}&=&(M_{\sigma}^{-1}AM_{\sigma})^{\ast}=A^{\ast}\nonumber
\end{eqnarray}
for any $A,B\in \mathcal{A}$.

One of the special features of $\mathcal{A}$ is that it contains the identity and is spanned by a set of zero-one matrices whose supports partition $\{1,\ldots,n\}\times \{1,\ldots,n\}$ (that is, it is the algebra belonging to a \notion{coherent configuration}, see \cite{coherent}). These matrices have a combinatorial interpretation. Indeed, from (\ref{centralizeralgebra}) it follows that
\begin{equation}
\text{$A\in\mathcal{A}$\  if and only if $A_{i,j}=A_{\sigma (i),\sigma (j)}$ for all $i,j\in X$.}
\end{equation}
Hence $\mathcal{A}$ is spanned by zero-one matrices $A_1,\ldots, A_t$, where the supports of the $A_i$ are the orbits of $\{1,\ldots,n\}\times \{1,\ldots,n\}$ under the action of $G$, called the \notion{orbitals} of $G$.

The following structure theorem is one of the main motivations for this thesis. It allows to give a matrix $\ast$-algebra a simple appearance by performing a unitary transformation (a block diagonalisation). We will not use this theorem, but rather give explicit block diagonalisations for the matrix $\ast$-algebras under consideration.
\begin{theorem}\label{BD}
Let $\mathcal{A}\subseteq \Complex^{n\times n}$ be a matrix $\ast$-algebra containing the identity matrix $I$. Then there exists a unitary $n\times n$ matrix $U$ and positive integers $p_1,\ldots,p_m$ and $q_1,\ldots, q_m$ such that $U^{\ast}\mathcal{A}U$ consists of \emph{all} block diagonal matrices 
\begin{equation}
\begin{pmatrix}
C_1&0&\cdots&0\\
0&C_2&\cdots&0\\
\vdots&\vdots&\ddots&0\\
0&0&\cdots&C_m
\end{pmatrix}
\end{equation}
where each $C_k$ is a block diagonal matrix 
\begin{equation}
\begin{pmatrix}
B_k&0&\cdots&0\\
0&B_k&\cdots&0\\
\vdots&\vdots&\ddots&0\\
0&0&\cdots&B_k
\end{pmatrix}
\end{equation}
with $q_k$ identical blocks $B_k\in \Complex^{p_k\times p_k}$ on the diagonal. 
\end{theorem}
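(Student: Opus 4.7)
The plan is to derive the theorem from the semisimplicity of $\mathcal{A}$, viewed as an algebra of operators on $\Complex^n$. First I would observe that because $\mathcal{A}$ is closed under taking conjugate transposes, any $\mathcal{A}$-invariant subspace $V\subseteq \Complex^n$ has its orthogonal complement $V^{\perp}$ also $\mathcal{A}$-invariant: for $v\in V^{\perp}$, $w\in V$, and $A\in\mathcal{A}$, one has $\langle Av,w\rangle=\langle v,A^{\ast}w\rangle=0$ since $A^{\ast}w\in V$. Iterating this, $\Complex^n$ decomposes as an orthogonal direct sum of irreducible $\mathcal{A}$-submodules. Since $I\in\mathcal{A}$, there are no issues with trivial summands.

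Next I would group the irreducible summands into isotypic components. Let $V_1,\ldots,V_m$ be representatives of the distinct isomorphism classes of $\mathcal{A}$-irreducibles that appear, with $p_k:=\dim V_k$, and let $q_k$ be the multiplicity of $V_k$. Then there is a unitary identification
\begin{equation}
\Complex^n \;\cong\; \bigoplus_{k=1}^{m} V_k\otimes \Complex^{q_k},
\end{equation}
where the unitarity is arranged by choosing an orthonormal basis of each copy of $V_k$ and an orthonormal basis of the multiplicity space $\Complex^{q_k}$. Under this identification, I would show that every $A\in\mathcal{A}$ acts on the $k$-th isotypic component as $B_k\otimes I_{q_k}$ for some $B_k\in\Complex^{p_k\times p_k}$. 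That $A$ must have this form follows from Schur's lemma: any $\mathcal{A}$-equivariant map from $V_k\otimes\Complex^{q_k}$ to $V_l\otimes\Complex^{q_l}$ is $0$ when $k\neq l$ and has the form $\mathrm{id}_{V_k}\otimes T$ sandwiching the action on the $V_k$ factor when $k=l$.

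The last and most substantive step is the converse: every choice of matrices $(B_1,\ldots,B_m)\in\bigoplus_k\Complex^{p_k\times p_k}$ is realized by some $A\in\mathcal{A}$. This is the density (Burnside/Jacobson) statement, and it is the key obstacle. I would prove it using the $\ast$-structure: consider the image $\mathcal{A}_k\subseteq \mathrm{End}(V_k)$ of $\mathcal{A}$ acting on the irreducible $V_k$. Since $\mathcal{A}$ is a $\ast$-algebra, so is $\mathcal{A}_k$; it contains $I_{V_k}$ and acts irreducibly, so the commutant of $\mathcal{A}_k$ in $\mathrm{End}(V_k)$ is $\Complex\cdot I_{V_k}$ by Schur. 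Then the double commutant of $\mathcal{A}_k$ is all of $\mathrm{End}(V_k)$, and a standard argument (using that $\mathcal{A}_k$ is a $\ast$-subalgebra containing $I$, so equals its double commutant in the finite-dimensional setting) gives $\mathcal{A}_k=\mathrm{End}(V_k)=\Complex^{p_k\times p_k}$. Finally, to see that the different components are independent, I would use the central idempotents projecting onto each isotypic component: these lie in $\mathcal{A}$ (being polynomials in self-adjoint elements whose spectral projections separate the components), and multiplying by them isolates the $k$-th block. Choosing the unitary $U$ to be the change-of-basis matrix from the standard basis of $\Complex^n$ to the adapted orthonormal basis constructed above then yields the desired explicit form.
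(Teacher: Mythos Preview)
Your approach is correct in outline but genuinely different from the paper's, and one step is phrased in a misleading way.

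\textbf{Comparison with the paper.} The paper never speaks of irreducible modules, isotypic components, Schur's lemma, or Burnside/density. Instead it proceeds in two concrete algebraic steps. First (Proposition~\ref{BD1}) it diagonalises the commutative $\ast$-algebra $C_{\mathcal{A}}$ (the center) to split $\mathcal{A}$ unitarily into a direct sum of \emph{simple} matrix $\ast$-algebras, where ``simple'' means $C_{\mathcal{A}}=\Complex E$. Second (Proposition~\ref{BD2}), for each simple summand it picks a \emph{maximal commutative} sub $\ast$-algebra $\mathcal{B}$, diagonalises it to $\Complex I_1+\cdots+\Complex I_m$, shows $I_i\mathcal{A}I_i=\Complex I_i$ and $I_i\mathcal{A}I_j\neq\{0\}$ using simplicity, and then uses explicitly chosen elements $A_{1,i}$ (normalised to be unitary) as partial isometries to conjugate each block to a scalar matrix. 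This is hands-on and entirely self-contained: no density theorem is quoted. Your route packages the same content in representation-theoretic language and is shorter if one is willing to cite Burnside; the paper's route is more elementary and gives an explicit recipe for~$U$.

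\textbf{A wording issue in your argument.} Your justification that $A\in\mathcal{A}$ acts on the $k$-th isotypic component as $B_k\otimes I_{q_k}$ invokes Schur's lemma for ``$\mathcal{A}$-equivariant maps'', but $A$ itself is \emph{not} $\mathcal{A}$-equivariant. The correct reason is that the unitary identification $V_k\otimes\Complex^{q_k}\cong\bigoplus_j W_j$ is built from (unitary) $\mathcal{A}$-module isomorphisms $\phi_j:V_k\to W_j$; since $\phi_j$ intertwines the $\mathcal{A}$-actions, the matrix of $A$ on each $W_j$ pulls back to the \emph{same} matrix $B_k$ on $V_k$. Schur is used only to guarantee such $\phi_j$ exist and can be taken unitary (via $\phi_j^\ast\phi_j\in\Complex I$). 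Also, your final independence step (``central idempotents lie in $\mathcal{A}$ as spectral projections'') is a bit compressed; the cleanest version is the Chinese Remainder argument: the kernels $I_k$ of the maps $\mathcal{A}\to\mathrm{End}(V_k)$ are distinct maximal two-sided ideals (distinct because $V_k\not\cong V_l$ and $\mathrm{End}(V_k)$ has a unique irreducible), hence pairwise comaximal, so $\mathcal{A}\to\bigoplus_k\mathrm{End}(V_k)$ is surjective.
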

Observe that the numbers $p_1,\ldots,p_m$ and $q_1,\ldots,q_m$ satisfy 
\begin{eqnarray}
q_1p_1+q_2p_2+\cdots+q_mp_m&=&n,\\
p_1^2+p_2^2+\cdots+p_m^2&=&\dim \mathcal{A}.\nonumber
\end{eqnarray}
We call the algebra $U^{\ast}\mathcal{A}U$ a \notion{block diagonalisation} of $\mathcal{A}$. This theorem was proved in \cite{bdref} by using (a special case of) the Wedderburn-Artin theorem (see also \cite{wedderburn}, \cite{Roerdam}). However, we will present a self-contained proof here.

A well-known instance is when $\mathcal{A}\subseteq \Complex^{n\times n}$ is commutative. This occurs for example when $\mathcal{A}$ is the Bose--Mesner algebra of an association scheme. In the commutative case we must have $p_1=\ldots=p_m=1$, since for any $p\geq 2$ the algebra $\Complex ^{p\times p}$ is non-commutative. The theorem then says that the matrices in $\mathcal{A}$ can be simultaneously diagonalised: 
\begin{equation}
U^{\ast}\mathcal{A}U=\{x_1I_1+x_2I_2+\cdots+x_mI_m\mid x\in \Complex^m\},
\end{equation}
where for each $k$ the matrix $I_k\in\Complex^n$ is a zero-one diagonal matrix with $q_k$ ones, and $I_1+\cdots+I_m=I$. The rest of this section is devoted to proving Theorem \ref{BD}.

We first introduce some more notation. For two square matrices $A\in \Complex^{n\times n}$ and $B\in\Complex^{m\times m}$, we define their \notion{direct sum} $A\oplus B\in \Complex^{(n+m)\times (n+m)}$ by
\begin{equation}
A\oplus B:=\begin{pmatrix}A&0\\0&B\end{pmatrix}.
\end{equation}
For two matrix $\ast$-algebras $\mathcal{A}$ and $\mathcal{B}$, we define their direct sum by
\begin{equation}
\mathcal{A}\oplus \mathcal{B}:=\{A\oplus B\mid A\in\mathcal{A},B\in \mathcal{B}\}.
\end{equation}
This is again a matrix $\ast$-algebra. For a positive integer $t$, we define
\begin{equation}
t\odot \mathcal{A}:=\{t\odot A\mid A\in \mathcal{A}\},
\end{equation}
where $t\odot A$ denotes the iterated direct sum $\oplus_{i=1}^t A$.

We call two square matrices $A,B\in\Complex^{n\times n}$ \emph{equivalent} if there exists a unitary matrix $U$ such that $B=U^{\ast}AU$. We extend this to matrix $\ast$-algebras and call two matrix $\ast$-algebras $\mathcal{A}$ and $\mathcal{B}$ equivalent if $\mathcal{B}=U^{\ast}\mathcal{A}U$ for some unitary matrix $U$.

Theorem \ref{BD} can thus be expressed by saying that every matrix $\ast$-algebra $\mathcal{A}$ containing the identity matrix is equivalent to a matrix $\ast$-algebra of the form
\begin{equation}
\bigoplus_{i=1}^m (q_i\odot \Complex^{p_i\times p_i}).
\end{equation}

We start by considering the commutative case. We first introduce some more notions. Let $\mathcal{A}\subseteq \Complex^{n\times n}$ and let $V\subseteq \Complex^n$ be a linear subspace. We say that $V$ is \emph{$\mathcal{A}$-invariant} when $Av\in V$ for every $A\in \mathcal{A}$ and every $v\in V$. Observe that if $\mathcal{A}$ is closed under taking the conjugate transpose, also the orthoplement 
\begin{equation}
V^{\perp}:=\{v\in \Complex^n\mid \left<v,u\right>=0\text{ for all $u\in V$}\}
\end{equation}
is $\mathcal{A}$-invariant. Indeed, for every $u\in V$, $v\in V^{\perp}$ and $A\in\mathcal{A}$ we have
\begin{equation}
\left<Av,u \right>=\left<v,A^{\ast}u\right>=0,
\end{equation}
since $A^{\ast}u\in V$.

A nonzero vector $v\in \Complex^n$ is called a \notion{common eigenvector} for $\mathcal{A}$ when $\Complex v$ is $\mathcal{A}$-invariant. We recall the following basic fact from linear algebra.
\begin{fact}
Let $V$ be a complex linear space of finite, nonzero dimension, and let $A:V\longrightarrow V$ be a linear map. Then there exist $\lambda\in \Complex$ and $v\in V\setminus \{0\}$ such that $Av=\lambda v$.  
\end{fact}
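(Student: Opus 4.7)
The plan is to use the fundamental theorem of algebra together with linear dependence in finite dimensions; the choice of ground field $\Complex$ is what makes the argument go through.

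First, I would pick any nonzero $w\in V$ and consider the iterates $w, Aw, A^2w,\ldots, A^nw$, where $n=\dim V$. These are $n+1$ vectors in an $n$-dimensional space, so they must be linearly dependent. Hence there is a nonzero polynomial $p(x)=\sum_{i=0}^{n} c_i x^i \in \Complex[x]$, of degree at most $n$, such that $p(A)w=0$. After discarding any leading zero coefficients, we may assume $p$ has degree at least $1$ (note $p$ cannot be a nonzero constant since $w\neq 0$).

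Next, I would invoke the fundamental theorem of algebra to factor $p$ over $\Complex$ as $p(x)=c\prod_{i=1}^{k}(x-\lambda_i)$ with $c\neq 0$ and $\lambda_i\in\Complex$. Substituting $A$ and applying to $w$ yields $(A-\lambda_1 I)(A-\lambda_2 I)\cdots (A-\lambda_k I)w=0$. Define $u_0:=w$ and, working from the innermost factor outward, $u_j:=(A-\lambda_{k-j+1} I)u_{j-1}$ for $j=1,\ldots,k$. Then $u_0\neq 0$ while $u_k=0$, so there is a smallest index $j\geq 1$ with $u_j=0$. For this $j$ the vector $v:=u_{j-1}$ is nonzero and satisfies $(A-\lambda_{k-j+1} I)v=0$, i.e.\ $Av=\lambda_{k-j+1}\, v$, exhibiting the required eigenvalue and eigenvector.

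The proof has essentially no obstacle: the only nontrivial ingredient is the fundamental theorem of algebra, which is assumed as standard. The alternative route via the characteristic polynomial $\det(xI-M)$ of a matrix representation $M$ of $A$ works equally well, producing a root $\lambda$ of a degree-$n$ polynomial and a nonzero kernel vector of $M-\lambda I$; I prefer the basis-free argument above because it avoids choosing coordinates.
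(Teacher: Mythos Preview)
Your proof is correct. The paper does not actually prove this statement: it is stated as a \emph{Fact} recalled from linear algebra, with no argument given. Your basis-free approach via linear dependence of the iterates $w,Aw,\ldots,A^n w$ and factorisation over $\Complex$ using the fundamental theorem of algebra is a standard and complete proof, so there is nothing to compare against in the paper itself.
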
 
In particular, this implies that when $A\in\Complex^{n\times n}$ and $V\subseteq \Complex^n$ is $\{A\}$-invariant, there exists an eigenvector of $A$ that belongs to $V$. We are now ready to prove the following theorem.
\begin{theorem}
Let $\mathcal{A}\subseteq \Complex^{n\times n}$ be a \emph{commutative} matrix $\ast$-algebra and let $V\subseteq \Complex^n$ be an $\mathcal{A}$-invariant subspace. Then $V$ has an orthonormal basis of common eigenvectors for $\mathcal{A}$. 
\end{theorem}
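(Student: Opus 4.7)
The plan is to induct on $\dim V$. The base case $\dim V \leq 1$ is immediate: a one-dimensional $\mathcal{A}$-invariant subspace $\Complex v$ satisfies $Av \in \Complex v$ for every $A \in \mathcal{A}$, so a unit vector in $V$ is itself a common eigenvector.

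For the inductive step, I distinguish two cases. If every $A \in \mathcal{A}$ acts as a scalar multiple of the identity on $V$, then any orthonormal basis of $V$ consists of common eigenvectors and we are done. Otherwise, fix some $A_0 \in \mathcal{A}$ whose restriction $A_0|_V$ is not a scalar. Since $V$ is $\mathcal{A}$-invariant, it is in particular $\{A_0\}$-invariant, so the Fact cited just before the theorem yields some eigenvalue $\lambda \in \Complex$ with $V_\lambda := \ker(A_0 - \lambda I) \cap V$ nonzero. Because $A_0$ does not act as $\lambda I$ on $V$, we also have $V_\lambda \neq V$.

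Commutativity of $\mathcal{A}$ makes $V_\lambda$ an $\mathcal{A}$-invariant subspace: for $B \in \mathcal{A}$ and $v \in V_\lambda$, the identity $A_0(Bv) = B(A_0 v) = \lambda (Bv)$ together with $Bv \in V$ gives $Bv \in V_\lambda$. The orthogonal complement $W := V_\lambda^\perp \cap V$ of $V_\lambda$ inside $V$ is likewise $\mathcal{A}$-invariant, by the very same argument given earlier in the section for complements under $\ast$-closed algebras: if $u \in V_\lambda$, $w \in W$, $B \in \mathcal{A}$, then $\langle Bw, u\rangle = \langle w, B^\ast u\rangle = 0$ since $B^\ast u \in V_\lambda$. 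Thus $V = V_\lambda \oplus W$ is an orthogonal decomposition into two $\mathcal{A}$-invariant subspaces, each of dimension strictly less than $\dim V$. The induction hypothesis provides orthonormal bases of common eigenvectors for $\mathcal{A}$ on each summand, and their union is the desired basis of $V$.

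I do not foresee a substantive obstacle: every ingredient is already in place. The only subtle point is guaranteeing that the inductive decomposition is nontrivial, which is precisely why I choose $A_0$ so that $A_0|_V$ is non-scalar — this both produces $V_\lambda \neq 0$ (via the Fact) and forces $W \neq 0$. Commutativity is used exactly once, to keep eigenspaces of one element of $\mathcal{A}$ invariant under the whole algebra, and $\ast$-closedness is used exactly once, to pass to orthogonal complements; both roles are essential and cleanly separated.
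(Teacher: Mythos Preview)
Your proof is correct and follows essentially the same inductive strategy as the paper's own argument: pick an element of $\mathcal{A}$ that does not act as a scalar on $V$, use its eigenspace to split $V$ nontrivially, invoke commutativity to keep the eigenspace $\mathcal{A}$-invariant and $\ast$-closedness to keep its complement $\mathcal{A}$-invariant, then recurse. The only cosmetic difference is that the paper works with the full eigenspace $E_\lambda \subseteq \Complex^n$ and intersects with $V$, whereas you take the eigenspace inside $V$ from the outset and form the orthogonal complement within $V$; the resulting decomposition is the same.
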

\begin{proof}
The proof is by induction on $\dim V$. If all vectors in $V$ are common eigenvectors for $\mathcal{A}$, then we are done since we can take any orthonormal basis of $V$. Therefore we may assume that there exists an $A\in\mathcal{A}$ such that not every $v\in V$ is an eigenvector for $A$. Since $V$ is $\{A\}$-invariant, $A$ has some eigenvector $v\in V$ of eigenvalue $\lambda\in \Complex$. Denote by 
\begin{equation}
E_{\lambda}:=\{x\in \Complex^n\mid Ax=\lambda x\}
\end{equation}
the eigenspace of $A$ for eigenvalue $\lambda$. As $\mathcal{A}$ is commutative, the space $E_{\lambda}$ is $\mathcal{A}$-invariant. This follows since for any $B\in \mathcal{A}$ and any $v\in E_{\lambda}$ we have
\begin{equation}
A(Bv)=B(Av)=\lambda Bv,
\end{equation}   
and hence $Bv\in E_{\lambda}$. It follows that also $V':=V\cap E_{\lambda}$ and $V'':=V\cap E_{\lambda}^{\perp}$ are $\mathcal{A}$-invariant. By assumption on $A$, $V''$ has positive dimension, yielding a nontrivial orthogonal decomposition $V=V'\oplus V''$. By induction both $V'$ and $V''$ have an orthonormal basis of common eigenvectors of $\mathcal{A}$. The union of these two bases gives an orthonormal basis of $V$ consisting of common eigenvectors of $\mathcal{A}$.
\end{proof}

Let us consider the special case $V:=\Complex ^n$. Let $\{U_1,\ldots, U_n\}$ be an orthonormal basis of common eigenvectors for $\mathcal{A}$ and denote by $U$ the square matrix with these vectors as columns (in some order). Then $U$ is a unitary matrix that diagonalises $\mathcal{A}$. That is, all matrices in $U^{\ast}\mathcal{A}U$ are diagonal matrices. 
\begin{proposition}
Let $\mathcal{A}\subseteq \Complex^{n\times n}$ be an algebra consisting of diagonal matrices. Then there exist zero-one diagonal matrices $I_1,\ldots I_m$ with disjoint support such that
\begin{equation}
\mathcal{A}=\Complex I_1+\cdots+\Complex I_m.
\end{equation}
\end{proposition}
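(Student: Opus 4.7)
The plan is to define an equivalence relation on $\{1,\ldots,n\}$ recording which diagonal positions are forced to agree on all of $\mathcal{A}$, and then to exhibit the associated indicator matrices inside $\mathcal{A}$ by Lagrange interpolation applied to a single carefully chosen element. Declare $i\sim j$ when $A_{i,i}=A_{j,j}$ for every $A\in\mathcal{A}$, let $S_{0}$ be the class on which every $A\in\mathcal{A}$ has a vanishing diagonal entry (possibly empty), let $S_{1},\ldots,S_{m}$ be the remaining classes, and let $I_{k}$ be the $\{0,1\}$-diagonal matrix with support $S_{k}$. These supports are disjoint by construction, and the inclusion $\mathcal{A}\subseteq\Complex I_{1}+\cdots+\Complex I_{m}$ is immediate, because every $A\in\mathcal{A}$ is constant on each $S_{k}$ and zero on $S_{0}$.

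For the reverse inclusion I must show each $I_{k}$ belongs to $\mathcal{A}$. Since $\mathcal{A}$ is closed under conjugate transpose and this operation conjugates diagonal matrices entrywise, both $(B+B^{\ast})/2$ and $(B-B^{\ast})/(2i)$ lie in $\mathcal{A}$ for every $B\in\mathcal{A}$, and $B$ is their $\Complex$-combination. Consequently $\mathcal{A}$ has an $\Real$-basis of real diagonal matrices $B_{1},\ldots,B_{d}$. Pick representatives $i_{k}\in S_{k}$ and set $v_{k}:=((B_{1})_{i_{k},i_{k}},\ldots,(B_{d})_{i_{k},i_{k}})\in\Real^{d}$; by construction $v_{1},\ldots,v_{m}$ are nonzero and pairwise distinct. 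The hyperplanes $\{c\in\Real^{d}\mid c\cdot v_{k}=0\}$ and $\{c\mid c\cdot v_{k}=c\cdot v_{l}\}$ are all proper, so a generic choice of $c\in\Real^{d}$ avoids them, and the matrix $A:=\sum_{j}c_{j}B_{j}\in\mathcal{A}$ then takes pairwise distinct nonzero values $\alpha_{k}:=c\cdot v_{k}$ on the classes $S_{k}$, together with value $0$ on $S_{0}$.

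Finally, for each $k$ set
\[
p_{k}(x):=\frac{x}{\alpha_{k}}\prod_{l\neq k}\frac{x-\alpha_{l}}{\alpha_{k}-\alpha_{l}},
\]
a polynomial with no constant term satisfying $p_{k}(0)=0$ and $p_{k}(\alpha_{l})=\delta_{k,l}$. Because $p_{k}(A)$ is a $\Complex$-linear combination of the powers $A,A^{2},\ldots,A^{m}$ and $\mathcal{A}$ is closed under multiplication, $p_{k}(A)\in\mathcal{A}$; evaluating entrywise gives $p_{k}(A)=I_{k}$. Thus $I_{k}\in\mathcal{A}$ for each $k$, yielding $\mathcal{A}=\Complex I_{1}+\cdots+\Complex I_{m}$ as claimed. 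The one delicate point is the genericity argument guaranteeing that each $\alpha_{k}$ is nonzero: without this the Lagrange polynomial would need a constant term, and $I$ is not assumed to lie in $\mathcal{A}$. Everything else is a routine interpolation computation.
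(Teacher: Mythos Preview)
Your argument follows the same route as the paper's: partition $\{1,\ldots,n\}$ by the equivalence relation ``agree on all of $\mathcal{A}$'', observe the easy inclusion, then produce a single $A\in\mathcal{A}$ taking pairwise distinct nonzero values on the nontrivial classes and recover each $I_k$ as a polynomial in $A$ with zero constant term. The paper simply asserts such an $A$ exists (``it is not hard to see''); you spell out the genericity argument, which is a welcome addition, and you correctly flag why the constant term must vanish.

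There is one unjustified step. You write ``since $\mathcal{A}$ is closed under conjugate transpose'' to pass to an $\Real$-basis of real diagonal matrices, but the hypothesis is only that $\mathcal{A}$ is an \emph{algebra}, not a $*$-algebra. Fortunately the detour is unnecessary: take any $\Complex$-basis $B_1,\ldots,B_d$ of $\mathcal{A}$, form the vectors $v_k\in\Complex^d$ exactly as you do, and note that the finitely many $\Complex$-hyperplanes $\{c\mid c\cdot v_k=0\}$ and $\{c\mid c\cdot(v_k-v_l)=0\}$ are proper (since each $v_k\neq 0$ and $v_k\neq v_l$), hence their union is not all of $\Complex^d$. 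Any $c$ outside this union yields your separating $A$. With that one-line fix the proof is complete and matches the paper's.
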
   
\begin{proof}
Let $S:=\{i\in\{1,\ldots,n\}\mid A_{i,i}\not=0\text{ for some $A\in \mathcal{A}$}\}$ be the union of the supports on the diagonal, of the matrices in $\mathcal{A}$. Define an equivalence relation on $S$ by calling $i$ and $j$ equivalent when $A_{i,i}=A_{j,j}$ for every $A\in \mathcal{A}$, and let $S_1,\ldots,S_m$ be the equivalence classes. Denote for $k=1,\ldots,m$ by $I_k:=\Diag(\chi^{S_k})$ the zero-one diagonal matrix with support $S_k$. The inclusion 
\begin{equation}
\mathcal{A}\subseteq \Complex I_1+\cdots+\Complex I_m
\end{equation}
is clear.

To finish the proof, we show that $I_1,\ldots,I_m\in \mathcal{A}$. 
It is not hard to see that there is a matrix $A\in \mathcal{A}$ with 
\begin{equation}
A=c_1I_1+c_2I_2+\cdots+c_mI_m,
\end{equation}
for pairwise different nonzero numbers $c_1,\ldots,c_m$. It then follows that for $k=1,\ldots, m$ we have
\begin{equation}
I_k=\frac{A\prod_{i\not=k}(A-c_iI)}{c_k\prod_{i\not=k}(c_k-c_i)}.
\end{equation} 
Since the right-hand side is a polynomial in $A$ with constant term equal to zero, we obtain $I_k\in \mathcal{A}$.
\end{proof}

When $\mathcal{A}$ is a commutative matrix $\ast$-algebra containing the identity, and $U$ is a unitary matrix diagonalising the algebra, say $U^{\ast}\mathcal{A}U=\Complex I_1+\cdots+\Complex I_m$, then the matrices
\begin{equation}
E_k:=UI_kU^{\ast}\in \mathcal{A}
\end{equation}
form a basis of \notion{orthogonal idempotents} of $\mathcal{A}$. They satisfy
\begin{eqnarray}
E_1+\cdots+E_m&=&I,\\
E_iE_j&=&\delta_{i,j},\nonumber\\
E_i&=&E_i^{\ast},\nonumber
\end{eqnarray} 
for $i,j\in \{1,\ldots, m\}$.
Geometrically, we have an orthogonal decomposition 
\begin{equation}
\Complex^n=V_1\oplus\cdots\oplus V_m
\end{equation}
and $E_k$ is the orthogonal projection of $\Complex^n$ onto $V_k$.

We will now consider the case that the matrix $\ast$-algebra $\mathcal{A}$ is not necessarily commutative. We first introduce some terminology. Let $\mathcal{A}\subseteq \Complex^{n\times n}$ be a matrix $\ast$-algebra. An element $E\in \mathcal{A}$ is called a \notion{unit} of $\mathcal{A}$ when $EA=AE=A$ for every $A\in \mathcal{A}$. Every matrix $\ast$-algebra has a unit, see Proposition \ref{unit} below. A \notion{sub $\ast$-algebra} of $\mathcal{A}$ is a subset of $\mathcal{A}$ that is a matrix $\ast$-algebra. An important example is 
\begin{equation}
C_{\mathcal{A}}:=\{A\in \mathcal{A}\mid AB=BA \text{ for all $B\in \mathcal{A}$}\}.
\end{equation}
An \notion{ideal} of $\mathcal{A}$ is a sub $\ast$-algebra that is closed under both left and right multiplication by elements of $\mathcal{A}$. We observe that if $\mathcal{I}$ is an ideal of $\mathcal{A}$ and $E$ is the unit of $\mathcal{I}$, then $E\in C_{\mathcal{A}}$. This follows since for any $A\in \mathcal{A}$ both $EA$ and $AE$ belong to $\mathcal{I}$ and hence $EA=EAE=AE$.

We say that a commutative sub $\ast$-algebra of $\mathcal{A}$ is \emph{maximal} if it is not strictly contained in a commutative sub $\ast$-algebra of $\mathcal{A}$. We have the following useful property.
\begin{proposition}
Let $\mathcal{B}$ be a maximal commutative sub $\ast$-algebra of the matrix $\ast$-algebra $\mathcal{A}$ and let 
\begin{equation}
\mathcal{B}':=\{A\in\mathcal{A}\mid AB=BA\text{ for all $B\in \mathcal{B}$}\}.
\end{equation}
Then $\mathcal{B'}=\mathcal{B}$.   
\end{proposition}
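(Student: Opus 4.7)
The inclusion $\mathcal{B}\subseteq \mathcal{B}'$ is immediate, since $\mathcal{B}$ is commutative. The heart of the matter is the reverse inclusion, and my plan is to reduce to the Hermitian case and then invoke maximality of $\mathcal{B}$.

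First I would observe that $\mathcal{B}'$ is closed under taking the conjugate transpose. Indeed, if $A\in\mathcal{B}'$ and $B\in\mathcal{B}$, then $B^{\ast}\in\mathcal{B}$ as well (since $\mathcal{B}$ is a $\ast$-algebra), so $AB^{\ast}=B^{\ast}A$; taking conjugate transposes gives $BA^{\ast}=A^{\ast}B$. Hence for any $A\in\mathcal{B}'$ the Hermitian matrices
\begin{equation}
H_1:=\tfrac{1}{2}(A+A^{\ast}),\qquad H_2:=\tfrac{1}{2i}(A-A^{\ast})
\end{equation}
also lie in $\mathcal{B}'$, and $A=H_1+iH_2$. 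It therefore suffices to prove that every Hermitian element of $\mathcal{B}'$ lies in $\mathcal{B}$.

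So let $A\in \mathcal{B}'$ be Hermitian, and consider the subset
\begin{equation}
\mathcal{C}:=\left\{\sum_{k=0}^{N} A^{k}B_{k}\ \Big|\ N\geq 0,\ B_{k}\in\mathcal{B}\right\}\subseteq \mathcal{A},
\end{equation}
the sub $\ast$-algebra of $\mathcal{A}$ generated by $\mathcal{B}\cup\{A\}$. I would check three things about $\mathcal{C}$:
(i) it contains $\mathcal{B}$ (taking $N=0$);
(ii) it is closed under the $\ast$ operation, since $A^{\ast}=A$ and each $B_{k}^{\ast}\in\mathcal{B}$, and $A$ commutes with every element of $\mathcal{B}$, so $(A^{k}B_{k})^{\ast}=B_{k}^{\ast}A^{k}=A^{k}B_{k}^{\ast}\in\mathcal{C}$;
(iii) it is commutative. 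This last point is the main step to verify carefully: for $A^{j}B$ and $A^{k}B'$ with $B,B'\in\mathcal{B}$, we have $B'A^{j}=A^{j}B'$ because $A$ (and hence every polynomial in $A$) commutes with $\mathcal{B}$, and $BB'=B'B$ because $\mathcal{B}$ itself is commutative; therefore $(A^{j}B)(A^{k}B')=A^{j+k}BB'=A^{j+k}B'B=(A^{k}B')(A^{j}B)$, and commutativity extends to finite sums by linearity.

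Thus $\mathcal{C}$ is a commutative sub $\ast$-algebra of $\mathcal{A}$ containing $\mathcal{B}$. By the maximality of $\mathcal{B}$, we must have $\mathcal{C}=\mathcal{B}$, and in particular $A\in\mathcal{B}$. Combined with the reduction above, this gives $\mathcal{B}'\subseteq\mathcal{B}$, completing the proof. The only conceptual subtlety is the $\ast$-closure step: without first passing to Hermitian parts, the generated algebra would involve both $A$ and $A^{\ast}$, and $A^{\ast}$ need not commute with $A$ in general, so commutativity of the generated algebra would fail. The Cartesian decomposition $A=H_{1}+iH_{2}$ is precisely what sidesteps this difficulty.
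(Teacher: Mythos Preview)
Your argument is correct and follows essentially the same strategy as the paper's: show $\mathcal{B}'$ is $\ast$-closed, reduce to a tractable subclass of elements, and invoke maximality via the commutative $\ast$-subalgebra they generate together with $\mathcal{B}$. The only difference is in the reduction step: the paper handles \emph{normal} elements first and then shows that every $A\in\mathcal{B}'$ is automatically normal (since $A+A^{\ast}$, being Hermitian, already lies in $\mathcal{B}$, whence $A(A+A^{\ast})=(A+A^{\ast})A$ forces $AA^{\ast}=A^{\ast}A$), whereas you use the Cartesian decomposition $A=H_1+iH_2$ to reduce directly to the Hermitian case. Both routes are equally short.

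One small caveat about your explicit description of $\mathcal{C}$: the set $\bigl\{\sum_{k\ge 0} A^{k}B_{k}:B_{k}\in\mathcal{B}\bigr\}$ need not contain $A$ itself unless $\mathcal{B}$ already contains a unit for $\mathcal{A}$, and at this point in the development no unit is available (the existence of a unit is the \emph{next} proposition in the paper, and its proof uses the present one). So from $\mathcal{C}=\mathcal{B}$ alone your conclusion ``in particular $A\in\mathcal{B}$'' does not yet follow. The repair is immediate: take $\mathcal{C}$ to be the genuine $\ast$-subalgebra generated by $\mathcal{B}\cup\{A\}$, namely $\bigl\{\sum_k A^k B_k:B_k\in\mathcal{B}\bigr\}+\sum_{k\ge 1}\mathbb{C}A^{k}$; your commutativity and $\ast$-closure verifications extend to this set verbatim, and now $A\in\mathcal{C}$ is automatic.
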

\begin{proof}
Clearly $\mathcal{B}\subseteq \mathcal{B}'$. We show the converse inclusion. First observe that for any $A\in\mathcal{B}'$ also $A^{\ast}\in \mathcal{B}'$. This follows since for any $B\in \mathcal{B}$ we have 
\begin{equation}
A^{\ast}B=(B^{\ast}A)^{\ast}=(AB^{\ast})^{\ast}=BA^{\ast}.
\end{equation}
Next, we show that $\mathcal{B}$ contains every $A\in\mathcal{B}'$ that is normal (that is $AA^{\ast}=AA^{\ast}$). This follows since for any normal $A\in \mathcal{B}'\setminus \mathcal{B}$ the commutative matrix $\ast$-algebra generated by $A$, $A^{\ast}$ and $\mathcal{B}$,
strictly contains $\mathcal{B}$.

Finally, let $A\in\mathcal{B}'$ be arbitrary. The matrix $A+A^{\ast}$ is normal, and hence belongs to $\mathcal{B}$. It follows that $A(A+A^{\ast})=(A+A^{\ast})A$, or $AA^{\ast}=A^{\ast}A$, and hence $A$ itself is normal and therefore belongs to $\mathcal{B}$.
\end{proof} 

When a matrix $\ast$-algebra does not contain the identity, the following proposition is useful. 
\begin{proposition}\label{unit}
Every nonzero matrix $\ast$-algebra $\mathcal{A}$ is equivalent to a direct sum of a matrix $\ast$-algebra containing the identity and (possibly) a zero algebra. In particular, $\mathcal{A}$ has a unit.
\end{proposition}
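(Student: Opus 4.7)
The plan is to decompose $\Complex^n$ as an orthogonal sum $V\oplus V^{\perp}$ on which $\mathcal{A}$ acts block-diagonally with a zero block on $V^{\perp}$, and then to show that the $V$-block contains the identity.

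First I would set $V:=\sum_{A\in\mathcal{A}}\mathrm{Im}(A)\subseteq \Complex^n$. Because $\mathcal{A}$ is closed under multiplication, $V$ is $\mathcal{A}$-invariant, and because $\mathcal{A}^{\ast}=\mathcal{A}$ its orthocomplement $V^{\perp}$ is also $\mathcal{A}$-invariant. The key observation is that every $A\in\mathcal{A}$ annihilates $V^{\perp}$: for $u\in V^{\perp}$ one has $Au\in\mathrm{Im}(A)\subseteq V$ and $Au\in V^{\perp}$ by invariance, so $Au=0$. Choosing a unitary $U$ whose first columns form an orthonormal basis of $V$ and whose remaining columns form an orthonormal basis of $V^{\perp}$, every $U^{\ast}AU$ with $A\in\mathcal{A}$ acquires the block form $A'\oplus 0$; hence $U^{\ast}\mathcal{A}U=\mathcal{A}_V\oplus\{0\}$, where $\mathcal{A}_V$ is the matrix $\ast$-algebra of restrictions of $\mathcal{A}$ to $V$ (the zero block disappears when $V=\Complex^n$, accounting for the word ``possibly'').

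The remaining, technical step is to show that $\mathcal{A}_V$ contains the identity $I_V$ of $V$. For this I would pick any basis $A_1,\ldots,A_d$ of $\mathcal{A}_V$ and form the positive semidefinite element $M:=\sum_{i=1}^{d}A_iA_i^{\ast}\in\mathcal{A}_V$. Its kernel on $V$ is the set of $v\in V$ with $Bv=0$ for every $B\in\mathcal{A}_V$; for such a $v$ and any $B\in\mathcal{A}_V$, $w\in V$, one has $\left<v,Bw\right>=\left<B^{\ast}v,w\right>=0$, so $v$ is orthogonal to $\mathrm{Im}(B)$ for every $B$. These images span $V$ (since every $A\in\mathcal{A}$ annihilates $V^{\perp}$, $\mathrm{Im}(A)=\mathrm{Im}(A|_V)$), so $v\perp V$ and $v=0$. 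Thus $M$ is invertible on $V$, the constant term $c_0$ of its minimal polynomial $t^k+c_{k-1}t^{k-1}+\cdots+c_0$ is nonzero, and rearranging gives
\begin{equation}
I_V=-\frac{1}{c_0}\bigl(M^k+c_{k-1}M^{k-1}+\cdots+c_1 M\bigr)\in\mathcal{A}_V,
\end{equation}
since the right-hand side is a polynomial in $M$ without constant term.

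The main obstacle is precisely this last step: one must manufacture an identity element of $\mathcal{A}_V$ from inside $\mathcal{A}_V$ itself, having no projection ready to hand. The device of averaging $A_iA_i^{\ast}$ to build an invertible Hermitian element of $\mathcal{A}_V$ and then invoking its minimal polynomial is what makes this work. A unit of $\mathcal{A}$ is then $U(I_V\oplus 0)U^{\ast}$, and the decomposition from the first step, with $\mathcal{A}_V$ now known to contain the identity, furnishes the claimed equivalence.
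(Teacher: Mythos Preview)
Your proof is correct, but it follows a genuinely different route from the paper's.

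The paper proceeds by choosing a maximal commutative sub $\ast$-algebra $\mathcal{B}\subseteq\mathcal{A}$, diagonalising it (using the earlier theorem on commutative matrix $\ast$-algebras) so that $\mathcal{B}=\Complex I_1+\cdots+\Complex I_m$ with $I=I_0+I_1+\cdots+I_m$, and then showing $I_0\mathcal{A}=\mathcal{A}I_0=\{0\}$: for $A\in\mathcal{A}$ the element $A':=(I_0A)(I_0A)^{\ast}$ commutes with each $I_k$, hence lies in $\mathcal{B}$ by maximality, yet $A'I_k=0$ for all $k\geq 1$, forcing $A'=0$ and thus $I_0A=0$. The unit is then $I_1+\cdots+I_m$, already visibly an element of $\mathcal{B}\subseteq\mathcal{A}$.

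Your argument bypasses the commutative machinery entirely: you identify the correct subspace $V$ directly as the span of all images, and you manufacture the identity on $V$ by building an invertible Hermitian element $M=\sum_i A_iA_i^{\ast}\in\mathcal{A}_V$ and extracting $I_V$ from its minimal polynomial. The two constructions yield the same decomposition (the paper's $I_0$ is the orthogonal projection onto your $V^{\perp}$), but the paper leans on the diagonalisation results developed just before, while your proof is self-contained and would work even without that preparation. The trade-off is that the paper's route makes the unit explicit as a sum of the idempotents $I_k$, fitting the surrounding narrative about orthogonal idempotents, whereas your minimal-polynomial trick is a cleaner stand-alone argument.
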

\begin{proof}
Let $\mathcal{B}$ be a maximal commutative sub $\ast$-algebra of $\mathcal{A}$. By diagonalising $\mathcal{B}$ we may assume that
\begin{equation}
\mathcal{B}=\Complex I_1+\cdots+\Complex I_m
\end{equation} 
for diagonal zero-one matrices $I_0,I_1,\ldots,I_m$ with $I=I_0+I_1+\cdots+I_m$. If $I_0=0$, we are done. So we may assume that $I_0\not=0$.  
To prove the proposition, it suffices to show that  
\begin{equation}
I_0\mathcal{A}=\mathcal{A}I_0=\{0\},
\end{equation}
since this implies that $\mathcal{A}$ is the direct sum of the algebra obtained by restricting $\mathcal{A}$ to the support of $I_1+\cdots+I_m$ and the zero algebra on the support of $I_0$.

First observe that 
\begin{equation}
I_0A=A-(I_1+\cdots+I_m)A\in \mathcal{A}\text{ for every $A\in\mathcal{A}$}.
\end{equation}
Let $A\in \mathcal{A}$ be arbitrary and let 
\begin{equation}
A':=(I_0A)(I_0A)^{\ast}\in \mathcal{A}.
\end{equation}
Then for $k=1,\ldots,m$ we have
\begin{equation}
I_kA'=I_kI_0AA^{\ast}I_0=0=I_0AA^{\ast}I_0I_k=A'I_k.
\end{equation} 
It follows that $A'$ commutes with $I_1,\ldots, I_m$ and hence is a linear combination of $I_1,\ldots,I_m$ by the maximality of $\mathcal{B}$. On the other hand $A'I_k=0$ for $k=1,\ldots,m$, and hence $A'=0$. It follows that also $I_0A=0$.

Similarly, by considering $A^{\ast}$ we obtain $I_0A^{\ast}=0$ and hence $AI_0=0$. 
\end{proof}

We call a nonzero matrix $\ast$-algebra $\mathcal{A}$ \notion{simple} if $C_{\mathcal{A}}=\Complex E$, where $E$ is the unit of $\mathcal{A}$. Since the unit of any ideal of $\mathcal{A}$ belongs to $C_{\mathcal{A}}$, it follows that if $\mathcal{A}$ is simple, it has only the two trivial ideals $\{0\}$ and $\mathcal{A}$. The reverse implication also holds (see Proposition \ref{BD1}).

\begin{proposition}\label{BD1}
Every matrix $\ast$-algebra $\mathcal{A}$ containing the identity is equivalent to a direct sum of simple matrix $\ast$-algebras.  
\end{proposition}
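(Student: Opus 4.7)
My strategy is to split $\mathcal{A}$ by means of central idempotents drawn from $C_{\mathcal{A}}$. Since $I\in\mathcal{A}$ commutes with every element of $\mathcal{A}$, we have $I\in C_{\mathcal{A}}$; and $C_{\mathcal{A}}$ is closed under $\ast$, because for $X\in C_{\mathcal{A}}$ and $A\in\mathcal{A}$ one has $X^{\ast}A=(A^{\ast}X)^{\ast}=(XA^{\ast})^{\ast}=AX^{\ast}$. Thus $C_{\mathcal{A}}$ is a commutative sub $\ast$-algebra containing the identity, and after conjugating by a unitary (which we may do, since we only need equivalence) I may assume $C_{\mathcal{A}}=\Complex I_1+\cdots+\Complex I_m$, where $I_1,\ldots,I_m$ are zero-one diagonal matrices with pairwise disjoint supports. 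Since $I\in C_{\mathcal{A}}$ has full support, $I_1+\cdots+I_m=I$.

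Using centrality of each $I_k$, I obtain $I_j A I_k=AI_j I_k=0$ for $j\neq k$, which gives the orthogonal decomposition
\[
\mathcal{A}=I\mathcal{A}I=\bigoplus_{k=1}^m I_k\mathcal{A}I_k=\bigoplus_{k=1}^m \mathcal{A}_k,
\]
where $\mathcal{A}_k:=I_k\mathcal{A}I_k=I_k\mathcal{A}$. Each $\mathcal{A}_k$ is a matrix $\ast$-algebra (nonzero, since $I_k\in\mathcal{A}_k$) whose matrices are supported on the diagonal block indexed by $I_k$ and have $I_k$ as unit. After the unitary change of basis, $\mathcal{A}$ is block diagonal with blocks $\mathcal{A}_1,\ldots,\mathcal{A}_m$, so it remains only to show that each $\mathcal{A}_k$ is simple.

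To verify $C_{\mathcal{A}_k}=\Complex I_k$, suppose $X\in\mathcal{A}_k$ commutes with every element of $\mathcal{A}_k$. For arbitrary $A\in\mathcal{A}$, centrality of $I_k$ gives $I_kA=I_kAI_k\in\mathcal{A}_k$, hence $X(I_kA)=(I_kA)X$. Using $X=XI_k=I_kX$ on the left and $AI_k=I_kA$ on the right, both sides collapse to $XA$ and $AX$ respectively, yielding $XA=AX$. Thus $X\in C_{\mathcal{A}}\cap\mathcal{A}_k$. But every element of $C_{\mathcal{A}}$ has the form $\sum_j c_j I_j$, and belonging to $\mathcal{A}_k=I_k\mathcal{A}$ forces $X=I_kX=c_k I_k$. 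Hence $C_{\mathcal{A}_k}=\Complex I_k$, and $\mathcal{A}_k$ is simple.

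The one place where I expect to have to be careful is this last step: it is tempting (but wrong in general) to declare an element that is central within the small algebra $\mathcal{A}_k$ automatically central within $\mathcal{A}$. The argument works precisely because the $I_k$ themselves lie in $C_{\mathcal{A}}$, which allows each relation $X(I_kA)=(I_kA)X$ inside $\mathcal{A}_k$ to be lifted all the way up to $XA=AX$ inside $\mathcal{A}$. Everything else is a direct application of the already-established diagonalisation result for commutative matrix $\ast$-algebras.
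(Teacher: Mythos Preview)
Your proof is correct and follows essentially the same route as the paper: diagonalise the commutative center $C_{\mathcal{A}}$, use centrality of the resulting idempotents $I_k$ to obtain $I_j\mathcal{A}I_k=0$ for $j\neq k$, and decompose $\mathcal{A}$ into the blocks $\mathcal{A}_k=I_k\mathcal{A}I_k$. In fact you are more explicit than the paper, which stops at the direct-sum decomposition and leaves the simplicity of each $\mathcal{A}_k$ implicit; your argument that any $X\in C_{\mathcal{A}_k}$ lifts to $C_{\mathcal{A}}$ (via $XA=X(I_kA)=(I_kA)X=AX$) and hence equals $c_kI_k$ fills precisely that gap.
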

\begin{proof}
Since $C_{\mathcal{A}}$ is commutative, we may assume it is diagonalised by replacing $\mathcal{A}$ by $U^{\ast}\mathcal{A}U$ for a unitary matrix $U$ diagonalising $C_{\mathcal{A}}$. Then 
\begin{equation}
C_{\mathcal{A}}=\Complex I_1+\cdots+\Complex I_m
\end{equation}
where $I_1,\ldots,I_m$ are zero-one diagonal matrices with $I_1+\cdots+I_m=I$. For every $i,j\in \{1,\ldots,m\}$ with $i\not=j$ we have
\begin{equation}
I_i\mathcal{A}I_j=I_iI_j\mathcal{A}=\{0\}.
\end{equation} 
It follows that $\mathcal{A}$ is the direct sum
\begin{equation}
\mathcal{A}=\mathcal{A}_1\oplus\cdots\oplus\mathcal{A}_m,
\end{equation}
where for $i=1,\ldots,m$ the matrix $\ast$-algebra $\mathcal{A}_i$ is obtained from $I_i\mathcal{A}I_i$ by restricting to the rows and columns in which $I_i$ has a $1$.
\end{proof}

Finally, we show that every simple matrix $\ast$-algebra can be brought into block diagonal form.
\begin{proposition}\label{BD2}
Every simple matrix $\ast$-algebra $\mathcal{A}$ containing the identity is equivalent to a matrix $\ast$-algebra of the form $t\odot \Complex^{m\times m}$ for some $t,m$.
\end{proposition}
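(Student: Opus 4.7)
The plan is to construct $m^2$ matrix units $E_{ij}\in\mathcal{A}$ ($i,j=1,\ldots,m$) satisfying the elementary-matrix relations and spanning $\mathcal{A}$, and then to choose an orthonormal basis of $\Complex^n$ that realises each $E_{ij}$ as a direct sum of $t$ copies of the same elementary $m\times m$ matrix. The strategy is thus abstract first (find the $E_{ij}$ inside $\mathcal{A}$) and explicit second (build the unitary $U$).

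First I would fix a minimal nonzero self-adjoint idempotent $E\in\mathcal{A}$ (minimal with respect to rank, which exists by finite-dimensionality) and prove $E\mathcal{A}E=\Complex E$. Any self-adjoint $X\in E\mathcal{A}E$ admits a spectral decomposition $X=\sum_i\lambda_iP_i$ in which the orthogonal projections $P_i$ are polynomials in $X$, hence lie in $E\mathcal{A}E$; moreover $\sum_iP_i=E$ and each $P_i$ is a projection in $\mathcal{A}$ with $P_i\leq E$. Minimality of $E$ forces each $P_i$ to be $0$ or $E$, so $X\in\Complex E$. Since $E\mathcal{A}E$ is the $\Real$-span of its self-adjoint elements, the claim follows.

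Next I would use simplicity, in the form that $\mathcal{A}$ has only the trivial two-sided ideals, to decompose $I$ as a sum of orthogonal projections equivalent to $E$. For any nonzero projection $F\in\mathcal{A}$, the two-sided $\ast$-ideal $\mathcal{A}F\mathcal{A}$ is nonzero and therefore equals $\mathcal{A}$; this implies $F\mathcal{A}E\neq\{0\}$, since otherwise $E=IE\in\mathcal{A}F\mathcal{A}E=\{0\}$. Picking nonzero $X\in F\mathcal{A}E$ gives $X^{\ast}X\in E\mathcal{A}E=\Complex E$, say $X^{\ast}X=\lambda E$ with $\lambda>0$, so $V:=X/\sqrt{\lambda}$ satisfies $V^{\ast}V=E$ and $VV^{\ast}$ is a projection in $\mathcal{A}$ with $VV^{\ast}\leq F$. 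Iterating with $F=I-(E_1+\cdots+E_k)$ produces orthogonal projections $E_1,\ldots,E_m\in\mathcal{A}$ with $\sum_kE_k=I$ and partial isometries $V_1,\ldots,V_m\in\mathcal{A}$ (with $V_1=E$) satisfying $V_k^{\ast}V_k=E$ and $V_kV_k^{\ast}=E_k$. Defining $E_{ij}:=V_iV_j^{\ast}\in\mathcal{A}$, the relations $V_k^{\ast}V_l=\delta_{kl}E$ (derived from $E_kE_l=\delta_{kl}E_k$) give the matrix-unit identities $E_{ij}E_{kl}=\delta_{jk}E_{il}$, $E_{ij}^{\ast}=E_{ji}$, $\sum_iE_{ii}=I$. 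For any $A\in\mathcal{A}$, the element $V_i^{\ast}AV_j$ lies in $E\mathcal{A}E=\Complex E$; writing $V_i^{\ast}AV_j=c_{ij}E$ yields $E_{ii}AE_{jj}=c_{ij}E_{ij}$ and hence $A=\sum_{i,j}c_{ij}E_{ij}$, so $\mathcal{A}=\mathrm{span}\{E_{ij}\}$.

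Finally, I would set $t:=\mathrm{rank}(E)$, fix an orthonormal basis $v_1,\ldots,v_t$ of the range of $E$, and put $v_i^{(k)}:=V_kv_i$. The identity $V_k^{\ast}V_l=\delta_{kl}E$ shows these $mt$ vectors are orthonormal, and since $\sum_kE_k=I$ they form a basis of $\Complex^n$, forcing $mt=n$. Taking them as columns of $U$ in the order $v_1^{(1)},\ldots,v_1^{(m)},v_2^{(1)},\ldots,v_t^{(m)}$, the computation $E_{kl}v_i^{(j)}=\delta_{lj}v_i^{(k)}$ shows that $U^{\ast}E_{kl}U$ is the block diagonal matrix with $t$ identical copies of the elementary matrix with a single $1$ in position $(k,l)$; therefore $U^{\ast}\mathcal{A}U=t\odot\Complex^{m\times m}$. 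The main obstacle is the partial-isometry construction in the third paragraph, where simplicity is used in an essential way; once this is in place, everything reduces to bookkeeping around the key identity $E\mathcal{A}E=\Complex E$.
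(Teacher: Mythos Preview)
Your proof is correct and complete, but it takes a genuinely different route from the paper's.

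The paper starts from a \emph{maximal commutative} sub $\ast$-algebra $\mathcal{B}\subseteq\mathcal{A}$, diagonalises it to obtain orthogonal diagonal idempotents $I_1,\ldots,I_m$, and then uses maximality to conclude $I_i\mathcal{A}I_i=\Complex I_i$. Simplicity (in the form $C_{\mathcal{A}}=\Complex I$) is invoked to show the ideal $\mathcal{A}I_i\mathcal{A}$ equals $\mathcal{A}$, hence $I_i\mathcal{A}I_j\neq\{0\}$. A single matrix $A$ with all blocks $A_{i,j}$ nonzero is then chosen and normalised; the relations $A'(A')^{\ast}\in\Complex I_1$, $(A')^{\ast}A'\in\Complex I_i$ for $A'=I_1AI_i$ force all $|S_i|$ equal and the blocks $A_{1,i}$ unitary, after which a single conjugation by $\bigoplus_i A_{1,i}^{\ast}$ reduces every block to a scalar multiple of the identity.

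You instead start from a \emph{minimal} projection $E$, prove $E\mathcal{A}E=\Complex E$ via the spectral theorem, and build the remaining projections $E_k$ one at a time by extracting partial isometries $V_k$ from $F\mathcal{A}E$ (using simplicity in the ``no nontrivial ideals'' form, which the paper has already shown equivalent). This gives matrix units $E_{ij}=V_iV_j^{\ast}$ directly, and the unitary $U$ is assembled from images $V_kv_i$ of a basis of $\mathrm{range}(E)$.

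What each buys: the paper's argument gets all $m$ projections at once and never mentions partial isometries; it is closer in spirit to the explicit block-diagonalisations constructed later in the thesis. Your argument is the standard operator-algebraic construction of matrix units and generalises more transparently to abstract finite-dimensional $C^{\ast}$-algebras, at the cost of an inductive construction and an appeal to the spectral theorem for the minimality step.
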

\begin{proof}
Let $\mathcal{A}\subseteq \Complex^{n\times n}$ be a simple matrix $\ast$-algebra containing the identity, and let $\mathcal{B}$ be a maximal commutative sub $\ast$-algebra of $\mathcal{A}$. We may assume that $\mathcal{B}$ consists of diagonal matrices, say 
\begin{equation}
\mathcal{B}=\Complex I_1+\cdots+\Complex I_m
\end{equation}
where $I_i=\chi^{S_i}$ for $i=1,\ldots,m$ and $S_1\cup \cdots\cup S_m$ is a partition of $\{1,\ldots,n\}$. For every $i$ and every $A\in \mathcal{A}$ the matrix $I_iAI_i$ commutes with $I_1,\ldots,I_m$ and hence, by the maximality of $\mathcal{B}$, the matrix $I_iAI_i$ is a linear combination of $I_1,\ldots,I_m$. It follows that 
\begin{equation}
I_i\mathcal{A}I_i=\Complex I_i \text{\  for $i=1,\ldots,m$}.
\end{equation}
For any $i$, the set $\mathcal{I}:=\mathcal{A}I_i\mathcal{A}$ is a nonzero ideal of $\mathcal{A}$. Hence the unit of $\mathcal{I}$ belongs to $C_{\mathcal{A}}=\Complex I$. It follows that $I\in \mathcal{I}$ and hence 
\begin{equation}\label{nonzeroblocks}
I_i\mathcal{A}I_j\not=\{0\} \text{ for every $i,j=1,\ldots,m$.} 
\end{equation}

For any $A\in\Complex^{n\times n}$, and $i,j\in \{1,\ldots,m\}$, we denote by $A_{i,j}\in \Complex^{|S_i|\times |S_j|}$ the matrix obtained from $A$ by restricting the rows to $S_i$ and the columns to $S_j$ (and renumbering the rows and columns). By (\ref{nonzeroblocks}) we can fix an $A\in\mathcal{A}$ with $A_{i,j}\not=0$ for every $i,j\in \{1,\ldots,m\}$. In fact we can arrange that
\begin{equation}\label{scaled}
\trace ((A_{i,j})^{\ast}A_{i,j})=|S_i|.
\end{equation}
Let $i$ be arbitrary and let $A':=I_1AI_i$. Then 
\begin{eqnarray}
A'(A')^{\ast} \text{ is a nonzero matrix in $\Complex I_1$},\\
(A')^{\ast}A' \text{ is a nonzero matrix in $\Complex I_i$}.\nonumber
\end{eqnarray}
This shows that $I_1$ and $I_i$ have the same rank $t$, namely the rank of $A'$. In other words: $|S_1|=|S_i|=t$. Moreover by (\ref{scaled}), the matrices $A_{1,i}$ are unitary since 
\begin{equation}
(A_{1,i})^{\ast}A_{1,i}=A_{1,i}(A_{1,i})^{\ast}=I.
\end{equation}
Let $U:=A_{1,1}^{\ast}\oplus\cdots\oplus A_{1,m}^{\ast}\in \Complex^{n\times n}$ be the unitary matrix with blocks $A_{1,i}^{\ast}$ on the diagonal. By replacing $\mathcal{A}$ by $U^{\ast}\mathcal{A}U$ we may assume that $A_{1,i}=I$ for $i=1,\ldots, m$.

This implies that for any $i,j\in \{1,\ldots,m\}$
\begin{equation}
B_{i,1}=A_{1,i}B_{i,1}=(I_1AI_iBI_1)_{1,1}\in \Complex I\text{\  for any $B\in \mathcal{A}$},
\end{equation}
and hence
\begin{equation}
B_{i,j}=B_{i,j}(A^{\ast})_{j,1}=(I_iBI_jA^{\ast}I_1)_{i,1}\in \Complex I\text{\  for any $B\in \mathcal{A}$}.
\end{equation}
Summarizing, we have 
\begin{equation}
\mathcal{A}=\{A\in\Complex^{n\times n}\mid A_{i,j}\in \Complex I\text{\  for all $i,j\in\{1,\ldots,m\}$}\}.
\end{equation}
By reordering the rows and columns, we obtain the proposition.
\end{proof}

Proposition \ref{BD1} and \ref{BD2} together imply Theorem \ref{BD}.

\section{Semidefinite programming}
In this section we introduce semidefinite programming. For an overview of semidefinite programming and further references, we refer the reader to \cite{Todd}.
 
Recall that a complex matrix $A$ is called hermitian if $A^{\ast}=A$. It follows that all eigenvalues of $A$ are real. An hermitian matrix $A\in \Complex^{n\times n}$ is called \notion{positive semidefinite}, in notation $A\psd$, when it has only nonnegative eigenvalues.  
\begin{proposition}
For an hermitian matrix $A\in \Complex^{n\times n}$ the following are equivalent:
\begin{eqnarray}
\mathrm{(i)}&&A\psd,\\ 
\mathrm{(ii)}&&x^{\ast}Ax\geq 0\quad\text{for all $x\in \Complex^n$},\nonumber\\
\mathrm{(iii)}&&A=B^{\ast}B\quad \text{for some $B\in \Complex^{n\times n}$}.\nonumber
\end{eqnarray}
In the case that $A$ is real, we may restrict to real vectors $x$ in \textrm{(ii)} and take $B$ real in \textrm{(iii)}.
\end{proposition}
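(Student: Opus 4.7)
The plan is to prove the cycle $\mathrm{(i)}\Rightarrow\mathrm{(iii)}\Rightarrow\mathrm{(ii)}\Rightarrow\mathrm{(i)}$, with the spectral decomposition of a hermitian matrix as the workhorse. This decomposition is already available: since $A=A^{\ast}$, the matrix $\ast$-algebra generated by $A$ and $I$ is commutative and contains the identity, so the diagonalisation result of the previous section, applied with $V=\Complex^n$, produces an orthonormal basis of eigenvectors of $A$. Collecting these into the columns of a unitary $U$ yields $A=UDU^{\ast}$ with $D=\Diag(\lambda_1,\ldots,\lambda_n)$. The $\lambda_i$ are automatically real because $D=U^{\ast}AU$ is hermitian.

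For $\mathrm{(i)}\Rightarrow\mathrm{(iii)}$ I would set $D^{1/2}:=\Diag(\sqrt{\lambda_1},\ldots,\sqrt{\lambda_n})$, which requires precisely the nonnegativity of the $\lambda_i$, and take $B:=D^{1/2}U^{\ast}$; then $B^{\ast}B=UD^{1/2}D^{1/2}U^{\ast}=UDU^{\ast}=A$. The implication $\mathrm{(iii)}\Rightarrow\mathrm{(ii)}$ is the one-line calculation $x^{\ast}Ax=x^{\ast}B^{\ast}Bx=\|Bx\|^2\geq 0$. Finally, for $\mathrm{(ii)}\Rightarrow\mathrm{(i)}$, if $Av=\lambda v$ with $v\neq 0$ then $0\leq v^{\ast}Av=\lambda\|v\|^2$ forces $\lambda\geq 0$.

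For the real case I would remark that when $A\in\Real^{n\times n}$ is symmetric, each eigenvalue is real and the kernel $\ker(A-\lambda I)$ has the same dimension over $\Real$ as over $\Complex$ (since $A-\lambda I$ is a real matrix), and so admits a basis of real vectors. Applying Gram--Schmidt inside $\Real^n$ in each eigenspace produces a real orthonormal eigenbasis, so $U$ may be taken real orthogonal and $B:=D^{1/2}U^{\transp}$ is real. In the implication $\mathrm{(ii)}\Rightarrow\mathrm{(i)}$ one may similarly take the eigenvector $v$ real, so that only real test vectors are needed.

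No serious obstacle is anticipated: the one nontrivial input is the existence of an orthonormal eigenbasis for a hermitian matrix, but this is supplied directly by the commutative diagonalisation theorem already established. Everything else reduces to short algebraic manipulations.
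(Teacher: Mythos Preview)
The paper does not supply a proof of this proposition; it is stated as a standard background fact in the preliminaries, immediately followed by the consequence $\left<A,B\right>\geq 0$ for positive semidefinite $A,B$. Your argument is correct and entirely self-contained relative to the paper: the cycle $\mathrm{(i)}\Rightarrow\mathrm{(iii)}\Rightarrow\mathrm{(ii)}\Rightarrow\mathrm{(i)}$ via the spectral decomposition is the standard route, and your observation that the commutative diagonalisation theorem already established in the paper furnishes the needed orthonormal eigenbasis is a nice way to avoid importing the spectral theorem from outside. The treatment of the real case is also sound.
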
  
It follows that for positive semidefinite matrices $A,B\in \Complex^{n\times n}$ the inner product is nonnegative:
\begin{equation}
\left<A,B\right>=\trace (C^{\ast}CDD^{\ast})=\trace (CDD^{\ast}C^{\ast})=\left<CD,CD\right>\geq 0,
\end{equation}
when $A=C^{\ast}C$ and $B=D^{\ast}D$. Another useful observation is that when $A$ is positive semidefinite, every principal submatrix is positive semidefinite as well. In particular, the diagonal of $A$ consists of nonnegative real numbers. Also
\begin{equation}
\text{if $U$ is nonsingular, then $A\psd$ if and only if $U^{\ast}AU\psd$}. 
\end{equation}

In the remainder of this section, all matrices will be real. A \notion{semidefinite program} is a an optimization problem of the following form, where $A_1,\ldots, A_n,B$ are given symmetric matrices in $\Real^{n\times n}$ and $c\in\Real^n$ is a given vector:
\begin{eqnarray}\label{generalSDP}
\text{minimize}&& c^{\transp}x\\
\text{subject to}&& x_1A_1+\cdots+x_nA_n-B\psd\nonumber.
\end{eqnarray}  
When $A_1,\ldots, A_n,B$ are diagonal matrices, the program reduces to a linear program. In particular, linear constraints $Ax\leq b$ can be incorporated into the program (\ref{generalSDP}) by setting
\begin{equation}
\widetilde{A}_i:=\begin{pmatrix}A_i&0\\0&-\Diag(a_i)\end{pmatrix}
\end{equation}
and
\begin{equation}
\widetilde{B}:=\begin{pmatrix}B&0\\0&-\Diag(b)\end{pmatrix},
\end{equation}
where $a_i$ is the $i$-th column of $A$.
Semidefinite programs can be approximated in polynomial time within any specified accuracy by the ellipsoid algorithm (\cite{GLS}) or by practically efficient interior point methods (\cite{nesterov}).

For any symmetric matrix $A\in\Real^{n\times n}$, the matrix $R(A)$ is defined by:
\begin{equation}
R(A):=\begin{pmatrix}1&a^{\transp}\\a&A\end{pmatrix},
\end{equation}
where $a:=\diag (A)$ is the vector of diagonal elements of $A$. We will index the extra row and column of $R(A)$ by $0$. 

The following propositions are helpful when dealing with semidefinite programs that involve matrices of the form $R(A)$. 
\begin{proposition}\label{R(A)PSD}
Let $A\in\Real^{n\times n}$ be a symmetric matrix such that $\diag (A)=c\cdot A\one$ for some $c\in\Real$. Then the following are equivalent:
\begin{eqnarray}
\mathrm{(i)}&&R(A)\text{ is positive semidefinite,}\\
\mathrm{(ii)}&&A \text{ is positive semidefinite and $\one^{\transp}A\one\geq (\trace A)^2$.}\nonumber
\end{eqnarray}
\end{proposition}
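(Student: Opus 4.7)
The plan is to recognise that the two halves of the equivalence have very different character: one is a direct consequence of $R(A)\psd$ tested on a single family of vectors, while the other is a Schur-complement/Cauchy--Schwarz computation that genuinely uses the hypothesis $\diag(A)=c\cdot A\one$.

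For the direction (i)$\Rightarrow$(ii), I would first note that $A$ is a principal submatrix of $R(A)$, so $A\psd$ is immediate. To get $\one^{\transp}A\one\geq(\trace A)^2$, I would plug the vector $(t,\one)^{\transp}$ (with $t\in\Real$ arbitrary) into the quadratic form $v^{\transp}R(A)v$. Since $\one^{\transp}a=\trace A$, this produces the polynomial $t^2+2t(\trace A)+\one^{\transp}A\one\geq 0$, and nonnegativity for all $t$ forces the discriminant $4(\trace A)^2-4\one^{\transp}A\one$ to be nonpositive. Note that this half does not require the hypothesis on $\diag(A)$.

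For (ii)$\Rightarrow$(i), my plan is to pass to the Schur-complement reformulation: since the $(0,0)$ block of $R(A)$ equals $1>0$, we have $R(A)\psd$ if and only if $A-aa^{\transp}\psd$, i.e.\ if and only if $(a^{\transp}x)^2\leq x^{\transp}Ax$ for every $x\in\Real^n$. Using the hypothesis $a=c\cdot A\one$, the left-hand side becomes $c^2(\one^{\transp}Ax)^2$, and the Cauchy--Schwarz inequality applied to $A^{1/2}\one$ and $A^{1/2}x$ (legitimate because $A\psd$) gives $(\one^{\transp}Ax)^2\leq (\one^{\transp}A\one)(x^{\transp}Ax)$. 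It then remains to show $c^2\,\one^{\transp}A\one\leq 1$. Taking the trace of the identity $\diag(A)=c\cdot A\one$ yields $\trace A=\one^{\transp}\diag(A)=c\cdot\one^{\transp}A\one$, so assuming $\one^{\transp}A\one>0$ we may divide to get $c^2\,\one^{\transp}A\one=(\trace A)^2/\one^{\transp}A\one\leq 1$ by (ii).

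The only real obstacle is the degenerate case $\one^{\transp}A\one=0$, which must be treated separately to justify the division above. Here $A\psd$ forces $A\one=\zero$, hence $a=c\cdot A\one=\zero$, and $R(A)$ is the block-diagonal matrix with blocks $1$ and $A$, manifestly positive semidefinite. Once this case split is handled, the argument is essentially a one-line Schur complement plus one application of Cauchy--Schwarz, so I do not anticipate any further difficulty.
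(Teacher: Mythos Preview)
Your proof is correct and follows essentially the same approach as the paper: both directions hinge on the inequality $(a^{\transp}x)^2\leq x^{\transp}Ax$, obtained via Cauchy--Schwarz on $A^{1/2}\one$ and $A^{1/2}x$ together with the identity $\trace A=c\cdot\one^{\transp}A\one$. Your presentation is slightly cleaner---the discriminant argument for (i)$\Rightarrow$(ii) and the explicit Schur-complement reduction for (ii)$\Rightarrow$(i) replace the paper's Gram-vector computations---but the underlying ideas are the same.
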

\begin{proof}
First assume that (i) holds. Let $R(A)=U^{\transp}U$, where $U\in\Real^{(n+1)\times (n+1)}$. Using $U_0^{\transp}U_0=1$, we obtain
\begin{eqnarray}
\sumentries{A}=\sum_{i,j=1}^n U_i^{\transp}U_j&=&(\sum_{i=1}^n U_i)^{\transp}(\sum_{i=1}^n U_i)\cdot U_0^{\transp}U_0\\
&\geq&((\sum_{i=1}^n U_i)^{\transp}U_0)^2=(\trace A)^2.\nonumber
\end{eqnarray}
Here the inequality follows using Cauchy-Schwarz, and in the last equality we use $U_i^{\transp}U_0=A_{i,i}$.
Next assume that (ii) holds. We may assume that $\trace A>0$, since otherwise $A=0$ and hence $R(A)$ is positive semidefinite. Let $A=U^{\transp}U$ where $U\in\Real^{n\times n}$. Let $a:=\diag(A)$. For any $x\in \Real^n$ the following holds:
\begin{eqnarray}
x^{\transp}Ax&\geq& (\one^{\transp}A\one)^{-1}(x^{\transp}A\one)^2\\
&\geq&\left(\frac{\trace A}{\one^{\transp}A\one}x^{\transp}A\one\right)^2\nonumber\\
&=&c\frac{\one^{\transp}a}{\one^{\transp}a}c^{-1}x^{\transp}a\nonumber\\
&=&(x^{\transp}a)^2.\nonumber
\end{eqnarray}
Here the first inequality follows by applying Cauchy-Schwartz on the inner product of $Ux$ and $U\one$, and the second inequality follows from the assumption $\one^{\transp} A\one\geq (\trace A)^2$. It follows that for any vector ${\alpha\choose x}$ with $x\in \Real^{n}$ and $\alpha\in \Real$, we have
\begin{eqnarray}
(\alpha,x^{\transp})R(A){\alpha\choose x}&=&\alpha^2+2\alpha a^{\transp}x+x^{\transp}Ax\nonumber\\
&\geq&\alpha^2+2\alpha a^{\transp}x+(a^{\transp}x)^2\nonumber\\
&=&(\alpha+a^{\transp}x)^2\geq 0.\nonumber
\end{eqnarray}
\end{proof}

This implies the folowing useful equivalence of semidefinite programs.
\begin{proposition}\label{twooptima}
Let $C\subseteq \Real^{n\times n}$ be a cone, and assume that the following two maxima exist:
\begin{eqnarray}
O_1&:=&\max\{\one^{\transp}A\one\mid \trace A=1, A\psd, A\in C\},\\
O_2&:=&\max\{\trace A\mid R(A)\psd, A\in C\}.\nonumber
\end{eqnarray}
Further assume that the maximum in the first program is attained by a matrix $A$ with $\diag (A)=c\cdot A\one$ for some $c\in\Real$. Then $O_1=O_2$.
\end{proposition}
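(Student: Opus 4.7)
The plan is to prove the two inequalities $O_2 \geq O_1$ and $O_1 \geq O_2$ separately, using Proposition \ref{R(A)PSD} in both directions, the fact that $C$ is a cone, and Cauchy--Schwarz via the decomposition $R(\tilde A) = U^{\transp}U$ that already appears in the proof of Proposition \ref{R(A)PSD}.

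For the inequality $O_2 \geq O_1$, let $A^{\ast}$ be a matrix attaining the maximum in the first program, so that $\trace A^{\ast}=1$, $A^{\ast}\psd$, $A^{\ast}\in C$, and $\diag(A^{\ast})=c\cdot A^{\ast}\one$. Set $\lambda:=\one^{\transp}A^{\ast}\one$ and consider $A':=\lambda A^{\ast}$. Since $A^{\ast}$ is nonzero and positive semidefinite one has $\one^{\transp}A^{\ast}\one\geq 0$; in fact the condition $\diag(A^{\ast})=c\cdot A^{\ast}\one$ combined with $\trace A^{\ast}=1$ forces $\lambda>0$ (otherwise $A^{\ast}\one=0$ would give $\diag(A^{\ast})=0$ and hence $\trace A^{\ast}=0$). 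Because $C$ is a cone, $A'\in C$; it is still positive semidefinite; the relation $\diag(A')=c\cdot A'\one$ is preserved under scaling; and by construction $\one^{\transp}A'\one=\lambda^2=(\trace A')^2$. Proposition \ref{R(A)PSD} therefore gives $R(A')\psd$, so $A'$ is feasible for the second program and $O_2\geq \trace A'=\lambda=\one^{\transp}A^{\ast}\one=O_1$.

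For the inequality $O_1 \geq O_2$, let $\tilde A$ attain the maximum in the second program. Since $\tilde A$ is a principal submatrix of $R(\tilde A)\psd$, it is itself positive semidefinite, so $\trace \tilde A\geq 0$. If $\trace\tilde A=0$ then $\tilde A=0$, so $O_2=0$, and because the feasible $A^{\ast}$ of the first program is positive semidefinite we have $O_1=\one^{\transp}A^{\ast}\one\geq 0=O_2$. Otherwise $\trace \tilde A>0$, and writing $R(\tilde A)=U^{\transp}U$ with rows $U_0,U_1,\ldots,U_n$ indexed so that $U_0^{\transp}U_0=1$ and $U_i^{\transp}U_0=\tilde A_{i,i}$, Cauchy--Schwarz (exactly as in the (i)$\Rightarrow$(ii) direction of Proposition \ref{R(A)PSD}) yields $\one^{\transp}\tilde A\one\geq(\trace\tilde A)^2$. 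Now set $A'':=\tilde A/\trace \tilde A$. Then $\trace A''=1$, $A''\psd$, $A''\in C$ (cone), and
\begin{equation}
\one^{\transp}A''\one=\frac{\one^{\transp}\tilde A\one}{\trace\tilde A}\geq \trace\tilde A=O_2,
\end{equation}
so $A''$ is feasible for the first program and $O_1\geq\one^{\transp}A''\one\geq O_2$.

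The only subtlety I expect is confirming that the hypothesis $\diag(A^{\ast})=c\cdot A^{\ast}\one$ really is needed only for the direction $O_2\geq O_1$ (where it feeds into Proposition \ref{R(A)PSD}) and that it transfers cleanly under positive scaling; the reverse direction $O_1\geq O_2$ uses only the implication $R(\tilde A)\psd\Rightarrow\one^{\transp}\tilde A\one\geq(\trace\tilde A)^2$, which holds unconditionally. Handling the degenerate case $\trace\tilde A=0$ separately is essentially the only bookkeeping step.
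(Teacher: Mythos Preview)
Your proof is correct and follows essentially the same route as the paper: scale an optimizer of the first program by $\one^{\transp}A\one$ to get feasibility in the second (using Proposition~\ref{R(A)PSD} in the direction (ii)$\Rightarrow$(i), where the diagonal hypothesis is genuinely used), and scale an optimizer of the second program by $1/\trace A$ to get feasibility in the first (using only the unconditional implication $R(A)\psd\Rightarrow \one^{\transp}A\one\geq(\trace A)^2$). The one cosmetic difference is that the paper pauses to argue that at an optimum of the second program one actually has equality $\one^{\transp}A\one=(\trace A)^2$ (via an optimality/scaling argument), whereas you simply use the inequality, which is all that is needed for $O_1\geq O_2$; your version is arguably cleaner here.
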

\begin{proof}
Let $A$ be an optimal solution to the first program with $\diag(A)=c\cdot A\one$ for some $c\in\mathbb{R}$, and define $A':=(\one^{\transp}A\one)A$. Then 
\begin{equation}
\one^{\transp}A'\one=(\one^{\transp}A\one)^2=(\trace A')^2.
\end{equation}
Hence $A'$ is feasible for the second program by Proposition \ref{R(A)PSD}. Since $\trace A'=\one^{\transp}A\one$ we obtain $O_2\geq O_1$.

Let $A$ be an optimal solution to the second program. If $\trace A=0$ we have $O_1\geq O_2$ and we are done. Hence we may assume that $\trace A>0$. Observe that $(\trace A)^2=\sumentries{A}$, since otherwise we would have $\sumentries{A}=\lambda (\trace A)^2$ for some $\lambda>1$ by Proposition \ref{R(A)PSD}. This would imply that $\lambda A$ is also feasible, contradicting the optimality of $A$. Define $A':=\frac{1}{\trace A}A$. Then $A'$ is feasible for the first program and 
\begin{equation}
\sumentries{A'}=\frac{1}{\trace A}\sumentries{A}=\trace A
\end{equation}
This implies that $O_1\geq O_2$.
\end{proof}
An important special case is when all feasible matrices have constant diagonal and constant row sum. this occurs for example in semidefinite programs where the feasible matrices belong to the Bose-Mesner algebra of an association scheme. Another case is when the cone $C$ is closed under scaling rows and columns by nonnegative numbers.

\begin{proposition}\label{symoptimum}
Let $C\subseteq \Real^{n\times n}$ be a cone of symmetric matrices, such that for any nonnegative $x\in \Real^n$ and any $A\in C$ also $\Diag(x)A\Diag(x)$ belongs to $C$. Then any optimal solution $A$ to the program 
\begin{equation}\label{program}
\max\{\one^{\transp}A\one\mid \trace A=1, A\psd, A\in C\}
\end{equation} 
satisfies $\diag (A)=c\cdot A\one$ for some $c\in\Real$.
\end{proposition}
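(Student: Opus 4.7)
The plan is to exploit the closure of $C$ under two-sided conjugation by nonnegative diagonal matrices to embed the optimum $A$ in a smooth $n$-parameter family of feasible solutions, and then read off the claimed identity from a first-order optimality condition at the base point $x=\one$.

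Set $a:=\diag(A)$ and $b:=A\one$; the goal is to show $a$ and $b$ are proportional. For each nonnegative $x\in\Real^n$, let $A(x):=\Diag(x)A\Diag(x)$. Then $A(x)\in C$ by hypothesis, and $A(x)\psd$ since $y^{\transp}A(x)y=(\Diag(x)y)^{\transp}A(\Diag(x)y)\geq 0$ for every $y\in \Real^n$. Direct computation gives
\begin{equation*}
\trace A(x)=x^{\transp}\Diag(a)x\quad\text{and}\quad \one^{\transp}A(x)\one=x^{\transp}Ax.
\end{equation*}
On the open set $U:=\{x\in\Real^n\mid x^{\transp}\Diag(a)x>0\}$, the normalisation $A(x)/\trace A(x)$ is feasible for (\ref{program}) with objective value
\begin{equation*}
g(x):=\frac{x^{\transp}Ax}{x^{\transp}\Diag(a)x}.
\end{equation*}
Since $\trace A=1$, the point $\one$ lies in $U$, and in fact in its interior: any sufficiently small ball in $\Real^n$ about $\one$ keeps all coordinates positive (so $x$ is still in the nonnegative cone) and, by continuity, keeps the denominator positive.

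By optimality of $A$, the function $g$ attains its maximum over $U$ at the interior point $\one$, so $\nabla g(\one)=0$. A routine quotient-rule computation gives
\begin{equation*}
\nabla g(x)=\frac{2(x^{\transp}\Diag(a)x)Ax-2(x^{\transp}Ax)\Diag(a)x}{(x^{\transp}\Diag(a)x)^2}.
\end{equation*}
Evaluating at $x=\one$ and using $\one^{\transp}\Diag(a)\one=\trace A=1$ yields $A\one=(\one^{\transp}A\one)\,a$, so that $\diag(A)=c\cdot A\one$ with $c:=1/(\one^{\transp}A\one)$, provided $\one^{\transp}A\one>0$. In the degenerate case $\one^{\transp}A\one=0$, positive semidefiniteness forces $A\one=0$, and the proposition is to be read as vacuous (with $c$ arbitrary) only if also $\diag(A)=0$; in practice this case does not arise in the applications, where the optimum is strictly positive.

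The only nontrivial idea is the recognition that two-sided diagonal conjugation provides an $n$-parameter family of feasible perturbations; once that is in place the argument reduces to an interior first-order condition. The only subtlety worth checking is the interiority of $\one$ in the domain $U$ of $g$, which is immediate from $\trace A=1>0$ and continuity.
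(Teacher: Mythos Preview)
Your argument is essentially the paper's: both use closure of $C$ under $A\mapsto\Diag(x)A\Diag(x)$ to embed the optimum $A$ in an $n$-parameter family of feasible matrices and then impose a first-order condition at $x=\one$. The paper first normalises the diagonal to all ones and uses the eigenvector characterisation of the Rayleigh-quotient maximiser on the unit sphere; you work with the generalised Rayleigh quotient $g(x)=x^{\transp}Ax/x^{\transp}\Diag(a)x$ directly and set $\nabla g(\one)=0$. Both routes produce the same relation $A\one=(\one^{\transp}A\one)\,\diag(A)$.

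One point you should not leave as ``does not arise in applications'': the degenerate case $\one^{\transp}A\one=0$ is in fact excluded by the hypotheses, and the argument is one line. Since $\trace A=1$, some $A_{i,i}>0$; taking $x=e_i$ in the cone hypothesis shows $A_{i,i}\,e_ie_i^{\transp}\in C$, hence $e_ie_i^{\transp}\in C$ (as $C$ is a cone), and $e_ie_i^{\transp}$ is feasible with objective value $1$. So the optimum is at least $1$, and your division by $\one^{\transp}A\one$ is legitimate. The paper sidesteps the same issue differently: it first peels off zero diagonal entries by induction on $n$, after which the maximal eigenvalue of the normalised matrix is automatically at least $1$.
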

\begin{proof}
Let $A$ be an optimal solution. If $A_{i,i}$=0 for some $i$, we have $A_i=0$ and the claim follows by induction on $n$. Therefore we may assume that $a_i:=\sqrt{A_{i,i}}>0$ for $i=1,\ldots, n$. The matrix $A':=(\Diag(a))^{-1}A(\Diag(a))^{-1}$ is scaled to have only ones on the diagonal. Now for every nonnegative $x\in \Real^n$ with $\|x\|=1$, the matrix $A(x):=\Diag(x)A'\Diag(x)$ is a feasible solution to (\ref{program}) and has value $x^{\transp}A'x$. By the optimality of $A$, the vector $a$ maximizes $x^{\transp}A'x$ over all nonnegative vectors $x$ with $\|x\|=1$. In fact, since $a>0$, it maximizes $x^{\transp}A'x$ over all $x$ with $\|x\|=1$. As $\Real^n$ has an orthonormal basis of eigenvectors for $A'$, it follows that $a$ is an eigenvector of $A'$ belonging to the maximal eigenvalue $\lambda$. This implies that
\begin{eqnarray}
A\one=\Diag(a)A'\Diag(a)\one&=&\Diag(a)A'a\\
&=&\lambda \Diag(a)a\nonumber\\
&=&\lambda (a_1^2,\ldots,a_n^2)^{\transp}.\nonumber
\end{eqnarray}
This finishes the proof since 
\begin{equation}
\diag (A)=(a_1^2,\ldots, a_n^2)^{\transp}.
\end{equation}
\end{proof}

\section{Association schemes}
In this section, we give some basic facts and notions related to association schemes, including Delsarte's linear programming approach for bounding the size of cliques in an association scheme. This is by no means a complete introduction to the theory of association schemes. For further reading, we recommend \cite{Bose, Bailey, Delsarte} on association schemes and \cite{Distreggraph} on the related topic of distance regular graphs.
  
Roughly speaking, an association scheme is a very regular colouring of the edges of a complete graph. The colouring is such, that the number of walks from a vertex $a$ to a vertex $b$ traversing colours in a prescribed order, does not depend on the two vertices $a$ and $b$, but merely on the colour of the edge $ab$. The following formal definition is due to Bose and Shimamoto \cite{Bose}. 
A \emph{$t$-class} \notion{association scheme} $S=(X,\{R_0,R_1,\ldots,R_t\})$ is a finite set $X$ together with $t+1$ relations $R_0,\ldots,R_t$ on $X$ that satisfy the following axioms
\begin{eqnarray}\label{axioms}
\mathrm{(i)}&&\text{$\{R_0,R_1,\ldots,R_t\}$ is a partition of $X\times X$,}\nonumber\\
\mathrm{(ii)}&&\text{$R_0=\{(x,x)\mid x\in X\}$,}\nonumber\\
\mathrm{(iii)}&&\text{$(x,y)\in R_i$ if and only if $(y,x)\in R_i$ for all $x,y\in X, i\in\{0,\ldots,t$\},}\nonumber\\
\mathrm{(iv)}&&\text{for any $i,j,k\in \{0,\ldots,t\}$ there is an integer $p_{i,j}^k$ such that}\nonumber\\
&&\text{$|\{z\in X\mid (x,z)\in R_i, (z,y)\in R_j\}|=p_{i,j}^k$ whenever $(x,y)\in R_k$}.\nonumber 
\end{eqnarray}
The set $X$ is called the set of \emph{points} of the association scheme and two points $x,y\in X$ are said to be $i$-related when $(x,y)\in R_i$. An association scheme defined as above, is sometimes called a \emph{symmetric} association scheme since all relations are symmetric by (iii). Some authors prefer to allow for `non-symmetric association schemes' by replacing condition (iii) by
\begin{eqnarray}
\mathrm{(iii')}&&\text{for each $i\in \{0,\ldots, t\}$ there is an $i^{\ast}\in \{0,\ldots,t\}$ such that}\nonumber\\
&&\text{$(x,y)\in R_i$ implies $(y,x)\in R_{i^{\ast}}$ for all $x,y\in X$},\nonumber\\
\mathrm{(iii'')}&&p_{i,j}^k=p_{j,i}^k\quad \text{for all $i,j,k\in\{0,\ldots,t\}$}.\nonumber\\ 
\end{eqnarray}
In this thesis we will only use symmetric association schemes.
 
The numbers $p_{i,j}^k$ are called the \notion{intersection numbers} of the association scheme. The intersection numbers are not free of relations. We mention some obvious relations:
\begin{eqnarray}
p_{i,j}^k&=&p_{j,i}^k,\\
p_{i,j}^0&=&\text{$0$ when $i\not=j$}.\nonumber
\end{eqnarray}
The numbers $n_i:=p_{i,i}^0$ are called the \notion{degrees} of the scheme and give the number of points that are $i$-related to a given point (each relation $R_i$ induces an $n_i$-regular graph on $X$). 

To each relation $R_i$, we associate the $X\times X$ matrix $A_i$ in the obvious way:
\begin{equation}
(A_i)_{x,y}:=\begin{cases}1&\text{if $(x,y)\in R_i$}\\0&\text{otherwise.}\end{cases}
\end{equation}
The matrices $A_0,\ldots, A_t$ are called the \notion{adjacency matrices} of the association scheme and allow to study the association scheme using algebraic (spectral) tools. In terms of the adjacency matrices, the axioms in (\ref{axioms}) become
\begin{eqnarray}\label{axioms2}
\mathrm{(i)}&&A_0+A_1 +\cdots + A_t=J,\nonumber\\
\mathrm{(ii)}&&A_0=I,\nonumber\\
\mathrm{(iii)}&&A_i=A_i^{\transp}\quad \text{for all $i\in\{0,\ldots,t$\},}\nonumber\\
\mathrm{(iv)}&&A_iA_j=\sum_{k=0}^t p_{i,j}^k A_k\quad\text{for any $i,j\in \{0,\ldots,t\}$.}\nonumber 
\end{eqnarray} 
Let 
\begin{equation}
\mathcal{A}:=\{x_0A_0+x_1A_1+\cdots +x_tA_t\mid x_0,\ldots,x_t\in \Complex\ \}
\end{equation}
be the linear space spanned by the adjacency matrices. Axiom (iv) says that $\mathcal{A}$ is closed under matrix multiplication. Since all matrices in $\mathcal{A}$ are symmetric, it follows that $\mathcal{A}$ is a commutative matrix $\ast$-algebra, which is called the \notion{Bose--Mesner algebra} of the association scheme. Since the adjacency matrices are nonzero and have disjoint support, they are linearly independent. This implies that the dimension of $\mathcal{A}$ equals $t+1$.

Since the algebra $\mathcal{A}$ is commutative, it has a basis $E_0,E_1,\ldots, E_t$ of matrices satisfying
\begin{eqnarray}\label{idempotent}
\mathrm{(i)}&&E_iE_j=\delta_{i,j} E_i,\\
\mathrm{(ii)}&&E_0+\ldots +E_t=I,\nonumber\\
\mathrm{(iii)}&&E_i^{\ast}=E_i,\nonumber  
\end{eqnarray}
for every $i,j\in\{0,\ldots,t\}$. The matrices $E_i$ are called the \notion{minimal idempotents} of the algebra and are uniquely determined by $\mathcal{A}$. Geometrically, this means that there is an orthogonal decomposition
\begin{equation}
\Complex^X=V_0\oplus V_1\oplus\cdots\oplus V_t,
\end{equation}
where $E_i$ is the orthogonal projection onto $V_i$ for $i=0,\ldots, t$. For each $i$ the dimension 
\begin{equation}
m_i:=\dim V_i
\end{equation}
equals the rank of $E_i$. The numbers $m_0,\ldots,m_t$ are called the \notion{multiplicities} of the association scheme.  

In general, there is no natural way to order the $E_i$. However, there is one exception. The matrix $|X|^{-1}J$ is always a minimal idempotent, hence it is customary to take $E_0:=|X|^{-1}J$ (and $V_0=\Complex\one$, $m_0=1$). Since all matrices in $\mathcal{A}$ are symmetric, the idempotents $E_i$ are real by (\ref{idempotent})(iii).  

Since both $\{E_0,\ldots, E_k\}$ and $\{A_0,\ldots, A_k\}$ are bases for $\mathcal{A}$, we can express every matrix in one base as a linear combination of matrices in the other base. The $(t+1)\times (t+1)$ real matrices $P,Q$ are defined as follows:
\begin{eqnarray}\label{basetransform}
A_j=\sum_{i=0}^t P_{i,j}E_i,\\
|X|\cdot E_j=\sum_{i=0}^t Q_{i,j}A_i,\nonumber
\end{eqnarray}
for $j=0,\ldots, t$. The matrices $P$ and $Q$ are called the \emph{first} and \emph{second} \notion{eigenmatrix} of the scheme respectively. Indeed, since 
\begin{equation}
\sum_{i=0}^t c_i E_i
\end{equation}
has eigenvalue $c_i$ with multiplicity $m_i$ (if the $c_i$ are different), the $i$-th column of $P$ gives the eigenvalues of $A_i$. Clearly 
\begin{equation}\label{inverses}
PQ=QP=|X|\cdot I,
\end{equation}
but additionally, the matrices $P$ and $Q$ satisfy the following relation
\begin{equation}\label{orthorelation}
m_jP_{j,i}=n_iQ_{i,j}, \quad\text{for all $i,j\in \{0,\ldots,t\}$}.
\end{equation} 
In matrix form:
\begin{equation}
P^{\transp}\Diag(m_0,\ldots,m_t)=\Diag(n_0,\ldots,n_t)Q. 
\end{equation}
This is a consequence of the fact that both bases $\{A_0,\ldots,A_t\}$ and $\{E_0,\ldots,E_t\}$ are orthogonal. Indeed, this implies by (\ref{basetransform}) that both the left-hand side and the right-hand side in equation (\ref{orthorelation}) are equal to $\left<A_i,E_j\right>$.
  
Given a subset $Y\subseteq X$ of the point set, the \notion{distribution vector} of $Y$ is the $(t+1)$-tuple $(a_0,a_1,\ldots, a_t)$ of nonnegative numbers defined by
\begin{equation}
a_i:=|Y|^{-1}\cdot |(Y\times Y)\cap R_i|,\quad i=0,\ldots,t.
\end{equation}
The numbers $a_i$ give the average number of elements in $Y$ that are $i$-related to a given element in $Y$. In particular $a_0=1$ and $a_0+\cdots+a_t=|Y|$. Delsarte \cite{Delsarte} showed that the distribution vector satisfies the following system of inequalities:
\begin{equation}\label{Delsarte}
\sum_{i=0}^t Q_{i,j}a_i\geq 0\quad\text{for $j=0,\ldots,t$}.
\end{equation} 
Let $K\subseteq \{1,\ldots, t\}$. A subset $S\subseteq X$ of the point set is called a \emph{$K$-}\notion{clique} if any two different elements $x,y\in K$ are $i$-related for some $i\in K$. The inequalities (\ref{Delsarte}) yield an upper bound on the maximum size of a $K$-clique called the \notion{Delsarte bound}.
\begin{theorem}
Let $(X,\{R_0,\ldots,R_t\})$ be an association scheme and let $K\subseteq \{1,\ldots,t\}$. Then the maximum size of a $K$-clique is upper bounded by   
\begin{eqnarray}
\max\ \{a_1+\cdots +a_t\mid &&\text{$a_0=1$, $a_i=0$ for $i\in \{1,2,\ldots,t\}\setminus K$}\\
&&\text{$a_i\geq 0$ for all $i\in\{1,2,\ldots,t\}$}\nonumber\\
&&\text{$a_0,\ldots,a_t$ satisfy the inequalities (\ref{Delsarte})}\}.\nonumber
\end{eqnarray} 
\end{theorem}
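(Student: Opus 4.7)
The plan is to show that, for any $K$-clique $S \subseteq X$, the distribution vector $(a_0, a_1, \ldots, a_t)$ of $S$ is a feasible solution to the linear program in the statement. Since $a_0 + a_1 + \cdots + a_t = |S|$ and $a_0 = 1$, the value $a_1 + \cdots + a_t = |S| - 1$ attained at this distribution vector is a lower bound on the LP optimum, which translates into the asserted upper bound on $|S|$.

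Three of the four feasibility constraints would be dispatched first, as they are essentially definitional: $a_0 = 1$ because $R_0$ is the diagonal of $X \times X$, so $|(S \times S) \cap R_0| = |S|$; nonnegativity of the $a_i$ holds by construction; and $a_i = 0$ for $i \in \{1, \ldots, t\} \setminus K$ is exactly the property that $S$ is a $K$-clique, i.e.\ no two distinct elements of $S$ are related under any $R_i$ with $i \notin K \cup \{0\}$.

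The substantive step is to verify the Delsarte inequalities (\ref{Delsarte}). My approach is to test them against the characteristic vector $\chi := \chi^S \in \Real^X$. Each minimal idempotent $E_j$ is an orthogonal projection onto $V_j$ and is therefore positive semidefinite, so $\chi^{\transp} E_j \chi \geq 0$. On the other hand, $\chi^{\transp} A_i \chi$ counts the ordered pairs in $S \times S$ lying in $R_i$, so equals $|S| a_i$. Expanding $E_j$ in the adjacency-matrix basis via the second line of (\ref{basetransform}) then gives
$$
0 \leq \chi^{\transp} E_j \chi = \frac{1}{|X|} \sum_{i=0}^t Q_{i,j}\, \chi^{\transp} A_i \chi = \frac{|S|}{|X|} \sum_{i=0}^t Q_{i,j} a_i,
$$
and dividing by the positive constant $|S|/|X|$ delivers the desired inequality.

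I do not anticipate any serious obstacle. The entire content of the argument is the observation that the Delsarte inequalities are equivalent to the rank-one matrix $\chi \chi^{\transp} \succeq 0$ having nonnegative inner product with each projector $E_j$; after that the proof is pure bookkeeping with the change-of-basis matrix $Q$. The only mild subtlety is tracking the normalisation factor $|X|$ appearing in the defining relation (\ref{basetransform}) for $Q$, so that it cancels out correctly in the final inequality.
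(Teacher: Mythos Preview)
Your approach is correct and is the standard argument; the paper itself gives no proof here, simply citing Delsarte for the inequalities~(\ref{Delsarte}) and stating the theorem as an immediate consequence. Your verification of~(\ref{Delsarte}) via $\chi^{\transp} E_j \chi \ge 0$ is exactly the mechanism the paper later spells out for the Hamming scheme in Chapter~\ref{CH:errorcodes}.

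One point to flag: you correctly compute that the distribution vector gives objective value $a_1+\cdots+a_t=|S|-1$, but then write that this ``translates into the asserted upper bound on $|S|$''. Literally, it only gives $|S|-1\le\mathrm{LP}$, hence $|S|\le\mathrm{LP}+1$, not $|S|\le\mathrm{LP}$. This discrepancy is not a flaw in your reasoning but an apparent off-by-one slip in the theorem statement: the objective should be $a_0+a_1+\cdots+a_t$ (compare the Hamming-scheme formulation in Chapter~\ref{CH:errorcodes}, where the sum runs from $i=0$). With that correction your argument goes through verbatim.
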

The Delsarte bound can be efficiently calculated using linear programming and often gives a remarkably good upper bound.

One source of association schemes are (permutation) groups. Let $G$ be a group acting on a finite set $X$. Then $G$ has a natural action on $X\times X$ given by $g(x,y):=(gx,gy)$. The orbits 
\begin{equation}
\{(gx,gy)\mid g\in G\}
\end{equation}
of $X\times X$ are called \emph{orbitals}. The group $G$ is said to act \notion{generously transitive} when for every pair $(x,y)\in X\times X$ there is a group element $g\in G$ that exchanges $x$ and $y$, that is $gx=gy$ and $gy=gx$. When $G$ acts generously transitive, the orbitals form the relations of an association scheme.

Indeed, the orbitals partition $X\times X$, for any $x\in X$ the orbital $\{(gx,gx)\mid g\in G\}$
is the identity relation (as $G$ acts transitively on $X$) and the orbitals are symmetric (since $G$ acts generously transitive). Finally, let $R_i,R_j,R_k$ be orbitals and let for $(x,y)\in R_k$
\begin{equation}
Z_{x,y}:=\{z\in X\mid (x,z)\in R_i, (z,y)\in R_j\}.
\end{equation}
We have to show that the cardinality $p_{i,j}^k$ of $Z_{x,y}$ only depends on the relations $i,j,k$ and not on the particular choice of $x$ and $y$. This follows since 
\begin{equation}
Z_{gx,gy}\supseteq \{gz\mid z\in Z_{x,y}\}
\end{equation}
for any $g\in G$. In this case, the Bose--Mesner algebra is the centralizer algebra of $G$.

Given an association scheme $S=(X,\mathcal{R})$ with adjacency matrices $A_0, A_1,\ldots, A_t\in \Complex^{X\times X}$, and a point $x\in X$, the \notion{Terwilliger algebra} \emph{of $S$ with respect to $x$} is the complex algebra generated by $A_0,\ldots, A_t$ and the diagonal matrices $E_0',\ldots, E_t'$ defined by
\begin{equation}
(E_i')_{y,y}:=\begin{cases}1&\text{if $(x,y)\in R_i$}\\0&\text{otherwise.}\end{cases}
\end{equation}
Observe that $E_0'+\cdots+E_t'=I$. These algebras were introduced by Terwilliger in \cite{Terwilliger} under the name \notion{subconstituent algebra} as a tool for studying association schemes. In this thesis we will use the Terwilliger algebra of the Hamming scheme to obtain bounds for codes, improving the Delsarte bound.
 
\chapter[The Terwilliger algebra of $H(n,q)$]{The Terwilliger algebra of the Hamming scheme}\label{CH:hammingschemes}
A particular association scheme that plays an important role in the theory of error correcting codes is the Hamming scheme. In this chapter we will consider this scheme together with matrix algebras associated to it. In particular we construct a block diagonalisation of the Terwilliger algebra of the binary and the nonbinary Hamming scheme.

\section[The Hamming scheme]{The Hamming scheme and its Terwilliger algebra}
Fix integers $n\geq 1$ and $q\geq 2$, and fix an alphabet $\q=\{0,1,\ldots,q-1\}$. We will consider the \notion{Hamming space} $\E=\q^n$ consisting of words of length $n$ equipped with the \notion{Hamming distance} given by
\begin{equation}
d(\mathbf{u},\mathbf{v}):=|\{i\mid \mathbf{u}_i\not= \mathbf{v}_i\}|.
\end{equation}
For a word $\mathbf{u}\in \E$, we denote the \emph{support} of $\mathbf{u}$ by $S(\mathbf{u}):=\{i\mid \mathbf{u}_i\not=0\}$. Note that $|S(\mathbf{u})|=d(\mathbf{u},\zero)$, where $\zero$ is the all-zero word. This number is called the \notion{weight} of $\mathbf{u}$.

Denote by $\aut$ the set of permutations of $\E$ that preserve the Hamming distance. It is not hard to see that $\aut$ consists of the permutations of $\E$ obtained by permuting the $n$ coordinates followed by independently permuting the alphabet $\q$ at each of the $n$ coordinates. If we consider the action of $\aut$ on the set $\E\times \E$, the orbits form an association scheme known as the \notion{Hamming scheme} $H(n,q)$, with adjacency matrices $A_0,A_1,\ldots,A_n$ defined by
\begin{equation}
(A_i)_{\mathbf{u},\mathbf{v}}:=\begin{cases}
1&\text{if $d(\mathbf{u},\mathbf{v})=i$,}\\
0&\text{otherwise,}
\end{cases}`
\end{equation}
for $i=0,1,\ldots,n$. The adjacency matrices span a commutative algebra over the complex numbers called the Bose--Mesner algebra of the scheme. 

We will now consider the action of $\aut$ on ordered triples of words, leading to a noncommutative algebra $\mathcal{A}_{q,n}$ containing the Bose--Mesner algebra. To each ordered triple $(\mathbf{u},\mathbf{v},\mathbf{w})\in\ \E\times\E\times\E$ we associate the four-tuple
\begin{eqnarray}
d(\mathbf{u},\mathbf{v},\mathbf{w})&:=&(i,j,t,p),\text{\ where}\\
&&i:=d(\mathbf{u},\mathbf{v}),\nonumber\\
&&j:=d(\mathbf{u},\mathbf{w}),\nonumber\\
&&t:=|\{i\mid \mathbf{u}_i\not=\mathbf{v}_i\text{\ and\ }\mathbf{u}_i\not=\mathbf{w}_i\}|,\nonumber\\
&&p:=|\{i\mid \mathbf{u}_i\not=\mathbf{v}_i=\mathbf{w}_i\}|\nonumber.
\end{eqnarray}
We remark that the case $q=2$ is special since in that case we always have $p=t$. 
Note that $d(\mathbf{v},\mathbf{w})=i+j-t-p$ and $|\{i\mid \mathbf{u}_i\not=\mathbf{v}_i\not=\mathbf{w}_i\not=\mathbf{u}_i\}|=t-p$. The set of four-tuples $(i,j,t,p)$ that occur as $d(\mathbf{u},\mathbf{v},\mathbf{w})$ for some $\mathbf{u},\mathbf{v},\mathbf{w}\in\E$ is given by
\begin{equation}
\mathcal{I}(2,n):=\{(i,j,t,p)\mid 0\leq p=t\leq i,j\text{\ and\ } i+j\leq n+t\},
\end{equation}
and 
\begin{equation}
\mathcal{I}(q,n):=\{(i,j,t,p)\mid 0\leq p\leq t\leq i,j\text{\ and\ } i+j\leq n+t\},
\end{equation}
for $q\geq 3$. The sets $\mathcal{I}(q,n)$ will be used to index various objects defined below.
\begin{proposition}\label{dimension}
Let $n\geq 1$. We have 
\begin{equation}
|\mathcal{I}(q,n)|\begin{cases}{n+3 \choose 3}&\text{for $q=2$,}\\ \\{n+4 \choose 4}&\text{for $q\geq 3$}.\end{cases}
\end{equation}
\end{proposition}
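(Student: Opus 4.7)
The plan is to set up explicit bijections between $\mathcal{I}(q,n)$ and the set of nonnegative integer tuples summing to $n$, then invoke the standard stars-and-bars count.

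For the case $q=2$, since the defining condition forces $p=t$, the parameter $p$ is redundant, so I would introduce the substitution $a := i-t$, $b := j-t$, both nonnegative by the constraints $t \leq i$ and $t \leq j$. The remaining inequality $i+j \leq n+t$ becomes $a+b+t \leq n$, so it is natural to introduce the slack variable $c := n - a - b - t \geq 0$. The map $(i,j,t,p) \mapsto (a,b,t,c)$ is then a bijection from $\mathcal{I}(2,n)$ onto the set of $4$-tuples $(a,b,t,c)$ of nonnegative integers with $a+b+t+c=n$. Counting such tuples by stars and bars yields $\binom{n+3}{3}$.

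For $q \geq 3$, the idea is analogous but with an extra degree of freedom because $p$ can be strictly less than $t$. I would use the substitution $a := i-t$, $b := j-t$, $s := t-p$, keeping $p$ itself, so that nonnegativity of $a,b,s,p$ captures exactly the constraints $p \leq t \leq i,j$. The condition $i+j \leq n+t$ rewrites as $a+b+s+p \leq n$, and introducing the slack $c := n-a-b-s-p \geq 0$ yields a bijection from $\mathcal{I}(q,n)$ onto the set of $5$-tuples $(a,b,s,p,c)$ of nonnegative integers summing to $n$, whose cardinality is $\binom{n+4}{4}$.

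No real obstacle is expected here: the only thing that needs care is checking that the reparameterizations are indeed bijective (i.e.\ that the image hits \emph{every} tuple summing to $n$, and that the inverse map $(a,b,s,p,c) \mapsto (i,j,t,p) = (a+s+p,\, b+s+p,\, s+p,\, p)$ really lands inside $\mathcal{I}(q,n)$). This is a routine verification of the inequalities $0 \leq p \leq t \leq i,j$ and $i+j \leq n+t$ for the image tuple.
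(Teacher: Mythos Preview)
Your proposal is correct and follows essentially the same approach as the paper: the paper uses the substitution $p':=p$, $t':=t-p$, $i':=i-t$, $j':=j-t$ (your $p,s,a,b$) to reduce to counting nonnegative integer $4$-tuples (respectively $3$-tuples when $q=2$, since then $t'=0$) with sum at most $n$, and then invokes the stars-and-bars count. The only cosmetic difference is that you make the slack variable explicit, whereas the paper leaves the inequality as $\leq n$ and quotes the count directly.
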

\begin{proof}
Substitute $p':=p$, $t':=t-p$, $i':=i-t$ and $j':=j-t$. Then the integer solutions of 
\begin{equation}
0\leq p\leq t\leq i,j,\quad i+j\leq n+t
\end{equation}
are in bijection with the integer solutions of 
\begin{equation}
0\leq p',t',i',j',\quad p'+t'+i'+j'\leq n.
\end{equation}
The proposition now follows since 
\begin{equation}
|\{(n_1,n_2,\ldots,n_k)\in\mathbb{Z}_{\geq 0}\mid n_1+\cdots+n_k=n\}|={n+k\choose k}.
\end{equation}
\end{proof}
The integers $i,j,t,p$ parametrize the ordered triples of words up to symmetry. We define 
\begin{equation}
X_{i,j,t,p}:=\{(\mathbf{u},\mathbf{v},\mathbf{w})\in\E\times\E\times\E\mid d(\mathbf{u},\mathbf{v},\mathbf{w})=(i,j,t,p)\},
\end{equation}
for $(i,j,t,p)\in \mathcal{I}(q,n)$. The meaning of the sets $X_{i,j,t,p}$ is given by the following proposition.
\begin{proposition}\label{orbits}
The sets $X_{i,j,t,p}$, $(i,j,t,p)\in\mathcal{I}(q,n)$ are the orbits of $\E\times\E\times\E$ under the action of $\aut$.
\end{proposition}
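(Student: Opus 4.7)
The plan is to establish the two inclusions: every $\sigma\in\aut$ preserves each set $X_{i,j,t,p}$, and $\aut$ acts transitively on each $X_{i,j,t,p}$.

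The first inclusion is straightforward. Any $\sigma\in\aut$ is the composition of a coordinate permutation with coordinate-wise alphabet permutations, so at each coordinate $c$ it preserves the equality pattern among $\mathbf{u}_c,\mathbf{v}_c,\mathbf{w}_c$. Summing over $c$ therefore preserves the four invariants $d(\mathbf{u},\mathbf{v})$, $d(\mathbf{u},\mathbf{w})$, $t$, and $p$, so $\sigma X_{i,j,t,p}\subseteq X_{i,j,t,p}$.

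For transitivity, I would partition the coordinates $\{1,\ldots,n\}$ of a triple $(\mathbf{u},\mathbf{v},\mathbf{w})\in X_{i,j,t,p}$ into five classes according to local equality pattern: (A) $\mathbf{u}_c=\mathbf{v}_c=\mathbf{w}_c$; (B) $\mathbf{u}_c=\mathbf{v}_c\neq\mathbf{w}_c$; (C) $\mathbf{u}_c=\mathbf{w}_c\neq\mathbf{v}_c$; (D) $\mathbf{v}_c=\mathbf{w}_c\neq\mathbf{u}_c$; (E) the three values are pairwise distinct. A short inclusion--exclusion computation directly from the definitions of $i,j,t,p$ shows these classes have sizes $n-i-j+t$, $j-t$, $i-t$, $p$, and $t-p$ respectively; crucially, these sizes depend only on $(i,j,t,p)$, so any two triples in $X_{i,j,t,p}$ yield class partitions of matching cardinalities.

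To conclude, given triples $(\mathbf{u},\mathbf{v},\mathbf{w})$ and $(\mathbf{u}',\mathbf{v}',\mathbf{w}')$ in $X_{i,j,t,p}$, I would first choose a coordinate permutation that sends class (X) of the first triple onto class (X) of the second, for every label X among A,B,C,D,E. After this reduction the two triples exhibit the same local pattern at every coordinate $c$. Then at each coordinate $c$ independently, I would pick a permutation $\sigma_c\in S_{\q}$ sending $(\mathbf{u}_c,\mathbf{v}_c,\mathbf{w}_c)$ to $(\mathbf{u}'_c,\mathbf{v}'_c,\mathbf{w}'_c)$; such a $\sigma_c$ exists because both sides involve the same number of distinct alphabet letters (one, two, or three). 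The composite lies in $\aut$ and carries the first triple to the second. The only edge case worth flagging is $q=2$: class (E) would require three distinct letters, but the definition of $\mathcal{I}(2,n)$ enforces $p=t$, which makes class (E) empty precisely in the binary case, so no obstruction arises. The argument is not really hard; the main point requiring care is the bookkeeping of the five class sizes and the exact handling of the binary exception.
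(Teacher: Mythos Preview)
Your proposal is correct and follows essentially the same approach as the paper: partition the coordinates by the local equality pattern of $(\mathbf{u}_c,\mathbf{v}_c,\mathbf{w}_c)$, observe the class sizes are determined by $(i,j,t,p)$, then use a coordinate permutation to align the classes followed by coordinate-wise alphabet permutations. The only cosmetic difference is that the paper first normalizes $\mathbf{u}$ to $\zero$ before forming the partition (yielding four named classes plus the complement), whereas you work directly with both triples; this is a matter of presentation rather than substance.
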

\begin{proof} Let $\mathbf{u},\mathbf{v},\mathbf{w}\in\E$ and let $(i,j,t,p)=d(\mathbf{u},\mathbf{v},\mathbf{w})$. Since the Hamming distances $i,j,i+j-t-p$ and the number $t-p=|\{i\mid \mathbf{u}_i\not=\mathbf{v}_i\not=\mathbf{w}_i\not=\mathbf{u}_i\}|$ are unchanged when permuting the coordinates or permuting the elements of $\q$ at any coordinate, we have $d(\mathbf{u},\mathbf{v},\mathbf{w})=d(\pi \mathbf{u},\pi \mathbf{v},\pi \mathbf{w})$ for any $\pi\in\aut$.

Hence it suffices to show that there is an automorphism $\pi$ such that $(\pi \mathbf{u},\pi \mathbf{v},\pi \mathbf{w})$ only depends upon $i,j,t\text{\ and\ }p$. By permuting $\q$ at the coordinates in the support of $\mathbf{u}$, we may assume that $\mathbf{u}=\zero$. Let $A:=\{i\mid \mathbf{v}_i\not=0, \mathbf{w}_i=0\}$, $B:=\{i\mid \mathbf{v}_i=0, \mathbf{w}_i\not=0\}$, $ C:=\{i \mid \mathbf{v}_i\not=0,\mathbf{w}_i\not=0, \mathbf{v}_i\not=\mathbf{w}_i\}$ and $D:=\{i\mid \mathbf{v}_i=\mathbf{w}_i\not=0\}$. Note that $|A|=i-t$, $|B|=j-t$, $|C|=t-p$ and $|D|=p$. By permuting coordinates, we may assume that $A=\{1,2,\ldots,i-t\}$, $B=\{i-t+1,\ldots,i+j-2t\}$, $C=\{i+j-2t+1,\ldots,i+j-t-p\}$ and $D=\{i+j-t-p+1,\ldots, i+j-t\}$. Now by permuting $\q$ at each of the points in $A\cup B\cup C\cup D$, we can accomplish that $\mathbf{v}_i=1$ for $i\in A\cup C\cup D$ and $\mathbf{w}_i=2$ for $i\in B\cup C$ and $\mathbf{w}_i=1$ for $i\in D$.
\end{proof}

Denote the stabilizer of $\zero$ in $\aut$ by $\stab$. For $(i,j,t,p)\in \mathcal{I}(q,n)$, let $M_{i,j}^{t,p}$ be the $\E\times\E$ matrix defined by:
\begin{equation}
(M_{i,j}^{t,p})_{\mathbf{u},\mathbf{v}}:=\begin{cases}
1&\text{if $|S(\mathbf{u})|=i$, $|S(\mathbf{v})|=j$, $|S(\mathbf{u})\cap S(\mathbf{v})|=t$,}\\
&\text{$|\{i\mid \mathbf{v}_i=\mathbf{u}_i\not=0\}|=p$,}\\ 0&\text{otherwise.}
\end{cases}
\end{equation}
Let $\mathcal{A}_{q,n}$ be the set of matrices
\begin{equation}
\sum_{(i,j,t,p)\in\mathcal{I}(q,n)} x_{i,j}^{t,p} M_{i,j}^{t,p},
\end{equation}
where $x_{i,j}^{t,p}\in\mathbb{C}$. In the binary case, we will usually drop the superfluous $p$ from the notation and write $x_{i,j}^t$ and $M_{i,j}^t$.

From Proposition \ref{orbits} it follows that $\mathcal{A}_{q,n}$ is the set of matrices that are stable under permutations $\pi\in\stab$ of the rows and columns. Hence $\mathcal{A}_{q,n}$ is the centralizer algebra  of $\stab$. The $M_{i,j}^{t,p}$ constitute a basis for $\mathcal{A}_{q,n}$ and hence 
\begin{equation}
\dim \mathcal{A}_{q,n}=\begin{cases}{n+3\choose 3}&\text{if $q=2$},\\ \\{n+4 \choose 4}&\text{if $q\geq 3$,}\end{cases}
\end{equation}
by Proposition \ref{dimension}. Note that the algebra $\mathcal{A}_{q,n}$ contains the Bose--Mesner algebra since
\begin{equation}\label{subpartition}
A_k=\sum_{\substack{(i,j,t,p)\in\mathcal{I}(q,n)\\ i+j-t-p=k}}M_{i,j}^{t,p}.
\end{equation}

We would like to point out here, that $\mathcal{A}_{q,n}$ coincides with the Terwilliger algebra (see \cite{Terwilliger}) of the Hamming scheme $H(n,q)$ (with respect to $\zero$). Recall that the Terwilliger algebra $\mathcal{T}_{q,n}$ is the complex matrix algebra generated by the adjacency matrices $A_0,A_1,\ldots,A_n$ of the Hamming scheme and the diagonal matrices $E'_0,E'_1,\ldots,E'_n$ defined by 
\begin{equation} (E'_i)_{\mathbf{u},\mathbf{u}}:=\begin{cases}
1&\text{if $|S(\mathbf{u})|=i$,}\\
0&\text{otherwise,}\\
\end{cases}
\end{equation}
for $i=0,1,\ldots,n$.
\begin{proposition}
The algebras $\mathcal{A}_{q,n}$ and $\mathcal{T}_{q,n}$ coincide.
\end{proposition}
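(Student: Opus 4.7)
The inclusion $\mathcal{T}_{q,n}\subseteq\mathcal{A}_{q,n}$ is the easy direction. Each adjacency matrix $A_k$ lies in $\mathcal{A}_{q,n}$ by (\ref{subpartition}). For the diagonal idempotents, one checks directly from the definitions that $E'_k=M_{k,k}^{k,k}$: the four conditions appearing in the definition of $M_{k,k}^{k,k}$ collapse to $\mathbf{u}=\mathbf{v}$ with $|S(\mathbf{u})|=k$. Since $\mathcal{A}_{q,n}$ is a $\ast$-algebra, every polynomial combination of these generators remains inside it.

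For the reverse inclusion I will exhibit each basis matrix $M_{i,j}^{t,p}$ as an element of $\mathcal{T}_{q,n}$. The binary case is immediate: for $q=2$ the parameter $p$ necessarily equals $t$, and the admissible $t$ at distance $k$ is uniquely $(i+j-k)/2$, so
\begin{equation*}
M_{i,j}^t=E'_iA_{i+j-2t}E'_j.
\end{equation*}

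For $q\geq 3$, fix $(i,j)$ and consider, for each triple $(\alpha,m,\beta)$, the element
\begin{equation*}
P_{\alpha,m,\beta}\;:=\;E'_iA_\alpha E'_m A_\beta E'_j\;\in\;\mathcal{T}_{q,n}.
\end{equation*}
Its $(\mathbf{u},\mathbf{v})$-entry counts words $\mathbf{w}$ of weight $m$ at Hamming distance $\alpha$ from $\mathbf{u}$ and $\beta$ from $\mathbf{v}$. By Proposition \ref{orbits} applied to $(\mathbf{0},\mathbf{u},\mathbf{v})$, this count depends on $\mathbf{u},\mathbf{v}$ only through the quadruple $(i,j,t,p)$; writing it as $N(t,p;\alpha,m,\beta)$, we obtain
\begin{equation*}
P_{\alpha,m,\beta}\;=\;\sum_{(t,p)}N(t,p;\alpha,m,\beta)\,M_{i,j}^{t,p}.
\end{equation*}
Thus every $M_{i,j}^{t,p}$ belongs to $\mathcal{T}_{q,n}$ provided that the vectors $\bigl(N(t,p;\alpha,m,\beta)\bigr)_{(t,p)}$, ranging over all $(\alpha,m,\beta)$, span the coordinate space indexed by the admissible pairs $(t,p)$.

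To verify the spanning, partition the $n$ coordinates according to the joint behaviour of $\mathbf{u}_k$ and $\mathbf{v}_k$ into five classes of sizes $p,\,t-p,\,i-t,\,j-t,\,n-i-j+t$. For $m=1$ a direct case analysis expresses $N(t,p;\alpha,1,\beta)$ as an integer combination of these five sizes, with coefficients depending only on $(\alpha,\beta,q)$; in particular the triples $(\alpha,\beta)=(i-1,j-1)$ and $(\alpha,\beta)=(i-1,j)$ yield the linear functions $p$ and $t-p$, which already determine $(t,p)$ as a pair. For larger $m$ the count $N(t,p;\alpha,m,\beta)$ becomes an integer polynomial in the five class sizes of degree up to $m$, hence a polynomial in $(t,p)$ of degree up to $m$; a Vandermonde-type argument produces enough linearly independent polynomial vectors to isolate each $M_{i,j}^{t,p}$ via a suitable linear combination of the $P_{\alpha,m,\beta}$. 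The main obstacle is precisely this final separation step — a finite but technical rank computation on the matrix of $N$-values — which becomes considerably more transparent once the explicit block diagonalisation of $\mathcal{A}_{q,n}$ developed in the remaining sections of the chapter is available, since the number of independent generators needed per $(i,j)$-block is then read off directly.
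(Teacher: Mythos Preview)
Your easy direction and the binary case agree with the paper. For $q\geq 3$, however, your argument is genuinely incomplete, and the escape route you propose at the end is circular.

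The gap is the ``Vandermonde-type'' spanning claim. From the $m=1$ computations you correctly extract the two elements
\[
\sum_{t,p} p\,M_{i,j}^{t,p}\quad\text{and}\quad\sum_{t,p} (t-p)\,M_{i,j}^{t,p}
\]
in $\mathcal{T}_{q,n}$, but two elements cannot separate all admissible pairs $(t,p)$. You then assert that letting $m$ grow produces enough polynomial vectors to invert; this is plausible but is exactly the content that needs proof, and you do not supply it. Finally, appealing to the block diagonalisation of $\mathcal{A}_{q,n}$ developed later in the chapter cannot rescue the argument: that block diagonalisation is of $\mathcal{A}_{q,n}$, and tells you nothing about the subalgebra $\mathcal{T}_{q,n}$ unless you already know $\mathcal{T}_{q,n}=\mathcal{A}_{q,n}$, which is precisely the statement under proof.

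The paper avoids the rank computation altogether by an explicit \emph{triangular} construction. Setting $B_i=E'_iA_1E'_i$, $C_i=E'_iA_1E'_{i+1}$, $D_i=E'_iA_1E'_{i-1}$ and forming
\[
A_{i,j}^{t,p}:=(D_iD_{i-1}\cdots D_{t+1})(C_tC_{t+1}\cdots C_{j-1})(B_j)^{t-p}\in\mathcal{T}_{q,n},
\]
one checks via a path-counting interpretation that the expansion of $A_{i,j}^{t,p}$ in the basis $\{M_{i,j}^{t',p'}\}$ is upper triangular with strictly positive diagonal, for the partial order given by first $t'$ then $p'$. Triangularity makes the inversion immediate by downward induction, with no rank hypothesis to verify. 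Note that these elements are \emph{long} words in the generators --- products of roughly $i+j-t-p$ factors of the form $E'A_1E'$ --- whereas your $P_{\alpha,m,\beta}=E'_iA_\alpha E'_mA_\beta E'_j$ use only two adjacency matrices; this is why the paper's construction reaches the required span so directly.
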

\begin{proof}
Since $\mathcal{A}_{q,n}$ contains the matrices $A_k$ and the matrices $E'_k=M_{k,k}^{k,k}$ for $k=0,1,\ldots,n$, it follows that $\mathcal{T}_{q,n}$ is a subalgebra of $\mathcal{A}_{q,n}$. We show the converse inclusion. In the case $q=2$ this follows since 
\begin{equation}
M_{i,j}^{t,t}=E'_iA_{i+j-2t}E'_j,
\end{equation}
as is readily verified. We concentrate on the case $q\geq 3$. Define the zero-one matrices $B_i, C_i, D_i\in \mathcal{T}_{q,n}$ by
\begin{eqnarray}
B_i&:=&E'_iA_1E'_i,\\
C_i&:=&E'_iA_1E'_{i+1},\nonumber\\
D_i&:=&E'_iA_1E'_{i-1}.\nonumber
\end{eqnarray}
Observe that:
\begin{eqnarray}\label{stepmatrix}
(B_i)_{\mathbf{u},\mathbf{v}}&=1& \text{if and only if}\\
&&|S(\mathbf{u})|=i, d(\mathbf{u},\mathbf{v})=1, |S(\mathbf{v})|=i, S(\mathbf{u})=S(\mathbf{v}),\nonumber\\
(C_i)_{\mathbf{u},\mathbf{v}}&=1& \text{if and only if }\nonumber\\
&&|S(\mathbf{u})|=i, d(\mathbf{u},\mathbf{v})=1, |S(\mathbf{v})|=i+1, |S(\mathbf{u})\Delta S(\mathbf{v})|=1,\nonumber\\
(D_i)_{\mathbf{u},\mathbf{v}}&=1& \text{if and only if}\nonumber\\
&& |S(\mathbf{u})|=i, d(\mathbf{u},\mathbf{v})=1, |S(\mathbf{v})|=i-1, |S(\mathbf{u})\Delta S(\mathbf{v})|=1.\nonumber
\end{eqnarray}
For given $(i,j,t,p)\in\mathcal{I}(q,n)$, let $A_{i,j}^{t,p}\in\mathcal{T}_{q,n}$ be given by 
\begin{equation}
A_{i,j}^{t,p}:=(D_iD_{i-1}\cdots D_{t+1})(C_tC_{t+1}\cdots C_{j-1})(B_j)^{t-p}.
\end{equation}
Then for words $\mathbf{u},\mathbf{v}\in \E$, the entry $(A_{i,j}^{t,p})_{\mathbf{u},\mathbf{v}}$ counts the number of ($i+j-t-p+3$)-tuples 
\begin{equation}
\mathbf{u}=\mathbf{d}_i,\mathbf{d}_{i-1},\ldots,\mathbf{d}_t=\mathbf{c}_t,\mathbf{c}_{t+1},\ldots,\mathbf{c}_j=\mathbf{b}_0,\ldots,\mathbf{b}_{t-p}=\mathbf{v}
\end{equation}
where any two consecutive words have Hamming distance $1$, the $\mathbf{b}_k$ have equal support of cardinality $j$, and $|S(\mathbf{d}_k)|=k$, $|S(\mathbf{c}_k)|=k$ for all $k$.
Hence for $\mathbf{u},\mathbf{v}\in\E$ the following holds.
\begin{eqnarray}\label{step2a}
(A_{i,j}^{t,p})_{\mathbf{u},\mathbf{v}}&=0&\text{if $d(\mathbf{u},\mathbf{v})>i+j-t-p$\quad or}\nonumber\\&&\text{$|S(\mathbf{u})\Delta S(\mathbf{v})|>i+j-2t$}\\
\nonumber\\
&&\text{and}\nonumber\\
\nonumber\\
(A_{i,j}^{t,p})_{\mathbf{u},\mathbf{v}}&>0&\text{if $|S(\mathbf{u})|=i$, $|S(\mathbf{v})|=j$,}\label{step2b}\\
&&\text{$d(\mathbf{u},\mathbf{v})=i+j-t-p$\quad and}\nonumber\\ && \text{$|S(\mathbf{u})\Delta S(\mathbf{v})|=i+j-2t$}.\nonumber
\end{eqnarray}
Equation (\ref{step2a}) follows from the triangle inequality for $d$ and $d'(\mathbf{x},\mathbf{y}):=|S(\mathbf{x})\cap S(\mathbf{y})|$. To see (\ref{step2b}) one may take for $\mathbf{d}_k$ the zero-one word with support $\{i+1-k,\ldots,i\}$, for $\mathbf{c}_k$ the zero-one word with support $\{i+1-t,\ldots,i+k-t\}$ and for $\mathbf{b}_k$ the word with support $\{i+1-t,\ldots,i+j-t\}$ where the first $k$ nonzero entries are $2$ and the other nonzero entries are $1$.

Now suppose that $\mathcal{A}_{q,n}$ is not contained in $\mathcal{T}_{q,n}$, and let $M_{i,j}^{t,p}$ be a matrix not in $\mathcal{T}_{q,n}$ with $t$ maximal and (secondly) $p$ maximal. If we write
\begin{equation}
A_{i,j}^{t,p}=\sum_{t',p'} x_{i,j}^{t',p'} M_{i,j}^{t',p'},
\end{equation}
then by (\ref{step2a}) $x_{i,j}^{t',p'}=0$ if $t'+p'<t+p$ or $t'<t$ implying that $A_{i,j}^{t,p}-x_{i,j}^{t,p}M_{i,j}^{t,p}\in \mathcal{T}_{q,n}$ by the maximality assumption. Therefore since $x_{i,j}^{t,p}>0$ by (\ref{step2b}), also $M_{i,j}^{t,p}$ belongs to $\mathcal{T}_{q,n}$, a contradiction.
\end{proof}

\section{Block diagonalisation of $\mathcal{A}_n$}\label{Sec:blockdiag}
A block diagonalisation of $\mathcal{A}_n:=\mathcal{A}_{2,n}$ was first given by Schrijver in \cite{Lexcodes}. In this section we will describe this block diagonalisation. In the next section we will use it to describe a block diagonalisation of $\mathcal{A}_{q,n}$ for general $q$.
   
Let $n$ be a fixed positive integer and let $\mathcal{P}=\mathcal{P}_n$ denote the collection of subsets of $\{1,\ldots,n\}$. It will be convenient to identify binary words with their supports (as elements of $\mathcal{P}$). We will use capital letters to denote sets. For convenience, we use the notation
\begin{equation}
C_i:=M_{i-1,i}^{i-1},
\end{equation}
that is
\begin{equation}
(C_i)_{X,Y}=\begin{cases}1&\text{if $|X|=i-1, |Y|=i, X\subseteq Y$},\\0&\text{otherwise}\end{cases},
\end{equation}
for $i=0,\ldots n$. In particular observe that $C_0$ is the zero matrix. The matrices $C_1,\ldots,C_n$ and their transposes $C_1^{\transp},\ldots,C_n^{\transp}$ play a prominent role in the block diagonalisation of $\mathcal{A}_n$. They generate the algebra $\mathcal{A}_n$ as can be easily seen from the identities
\begin{equation}\label{bracket}
C_{i+1}C_{i+1}^{\transp}-C_i^{\transp}C_i=(n-2i)E'_i
\end{equation}
and
\begin{equation}
\sum_{i=0}^{n} (C_i+C_i^{\transp})=M_1.
\end{equation}
Indeed, the adjacency matrix $M_1$ of the Hamming cube generates the Bose--Mesner algebra of the Hamming scheme. Since $I=\sum_{i=1}^n E'_i$ is in the Bose--Mesner algebra it follows by (\ref{bracket}) that also the diagonal matrices $E'_1,\ldots,E'_n$ are in the algebra generated by $C_1,\ldots,C_n,C_1^{\transp},\ldots,C_n^{\transp}$.

For $k=0,\ldots,\rounddown{\frac{n}{2}}$, define the linear space $L_k$ to be the intersection of  the space of vectors with support contained in the collection of sets of cardinality $k$, and the kernel of $C_k$:
\begin{equation}
L_k:=\{b\in\Real^{\P}\mid C_kb=0, b_X=0\text{\ if\ }|X|\not=k\}.
\end{equation}
\begin{proposition}\label{dimlk}
For each $k\leq \rounddown{\frac{n}{2}}$ the dimension of $L_k$ is given by
\begin{equation}
\dim L_k={n\choose k}-{n\choose k-1}.
\end{equation}
\end{proposition}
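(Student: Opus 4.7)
The plan is to apply rank-nullity to the map $C_k$ restricted to the coordinate subspace $\Lambda_k \subseteq \Real^{\P}$ of vectors supported on $k$-subsets. Since $\dim \Lambda_k = \binom{n}{k}$ and $L_k = \ker(C_k|_{\Lambda_k})$, we immediately have
\[
\dim L_k = \binom{n}{k} - \mathrm{rank}\bigl(C_k|_{\Lambda_k}\bigr),
\]
so it suffices to show that $C_k|_{\Lambda_k}\colon \Lambda_k \to \Lambda_{k-1}$ is surjective onto the $\binom{n}{k-1}$-dimensional target space $\Lambda_{k-1}$.

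To establish this I would prove the stronger statement that the Gram matrix $C_k C_k^{\transp}$, viewed as a linear operator on $\Lambda_{k-1}$, is positive definite. Applying identity (\ref{bracket}) at index $i = k-1$ gives
\[
C_k C_k^{\transp} - C_{k-1}^{\transp} C_{k-1} = (n - 2(k-1))\, E'_{k-1}.
\]
Both $C_k C_k^{\transp}$ and $C_{k-1}^{\transp} C_{k-1}$ vanish off the block $\Lambda_{k-1} \times \Lambda_{k-1}$, so restricting cleanly to $\Lambda_{k-1}$, on which $E'_{k-1}$ acts as the identity, yields
\[
\bigl(C_k C_k^{\transp}\bigr)\big|_{\Lambda_{k-1}} = \bigl(C_{k-1}^{\transp} C_{k-1}\bigr)\big|_{\Lambda_{k-1}} + (n - 2(k-1))\, I.
\]
The first summand on the right has the form $A^{\transp}A$ and is therefore positive semidefinite. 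Under the hypothesis $k \leq \lfloor n/2 \rfloor$, the scalar $n - 2(k-1) \geq 2$ is strictly positive, so the right-hand side is positive definite. In particular it has full rank $\binom{n}{k-1}$, which forces $C_k|_{\Lambda_k}$ to be surjective and completes the dimension count.

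The argument has essentially no obstacle: the only thing that needs care is keeping track of the ambient $\P \times \P$ matrices versus their restrictions to the fixed-weight blocks, and verifying that identity (\ref{bracket}) is the right tool (it is the bracket relation between the "up" operator $C_k^{\transp}$ and "down" operator $C_k$, and evaluating it at $i=k-1$ is precisely what pins down the kernel at level $k$). The positivity $n - 2(k-1) > 0$ is exactly the combinatorial content of the hypothesis $k \leq \lfloor n/2 \rfloor$.
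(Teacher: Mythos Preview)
Your proof is correct and is essentially the same as the paper's: both apply identity (\ref{bracket}) at $i=k-1$ to see that $C_kC_k^{\transp}$ restricted to $\Lambda_{k-1}$ equals the positive semidefinite matrix $C_{k-1}^{\transp}C_{k-1}$ plus the strictly positive multiple $(n-2k+2)I$, hence is positive definite, forcing $C_k$ to have rank $\binom{n}{k-1}$. The paper phrases this as ``$C_k^{\transp}x\neq 0$ for nonzero $x$ supported on level $k-1$'' rather than surjectivity of $C_k$, but the content is identical.
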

\begin{proof}
It suffices to prove that $C_k$ has rank ${n\choose k-1}$. This follows since for any nonzero $x\in\Real^{\P}$ with $x_I=0$ when $|I|\not=k-1$, we have $C_kx\not=0$. Indeed
\begin{equation}
x^{\transp}C_kC_k^{\transp}x=x^{\transp}C_{k-1}^{\transp}C_{k-1}x+(n-2k+2)x^{\transp}x>0
\end{equation} 
by (\ref{bracket}).
\end{proof}

Before giving an explicit block diagonalisation, we will first sketch the basic idea.
Let $b\in L_k$ be nonzero and consider the vectors $b_k,b_{k+1},b_{k+2},\ldots$, where $b_k:=b$ and 
\begin{equation}
b_{i+1}:=C_{i+1}^{\transp}\cdots C_{k+2}^{\transp}C_{k+1}^{\transp}b
\end{equation}
for $i\geq k$. It can be shown (see Proposition \ref{innerprod} below) that 
\begin{equation}
\| b_i\| =\|b\|\cdot {n-2k\choose i-k}^{\frac{1}{2}}(i-k)!.
\end{equation}
It follows that $b_i$ is zero for $i>n-k$ and nonzero for $i=k,\ldots,n-k$. Since the $b_i$ have disjoint support, $b_k,\ldots,b_{n-k}$ are an orthogonal basis for the linear space $V_b$ they span. From (\ref{bracket}) it follows that
\begin{equation}
C_{i+1}b_{i+1}=C_{i+1}C_{i+1}^{\transp}b_i=(n-2i)b_i+C_i^{\transp}(C_ib_i)
\end{equation}
and hence, since $C_kb_k=0$, that
\begin{equation}
C_{i+i}b_{i+1}=b_i\cdot\sum_{s=k}^i (n-2s).
\end{equation} 
The space $V_b$ is thus mapped to itself by each of the $C_i$ and $C_i^{\transp}$ and hence by every $M\in \mathcal{A}_n$. The action of $\mathcal{A}_n$ restricted to $V_b$ is determined by
\begin{eqnarray}
C_{i+1}(\sum_{j=k}^{n-k}x_jb_j)&=&x_{i+1}(\sum_{s=k}^i (n-2s))b_i\\
C_{i+1}^{\transp}(\sum_{j=k}^{n-k}x_jb_j)&=&x_i b_{i+1}\nonumber
\end{eqnarray}
and does not depend on the particular choice of $b\in L_k$, but only on $k$. If we take for each $k$ an orthonormal basis of $L_k$ and let $b$ range over the union of these bases, we will obtain a decomposition of $\Real^{\P}$ as a direct sum of orthogonal subspaces $V_b$. This yields a block diagonalisation of $\mathcal{A}_n$, where for each $k$ there is a block of multiplicity $\dim L_k$. In order to obtain a formula for the image of $M_{i,j}^t$ in each of the blocks, we need to express $M_{i,j}^t$ (as a polynomial) in the matrices $C_l$ and $C_l^{\transp}$. 
 
We will now give a detailed proof, see also \cite{Lexcodes}. We begin by giving a convenient way to express the matrices $M_{i,j}^t$ in terms of $C_1,\ldots, C_n,C_1^{\transp},\ldots, C_n^{\transp}$. A first observation is that 
\begin{equation}
(k-i)M_{i,k}^i=M_{i,k-1}^iC_k\quad\text{for all $i<k$}.
\end{equation}
An important consequence is that 
\begin{equation}
M_{i,k}^i b=0\quad\text{for all $i<k$ and $b\in L_k$}. 
\end{equation}
Secondly, we have the following identity. 
\begin{proposition}
For all $l,k,p\in\{0,\ldots,n\}$:
\begin{equation}\label{eq1}
M_{l,k}^p=\sum_{s=0}^n (-1)^{s-p}{s\choose p}M_{l,s}^sM_{s,k}^s.
\end{equation}
\end{proposition}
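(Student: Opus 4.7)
The plan is to verify the identity entry-by-entry. Fix $X,Y\in\mathcal{P}$ with $|X|=l$ and $|Y|=k$, and set $t:=|X\cap Y|$. Outside of this case both sides clearly vanish, so it is enough to compare the $(X,Y)$-entries under this assumption.

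First I would compute the entries of $M_{l,s}^s M_{s,k}^s$. By definition of the matrices $M_{i,j}^t$ in the binary case (identifying words with their supports), $(M_{l,s}^s)_{X,Z}=1$ iff $|X|=l$, $|Z|=s$, and $Z\subseteq X$; similarly $(M_{s,k}^s)_{Z,Y}=1$ iff $Z\subseteq Y$ and $|Z|=s$, $|Y|=k$. Therefore
\begin{equation}
(M_{l,s}^s M_{s,k}^s)_{X,Y}=|\{Z\subseteq X\cap Y\mid |Z|=s\}|=\binom{t}{s}.
\end{equation}

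The claim reduces to the purely combinatorial identity
\begin{equation}
\sum_{s=0}^n (-1)^{s-p}\binom{s}{p}\binom{t}{s}=[t=p],
\end{equation}
since $(M_{l,k}^p)_{X,Y}=[t=p]$. I would prove this using the standard rewriting $\binom{s}{p}\binom{t}{s}=\binom{t}{p}\binom{t-p}{s-p}$, which gives
\begin{equation}
\sum_{s=0}^n(-1)^{s-p}\binom{s}{p}\binom{t}{s}=\binom{t}{p}\sum_{r\geq 0}(-1)^r\binom{t-p}{r}=\binom{t}{p}(1-1)^{t-p}.
\end{equation}
This equals $0$ if $t>p$, equals $0$ if $t<p$ (because then $\binom{t}{p}=0$), and equals $1$ if $t=p$, matching the Iverson bracket exactly.

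There is no real obstacle here beyond recognising the right combinatorial identity; the only mild subtlety is making sure the case $t<p$ (where the sum contains zero binomial factors from $\binom{t}{s}$ with $s>t$ and a vanishing prefactor $\binom{t}{p}=0$) is handled cleanly, which is why the rewriting via $\binom{t}{p}\binom{t-p}{s-p}$ is convenient.
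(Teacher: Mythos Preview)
Your proof is correct and follows essentially the same approach as the paper's: both arguments compare entries at a fixed position $(X,Y)$ with $|X\cap Y|=t$, observe that $(M_{l,s}^sM_{s,k}^s)_{X,Y}=\binom{t}{s}$ counts $s$-subsets of $X\cap Y$, and then reduce via $\binom{s}{p}\binom{t}{s}=\binom{t}{p}\binom{t-p}{s-p}$ to the alternating sum $\binom{t}{p}(1-1)^{t-p}$. Your handling of the edge case $t<p$ is slightly more explicit than the paper's, but the substance is identical.
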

\begin{proof}
The entry of 
\begin{equation}
\sum_{s=0}^n (-1)^{s-p}{s\choose p}M_{l,s}^sM_{s,k}^s
\end{equation}
in position $(X,Y)$, with $|X|=k$, $|Y|=l$ and $|X\cap Y|=t$, equals
\begin{eqnarray}
\sum_{s=0}^n (-1)^{s-p}{s\choose p}{t\choose s}&=& \sum_{s=p}^{t}(-1)^{s-p}{t\choose p}{t-p\choose s-p}\\
&=&{t\choose p}\sum_{s'=0}^{t-p} (-1)^{s'}{t-p\choose s'}.\nonumber
\end{eqnarray}
This last sum equals zero if $t\not=p$ and equals $1$ if $t=p$.
\end{proof}

The following proposition gives the inner products between vectors of the form $M_{j,k}^kb$, where $b\in L_k$. These will be used to construct an orthonormal basis with respect to which the algebra is in block diagonal form.
\begin{proposition}\label{innerprod}
For $i,j,k,l\in\{0,\ldots,n\}$ with $k,l\leq \rounddown{\frac{n}{2}}$, and for $c\in L_l,b\in L_k$:
\begin{equation}
c^{\transp}M_{l,i}^l M_{j,k}^k b=\begin{cases}{n-2k\choose i-k} c^{\transp}b&\text{if $l=k, i=j$}\\0&\text{otherwise.}\end{cases}
\end{equation}
\end{proposition}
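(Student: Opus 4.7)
My plan is to reduce everything to manipulations with the step matrices $C_r,C_r^{\transp}$ and then to apply the commutation relation (\ref{bracket}) repeatedly. A straightforward count of chains of subsets shows that, for $l\le i$ and $k\le j$,
\[
M_{l,i}^l = \tfrac{1}{(i-l)!}\,C_{l+1}C_{l+2}\cdots C_i, \qquad M_{j,k}^k = \tfrac{1}{(j-k)!}\,C_j^{\transp}C_{j-1}^{\transp}\cdots C_{k+1}^{\transp}
\]
(both sides vanish otherwise). If $i\ne j$, the columns of $M_{l,i}^l$ sit at level $i$ while the rows of $M_{j,k}^k$ sit at level $j$, so the product $M_{l,i}^l M_{j,k}^k$ is already zero; hence we may assume $i=j$ from here on.

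Setting $b_s:=C_s^{\transp}\cdots C_{k+1}^{\transp}b$ for $s\ge k$ (so $b_k=b$), the inner product reads $[(i-l)!(i-k)!]^{-1}\,c^{\transp}(C_{l+1}\cdots C_i\,b_i)$. The central computation is an induction on $s\ge k+1$ using $C_{s+1}C_{s+1}^{\transp}=C_s^{\transp}C_s+(n-2s)E'_s$ together with the base case $C_k b_k=C_k b=0$: it yields
\[
C_s b_s = \alpha_s\,b_{s-1}, \qquad \alpha_s:=(s-k)(n-k-s+1).
\]
Iterating this from $s=i$ down to $s=l+1$, the vector $C_{l+1}\cdots C_i b_i$ equals $\bigl(\prod_{s=l+1}^i\alpha_s\bigr)\,b_l$ when $l\ge k$, and vanishes when $l<k$ (in that case the iteration hits the annihilating factor $C_k b_k=0$).

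It remains to split the case $l\ge k$ in two. If $l>k$, then $b_l=C_l^{\transp}b_{l-1}$, so $c^{\transp}b_l=(C_l c)^{\transp}b_{l-1}=0$ because $c\in L_l$ means $C_l c=0$. If $l=k$, then $b_l=b$, and the factorial identity
\[
\frac{\prod_{s=k+1}^i (s-k)(n-k-s+1)}{(i-k)!\,(i-k)!}=\frac{(i-k)!\,(n-2k)!/(n-k-i)!}{[(i-k)!]^2}=\binom{n-2k}{i-k}
\]
delivers the claimed value $\binom{n-2k}{i-k}\,c^{\transp}b$. The only real obstacle is the three-case bookkeeping ($l<k$, $l=k$, $l>k$) together with the factorial simplification above; the algebraic ingredients—counting chains, the recursion (\ref{bracket}), and the defining property $C_l c=0$ for $c\in L_l$—are all already at hand.
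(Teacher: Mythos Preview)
Your proof is correct, and it takes a genuinely different route from the paper's formal argument. The paper does not factor $M_{l,i}^l$ and $M_{j,k}^k$ into step matrices; instead it invokes the inclusion--exclusion identity (\ref{eq1}) to evaluate $c^{\transp}M_{l,k}^p b$ for all $p$, then writes $M_{l,i}^lM_{i,k}^k$ as a linear combination $\sum_p \binom{n+p-l-k}{n-i}M_{l,k}^p$ and closes with a Vandermonde-type binomial identity. Your approach, by contrast, is the $\mathfrak{sl}_2$-style computation that the paper \emph{sketches informally} in the paragraphs preceding the proposition (the passage about $b_{i+1}=C_{i+1}^{\transp}\cdots C_{k+1}^{\transp}b$ and $C_{i+1}b_{i+1}=b_i\sum_{s=k}^i(n-2s)$) but then abandons in favour of the combinatorial route. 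What your method buys is a transparent explanation of the trichotomy $l<k$, $l=k$, $l>k$ directly from the kernel conditions $C_kb=0$ and $C_lc=0$, and it avoids the closed-form binomial summation. What the paper's method buys is the intermediate byproduct $c^{\transp}M_{l,k}^p b=(-1)^{k-p}\binom{k}{p}c^{\transp}b\cdot\delta_{k,l}$, which is reused verbatim in the proof of the next proposition (Proposition~\ref{actionmijt}); your argument does not produce this as a side effect.
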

\begin{proof}
Clearly $M_{l,i}^l M_{j,k}^k=0$ if $i\not=j$, hence we may assume $i=j$ in the remainder of the proof. By (\ref{eq1}) we have for $0\leq p\leq k,l$:
\begin{equation}
c^{\transp}M_{l,k}^p b=\begin{cases}(-1)^{k-p}{k\choose p} c^{\transp}b&\text{if $k=l$}\\0&\text{otherwise},\end{cases}
\end{equation}
since $c^{\transp}M_{l,s}^s=0$ for $s\not=l$ and $M_{s,k}^sb=0$ for $s\not=k$. Hence we obtain
\begin{eqnarray}
c^{\transp}M_{l,i}^l M_{i,k}^k b&=&\sum_{p=0}^{n} {n+p-l-k\choose n-i} c^{\transp}M_{l,k}^p b\\
&=&\delta_{k,l}\cdot \sum_{p=0}^{k} {n+p-2k\choose n-i}(-1)^{k-p}{k\choose p} c^{\transp}b\nonumber\\
&=&\delta_{k,l}\cdot {n-2k\choose i-k}c^{\transp}b.\nonumber
\end{eqnarray}
\end{proof}

Define for $i,j,k,t\in\{0,\ldots,n\}$ the number
\begin{equation}
\beta_{i,j,k}^t:={n-2k\choose i-k}\sum_{p=0}^n (-1)^{k-p}{k\choose p}{i-p\choose t-p}{n+p-i-k\choose n+t-i-j}.
\end{equation}
These numbers will be used to describe the block diagonalisation.
\begin{proposition}\label{actionmijt}
For $i,j,k,t\in\{0,\ldots,n\}$ with $k\leq \rounddown{\frac{n}{2}}$, and for $b\in L_k$:
\begin{equation}
{n-2k\choose i-k}M_{i,j}^t M_{j,k}^k b=\beta_{i,j,k}^t M_{i,k}^k b.
\end{equation}
\end{proposition}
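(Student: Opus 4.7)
The plan is to reduce $M_{i,j}^t M_{j,k}^k$ inside the algebra to a linear combination of the matrices $M_{i,k}^a$, and then use the defining property of $L_k$ to collapse that combination to a scalar multiple of $M_{i,k}^k b$.

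First, I would compute the product $M_{i,j}^t M_{j,k}^k$ directly by counting. Since both factors lie in the $\stab$-centralizer algebra, so does the product; its only nonzero entries sit on pairs $(X,Z)$ with $|X|=i,|Z|=k$, and on such pairs the orbits of $\stab$ are indexed by $a=|X\cap Z|$. Hence the product must take the form $\sum_a c_a M_{i,k}^a$. To determine $c_a$, I fix $(X,Z)$ with $|X\cap Z|=a$ and count the $j$-sets $Y\supseteq Z$ with $|X\cap Y|=t$: writing $Y=Z\cup W$ with $W\cap Z=\emptyset$ forces $|X\cap W|=t-a$, so $W$ is determined by choosing its $X$-part inside $X\setminus Z$ and its non-$X$-part inside $\{1,\ldots,n\}\setminus(X\cup Z)$, giving
\begin{equation}
c_a=\binom{i-a}{t-a}\binom{n+a-i-k}{j-k-t+a}.
\end{equation}

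Second, I would apply the identity (\ref{eq1}) in the form
\begin{equation}
M_{i,k}^a=\sum_{s}(-1)^{s-a}\binom{s}{a}M_{i,s}^s M_{s,k}^s
\end{equation}
and evaluate on $b\in L_k$. Only the term $s=k$ survives: for $s>k$ the matrix $M_{s,k}^s$ vanishes identically, and for $s<k$ the consequence of $C_kb=0$ noted earlier (namely $M_{s,k}^sb=0$) kills the small-$s$ terms. Using $M_{k,k}^kb=b$, this yields $M_{i,k}^a b=(-1)^{k-a}\binom{k}{a}M_{i,k}^k b$. Combining with the first step gives
\begin{equation}
M_{i,j}^t M_{j,k}^k b=\Bigl(\sum_a(-1)^{k-a}\binom{k}{a}\binom{i-a}{t-a}\binom{n+a-i-k}{j-k-t+a}\Bigr)M_{i,k}^k b.
\end{equation}

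Finally, I would multiply by $\binom{n-2k}{i-k}$ and rewrite the rightmost binomial via the symmetry $\binom{n+a-i-k}{j-k-t+a}=\binom{n+a-i-k}{n+t-i-j}$ (the two candidate lower entries add up to the top). The bracketed scalar is then precisely $\beta_{i,j,k}^t$, which proves the proposition. There is no substantial obstacle here; the argument is essentially bookkeeping, with the one mild care being that the edge cases where $a>t$ or $j-k-t+a<0$ correspond to vanishing binomials, consistent with the vanishing of $c_a$ in those regimes.
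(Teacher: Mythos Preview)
Your proof is correct and follows essentially the same approach as the paper. Both arguments expand $M_{i,j}^t M_{j,k}^k$ in the basis $\{M_{i,k}^a\}_a$ via the same counting (the paper leaves this step implicit, writing the expansion directly), and both use identity~(\ref{eq1}) together with $M_{s,k}^s b=0$ for $s<k$ to reduce $M_{i,k}^a b$ to $(-1)^{k-a}\binom{k}{a}M_{i,k}^k b$; the only difference is the order in which these two ingredients are presented.
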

\begin{proof}
By (\ref{eq1}), it follows that for $0\leq p\leq n$:
\begin{equation}
M_{i,k}^p b=(-1)^{k-p}{k\choose p}M_{i,k}^k b.
\end{equation}
This implies that 
\begin{eqnarray}
M_{i,j}^t M_{j,k}^k b&=&\sum_{p=0}^{n} {i-p\choose t-p}{n+p-i-k\choose n+t-i-j}M_{i,k}^p b\\
&=&\sum_{p=0}^n {i-p\choose t-p} {n+p-i-k\choose n+t-i-j}(-1)^{k-p}{k\choose p} M_{i,k}^k b.\nonumber
\end{eqnarray}
This proves the proposition.
\end{proof}
We will now describe the block diagonalisation. For each $k=0,\ldots, \rounddown{\frac{n}{2}}$, choose an orthonormal basis $B_k$ of the linear space $L_k$. By Proposition \ref{dimlk} we know that $|B_k|={n\choose k}-{n\choose k-1}$. Let 
\begin{equation}
\mathcal{V}:=\{(k,b,i)\mid k\in\{0,\ldots,\rounddown{\frac{n}{2}}\}, b\in B_k, i\in \{k,k+1,\ldots, n-k\}\}.
\end{equation}
Then 
\begin{eqnarray}\label{count}
|\mathcal{V}|&=&\sum_{i=0}^n \sum_{k=0}^{\min \{i,n-i\}}\left( {n\choose k}-{n\choose k-1}\right) \\
&=&\sum_{i=0}^n {n\choose \min \{i,n-i\}}=\sum_{i=0}^n {n\choose i}=2^n.\nonumber
\end{eqnarray}
Define for each $(k,b,i)\in \mathcal{V}$ the vector $u_{k,b,i}\in\Real^{\P}$ by
\begin{equation}
u_{k,b,i}:={n-2k\choose i-k}^{-\frac{1}{2}} M_{i,k}^k b.
\end{equation}
If follows from Proposition \ref{innerprod} and $|\mathcal{V}|=2^n$ that the vectors $u_{k,b,i}$ form an orthonormal base of $\Real^{\P}$. Let $U$ be the $\P\times \mathcal{V}$ matrix with $u_{k,b,i}$ as the $(k,b,i)$-th column. We will show that for each triple $i,j,t$ the matrix 
\begin{equation}
\widetilde{M}_{i,j}^t:=U^{\transp}M_{i,j}^t U
\end{equation}
is in block diagonal form. Indeed we have
\begin{proposition}\label{imageinblok}
For $(l,c,i'), (k,b,j')\in \mathcal{V}$ and $i,j,t\in \{0,\ldots,n\}$:
\begin{equation}
(\widetilde{M}_{i,j}^t)_{(l,c,i'),(k,b,j')}=\begin{cases}{n-2k\choose i-k}^{-\frac{1}{2}}{n-2k\choose j-k}^{-\frac{1}{2}}\beta_{i,j,k}^t&\text{if $l=k,i=i',j=j',b=c$,}\\ \\0&\text{otherwise.}\end{cases}
\end{equation}
\end{proposition}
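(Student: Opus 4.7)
The plan is to unfold the matrix entry $u_{l,c,i'}^{\transp}M_{i,j}^{t}u_{k,b,j'}$ using the explicit definition $u_{k,b,i}={n-2k\choose i-k}^{-1/2}M_{i,k}^{k}b$ and then apply Propositions \ref{actionmijt} and \ref{innerprod} in sequence to collapse it to the claimed expression, with the off-diagonal entries dropping out for elementary index-matching reasons.

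First I would substitute the formulas for $u_{k,b,j'}$ and $u_{l,c,i'}$ into the inner product, using the identity $(M_{i',l}^{l})^{\transp}=M_{l,i'}^{l}$, to rewrite the entry as
\begin{equation*}
{n-2l\choose i'-l}^{-\frac{1}{2}}{n-2k\choose j'-k}^{-\frac{1}{2}}\,c^{\transp}M_{l,i'}^{l}\,M_{i,j}^{t}\,M_{j',k}^{k}\,b.
\end{equation*}
This reduces the problem to evaluating the triple product inside.

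Next I would handle the right-hand factor $M_{i,j}^{t}M_{j',k}^{k}b$ via Proposition \ref{actionmijt}. A quick check of the nonzero pattern of the matrices $M_{a,b}^{c}$ shows that $M_{i,j}^{t}M_{j',k}^{k}=0$ whenever $j\neq j'$, since there is no intermediate set of size both $j$ and $j'$. When $j=j'$, Proposition \ref{actionmijt} rearranges to $M_{i,j}^{t}M_{j,k}^{k}b=\frac{\beta_{i,j,k}^{t}}{{n-2k\choose i-k}}M_{i,k}^{k}b$; the binomial in the denominator is nonzero because $(k,b,i)\in\mathcal{V}$ forces $k\leq i\leq n-k$.

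Then I would apply Proposition \ref{innerprod} to the remaining expression $c^{\transp}M_{l,i'}^{l}M_{i,k}^{k}b$, which vanishes unless $l=k$ and $i'=i$, and equals ${n-2k\choose i-k}c^{\transp}b$ in that case. The orthonormality of $B_{k}$ then gives $c^{\transp}b=\delta_{b,c}$. Combining the three simplifications, every off-diagonal entry vanishes (with $j\neq j'$ killed by the middle-index mismatch, $l\neq k$ or $i\neq i'$ killed by Proposition \ref{innerprod}, and $b\neq c$ killed by orthonormality of $B_{k}$); in the diagonal case the three scalar factors ${n-2k\choose i-k}^{-1/2}{n-2k\choose j-k}^{-1/2}$, $\frac{\beta_{i,j,k}^{t}}{{n-2k\choose i-k}}$ and ${n-2k\choose i-k}$ telescope cleanly to ${n-2k\choose i-k}^{-1/2}{n-2k\choose j-k}^{-1/2}\beta_{i,j,k}^{t}$, matching the stated formula.

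I do not expect any real obstacle here: once Propositions \ref{actionmijt} and \ref{innerprod} are in hand, the argument is a short, essentially arithmetic composition of three vanishing/evaluation steps. The only care needed is to track which of the three mechanisms disposes of each off-diagonal case, which is routine.
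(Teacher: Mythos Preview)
Your proposal is correct and follows essentially the same route as the paper. The only difference is cosmetic: the paper first computes $M_{i,j}^{t}u_{k,b,j'}$ as $\delta_{j,j'}{n-2k\choose i-k}^{-1/2}{n-2k\choose j-k}^{-1/2}\beta_{i,j,k}^{t}\,u_{k,b,i}$ and then invokes the already-established orthonormality of the full basis $\{u_{k,b,i}\}$, whereas you expand both basis vectors and appeal to Proposition~\ref{innerprod} together with the orthonormality of $B_k$; these are the same computation, since the orthonormality of the $u_{k,b,i}$ was proved precisely from Proposition~\ref{innerprod} and the orthonormality of $B_k$.
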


\begin{proof}
We have 
\begin{eqnarray}
M_{i,j}^t u_{k,b,j'}&=&{n-2k\choose j'-k}^{-\frac{1}{2}} M_{i,j}^t M_{j',k}^k b\\
&=&\delta_{j,j'}{n-2k\choose j-k}^{-\frac{1}{2}}{n-2k\choose i-k}^{-1}\beta_{i,j,k}^t M_{i,k}^k b\nonumber\\
&=&\delta_{j,j'}{n-2k\choose j-k}^{-\frac{1}{2}}{n-2k\choose i-k}^{-\frac{1}{2}}\beta_{i,j,k}^t u_{k,b,i}.\nonumber
\end{eqnarray}
Since 
\begin{equation}
(\widetilde{M}_{i,j}^t)_{(l,c,i'),(k,b,j')}=u_{l,c,i'}^{\transp}M_{i,j}^t u_{k,b,j'}
\end{equation}
the proposition follows.
\end{proof}

\begin{proposition}
The matrix $U$ gives a block diagonalisation of $\mathcal{A}_n$.
\end{proposition}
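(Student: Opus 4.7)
The plan is to verify the two defining features of a block diagonalisation in the sense of Theorem \ref{BD}: that $U$ is unitary (here, orthogonal) and that $U^{\transp}\mathcal{A}_n U$ equals the full algebra of block diagonal matrices of the appropriate shape. Everything has been set up so that this essentially reads off Proposition \ref{imageinblok}, augmented by a dimension count.

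First I would note that $U$ is orthogonal: by Proposition \ref{innerprod} together with the orthonormality of each basis $B_k$, the vectors $\{u_{k,b,i} : (k,b,i)\in\mathcal{V}\}$ are orthonormal, and the cardinality calculation in (\ref{count}) gives $|\mathcal{V}|=2^n=|\mathcal{P}|$. Hence these vectors form an orthonormal basis of $\Real^{\P}$ and $U$ is orthogonal.

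Next, by Proposition \ref{imageinblok}, the transformed basis element $\widetilde{M}_{i,j}^t = U^{\transp}M_{i,j}^t U$ has its only nonzero entries at positions $((k,b,i),(k,b,j))$ with $k\in\{0,\ldots,\rounddown{n/2}\}$ and $b\in B_k$, and the value at each such position depends only on $i,j,k,t$ and not on $b$. Interpreted structurally, $\widetilde{M}_{i,j}^t$ is block diagonal: for each $k$ one sees $q_k := \dim L_k = \binom{n}{k}-\binom{n}{k-1}$ identical copies of a $(n-2k+1)\times(n-2k+1)$ matrix (indexed by $i,j\in\{k,\ldots,n-k\}$). Since the $M_{i,j}^t$ span $\mathcal{A}_n$, this yields
\[
U^{\transp}\mathcal{A}_n U \;\subseteq\; \mathcal{D} \;:=\; \bigoplus_{k=0}^{\rounddown{n/2}} \bigl(q_k \odot \Complex^{(n-2k+1)\times (n-2k+1)}\bigr).
\]

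For the reverse inclusion I would compare dimensions. Since $U$ is invertible, $\dim U^{\transp}\mathcal{A}_n U = \dim \mathcal{A}_n = \binom{n+3}{3}$ by Proposition \ref{dimension}. On the other hand, $\dim \mathcal{D} = \sum_{k=0}^{\rounddown{n/2}}(n-2k+1)^2$, which evaluates to $\binom{n+3}{3}$ by direct summation (handling $n$ even and $n$ odd separately yields in both cases $(n+1)(n+2)(n+3)/6$). Equality of dimensions combined with the containment above forces $U^{\transp}\mathcal{A}_n U = \mathcal{D}$, which is the block diagonal form required by Theorem \ref{BD}.

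The one step with any content is the arithmetic identity $\sum_{k}(n-2k+1)^2=\binom{n+3}{3}$; once that is in hand, the proposition follows almost mechanically from Proposition \ref{imageinblok}. A more hands-on alternative would be to exhibit, for each fixed $k$ and each pair $i,j\in\{k,\ldots,n-k\}$, an explicit linear combination of $M_{i,j}^t$ (over $t$) whose image is supported on a single block, using invertibility properties of the coefficients $\beta_{i,j,k}^t$; but since the dimensions coincide, this more technical route is unnecessary.
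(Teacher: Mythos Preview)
Your proof is correct and follows essentially the same route as the paper: use Proposition \ref{imageinblok} to see that $U^{\transp}\mathcal{A}_n U$ lands inside the block-diagonal algebra $\mathcal{D}$, then conclude equality by the dimension identity $\sum_k (n-2k+1)^2=\binom{n+3}{3}$. The only cosmetic difference is that the paper verifies this identity by induction on $n$ (via $\binom{n+3}{3}-\binom{n+1}{3}=(n+1)^2$) rather than by splitting into parity cases.
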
 
\begin{proof}
Proposition \ref{imageinblok} implies that each matrix $\widetilde{M}_{i,j}^t$ has a block diagonal form, where for each $k=0,\ldots \rounddown{\frac{n}{2}}$ there are ${n\choose k}-{n\choose k-1}$ copies of an $(n+1-2k)\times (n+1-2k)$ block on the diagonal. For each $k$ the copies are indexed by the elements of $B_k$, and in each copy the rows and columns are indexed by the integers $i\in\{k,k+1,\ldots,n-k\}$. Hence we need to show that all matrices of this block diagonal form belong to $U^{\transp}\mathcal{A}_nU$. It suffices to show that the dimension $\sum_{k=0}^{\rounddown{\frac{n}{2}}}(n+1-2k)^2$ of the algebra consisting of the matrices in the given block diagonal form equals the dimension of $\mathcal{A}_n$, which is ${n+3\choose 3}$. This follows by induction on $n$ from
\begin{equation}
{n+3\choose 3}-{(n-2)+3\choose 3}={n+1\choose 1}+2{n+1\choose 2}=(n+1)^2.
\end{equation}
\end{proof}

\begin{remark}
Since 
\begin{eqnarray}
(\widetilde{M}_{i,j}^t)^{\transp}&=&U^{\transp}(M_{i,j}^t)^{\transp}U\\
&=&U^{\transp}M_{j,i}^t U\nonumber\\
&=&\widetilde{M}_{j,i}^t,\nonumber
\end{eqnarray}
if follows from Proposition \ref{imageinblok} that $\beta_{i,j,k}^t=\beta_{j,i,k}^t$, which is not obvious from the definition of $\beta_{i,j,k}^t$. In \cite{Lexcodes}, Proposition \ref{imageinblok} is derived in a slightly different manner, resulting in a different expression for $\beta_{i,j,k}^t$ which displays the symmetry between $i$ and $j$:
\begin{equation}
\beta_{i,j,k}^t=\sum_{u=0}^n (-1)^{u-t}{u\choose t}{n-2k\choose u-k}{n-k-u\choose i-u}{n-k-u\choose j-u}.
\end{equation}  
\end{remark}

\section{Block-diagonalisation of $\mathcal{A}_{q,n}$}
In this section we give an explicit block diagonalisation of the algebra $\mathcal{A}_{q,n}$. The block diagonalisation can be seen as an extension of the block diagonalisation in the binary case as given in the previous section. There the binary Hamming space was taken to be the collection of subsets $\P$ of $\{1,\ldots,n\}$. Now it will be convenient to replace this by the collection of subsets of a given finite set $V$. Let $V$ be a finite set of cardinality $|V|=m$. By $\mathcal{P}(V)$ we denote the collection of subsets of $V$. For integers $i,j$, define the $\mathcal{P}(V)\times \mathcal{P}(V)$ matrix $C_{i,j}^V$ by
\begin{equation}
(C_{i,j}^V)_{I,J}:=\begin{cases}
1&\text{if $|I|=i$, $|J|=j$, $I\subseteq J$ or $J\subseteq I$,}\\
0&\text{otherwise.}
\end{cases}
\end{equation}
The matrices $C_{i,k}^V$ correspond to the matrices $M_{i,k}^k$ from the binary Terwilliger algebra. We have renamed them in order to avoid confusion with the matrices $M_{i,j}^{t,p}\in\mathcal{A}_{q,n}$.
For $k=0,\ldots,\lfloor\frac{m}{2}\rfloor$ define the linear space $L_k^V$ by
\begin{equation}
L_k^V:=\{x\in \Complex^{\mathcal{P(V)}}\mid C_{k-1,k}^V x=0,\ x_I=0 \text{\ if $|I|\not=k$}\},
\end{equation}
and let $B_k^V$ be an orthonormal base of $L_k^V$.
For $i,j,k,t\in\{0,\ldots,m\}$, define the number
\begin{equation}
\beta_{i,j,k}^{m,t}:={m-2k\choose i-k}\sum_{p=0}^m (-1)^{k-p}{k\choose p}{i-p\choose t-p}{m+p-i-k\choose m+t-i-j}.
\end{equation}

We recall the following facts.
\begin{proposition}\label{binarycase}
Let $V$ be a finite set of cardinality $m$. Then
\begin{enumerate}
\item[(i)] For $k\in\{0,\ldots,\rounddown{\frac{m}{2}}\}$ we have
\begin{equation} \dim L_k^V={m\choose k}-{m\choose k-1}.
\end{equation}
\item[(ii)] For $i,k,l\in\{0,\ldots,n\}$ with $k,l\leq \rounddown{\frac{m}{2}}$, and for $b\in L_k^V,c\in L_l^V$
\begin{equation}
(C_{i,l}^V c)^{\transp}C_{i,k}^V b =\begin{cases}
{m-2k \choose i-k} c^{\transp}b&\text{if $k=l$},\\
0&\text{otherwise.}
\end{cases}
\end{equation}
\item[(iii)] For $i,j,k,t\in\{0,\ldots,n\}$ with $k\leq \rounddown{\frac{m}{2}}$, $b\in L_k^V$ and $Y\subseteq V$ with $|Y|=j$
\begin{equation}
\sum_{\substack{U\subseteq V\\|U|=i\\|U\cap Y|=t}} (C_{i,k}^V b)_U=\beta_{i,j,k}^{m,t} {m-2k \choose j-k}^{-1} (C_{j,k}^V b)_Y.
\end{equation}
\end{enumerate}
\end{proposition}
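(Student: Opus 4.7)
The plan is to observe that Proposition~\ref{binarycase} is, item by item, a transcription of the three binary-case propositions of Section~\ref{Sec:blockdiag} (namely Propositions~\ref{dimlk}, \ref{innerprod}, and \ref{actionmijt}) in which the ambient ground set $\{1,\ldots,n\}$ is replaced by an arbitrary finite set $V$ of cardinality $m$. Under the dictionary $M_{i,k}^{k}\leftrightarrow C_{i,k}^{V}$ (valid for $i\ge k$, with the transpose $C_{k,i}^{V}=(C_{i,k}^{V})^{\transp}$ corresponding to $M_{k,i}^{k}$) and $L_{k}\leftrightarrow L_{k}^{V}$, the earlier proofs carry over verbatim, since nothing in them used more about $\{1,\ldots,n\}$ than its cardinality.

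For (i) I would simply repeat the proof of Proposition~\ref{dimlk}: the key input is the commutator identity (\ref{bracket}), which holds with $V$ in place of $\{1,\ldots,n\}$ and $m$ in place of $n$; it forces $C_{k-1,k}^{V}$ to have full row-rank $\binom{m}{k-1}$ on the $k$-th level, so that its kernel $L_{k}^{V}$ within the $k$-th level has dimension $\binom{m}{k}-\binom{m}{k-1}$.

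For (ii), I would rewrite $(C_{i,l}^{V}c)^{\transp}(C_{i,k}^{V}b)=c^{\transp}C_{l,i}^{V}C_{i,k}^{V}b$, recognise this as the binary quantity $c^{\transp}M_{l,i}^{l}M_{i,k}^{k}b$ for the ground set $V$, and apply Proposition~\ref{innerprod} (specialised to $i=j$) to read off $\binom{m-2k}{i-k}c^{\transp}b$ when $k=l$ and $0$ otherwise.

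For (iii), the cleanest move is to package the left-hand side as a single matrix-vector product. Let $N$ denote the $\mathcal{P}(V)\times\mathcal{P}(V)$ matrix with $(N)_{Y,U}=1$ exactly when $|Y|=j$, $|U|=i$, and $|U\cap Y|=t$; this is the analogue of the binary $M_{j,i}^{t}$ on the set $V$. Then the left-hand side equals $(NC_{i,k}^{V}b)_{Y}$. Applying Proposition~\ref{actionmijt} with the roles of $i$ and $j$ interchanged and with $m$ in place of $n$ gives
\begin{equation}
\binom{m-2k}{j-k}NC_{i,k}^{V}b=\beta_{j,i,k}^{m,t}\,C_{j,k}^{V}b,
\end{equation}
and the symmetry $\beta_{j,i,k}^{m,t}=\beta_{i,j,k}^{m,t}$ recorded in the Remark after Proposition~\ref{imageinblok} finishes the identification. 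Reading off the $Y$-th coordinate gives the desired formula.

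There is no genuine obstacle; the only thing that needs care is bookkeeping, in particular the convention $(C_{i,j}^{V})^{\transp}=C_{j,i}^{V}$ and the correct matching $M_{a,b}^{b}\leftrightarrow C_{a,b}^{V}$ when translating between the two notations. Once those are straight, (i)--(iii) are immediate consequences of the binary-case propositions already proved in Section~\ref{Sec:blockdiag}.
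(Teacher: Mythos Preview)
Your proposal is correct and matches the paper's own proof, which simply reads ``Items (i), (ii) and (iii) follow directly from Propositions~\ref{dimlk}, \ref{innerprod} and \ref{actionmijt}.'' You have spelled out the translation in more detail than the paper does; in particular, your observation for part~(iii) that the direct application of Proposition~\ref{actionmijt} produces $\beta_{j,i,k}^{m,t}$ and that one then needs the symmetry $\beta_{j,i,k}^{m,t}=\beta_{i,j,k}^{m,t}$ from the Remark after Proposition~\ref{imageinblok} is a point the paper's one-line proof leaves implicit.
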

\begin{proof}
Items (i),(ii) and (iii) follow directly from Propositions \ref{dimlk}, \ref{innerprod} and \ref{actionmijt}.  
\end{proof}

We will now describe a block diagonalisation of $\mathcal{A}_{q,n}$. Let $\phi\in\mathbb{C}$ be a primitive $(q-1)$-th root of unity. Let
\begin{eqnarray}
\mathcal{V}&:=&\{(a,k,i,\mathbf{a},b)\mid\\
&&\quad a,k,i\text{\ are integers satisfying\ } 0\leq a\leq k\leq i\leq n+a-k,\nonumber\\
&&\quad \mathbf{a}\in\q^n\text{\ satisfies\ }|S(\mathbf{a})|=a, \mathbf{a}_h\not=q-1\text{\ for\ } h=1,\ldots, n,\nonumber\\
&&\quad b\in B_{k-a}^{\overline{S(\mathbf{a})}}\},\nonumber
\end{eqnarray}
where $\overline{U}:=\{1,2,\ldots,n\}\setminus U$ for any set $U\subseteq \{1,2,\ldots,n\}$. For each tuple $(a,k,i,\mathbf{a},b)$ in $\mathcal{V}$, define the vector $\Psi_{\mathbf{a},b}^{a,k,i}\in \mathbb{C}^{\q^n}$ by
\begin{equation}
\begin{split}
&(\Psi_{\mathbf{a},b}^{a,k,i})_{\x}:=\\
&\quad\begin{cases}
(q-1)^{-\frac{1}{2}i}{n+a-2k \choose i-k}^{-\frac{1}{2}}\phi^{\left<\mathbf{a},\x\right>}(C_{i-a,k-a}^{\overline{S(\mathbf{a})}} b)_{S(\x)\setminus S(\mathbf{a})}&\text{if $S(\mathbf{a})\subseteq S(\x)$},\\
0&\text{otherwise,}
\end{cases}
\end{split}
\end{equation}
for any $\x\in\q^n$. Here the nonnegative integer $\left<\x,\y\right>$ is given by 
\begin{equation}
\left<\x,\y\right>:=\sum_{h=0}^{n} \x_h\y_h 
\end{equation}
for any $\x,\y\in \q^n$. We stress that $\left<\x,\y\right>$ is \emph{not} taken modulo $q$. Observe 
that $(\Psi_{\mathbf{a},b}^{a,k,i})_{\x}=0$ if $|S(\x)|\not=i$.
We have:
\begin{proposition}
The vectors $\Psi_{\mathbf{a},b}^{a,k,i},\ (a,k,i,\mathbf{a},b)\in \mathcal{V}$ form an orthonormal base of $\mathbb{C}^{\q^n}$.
\end{proposition}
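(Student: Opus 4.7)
The strategy will be to establish pairwise orthonormality of the vectors $\Psi_{\mathbf{a},b}^{a,k,i}$ and then match the cardinality $|\mathcal{V}| = q^n = \dim \Complex^{\q^n}$. An orthonormal set of size equal to the dimension is automatically a basis.

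For the cardinality, I would partition $\mathcal{V}$ by $a$. For each $a$, the number of admissible $\mathbf{a} \in \q^n$ with $|S(\mathbf{a})| = a$ and entries in $\{0, 1, \ldots, q-2\}$ is $\binom{n}{a}(q-2)^a$. For a fixed $\mathbf{a}$, the pair $(k,b)$ contributes $\dim L_{k-a}^{\overline{S(\mathbf{a})}}$ choices by Proposition \ref{binarycase}(i), and $i$ ranges over $n - a - 2(k-a) + 1$ integers. The binary counting identity (\ref{count}) applied with $n$ replaced by $n-a$ yields a total of $2^{n-a}$ triples $(k,i,b)$ for this $\mathbf{a}$, so $|\mathcal{V}| = \sum_{a=0}^n \binom{n}{a}(q-2)^a\, 2^{n-a} = q^n$ by the binomial theorem.

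For orthogonality, compute $\langle \Psi_{\mathbf{a},b}^{a,k,i}, \Psi_{\mathbf{a}',b'}^{a',k',i'} \rangle$. Since $(\Psi_{\mathbf{a},b}^{a,k,i})_{\x} = 0$ unless $|S(\x)| = i$, the inner product vanishes unless $i = i'$, which I henceforth assume. Splitting the sum over $\x$ by its support $S \supseteq S(\mathbf{a}) \cup S(\mathbf{a}')$ and then over values $\x_h \in \{1,\ldots,q-1\}$ at each $h \in S$, the character factor decomposes as $\phi^{\langle \mathbf{a} - \mathbf{a}',\x\rangle} = \prod_{h \in S} \phi^{(\mathbf{a}_h - \mathbf{a}'_h)\x_h}$, while the matrix entry $(C_{i-a,k-a}^{\overline{S(\mathbf{a})}} b)_{S \setminus S(\mathbf{a})}$ depends only on $S$. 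At each $h$ the character sum $\sum_{s=1}^{q-1}\phi^{(\mathbf{a}_h - \mathbf{a}'_h)s}$ equals $q-1$ when $\mathbf{a}_h - \mathbf{a}'_h \equiv 0 \pmod{q-1}$ and $0$ otherwise. The restriction $\mathbf{a}_h, \mathbf{a}'_h \in \{0,\ldots,q-2\}$ forces $|\mathbf{a}_h - \mathbf{a}'_h| < q-1$, so the sum vanishes iff $\mathbf{a}_h \neq \mathbf{a}'_h$. Consequently the inner product is zero unless $\mathbf{a} = \mathbf{a}'$, in which case the character sum collapses to $(q-1)^i$.

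In the remaining case $\mathbf{a} = \mathbf{a}'$, substituting $S' := S\setminus S(\mathbf{a}) \subseteq \overline{S(\mathbf{a})}$ rewrites the residual sum over $S$ as the binary inner product $(C_{i-a,k'-a}^{\overline{S(\mathbf{a})}} b')^{\transp} (C_{i-a,k-a}^{\overline{S(\mathbf{a})}} b)$. Proposition \ref{binarycase}(ii), with $m = n-a$, evaluates this to $\delta_{k,k'}\binom{n+a-2k}{i-k}(b')^{\transp} b$. Combining with the character factor $(q-1)^i$ and the normalization $(q-1)^{-i}\binom{n+a-2k}{i-k}^{-1/2}\binom{n+a-2k'}{i-k'}^{-1/2}$ produces exactly $\delta_{k,k'}\,(b')^{\transp} b$, which equals $\delta_{b,b'}$ by orthonormality of $B_{k-a}^{\overline{S(\mathbf{a})}}$. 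Together with $|\mathcal{V}| = q^n$ this proves the claim. The main obstacle is the bookkeeping: the normalizations are engineered precisely so that $(q-1)^i$ cancels $(q-1)^{-i}$ and $\binom{n+a-2k}{i-k}$ cancels its inverse square roots, while the constraint $\mathbf{a}_h \neq q-1$ is exactly what forces the character sum to detect equality of $\mathbf{a}$ and $\mathbf{a}'$ coordinate by coordinate.
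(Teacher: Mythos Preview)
Your proof is correct and follows essentially the same approach as the paper: count $|\mathcal{V}|=q^n$, then verify orthonormality by first reducing to $i=i'$, next using the character sums over $\x_h\in\{1,\ldots,q-1\}$ to force $\mathbf{a}=\mathbf{a}'$, and finally invoking Proposition~\ref{binarycase}(ii) on $\overline{S(\mathbf{a})}$ to get $\delta_{k,k'}\delta_{b,b'}$. Your counting is slightly more streamlined than the paper's direct multi-sum: you recognize that for fixed $\mathbf{a}$ the count over $(k,i,b)$ is exactly the binary identity (\ref{count}) with $n$ replaced by $n-a$, yielding $2^{n-a}$, and then close with the binomial theorem; the paper instead unwinds the same sum by hand.
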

\begin{proof} First, the number $|\mathcal{V}|$ of vectors $\Psi_{\mathbf{a},b}^{a,k,i}$ equals $q^n$ since:
\begin{equation}
\begin{split}
\sum_{\substack{a,k,i\\0\leq a\leq k\leq i\leq n+a-k}} &{n \choose a} (q-2)^a\left[{n-a \choose k-a}-{n-a \choose k-a-1}\right]\\
=&\sum_{i=0}^n \sum_{a=0}^i  {n \choose a} (q-2)^a\sum_{k=a}^{\min (i,n+a-i)}\left[{n-a \choose k-a}-{n-a \choose k-a-1}\right]\\
=&\sum_{i=0}^n\sum_{a=0}^i {n \choose a} (q-2)^a {n-a \choose \min\{i-a,n-i\}}\\
=&\sum_{i=0}^n{n \choose i}\sum_{a=0}^i (q-2)^a {i \choose a}\\
=&\sum_{i=0}^n{n\choose i}(q-1)^i=q^n.
\end{split}
\end{equation}
Secondly, we calculate the inner product of $\Psi_{\mathbf{a},b}^{a,k,i}$ and $\Psi_{\mathbf{a}',b'}^{a',k',i'}$. If $i\not=i'$ then the inner product is zero since the two vectors have disjoint support. So we may assume that $i'=i$. We obtain:
\begin{equation}\label{innerproduct}
\begin{split}
\left<\Psi_{\mathbf{a},b}^{a,k,i},\right.&\left. \Psi_{\mathbf{a}',b'}^{a',k',i}\right> =(q-1)^{-i}{n+a-2k \choose i-k}^{-\frac{1}{2}}{n+a'-2k'\choose i-k'}^{-\frac{1}{2}}\\
&\quad\cdot\sum_{\x} \phi^{\left<\mathbf{a},\x\right>-\left<\mathbf{a}',\x\right>} (C_{i-a,k-a}^{\overline{S(\mathbf{a})}}b)_{S(\x)\setminus S(\mathbf{a})}\cdot (C_{i-a',k'-a'}^{\overline{S(\mathbf{a}')}}b')_{S(\x)\setminus S(\mathbf{a}')},
\end{split}
\end{equation}
where the sum ranges over all $\x\in\q^n$ with $|S(\x)|=i$ and $S(\x)\supseteq S(\mathbf{a})\cup S(\mathbf{a}')$.
If $\mathbf{a}_j\not= \mathbf{a}'_j$ for some $j$, then the inner product equals zero, since we can factor out $\sum_{\x_j=1}^{q-1} \phi^{\x_j(\mathbf{a}_j-\mathbf{a}'_j)}=0$. So we may assume that $\mathbf{a}=\mathbf{a}'$ (and hence $a=a'$), which simplifies the right-hand side of (\ref{innerproduct}) to
\begin{equation}\label{simpleform}
{n+a-2k \choose i-k}^{-\frac{1}{2}}{n+a-2k'\choose i-k'}^{-\frac{1}{2}} (C_{i-a,k-a}^{\overline{S(\mathbf{a})}}b)^{\transp}C_{i-a,k'-a}^{\overline{S(\mathbf{a})}}b'.
\end{equation}
Indeed, since $\mathbf{a}'=\mathbf{a}$, we observe that
\begin{equation}
\phi^{\left<\mathbf{a},\x\right>-\left<\mathbf{a},\x\right>}=1,
\end{equation}
and hence the summand only depends on the support of $\x$. We obtain 
\begin{equation}
\begin{split}
\sum_{\substack{\x\\|S(\x)|=i, S(x)\supseteq S(\mathbf{a})}} &(C_{i-a,k-a}^{\overline{S(\mathbf{a})}}b)_{S(\x)\setminus S(\mathbf{a})}\cdot (C_{i-a,k'-a}^{\overline{S(\mathbf{a})}}b')_{S(\x)\setminus S(\mathbf{a})}\\
&=\sum_{\substack{X\\|X|=i, X\supseteq S(\mathbf{a})}} (q-1)^i(C_{i-a,k-a}^{\overline{S(\mathbf{a})}}b)_{X\setminus S(\mathbf{a})}\cdot (C_{i-a,k'-a}^{\overline{S(\mathbf{a})}}b')_{X\setminus S(\mathbf{a})}\\
&=(q-1)^i\sum_{\substack{Y\subseteq \overline{S(\mathbf{a})}\\|Y|=i-a}}(C_{i-a,k-a}^{\overline{S(\mathbf{a})}}b)_{Y}\cdot (C_{i-a,k'-a}^{\overline{S(\mathbf{a})}}b')_{Y}\\
&=(q-1)^i(C_{i-a,k-a}^{\overline{S(\mathbf{a})}}b)^{\transp}C_{i-a,k'-a}^{\overline{S(\mathbf{a})}}b'.
\end{split}
\end{equation}
From equation (\ref{simpleform}) and Proposition \ref{binarycase} we conclude that $\left<\Psi_{\mathbf{a},b}^{a,k,i},\Psi_{\mathbf{a},b'}^{a,k',i}\right>$ is nonzero only if $k=k'$ and $b=b'$, in which case the inner product equals $1$.
\end{proof}

The block diagonalisation will follow by writing the matrices $M_{i,j}^{t,p}$ with respect to the new orthonormal basis of $\mathbb{C}^{\q^n}$ formed by the vectors $\Psi_{\mathbf{a},b}^{a,k,i}$. To this end we define for $i,j,t,p,a,k\in\{0,\ldots,n\}$ with $a\leq k\leq i,j$ the number
\begin{equation}
\begin{split}
\alpha(i,j,t,p,a,k):=&\beta_{i-a,j-a,k-a}^{n-a,t-a} (q-1)^{\frac{1}{2}(i+j)-t}\\
&\quad\cdot\sum_{g=0}^p (-1)^{a-g}{a \choose g}{t-a \choose p-g}(q-2)^{t-a-p+g}.
\end{split}
\end{equation}
We obtain the following.
\begin{proposition}\label{blockdiag}
For $(i,j,t,p)\in\mathcal{I}(q,n)$ and $(a,k,i',\mathbf{a},b)\in \mathcal{V}$ we have:
\begin{equation}\label{matrixaction}
\begin{split}
&M_{j,i}^{t,p}\Psi_{\mathbf{a},b}^{a,k,i'}=\\
&\quad\delta_{i,i'}{n+a-2k \choose i-k}^{-\frac{1}{2}} {n+a-2k \choose j-k}^{-\frac{1}{2}} \alpha(i,j,t,p,a,k) \Psi_{\mathbf{a},b}^{a,k,j}.
\end{split}
\end{equation}
\end{proposition}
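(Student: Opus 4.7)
The plan is to evaluate $(M_{j,i}^{t,p}\Psi_{\mathbf{a},b}^{a,k,i'})_\y$ entry by entry and match the result against the definition of $\Psi_{\mathbf{a},b}^{a,k,j}$ at $\y$. The matrix entry $(M_{j,i}^{t,p})_{\y,\x}$ vanishes unless $|S(\x)|=i$ and $|S(\y)|=j$, whereas $(\Psi_{\mathbf{a},b}^{a,k,i'})_\x$ vanishes unless $|S(\x)|=i'$. This immediately produces the factor $\delta_{i,i'}$, and reduces the task to $\y$ with $|S(\y)|=j$. Write $A:=S(\mathbf{a})$ and $Y:=S(\y)$ throughout.

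Substituting the definition of $\Psi_{\mathbf{a},b}^{a,k,i}$, the crucial observation is that the factor $(C_{i-a,k-a}^{\overline A}b)_{S(\x)\setminus A}$ depends on $\x$ only through the support $X:=S(\x)$. Grouping the sum over $\x$ by its support, the computation reduces to evaluating the character sum
$$\sigma(X)=\sum_{\substack{\x\,:\,S(\x)=X\\|\{h\in X\cap Y:\,\x_h=\y_h\}|=p}}\phi^{\left<\mathbf{a},\x\right>}$$
for each $i$-subset $X$ with $X\supseteq A$ and $|X\cap Y|=t$. I would split the positions in $X$ into the four blocks $A\cap Y$, $A\setminus Y$, $(X\setminus A)\cap Y$, $(X\setminus A)\setminus Y$ and compute $\sigma(X)$ block by block. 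Since $\mathbf{a}_h\in\{1,\ldots,q-2\}$ for $h\in A$, we have $\phi^{\mathbf{a}_h}\neq 1$, so the geometric sum $\sum_{\x_h=1}^{q-1}\phi^{\mathbf{a}_h\x_h}$ vanishes. Applied to the block $A\setminus Y$ (which is subject to no $p$-constraint, since $\y_h=0$ there), this forces $\sigma(X)=0$ whenever $A\not\subseteq Y$; the identity then holds trivially, because $(\Psi_{\mathbf{a},b}^{a,k,j})_\y=0$ in that case as well. When $A\subseteq Y$, I introduce the auxiliary parameter $g=|\{h\in A:\,\x_h=\y_h\}|$ and use $\sum_{\x_h\in\{1,\ldots,q-1\}\setminus\{\y_h\}}\phi^{\mathbf{a}_h\x_h}=-\phi^{\mathbf{a}_h\y_h}$ on the $A$-block, together with the elementary count $\binom{t-a}{p-g}(q-2)^{t-a-p+g}(q-1)^{i-t}$ of choices on the $(X\setminus A)$-block, to obtain
$$\sigma(X)=\phi^{\left<\mathbf{a},\y\right>}(q-1)^{i-t}\sum_{g=0}^{p}(-1)^{a-g}\binom{a}{g}\binom{t-a}{p-g}(q-2)^{t-a-p+g},$$
independent of the particular $X$.

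Pulling the $X$-independent $\sigma(X)$ outside the sum, what remains is $\sum_{X}(C_{i-a,k-a}^{\overline A}b)_{X\setminus A}$ with $X\setminus A$ ranging over $(i-a)$-subsets of $\overline A$ having prescribed intersection of size $t-a$ with $Y\setminus A$. This is exactly the sum handled by Proposition~\ref{binarycase}(iii), applied on $V=\overline A$ (so $m=n-a$) with shifted parameters $i-a,j-a,k-a,t-a$; it collapses to $\beta^{n-a,t-a}_{i-a,j-a,k-a}\binom{n+a-2k}{j-k}^{-1}(C_{j-a,k-a}^{\overline A}b)_{Y\setminus A}$. Assembling all collected prefactors (the two $\binom{n+a-2k}{\cdot}^{-1/2}$ factors, the powers of $q-1$ combining to $(q-1)^{(i+j)/2-t}$, and the $g$-summation) and comparing the result with the expansion of $(\Psi_{\mathbf{a},b}^{a,k,j})_\y$ yields precisely $\binom{n+a-2k}{i-k}^{-1/2}\binom{n+a-2k}{j-k}^{-1/2}\alpha(i,j,t,p,a,k)$, as claimed. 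The main obstacle will be the combinatorial bookkeeping in the evaluation of $\sigma(X)$: tracking how the $p$-constraint splits between the $A$- and $(X\setminus A)$-parts simultaneously with the character sum is what produces the inner $g$-summation built into $\alpha$, and care is needed so that the various $\binom{n+a-2k}{\cdot}$ binomials, the $(q-1)$ and $(q-2)$ powers, and the sign $(-1)^{a-g}$ come out in exactly the required arrangement.
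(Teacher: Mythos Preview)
Your proof is correct and follows essentially the same route as the paper's own argument: both reduce to the case $|S(\y)|=j$, dispose of $S(\mathbf{a})\not\subseteq S(\y)$ via the vanishing geometric sum $\sum_{l=1}^{q-1}\phi^{\mathbf{a}_h l}=0$, split the contributing positions according to their membership in $S(\mathbf{a})$ and $S(\y)$, introduce the auxiliary parameter $g=|\{h\in S(\mathbf{a}):\x_h=\y_h\}|$ to produce the inner sum in $\alpha$, and finish by invoking Proposition~\ref{binarycase}(iii) on $\overline{S(\mathbf{a})}$. The only organisational difference is that you first group by the support $X=S(\x)$ and isolate the character sum $\sigma(X)$ as a separate quantity before summing over $X$, whereas the paper carries the five-part decomposition $U,U',V,V',W$ through in one pass; the computations are otherwise identical.
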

\begin{proof}
Clearly, both sides of (\ref{matrixaction}) are zero if $i\not=i'$, hence we may assume that $i=i'$. We calculate $(M_{j,i}^{t,p}\Psi_{\mathbf{a},b}^{a,k,i})_{\y}$. We may assume that $|S(\y)|=j$, since otherwise both sides of (\ref{matrixaction}) have a zero in position $\y$. We have:

\begin{eqnarray}
(M_{j,i}^{t,p}\Psi_{\mathbf{a},b}^{a,k,i})_{\y}&=&\sum_{\x\in\q^n} (M_{j,i}^{t,p})_{\y,\x} (\Psi_{\mathbf{a},b}^{a,k,i})_{\x}\\
&=&(q-1)^{-\frac{1}{2}i}{n+a-2k \choose i-k}^{-\frac{1}{2}}\sum_{\x}\phi^{\left<\x,\mathbf{a} \right>} (C_{i-a,k-a}^{\overline{S(\mathbf{a})}} b)_{S(\x)\setminus S(\mathbf{a})},\nonumber
\end{eqnarray}
where the last sum ranges over all $\x\in\q^n$ with $|S(\x)|=i$, $S(\x)\supseteq S(\mathbf{a})$, $|S(\x)\cap S(\y)|=t$ and $|\{h\mid \x_h=\y_h\not=0\}|=p$.

We will work out the sum:
\begin{equation}
\sum_{\substack{\x\\|S(\x)|=i, S(\x)\supseteq S(\mathbf{a})\\|S(\x)\cap S(\y)|=t\\|\{h\mid \x_h=\y_h\not=0\}|}}\phi^{\left<\x,\mathbf{a} \right>} (C_{i-a,k-a}^{\overline{S(\mathbf{a})}} b)_{S(\x)\setminus S(\mathbf{a})}.
\end{equation}
 
If there exists an $h\in S(\mathbf{a})\setminus S(\y)$, we can factor out $\sum_{l=1}^{q-1} \phi^{l\cdot\mathbf{a}_h}=0$, implying that both sides of (\ref{matrixaction}) have a zero at position $\y$. Hence we may assume that $S(\y)\supseteq S(\mathbf{a})$. Now the support of each word $\x$ in this sum can be split into five parts $U,U',V,V',W$, where
\begin{eqnarray}
U&=&\{h\in S(\mathbf{a})\mid \x_h=\y_h\}\\
U'&=&S(\mathbf{a})\setminus U,\nonumber\\
V&=&\{h\in S(\y)\setminus S(\mathbf{a})\mid \x_h=\y_h\},\nonumber\\
V'&=&((S(\y)\setminus S(\mathbf{a}))\cap S(\x))\setminus V,\nonumber\\
W&=&S(\x)\setminus S(\y).\nonumber 
\end{eqnarray}
Setting $g:=|U|$, gives $|U'|=a-g$, $|V|=p-g$, $|V'|=t-a-p+g$ and $|W|=i-t$. Hence splitting the sum over $g$, we obtain:
\begin{multline} \sum_{g=0}^p \:\:\sum_{U,U',V,V',W} (C_{i-a,k-a}^{\overline{S(\mathbf{a})}} b)_{V\cup V'\cup W}\\
\prod_{h\in U} \phi^{\mathbf{a}_h \y_h} \prod_{h\in U'} (-\phi^{\mathbf{a}_h \y_h}) \prod_{h\in V} 1 \prod_{h\in V'} (q-2) \prod_{h\in W} (q-1),
\end{multline}
where $U,U',V,V',W$ are as indicated. Substituting $T=V\cup V'\cup W$, we can rewrite this as
\begin{multline}
\sum_{g=0}^p {a \choose g} {t-a \choose p-g}(-1)^{a-g} (q-2)^{t-a-p+g}\cdot\\
(q-1)^{i-t} \phi^{\left<\mathbf{a},\y\right>} \sum_{T} (C_{i-a,k-a}^{\overline{S(\mathbf{a})}} b)_T,
\end{multline}
where the sum ranges over all $T\subseteq \overline{S(\mathbf{a})}$ with $|T|=i-a$ and $|T\cap S(\y)|=t-a$. 
Now by Proposition \ref{binarycase}(iii), this is equal to
\begin{multline}
(q-1)^{i-t} \sum_{g=0}^p {a \choose g} {t-a \choose p-g}(-1)^{a-g} (q-2)^{t-a-p+g}\cdot\\
\phi^{\left<\mathbf{a},\y\right>} {n+a-2k \choose j-k}^{-1}\beta_{i-a,j-a,k-a}^{n-a,t-a} (C_{j-a,k-a}^{\overline{S(\mathbf{a})}} b)_{S(\y)\setminus S(\mathbf{a})},
\end{multline}
which equals
\begin{multline}
(\Psi_{\mathbf{a},b}^{a,k,j})_{\y} \cdot \beta_{i-a,j-a,k-a}^{n-a,t-a} {n+a-2k \choose j-k}^{-\frac{1}{2}} (q-1)^{i-t+\frac{1}{2}j}\cdot\\
\sum_{g=0}^p (-1)^{a-g} {a \choose g} {t-a \choose p-g} (q-2)^{t-a-p+g}.
\end{multline}
This completes the proof. 
\end{proof}

If we define $U$ to be the $\q^n\times \mathcal{V}$ matrix with $\Psi_{\mathbf{a},b}^{a,k,i}$ as the $(a,k,i,\mathbf{a},b)$-th column, then Proposition \ref{blockdiag} shows that for each $(i,j,t,p)\in \mathcal{I}(q,n)$ the matrix $\widetilde{M}_{i,j}^{t,p}:=U^{\ast} M_{i,j}^{t,p} U$ has entries
\begin{multline}\label{blockform}
(\widetilde{M}_{i,j}^{t,p})_{(a,k,l,\mathbf{a},b),(a',k',l',\mathbf{a}',b')}=\\
\begin{cases}
{n+a-2k \choose i-k}^{-\frac{1}{2}} {n+a-2k \choose j-k}^{-\frac{1}{2}} \alpha(i,j,t,p,a,k)&\text{if $a=a'$, $k=k'$, $\mathbf{a}=\mathbf{a}'$, $b=b'$ and}\\
&\text{$l=i$, $l'=j$},\\
0&\text{otherwise.}
\end{cases}
\end{multline}
This implies
\begin{proposition}
The matrix $U$ gives a block diagonalisation of $\mathcal{A}_{q,n}$.
\end{proposition}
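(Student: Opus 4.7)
The plan is to follow the same strategy as in the binary case at the end of Section \ref{Sec:blockdiag}. First, I would read off from the explicit formula (\ref{blockform}) for the entries of $\widetilde{M}_{i,j}^{t,p}=U^{\ast}M_{i,j}^{t,p}U$ that every such matrix is block-diagonal with respect to the partition of $\mathcal{V}$ by the tuple $(a,k,\mathbf{a},b)$: entries connecting indices with differing $(a,k,\mathbf{a},b)$ vanish. Moreover, within the block indexed by fixed $(a,k,\mathbf{a},b)$ (whose rows and columns are labelled by $l,l'\in\{k,k+1,\ldots,n+a-k\}$), the nonzero entry depends only on $(a,k,i,j,t,p)$ and not on $\mathbf{a}$ or $b$. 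Hence for each admissible pair $(a,k)$ (i.e.\ $0\le a\le k$ with $2k\le n+a$), the block is an $(n+a-2k+1)\times(n+a-2k+1)$ matrix, and it is repeated identically for each of the
\[
q_{a,k}:={n\choose a}(q-2)^a\left[{n-a\choose k-a}-{n-a\choose k-a-1}\right]
\]
choices of $(\mathbf{a},b)$.

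Since the $M_{i,j}^{t,p}$ span $\mathcal{A}_{q,n}$, it follows that $U^{\ast}\mathcal{A}_{q,n}U\subseteq \mathcal{C}$, where $\mathcal{C}$ denotes the matrix $\ast$-algebra consisting of all block-diagonal matrices of the shape just described. To conclude equality, which is precisely what the statement about a block diagonalisation asserts, I would compare dimensions: since $U$ is unitary, $\dim U^{\ast}\mathcal{A}_{q,n}U=\dim\mathcal{A}_{q,n}={n+4\choose 4}$ by Proposition \ref{dimension}, while
\[
\dim\mathcal{C}=\sum_{\substack{a,k\ge 0\\ a\le k,\ 2k\le n+a}}(n+a-2k+1)^2,
\]
by the Wedderburn-type dimension formula $\sum p_i^2$. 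Once these two numbers agree, the inclusion $U^{\ast}\mathcal{A}_{q,n}U\subseteq\mathcal{C}$ must be an equality.

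The main obstacle is thus establishing the combinatorial identity
\[
\sum_{\substack{a,k\ge 0\\ a\le k,\ 2k\le n+a}}(n+a-2k+1)^2={n+4\choose 4}.
\]
I would handle this by the substitution $u:=a$, $v:=k-a$, which converts the index set to $\{(u,v)\mid u,v\ge 0,\ u+2v\le n\}$ and the summand to $(n-u-2v+1)^2$. Grouping by $m:=u+2v$, and noting that $|\{(u,v)\mid u+2v=m,\ u,v\ge 0\}|=\lfloor m/2\rfloor+1$, the identity reduces to
\[
\sum_{m=0}^{n}(n-m+1)^2(\lfloor m/2\rfloor+1)={n+4\choose 4}.
\]
I would prove this by induction on $n$ in steps of two: a short computation shows that the left-hand side satisfies $f(n)-f(n-2)=\sum_{j=1}^{n+1}j^2={n+3\choose 3}+{n+2\choose 3}={n+4\choose 4}-{n+2\choose 4}$, which matches the recurrence for the right-hand side; the base cases $n=0,1$ are immediate. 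This is the natural $q$-ary analogue of the identity ${n+3\choose 3}-{n+1\choose 3}=(n+1)^2$ used to close the binary argument.
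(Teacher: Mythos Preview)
Your proposal is correct and follows essentially the same approach as the paper: read off the block structure from (\ref{blockform}), and then verify the inclusion $U^{\ast}\mathcal{A}_{q,n}U\subseteq\mathcal{C}$ is an equality by the dimension count $\sum_{0\le a\le k\le n+a-k}(n+a-2k+1)^2={n+4\choose 4}$. The only difference is in how this identity is evaluated: you group by $m=u+2v$ and run an induction in steps of two, whereas the paper groups by $u=a$, recognises the inner sum over $v=k-a$ as the already-established binary identity $\sum_{k'=0}^{\lfloor m/2\rfloor}(m-2k'+1)^2={m+3\choose 3}$ (with $m=n-a$), and then finishes with the hockey-stick identity $\sum_{a=0}^n{n-a+3\choose 3}={n+4\choose 4}$. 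The paper's route is a little shorter since it reuses the binary computation directly, but your argument is equally valid.
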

\begin{proof}
Equation (\ref{blockform}) implies that each matrix $\widetilde{M}_{i,j}^{t,p}$ has a block diagonal form, where for each pair $(a,k)$ there are ${n \choose a}(q-2)^a\left[{n-a\choose k-a }-{n-a\choose n-a-1}\right]$ copies of an $(n+a+1-2k)\times (n+a+1-2k)$ block on the diagonal. For fixed $a,k$ the copies are indexed by the pairs $(\mathbf{a},b)$ such that $\mathbf{a}\in\q^n$ satisfies $|S(\mathbf{a})|=a$, $\mathbf{a}_h\not=q-1$ for $h=1,\ldots,n$, and $b\in B_{k-a}^{\overline{S(\mathbf{a})}}$. In each copy the rows and columns in the block are indexed by the integers $i$ with $k\leq i\leq n+a-k$. Hence we need to show that all matrices of this block diagonal form belong to $U^{\ast}\mathcal{A}_{q,n}U$. It suffices to show that the dimension $\sum_{0\leq a\leq k\leq n+a-k} (n+a+1-2k)^2$ of the algebra consisting of the matrices in the given block diagonal form equals the dimension of $\mathcal{A}_{q,n}$, which is $n+4\choose 4$. This follows from
\begin{equation}
\begin{split}
\sum_{0\leq a\leq k\leq n+a-k} &(n+a+1-2k)^2\\
&=\sum_{a=0}^n\sum_{k=0}^{\lfloor\frac{n-a}{2}\rfloor} (n-a+1-2k)^2\\
&=\sum_{a=0}^n {n-a+3\choose 3}\\
&={n+4\choose 4}.
\end{split}
\end{equation}
\end{proof}

This implies the following result.  
\begin{theorem}\label{blokform1}
The matrix 
\begin{equation}
M=\sum_{(i,j,t,p)} x_{i,j}^{t,p} M_{i,j}^{t,p}
\end{equation}
is positive semidefinite if and only if for all $a,k$ with $0\leq a\leq k\leq n+a-k$ the matrices
\begin{equation}\label{blok}
\left(\sum_{t,p}\alpha(i,j,t,p,a,k)x_{i,j}^{t,p}\right)_{i,j=k}^{n+a-k}
\end{equation}
are positive semidefinite.
\end{theorem}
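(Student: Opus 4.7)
The plan is to deduce the theorem directly from the explicit block diagonalisation in Proposition \ref{blockdiag}. Since $U$ is unitary, $M\psd$ if and only if $\widetilde{M}:=U^{\ast}MU\psd$. Writing $M=\sum_{(i,j,t,p)}x_{i,j}^{t,p}M_{i,j}^{t,p}$ and using linearity gives $\widetilde{M}=\sum_{(i,j,t,p)}x_{i,j}^{t,p}\widetilde{M}_{i,j}^{t,p}$, so the entries of $\widetilde{M}$ can be read off from formula (\ref{blockform}).

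First I would observe that (\ref{blockform}) shows $\widetilde{M}$ to be block diagonal with blocks indexed by the tuples $(a,k,\mathbf{a},b)$ in $\mathcal{V}$: the entry in position $(i,j)$ of the block indexed by $(a,k,\mathbf{a},b)$ equals
\begin{equation*}
{n+a-2k \choose i-k}^{-\frac{1}{2}}{n+a-2k \choose j-k}^{-\frac{1}{2}}\sum_{t,p}\alpha(i,j,t,p,a,k)x_{i,j}^{t,p},
\end{equation*}
while all off-block entries vanish. Crucially, this expression depends only on the pair $(a,k)$ and not on $(\mathbf{a},b)$; hence the blocks attached to a given $(a,k)$ are identical copies, and it suffices to check positive semidefiniteness of one block per pair $(a,k)$.

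Next, let $D^{(a,k)}$ denote the real diagonal matrix indexed by $i\in\{k,\ldots,n+a-k\}$ whose $i$-th diagonal entry is ${n+a-2k\choose i-k}^{-1/2}$. Then the block for $(a,k)$ factors as $D^{(a,k)}B^{(a,k)}D^{(a,k)}$, where
\begin{equation*}
B^{(a,k)}:=\left(\sum_{t,p}\alpha(i,j,t,p,a,k)x_{i,j}^{t,p}\right)_{i,j=k}^{n+a-k}
\end{equation*}
is exactly the matrix in the statement of the theorem. The indices satisfy $0\leq i-k\leq n+a-2k$, so the diagonal entries of $D^{(a,k)}$ are strictly positive and $D^{(a,k)}$ is invertible. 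Consequently the block is positive semidefinite if and only if $B^{(a,k)}$ is. Combining all equivalences yields the theorem.

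There is no genuine obstacle here: all of the substantive work is contained in Proposition \ref{blockdiag}, and what remains is careful bookkeeping of the block structure and the diagonal rescaling by $D^{(a,k)}$.
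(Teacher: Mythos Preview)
Your proof is correct and follows essentially the same approach as the paper's: use that $U$ is unitary so $M\psd$ iff $U^{\ast}MU\psd$, then read off the block structure from (\ref{blockform}). You are in fact more careful than the paper, which simply asserts that ``the blocks are exactly the matrices in (\ref{blok})''; you make explicit the diagonal rescaling $D^{(a,k)}B^{(a,k)}D^{(a,k)}$ and note that the positive diagonal entries of $D^{(a,k)}$ make this equivalent to $B^{(a,k)}\psd$. One minor slip: the elements of $\mathcal{V}$ are five-tuples $(a,k,i,\mathbf{a},b)$, so the blocks are indexed by the four-tuples $(a,k,\mathbf{a},b)$ obtained by dropping $i$, with $i$ ranging over $\{k,\ldots,n+a-k\}$ inside each block---exactly as you intend.
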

\begin{proof}
The matrix $M$ is positive semidefinite if and only if $U^{*}MU$ is positive semidefinite. Since $U^{*}MU$ is in block diagonal form, where the blocks are exactly the matrices in (\ref{blok}), each with multiplicity at least one, the theorem follows.
\end{proof}

\begin{theorem}\label{blokform2}
The matrix 
\begin{equation}
R\left( \sum_{(i,j,t,p)} x_{i,j}^{t,p} M_{i,j}^{t,p}\right)
\end{equation}
is positive semidefinite if and only if for all $a,k$ with $0\leq a\leq k\leq n+a-k$ and $k\not=0$ the matrix
\begin{equation}
\left(\sum_{t,p}\alpha(i,j,t,p,a,k)x_{i,j}^{t,p}\right)_{i,j=k}^{n+a-k}
\end{equation}
is positive semidefinite, and also the matrix 
\begin{equation}
\begin{pmatrix}
1&x^{\transp}\\
x&L
\end{pmatrix}
\end{equation}
is positive semidefinite, where
\begin{equation}
L:=\left(\sum_{t,p}\alpha(i,j,t,p,0,0)x_{i,j}^{t,p}\right)_{i,j=0}^{n}
\end{equation}
and
\begin{equation}
x_i={n\choose i}(q-1)^i\cdot x_{i,i}^{i,i}\quad\text{for $i=0,\ldots,n$.}
\end{equation}
\end{theorem}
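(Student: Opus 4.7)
The plan is to block-diagonalize $R(M)$ by extending the unitary $U$ of the previous section by one coordinate. Setting $\tilde{U}:=1\oplus U$, one has
\[
\tilde{U}^{\ast}R(M)\tilde{U}=\begin{pmatrix}1 & m^{\transp}U\\U^{\ast}m & U^{\ast}MU\end{pmatrix},
\]
with $m:=\diag(M)$. By Proposition~\ref{blockdiag} the bottom-right block is already in block-diagonal form, so the whole argument turns on the vector $U^{\ast}m$.

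The decisive step is to observe that $m$ is weight-constant: since $(M_{i,j}^{t,p})_{\mathbf{u},\mathbf{u}}\neq 0$ only when $(i,j,t,p)=(w,w,w,w)$ with $w=|S(\mathbf{u})|$, one has $m_{\mathbf{u}}=x_{w,w}^{w,w}$. Hence $m$ lies in the span of the weight-indicator vectors $\one_0,\ldots,\one_n$. Unrolling the definition of $\Psi_{\zero,e_{\emptyset}}^{0,0,w}$---using $\phi^{\langle\zero,\x\rangle}=1$ and that $(C^{\{1,\ldots,n\}}_{w,0}e_{\emptyset})_{S(\x)}$ equals $1$ precisely when $|S(\x)|=w$---shows that $\Psi_{\zero,e_{\emptyset}}^{0,0,w}$ is a scalar multiple of $\one_w$. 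Consequently the columns of $U$ indexed by $(0,0,w,\zero,e_{\emptyset})$, $w=0,\ldots,n$, span the same subspace as the $\one_w$, and $U^{\ast}m$ is supported only on those $n+1$ coordinates.

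From this I would conclude that $\tilde{U}^{\ast}R(M)\tilde{U}$ is itself block diagonal: each block of $U^{\ast}MU$ indexed by $(a,k,\mathbf{a},b)\neq(0,0,\zero,e_{\emptyset})$---equivalently the blocks with $k\geq 1$, since $a\leq k$ and $(a,k)=(0,0)$ forces $\mathbf{a}=\zero$, $b=e_{\emptyset}$---is untouched and, up to harmless diagonal scaling, coincides with the corresponding matrix in~(\ref{blok}) of Theorem~\ref{blokform1}. This yields the first family of conditions. The unique remaining block is the $(0,0,\zero,e_{\emptyset})$-block of $U^{\ast}MU$ bordered by one extra row and column carrying $1$ and the inner products $\langle\Psi_{\zero,e_{\emptyset}}^{0,0,i},m\rangle$. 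Plugging in the explicit form of $\Psi_{\zero,e_{\emptyset}}^{0,0,i}$ together with $|\{\mathbf{u}\colon |S(\mathbf{u})|=i\}|=\binom{n}{i}(q-1)^i$ computes the border entries, while the entries of the $(0,0)$-block itself are computed from Proposition~\ref{blockdiag}. A final positive diagonal conjugation, which preserves positive semidefiniteness, brings the bordered block into the form $\begin{pmatrix}1 & x^{\transp}\\ x & L\end{pmatrix}$ displayed in the theorem.

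The main obstacle will be the careful bookkeeping of the two families of normalization factors hidden in $\Psi^{a,k,i}_{\mathbf{a},b}$---the $\binom{n+a-2k}{i-k}^{-1/2}$ coming from the combinatorial basis vectors $C^V_{i,k}b$ and the $(q-1)^{-i/2}$ coming from the Fourier-character prefactor---and verifying that the chosen diagonal conjugation simultaneously turns the border into $x$ with $x_i=\binom{n}{i}(q-1)^i x_{i,i}^{i,i}$ and the bottom-right into $L$ with $L_{i,j}=\sum_{t,p}\alpha(i,j,t,p,0,0)x_{i,j}^{t,p}$.
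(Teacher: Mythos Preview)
Your approach is essentially identical to the paper's: conjugate $R(M)$ by $1\oplus U$, observe that $\diag(M)=\sum_i x_{i,i}^{i,i}\chi^{S_i(\zero)}$ and that each $\chi^{S_i(\zero)}$ is a scalar multiple of the basis vector $\Psi_{\zero,1}^{0,0,i}$, so that $U^{\ast}\diag(M)$ is supported only in the $(a,k)=(0,0)$ block; the remaining blocks are then exactly those of Theorem~\ref{blokform1}. The paper's proof is terser---it writes out $U^{\ast}\diag(M)$ directly and ends with ``The theorem follows''---whereas you make explicit the final positive diagonal conjugation needed to strip the normalizing factors $\binom{n}{i}^{-1/2}$ from the $(0,0)$-block and match the displayed $L$ and $x$; that extra care is appropriate, but the underlying argument is the same.
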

\begin{proof}
Let 
\begin{equation}
M:=\sum_{(i,j,t,p)} x_{i,j}^{t,p} M_{i,j}^{t,p}.
\end{equation}
Observe that
\begin{equation}
\begin{pmatrix}1&0\\0&U\end{pmatrix}^{*}R(M)\begin{pmatrix}1&0\\0&U\end{pmatrix}=\begin{pmatrix}1&(\diag (M))^{\transp}U\\U^{*}\diag (M)&U^{*}MU\end{pmatrix}.
\end{equation}
Since 
\begin{equation}
\chi^{S_i(\zero)}={n\choose i}(q-1)^i\Psi_{\zero,1}^{0,0,i}={n\choose i}(q-1)^iU_{(0,0,i,\zero,1)}
\end{equation}
and $U^{*}U=I$, we see that 
\begin{eqnarray}
U^{*}\diag (M)&=&U^{*}\sum_{i=0}^n x_{i,i}^{i,i}\chi^{S_i(\zero)}\\
&=&\sum_{i=0}^n x_{i,i}^{i,i}{n\choose i}(q-1)^i U^{*}U_{(0,0,i,\zero,1)}\nonumber\\
&=&\sum_{i=0}^n x_{i,i}^{i,i}{n\choose i}(q-1)^i \chi^{(0,0,i,\zero,1)}\nonumber
\end{eqnarray}
has nonzero entries only in the block corresponding to $a=k=0$. The theorem follows.
\end{proof}

\section{The Terwilliger algebra of the Johnson scheme}
The Hamming scheme is a natural and powerful tool in studying subsets of the binary Hamming space with prescribed distance relations. In particular, the Delsarte bound gives good upper bounds on the size of a code. In the case of constant weight codes, one considers subsets of the Johnson space, consisting of the subsets of some fixed size $w$. Now the appropiate tool to use is the \notion{Johnson scheme}. 

Let $w\leq n$ be positive integers and let $\P_n^w$ be the collection of subsets of $\{1,\ldots,n\}$ of cardinality $w$. So $\mathcal{P}_n$ is the disjoint union of $\mathcal{P}_n^0,\mathcal{P}_n^1,\ldots,\mathcal{P}_n^n$. We will assume that $w\leq \rounddown{\frac{n}{2}}$. This is not a severe restriction since $\P_n^w$ and $P_n^{n-w}$, with the Hamming distance, are isomorphic. This is because the Hamming distance is preserved under taking complements: $d(U,V)=d(\overline{U},\overline{V})$ for sets $U,V\in\{1,\ldots,n\}$. It is convenient to define the \notion{Johnson distance} $d_J$ by
\begin{equation}
d_J(U,V):=w-|U\cap V|=\frac{1}{2}d(U,V).
\end{equation}
We denote by $\autj$ the set of automorphisms of the Johnson space. It is easy to see that the automorphisms are just the permutations of $\P_n^w$ induced by permuting the ground set $\{1,\ldots,n\}$. The distance relations $R_0,\ldots, R_w$ given by 
\begin{equation}
R_d:=\{(U,V)\in \P_n^w\times \P_n^w\mid d_J(U,V)=d\}
\end{equation}
are precisely the orbits under the action of $\autj$ on $\P_n^w\times \P_n^w$:
\begin{equation}
R_d=\{(\sigma U,\sigma V)\mid \sigma\in \autj\},\quad\text{when $d_J(U,V)=d$}.
\end{equation}
Hence $R_0,\ldots,R_w$ form an association scheme called the \emph{Johnson scheme} $J(n,w)$. The Bose--Mesner algebra of the Johnson scheme is spanned by the matrices $A_d\in\mathbb{C}^{\P_n^w\times\P_n^w}$, $d=0,\ldots,w$ given by
\begin{equation}
(A_d)_{U,V}:=\begin{cases}1&\text{if $d_J(U,V)=d$}\\0&\text{otherwise}\end{cases}.
\end{equation}

Like in the case of the Hamming scheme, it useful to consider the refinement of the Johnson scheme obtained by replacing the full symmetry group $\autj$ by the stabilizer subgroup $\stabj$ of some arbitrary element $W\in \P_n^w$. Therefore we fix some $W\in\P_n^w$. Consider the complex algebra $\mathcal{T}$ spanned by the $0$--$1$ matrices $M_{i,j}^{s,t}$ where $0\leq s\leq i,j\leq w$ and $t\leq w-i,w-j$, given by
\begin{equation}
(M_{i,j}^{s,t})_{U,V}:=\begin{cases}1&\text{if $|U\cap W|=i,|V\cap W|=j,$}\\
&\text{$|U\cap V\cap W|=s, |U\cap V\setminus W|=t$}\\
0&\text{otherwise}\end{cases}.
\end{equation}
It is not hard to verify that supports of the matrices $M_{i,j}^{s,t}$ correspond to the orbits of $\P_n^w\times\P_n^w$ under the action of $\stabj$. The algebra $\mathcal{T}$ is in fact the Terwilliger algebra of the Johnson scheme $J(n,w)$ with respect to $W$. 

We will give a block diagonalisation of the Terwilliger algebra of the Johnson scheme. This was implicit in the work of Schrijver (\cite{Lexcodes}).

Let $\mathcal{A}_{w,n-w}:=\mathcal{A}_w\otimes\mathcal{A}_{n-w}$ be the tensor product of the algebras $\mathcal{A}_w$ and $\mathcal{A}_{n-w}$. The algebra $\mathcal{A}_{w,n-w}$ consists of the matrices
\begin{equation}\label{generalelt}
\sum_{i,j,t,i',j',t'} x_{i,j,i',j'}^{t,t'} M_{w;i,j}^t\otimes M_{n-w;i',j'}^{t'},
\end{equation}
where $x_{i,j,i',j'}^{t,t'}\in \mathbb{C}$. From Section \ref{Sec:blockdiag} we obtain matrices $U_w$ and $U_{n-w}$ such that $U_w^{\transp}\mathcal{A}_wU_w$ and $U_{n-w}^{\transp}\mathcal{A}_{n-w}U_{n-w}$ are in block diagonal form. It follows that $U:=U_w\otimes U_{n-w}$ block diagonalises $\mathcal{A}_{w,n-w}$ since
\begin{equation}
U^{\transp}\mathcal{A}_{w,n-w}U=U_w^{\transp}\mathcal{A}_wU_w\otimes U_{n-w}^{\transp}\mathcal{A}_{n-w}U_{n-w}.
\end{equation}
It follows from Proposition \ref{imageinblok} that the blocks are indexed by the pairs
\begin{equation}
(k,k')\in \{0,1,\ldots,\rounddown{\frac{w}{2}}\}\times \{0,1,\ldots,\rounddown{\frac{n-w}{2}}\}.
\end{equation}
For each such pair $(k,k')$ we have a block $B_{k,k'}$, consisting of all $(V_k\times V'_{k'})\times (V_k\times V'_{k'})$ matrices, where $V_k:=\{k,\ldots,w-k\}$ and $V'_{k'}:=\{k',\ldots,n-w-k'\}$. The image of (\ref{generalelt}) in block $(k,k')$ is given by
\begin{equation}
\left(\sum_{t,t'}x_{i,j,i',j'}^{t,t'}\beta_{i,j,k}^{w,t}\cdot\beta_{i',j',k'}^{n-w,t'}\textstyle{\left[{w-2k\choose i-k}{w-2k\choose j-k}{n-w-2k'\choose i'-k'}{n-w-2k'\choose j'-k'}\right]^{-\frac{1}{2}}}\right)_{\substack{i,j\in V_k\\i',j'\in V'_{k'}}}
\end{equation}
 
By extending each matrix in $\mathcal{T}$ by zeros to a $\P_n\times \P_n$ matrix, and identifying $\P_n$ with $\P_w\times \mathcal{P}_{n-w}$ (by identifying $U$ and $(U\cap W,U\setminus W)$ for any $U\in \P_n$), the Terwilliger algebra $\mathcal{T}$ can be seen as a subalgebra of $\mathcal{A}_{w,n-w}$, where $M_{i,j}^{s,t}$ is identified with $M_{i,j}^s\otimes M_{w-i,w-j}^t$. It follows that in the block diagonalisation given above, $\mathcal{T}$ is mapped in block $(k,k')$ to those matrices that have nonzeros only in positions with rows and columns indexed by $\{(i,w-i)\mid i\in V_k, w-i\in V'_{k'}\}$. Hence restricting each block to those indices, we obtain a block diagonalisation of $\mathcal{T}$ with blocks of size 
\begin{equation}
|\{k,\ldots,n-k\}\cap \{2w-n+k',\ldots,w-k'\}|
\end{equation}
for each pair $(k,k')$ with $k+k'\leq w$. This was used in \cite{Lexcodes} to obtain bounds on constant weight codes.

In the nonbinary case, let $\E$ be the set of $q$-ary word of length $n$ and weight $w$, equipped with the Hamming distance. The $q$-ary Johnson scheme $J_q(n,w)$ has adjacency matrices $M_{t,p}$ for $0\leq p\leq t\leq w$ given by the orbits of $\E\times \E$ under the action of the automorphism group of $\E$: 
\begin{equation}
(M_{t,p})_{\x,\y}:=\begin{cases}1&\text{if $|S(\x)\cap S(\y)|=t, |\{i\mid \x_i=\y_i\not=0\}|=p$,}\\
0&\text{otherwise}.
\end{cases}
\end{equation}
Replacing the full automorphism group by the stabilizer of some word $\w\in\E$ we obtain an algebra containing the Bose--Mesner algebra of the nonbinary Johnson scheme which may serve to improve bounds for constant weight codes in the nonbinary case. We do not know if this is the Terwilliger algebra (with respect to $\w$) of the nonbinary Johnson scheme $J_q(n,w)$. The algebra is a subalgebra of a tensor product of $\mathcal{A}_{q,n-w}$ and an algebra of dimension $w+9\choose 9$ (or $w+8\choose 8$ if $q=3$). It would be interesting to find a block diagonalisation of this algebra.
  
\chapter{Error correcting codes}\label{CH:errorcodes}
Given a code $C\subseteq \E:=q^n$, the \notion{minimum distance} of $C$ is defined to be the minimum of $\{d(\mathbf{u},\mathbf{v})\mid \mathbf{u}\not=\mathbf{v}, \mathbf{u},\mathbf{v}\in C\}$. The maximum cardinality of a code with minimum distance at least $d$ is denoted by $A_q(n,d)$. In this chapter we give new upper bounds on $A_q(n,d)$ based on a semidefinite programming approach, strengthening Delsarte's linear programming bound.
For more information on coding theory, the reader is referred to \cite{Lint,Sloane}.
\section{Delsarte's linear programming bound}
Given a code $C\subseteq \E$, the $(n+1)$-tuple $(x_0,x_1,\ldots,x_n)$ defined by
\begin{equation}
x_i:=|C|^{-1}\cdot |\{(\mathbf{u},\mathbf{v})\in C\times C\mid d(\mathbf{u},\mathbf{v})=i\}|
\end{equation}
is called the \notion{distance distribution} of the code $C$. For each $i$ the number $x_i$ equals the average number of code words at distance $i$ from a given code word. Observe that $x_0=1$ and $x_0+x_1+\cdots+x_n=|C|$. The key observation that leads to the linear programming bound is that the following inequalities hold:

\begin{equation}\label{krawtchouk}
\sum_{i=0}^n x_i K_j(i)\geq 0 \quad \text{for all $j=0,\ldots,n$},
\end{equation}
where 
\begin{equation}
K_j(x):=\sum_{k=0}^j (-1)^k{x\choose k}{n-x\choose j-k}(q-1)^{j-k},\quad j=0,\ldots,n
\end{equation}
are the \notion{Krawtchouk polynomials}. These inequalities give rise to the following linear programming bound on the size of a code with minimum distance at least $d$:
\begin{eqnarray}
A_q(n,d)\leq \max \{\sum_{i=0}^n x_i \mid&&x_0=1, x_1,\ldots,x_n\geq 0,\\
&&x_1=\cdots=x_{d-1}=0,\nonumber\\
&&\text{the $x_i$ satisfy (\ref{krawtchouk})}\}.\nonumber
\end{eqnarray}
This approach turned out to be very powerful. Many of the best known upper bounds on $A_q(n,d)$ are obtained using this method.   

A proof of the validity of (\ref{krawtchouk}) can be found for example in \cite{Delsarte,Lint}. To illustrate the semidefinite programming approach in this chapter, we sketch a proof here. For any code $C\subseteq \E$, we denote by $M_C$ the $0$--$1$ matrix defined by 
\begin{equation}
(M_C)_{\mathbf{u},\mathbf{v}}:=\begin{cases}1&\text{if $\mathbf{u},\mathbf{v}\in C$}\\0&\text{otherwise}\end{cases}.
\end{equation}
We prove (\ref{krawtchouk}).
\begin{proof}
Consider the matrix
\begin{equation}
M:=\frac{1}{|\aut|\cdot|C|}\sum_{\sigma\in\aut}M_{\sigma C}.
\end{equation}
The matrix $M$ is an element of the Bose--Mesner algebra of the Hamming scheme and the coefficients with respect to the adjacency matrices $A_i$ of the Hamming scheme reflect the distance distribution: 
\begin{equation}
M=\sum_{i=0}^n x_i\gamma_i^{-1}A_i,
\end{equation} 
where 
\begin{equation}
\gamma_i:=q^n(q-1)^i{n\choose i}
\end{equation}
is the number of nonzero entries of $A_i$. Indeed, we have $\left<A_i,M_{\sigma C}\right>=|C|x_i$, and hence $\left<A_i,M\right>=x_i$ for every $i=0,\ldots,n$ and every $\sigma\in \aut$. 

The matrix $M$ is a nonnegative combination of the positive semidefinite matrices $M_{\sigma C}$ and is therefore positive semidefinite itself. The inequalities (\ref{krawtchouk}) will follow from this semidefiniteness by diagonalising the Bose--Mesner algebra. Let the unitary matrix $U\in\mathbb{C}^{\E\times\E}$ be given by 
\begin{equation}
(U)_{\mathbf{u},\mathbf{v}}:=q^{-n/2}\phi^{\left<\mathbf{u},\mathbf{v}\right>}
\end{equation}  
for $\mathbf{u},\mathbf{v}\in\E$, where $\phi$ is a primitive $q$-th root of unity. It is a straightforward calculation to show that for each $i=0,\ldots,n$ the matrix
$\widetilde{A}_i:=U^{\ast}A_iU$ is a diagonal matrix with
\begin{equation}
(\widetilde{A}_i)_{\mathbf{u},\mathbf{u}}=K_i(j)=\gamma_i\gamma_j^{-1}K_j(i)\quad\text{when $d(\zero,\mathbf{u})=j$}.
\end{equation}
Since $M$ is positive semidefinite, also the diagonal matrix $U^{\ast}MU$ is positive semidefinite, which means that all diagonal elements $\sum_{i=0}^n x_iK_j(i)\gamma_j^{-1}$, $j=0,\ldots n$ are nonnegative. This implies (\ref{krawtchouk}).
\end{proof}
In fact, the equivalence of $U^{\ast}MU\psd$ and $M\psd$ shows the following, which we mention for future reference.
\begin{proposition}\label{Krawtchouk}
For $x_0,x_1,\ldots, x_n\in \Real$, we have
\begin{eqnarray}
x_0A_0+xc_1A_1+\cdots+x_nA_n\psd\quad\text{if and only if}\\
x_0K_j(0)+x_1K_j(1)+\cdots+x_nK_j(n)\geq 0\quad\text{for $j=0,\ldots,n$}.\nonumber
\end{eqnarray}
\end{proposition}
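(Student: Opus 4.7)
The proposition is essentially a two-sided version of the diagonalization argument already used in the proof of (\ref{krawtchouk}) just given. The plan is to observe that every step of that argument is reversible and to repackage the conclusion.

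First, I would reuse the unitary matrix $U\in\Complex^{\E\times\E}$ with entries $U_{\mathbf{u},\mathbf{v}}=q^{-n/2}\phi^{\langle \mathbf{u},\mathbf{v}\rangle}$, where $\phi$ is a primitive $q$-th root of unity. As computed above, each $\widetilde{A}_i:=U^{\ast}A_iU$ is diagonal, and its $(\mathbf{u},\mathbf{u})$-entry depends only on $j:=d(\zero,\mathbf{u})$; explicitly it equals $K_i(j)$, which by the Krawtchouk reciprocity $\gamma_iK_j(i)=\gamma_jK_i(j)$ recalled in the earlier proof is a strictly positive scalar multiple of $K_j(i)$. Consequently, the $(\mathbf{u},\mathbf{u})$-entry of $U^{\ast}\bigl(\sum_ix_iA_i\bigr)U$, for $\mathbf{u}$ of weight $j$, is $\gamma_j^{-1}$ times an explicit linear combination of the values $K_j(i)$.

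Secondly, I would invoke the observation (already used in the semidefinite programming section) that for a unitary $U$ the matrix $M:=\sum_ix_iA_i$ is positive semidefinite iff $U^{\ast}MU$ is; and a diagonal matrix is positive semidefinite iff all its diagonal entries are nonnegative. Combining this with the diagonalization above gives an iff between $M\psd$ and the nonnegativity of one inequality for each weight $j\in\{0,\ldots,n\}$. After clearing the positive factors $\gamma_j^{-1}$, these are exactly the inequalities $\sum_ix_iK_j(i)\ge 0$ in the statement.

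The only delicate point, and the part I would want to double-check carefully, is the index bookkeeping between $K_i(j)$ (the eigenvalue of $A_i$ on the eigenspace indexed by weight $j$) and $K_j(i)$ (the form appearing in the proposition). The Krawtchouk reciprocity identity makes this routine, but one has to verify that the reindexing is truly a positive rescaling of each individual inequality so that nonnegativity is preserved in both directions.
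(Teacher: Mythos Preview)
Your proposal is correct and is exactly the paper's approach: the paper simply remarks that ``the equivalence of $U^{\ast}MU\psd$ and $M\psd$'' makes the diagonalization argument for (\ref{krawtchouk}) into an if-and-only-if statement. Your careful handling of the Krawtchouk reciprocity $K_i(j)=\gamma_i\gamma_j^{-1}K_j(i)$ to pass between the eigenvalue form and the stated inequalities (a positive rescaling for each $j$) is precisely what is needed.
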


\section{Semidefinite programming bound}
In this section we describe a way to obtain upper bounds on $A_q(n,d)$ by semidefinite programming. The method strengthens Delsarte's linear programming bound and was introduced by Schrijver in \cite{Lexcodes} in the case of binary codes. There it was used to find a large number of improved bounds for binary codes. While this thesis was being written, the same method was used by de Klerk and Pasechnik in \cite{orthogonalitygraph} to bound the stability number of \notion{orthogonality graphs}, (or equivalently) the maximum size of a binary code of length $n$ in which no two words have Hamming distance $\frac{1}{2}n$, where $n$ is divisible by four.

In this section we will describe this method, but restrict ourselves to the nonbinary case. In Section \ref{Sec:comp1} we give a list of improved upper bounds that we found with this method for $q=3,4,5$. 

Let $C$ be any code. We define the matrices $M'$ and $M''$ by:
\begin{eqnarray}
M'&:=&|\aut |^{-1}\sum_{\substack{\sigma\in\aut\\ \zero\in \sigma C}}M_{\sigma C}\\
M''&:=&|\aut |^{-1}\sum_{\substack{\sigma\in\aut\\ \zero\not\in \sigma C}}M_{\sigma C}.\nonumber
\end{eqnarray}

By construction, the matrices $M'$ and $M''$ are invariant under permutations $\sigma\in\stab$ of the rows and columns. Hence $M'$ and $M''$ are elements of the algebra $\mathcal{A}_{q,n}$. We write
\begin{equation}\label{Mcoefficients}
M'=\sum_{(i,j,t,p)} x_{i,j}^{t,p} M_{i,j}^{t,p}.
\end{equation}
Here the matrices $M_{i,j}^{t,p}$ are the standard basis matrices of the algebra $\mathcal{A}_{q,n}$.

The matrix $M''$ can be expressed in terms of the coefficients $x_{i,j}^{t,p}$ as follows.

\begin{proposition}
The matrix $M''$ satisfies
\begin{equation}\label{defineM}
M''=\sum_{(i,j,t,p)}(x_{i+j-t-p,0}^{0,0}-x_{i,j}^{t,p})M_{i,j}^{t,p}.
\end{equation}
\end{proposition}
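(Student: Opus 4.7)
The plan is to compute $M''$ by expressing $M'+M''$ (which drops the condition $\zero\in\sigma C$) in terms of the coefficients $x_{i,j}^{t,p}$, and then subtract $M'$.

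First I would observe that
\begin{equation}
M'+M''=|\aut|^{-1}\sum_{\sigma\in\aut}M_{\sigma C}.
\end{equation}
For any pair $(\mathbf{u},\mathbf{v})\in\E\times\E$, the entry of this sum at $(\mathbf{u},\mathbf{v})$ counts (up to the $|\aut|^{-1}$ factor) the number of $\sigma\in\aut$ with $\sigma^{-1}\mathbf{u},\sigma^{-1}\mathbf{v}\in C$. Grouping the $\sigma$'s by the pair $(\sigma^{-1}\mathbf{u},\sigma^{-1}\mathbf{v})$, and using the fact (from Proposition \ref{orbits}, specialized to pairs) that $\aut$ acts transitively on pairs of words at any fixed Hamming distance with stabilizer size $|\aut|/\gamma_k$, one sees that $(M'+M'')_{\mathbf{u},\mathbf{v}}$ depends only on $k:=d(\mathbf{u},\mathbf{v})$.

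The next step is to identify this common value. Specialize to $\mathbf{u}=\zero$ and pick any $\mathbf{v}$ with $d(\zero,\mathbf{v})=k$. On the one hand, $M''_{\zero,\mathbf{v}}=0$ automatically: if $(M_{\sigma C})_{\zero,\mathbf{v}}=1$ then $\zero\in\sigma C$, contradicting the summation condition in the definition of $M''$. Hence $(M'+M'')_{\zero,\mathbf{v}}=M'_{\zero,\mathbf{v}}$. On the other hand, $d(\zero,\zero,\mathbf{v})=(0,k,0,0)$, so by the expansion \eqref{Mcoefficients} we have $M'_{\zero,\mathbf{v}}=x_{0,k}^{0,0}$. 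By the symmetry $M'=(M')^{\transp}$ (which is immediate from the definition of $M'$, as each $M_{\sigma C}$ is symmetric) we have $x_{0,k}^{0,0}=x_{k,0}^{0,0}$.

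Finally, for any pair $(\mathbf{u},\mathbf{v})$ with $d(\zero,\mathbf{u},\mathbf{v})=(i,j,t,p)$, we have $d(\mathbf{u},\mathbf{v})=i+j-t-p$, so combining the previous two steps,
\begin{equation}
(M'+M'')_{\mathbf{u},\mathbf{v}}=x_{i+j-t-p,0}^{0,0},
\end{equation}
while $(M')_{\mathbf{u},\mathbf{v}}=x_{i,j}^{t,p}$ by \eqref{Mcoefficients}. Subtracting and summing over the orbits $X_{i,j,t,p}$ (which partition $\E\times\E$ and yield the basis $\{M_{i,j}^{t,p}\}$) gives exactly \eqref{defineM}. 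No step presents a real obstacle; the only delicate point is keeping the convention $d(\zero,\mathbf{u},\mathbf{v})=(i,j,t,p)$ straight so that the index $k=i+j-t-p$ really is $d(\mathbf{u},\mathbf{v})$, which is built into the definition of $d(\cdot,\cdot,\cdot)$ given earlier.
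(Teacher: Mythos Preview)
Your proof is correct and follows essentially the same approach as the paper: compute $M:=M'+M''$, observe it is $\aut$-invariant so its entries depend only on $d(\mathbf{u},\mathbf{v})$, read off the coefficient from the $\zero$-row (where $M''$ vanishes), and subtract $M'$. The only cosmetic difference is that the paper phrases the invariance step as ``$M$ lies in the Bose--Mesner algebra, say $M=\sum_k y_k A_k$'' and then uses the identity $A_k=\sum_{i+j-t-p=k}M_{i,j}^{t,p}$, whereas you argue the same thing entrywise; also the paper reads off $M'_{\mathbf{u},\zero}=x_{k,0}^{0,0}$ directly rather than via $x_{0,k}^{0,0}$ plus symmetry.
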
 
\begin{proof}
The matrix
\begin{equation}
M:=M'+M''=|\aut|^{-1}\sum_{\sigma\in\aut} M_{\sigma C}
\end{equation}
is invariant under permutation of the rows and columns by any permutation $\sigma\in\aut$, and hence is an element of the Bose--Mesner algebra, say
\begin{equation}
M=\sum_{k} y_k A_k.
\end{equation}
Observe that for any $\mathbf{u}\in\E$ with $d(\mathbf{u},\zero)=k$, we have 
\begin{equation}
y_k=(M)_{\mathbf{u},\zero}=(M')_{\mathbf{u},\zero}=x_{k,0}^{0,0},
\end{equation}
since $(M'')_{\mathbf{u},\zero}=0$. Hence we have
\begin{eqnarray}
M''&=&M-M'\\
&=&\sum_{k} x_{k,0}^{0,0} A_k-\sum_{(i,j,t,p)} x_{i,j}^{t,p} M_{i,j}^{t,p}\nonumber\\ &=&\sum_{k}\sum_{i+j-t-p=k} x_{k,0}^{0,0} M_{i,j}^{t,p} -\sum_{(i,j,t,p)} x_{i,j}^{t,p} M_{i,j}^{t,p}\nonumber\\
&=&\sum_{(i,j,t,p)} (x_{i+j-t-p,0}^{0,0}-x_{i,j}^{t,p}) M_{i,j}^{t,p},\nonumber
\end{eqnarray}
which proves the proposition.
\end{proof}
The coefficients $x_{i,j}^{t,p}$ carry important information about the code $C$, comparable to the distance distribution in Delsarte's linear programming approach. Where the distance distribution records for each distance $d$ the number of pairs in $C$ at distance $d$, the coefficients $x_{i,j}^{t,p}$ count the number of \emph{triples} $(\mathbf{u},\mathbf{v},\mathbf{w})\in C^3$ for each equivalence class of $\E^3$ under the action of $\aut$. We express this formally as follows. Recall that 
\begin{equation}
X_{i,j,t,p}:=\{(\mathbf{u},\mathbf{v},\mathbf{w})\in\E\times\E\times\E\mid d(\mathbf{u},\mathbf{v},\mathbf{w})=(i,j,t,p)\},
\end{equation}
for $(i,j,t,p)\in \mathcal{I}(q,n)$. Now denote for each $(i,j,t,p)\in \mathcal{I}(q,n)$ the numbers
\begin{equation}
\lambda_{i,j}^{t,p}:=|(C\times C\times C)\cap X_{i,j,t,p}|,
\end{equation}
and let
\begin{equation}
\gamma_{i,j}^{t,p}:=|(\{\zero\}\times \E\times\E)\cap X_{i,j,t,p}|
\end{equation}
be the number of nonzero entries of $M_{i,j}^{t,p}$. A simple calculation yields:
\begin{equation}
\gamma_{i,j}^{t,p}=(q-1)^{i+j-t} (q-2)^{t-p} {n \choose p,t-p,i-t,j-t}.
\end{equation}
The numbers $x_{i,j}^{t,p}$ are related to the numbers $\lambda_{i,j}^{t,p}$ by
\begin{proposition}\label{expressxijtp}
$x_{i,j}^{t,p}=q^{-n}(\gamma_{i,j}^{t,p})^{-1} \lambda_{i,j}^{t,p}$.
\end{proposition}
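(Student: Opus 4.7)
My plan is to prove this by evaluating the entry $(M')_{\mathbf{u},\mathbf{v}}$ in two ways for a suitably chosen representative pair. Since the basis matrices $M_{i,j}^{t,p}$ have pairwise disjoint supports (each pair $(\mathbf{u},\mathbf{v})\in\E\times\E$ lies in the support of exactly one of them, determined by the parameters $|S(\mathbf{u})|$, $|S(\mathbf{v})|$, $|S(\mathbf{u})\cap S(\mathbf{v})|$ and $|\{h\mid\mathbf{u}_h=\mathbf{v}_h\neq 0\}|$), the basis expansion (\ref{Mcoefficients}) gives $(M')_{\mathbf{u},\mathbf{v}}=x_{i,j}^{t,p}$ whenever these parameters equal $(i,j,t,p)$. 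So it suffices to fix any pair $(\mathbf{u},\mathbf{v})$ having these parameters and compute $(M')_{\mathbf{u},\mathbf{v}}$ directly from the definition of $M'$.

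Unfolding the definition, $(M')_{\mathbf{u},\mathbf{v}}$ equals $|\aut|^{-1}$ times the number of $\sigma\in\aut$ such that $\zero,\mathbf{u},\mathbf{v}\in\sigma C$. Substituting $\tau:=\sigma^{-1}$, this becomes the count of $\tau\in\aut$ with $(\tau\zero,\tau\mathbf{u},\tau\mathbf{v})\in C\times C\times C$. Observe that by the very choice of parameters we have $d(\zero,\mathbf{u},\mathbf{v})=(i,j,t,p)$, so by Proposition \ref{orbits} the triple $(\zero,\mathbf{u},\mathbf{v})$ lies in the orbit $X_{i,j,t,p}$ under $\aut$.

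Now I invoke orbit--stabilizer: the map $\tau\mapsto(\tau\zero,\tau\mathbf{u},\tau\mathbf{v})$ sends $\aut$ onto $X_{i,j,t,p}$, with every fiber of size $|\aut|/|X_{i,j,t,p}|$. Hence the number of $\tau$ landing inside $C^3$ is $(|\aut|/|X_{i,j,t,p}|)\cdot\lambda_{i,j}^{t,p}$, and therefore
\begin{equation}
(M')_{\mathbf{u},\mathbf{v}}=\frac{\lambda_{i,j}^{t,p}}{|X_{i,j,t,p}|}.
\end{equation}
It remains to identify $|X_{i,j,t,p}|$ with $q^n\gamma_{i,j}^{t,p}$. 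Since $\aut$ acts transitively on $\E$, the number of triples $(\mathbf{u}',\mathbf{v}',\mathbf{w}')\in X_{i,j,t,p}$ with a prescribed first coordinate is independent of that coordinate; summing over the $q^n$ choices of first coordinate and using the case $\mathbf{u}'=\zero$ (which counts pairs $(\mathbf{v}',\mathbf{w}')$ in the support of $M_{i,j}^{t,p}$, namely $\gamma_{i,j}^{t,p}$) gives $|X_{i,j,t,p}|=q^n\gamma_{i,j}^{t,p}$.

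Combining these two expressions yields $x_{i,j}^{t,p}=q^{-n}(\gamma_{i,j}^{t,p})^{-1}\lambda_{i,j}^{t,p}$, as desired. The calculation is essentially routine; the only point that requires a little care is the orbit--stabilizer bookkeeping, in particular keeping straight the factor $|X_{i,j,t,p}|$ versus $\gamma_{i,j}^{t,p}$ (differing by the $q^n$ from transitivity of $\aut$ on the first coordinate).
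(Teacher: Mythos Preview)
Your proof is correct and follows essentially the same orbit-counting idea as the paper. The only cosmetic difference is that the paper computes the full inner product $\left<M',M_{i,j}^{t,p}\right>$ (summing over all entries, using $|\aut|/|\stab|=q^n$), whereas you evaluate a single entry $(M')_{\mathbf{u},\mathbf{v}}$ directly via orbit--stabilizer on the triple $(\zero,\mathbf{u},\mathbf{v})$; both routes amount to the same bookkeeping.
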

\begin{proof}
Observe that the matrices $M_{i,j}^{t,p}$ are pairwise orthogonal and that $\left<M_{i,j}^{t,p},M_{i,j}^{t,p}\right>=\gamma_{i,j}^{t,p}$  for $(i,j,t,p)\in\mathcal{I}(q,n)$. Hence
\begin{eqnarray}
\left<M',M_{i,j}^{t,p}\right>&=&|\aut|^{-1}\sum_{u\in C}\sum_{\substack{\sigma\in\aut\\ \sigma \mathbf{u}=\zero}}\left< M_{\sigma C},M_{i,j}^{t,p}\right>\\
&=&|\aut|^{-1}\cdot |\stab|\sum_{\mathbf{u}\in C}\cdot |(\{\mathbf{u}\}\times C\times C)\cap X_{i,j,t,p}|\nonumber\\
&=&q^{-n} |(C\times C\times C) \cap X_{i,j,t,p}|=q^{-n}\lambda_{i,j}^{t,p}\nonumber
\end{eqnarray}
implies that
\begin{equation} M'=q^{-n}\sum_{(i,j,t,p)\in\mathcal{I}(q,n)}\lambda_{i,j}^{t,p}(\gamma_{i,j}^{t,p})^{-1} M_{i,j}^{t,p}.
\end{equation}
Comparing the coefficients of the $M_{i,j}^{t,p}$ with those in (\ref{Mcoefficients}) proves the proposition.
\end{proof}

\begin{proposition}
The $x_{i,j}^{t,p}$ satisfy the following linear constraints, where (iii) holds if $C$ has minimum distance at least $d$:
\begin{eqnarray}\label{linconstraint}
\text{(i)}&&0\leq x_{i,j}^{t,p}\leq x_{i,0}^{0,0}\\
\text{(ii)}&&x_{i,j}^{t,p}=x_{i',j'}^{t',p'}\text{\ if\ } t-p=t'-p'\text{\ and}\nonumber\\
&&(i,j,i+j-t-p)\text{\ is a permutation of\ } (i',j',i'+j'-t'-p')\nonumber\\
\text{(iii)}&&x_{i,j}^{t,p}=0 \text{\ if\ } \{i,j,i+j-t-p\}\cap\{1,2,\ldots,d-1\}\not=\emptyset\nonumber.
\end{eqnarray}
\end{proposition}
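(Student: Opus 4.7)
The plan is to combine Proposition~\ref{expressxijtp} --- which gives $x_{i,j}^{t,p}=q^{-n}(\gamma_{i,j}^{t,p})^{-1}\lambda_{i,j}^{t,p}$ --- with two combinatorial observations. Since $\lambda_{i,j}^{t,p}\geq 0$ and $\gamma_{i,j}^{t,p}>0$, the lower bound in (i) is immediate. The vanishing in (iii) also drops out of this formula: any triple $(\mathbf{u},\mathbf{v},\mathbf{w})\in C^3$ with $d(\mathbf{u},\mathbf{v},\mathbf{w})=(i,j,t,p)$ realises $i$, $j$ and $i+j-t-p$ as the pairwise Hamming distances $d(\mathbf{u},\mathbf{v})$, $d(\mathbf{u},\mathbf{w})$, $d(\mathbf{v},\mathbf{w})$. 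Under the minimum-distance hypothesis, any positive pairwise distance in $C$ is at least $d$, so no such triple exists when one of $i,j,i+j-t-p$ lies in $\{1,\ldots,d-1\}$, forcing $\lambda_{i,j}^{t,p}=0$.

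For the upper bound in (i), I would work with the matrix formulation. Unwinding the definition,
\begin{equation*}
(M')_{\mathbf{u},\mathbf{v}}=|\aut|^{-1}|\{\sigma\in\aut\mid\zero,\mathbf{u},\mathbf{v}\in\sigma C\}|,
\end{equation*}
and this quantity equals $x_{i,j}^{t,p}$ whenever $d(\zero,\mathbf{u},\mathbf{v})=(i,j,t,p)$. Setting $\mathbf{v}=\zero$ yields $(M')_{\mathbf{u},\zero}=x_{i,0}^{0,0}$, and the containment $\{\sigma:\zero,\mathbf{u},\mathbf{v}\in\sigma C\}\subseteq\{\sigma:\zero,\mathbf{u}\in\sigma C\}$ gives the desired inequality $x_{i,j}^{t,p}\leq x_{i,0}^{0,0}$.

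The main step is (ii), which I plan to derive from an $S_3$-symmetry argument. The symmetric group $S_3$ acts on $\E^3$ by permuting the three coordinates; this action commutes with the diagonal action of $\aut$, and the set $C\times C\times C$ is $S_3$-invariant. For each position $h\in\{1,\ldots,n\}$ a triple $(\mathbf{u},\mathbf{v},\mathbf{w})$ falls into one of five position-types (all three entries equal; exactly one of the three entries differs from the other two, in three sub-cases; all three entries pairwise distinct), and the corresponding type-counts $(\alpha,\beta_1,\beta_2,\beta_3,\gamma)$ are in bijection with $(i,j,t,p)$ via $\beta_1=p$, $\beta_2=i-t$, $\beta_3=j-t$, $\gamma=t-p$, $\alpha=n-i-j+t$. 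Under any $\pi\in S_3$ the quantity $\gamma$ and the multiset $\{\beta_1,\beta_2,\beta_3\}$ are clearly preserved; in terms of the original parameters this says that $t-p$ is invariant and the multiset $\{i,j,i+j-t-p\}=\{\beta_1+\beta_2+\gamma,\,\beta_1+\beta_3+\gamma,\,\beta_2+\beta_3+\gamma\}$ is invariant. A short case check on the transpositions and the $3$-cycle confirms that the resulting new parameter tuples $(i',j',t',p')$ are precisely those described in (ii). Hence $\pi$ gives bijections between the $\aut$-orbits $X_{i,j,t,p}$ and $X_{i',j',t',p'}$, and between their intersections with $C^3$, so $\lambda_{i,j}^{t,p}=\lambda_{i',j'}^{t',p'}$. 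Because $\aut$ acts transitively on $\E$ we also have $\gamma_{i,j}^{t,p}=|X_{i,j,t,p}|/q^n$, which is likewise $S_3$-invariant. Plugging back into Proposition~\ref{expressxijtp} gives $x_{i,j}^{t,p}=x_{i',j'}^{t',p'}$.

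The one step requiring real attention is the case check relating the $S_3$-action on triples to the induced action on parameter tuples $(i,j,t,p)$; I expect this mildly intricate bookkeeping to be the main obstacle. Once it is in place, the rest of the argument is immediate from Proposition~\ref{expressxijtp}.
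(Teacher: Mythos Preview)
Your proof is correct and follows essentially the same approach as the paper: the paper also derives (ii) and (iii) directly from Proposition~\ref{expressxijtp} and obtains (i) from the entrywise inequality $0\leq (M_{\sigma C})_{\mathbf{u},\mathbf{v}}\leq (M_{\sigma C})_{\zero,\mathbf{u}}$ for $\zero\in\sigma C$. The paper is simply much terser, stating that (ii) and (iii) ``follow directly'' from Proposition~\ref{expressxijtp}, whereas you have spelled out the $S_3$-symmetry bookkeeping that makes (ii) explicit.
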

\begin{proof}
Conditions (ii) and (iii) follow directly from Proposition \ref{expressxijtp}. Condition (i) follows from the fact that if $M=M_{\sigma C}$ for some $\sigma\in\aut$ with $\zero\in \sigma C$, then $0\leq M_{\mathbf{u},\mathbf{v}}\leq M_{\zero,\mathbf{u}}$ for any $\mathbf{u},\mathbf{v}\in\E$.
\end{proof}

An important feature of the matrices $M'$ and $M''$ is, that they are positive semidefinite. This follows since $M'$ and $M''$ are nonnegative combinations of the matrices $M_{\sigma C}=\chi^{\sigma C}(\chi^{\sigma C})^{\transp}$ which are clearly positive semidefinite. Using the block diagonalisation of $\mathcal{A}_{q,n}$, the positive semidefiniteness of $M'$ and $M''$ is equivalent to:
\begin{gather}\label{semidefinite}
\text{for all $a,k$ with $0\leq a\leq k\leq n+a-k$, the matrices}\\ \left(\sum_{t,p}\alpha(i,j,t,p,a,k)x_{i,j}^{t,p}\right)_{i,j=k}^{n+a-k}\notag\\
\text{and}\notag\\ \left(\sum_{t,p}\alpha(i,j,t,p,a,k)(x_{i+j-t-p,0}^{0,0}-x_{i,j}^{t,p})\right)_{i,j=k}^{n+a-k}\notag\\
\text{are positive semidefinite.}\notag
\end{gather}

If we view the $x_{i,j}^{t,p}$ as variables, we obtain an upper bound on the size of a code of minimum distance $d$ as follows.
\begin{theorem}
The semidefinite programming problem
\begin{eqnarray}\label{primesdp}
\text{maximize } \sum_{i=0}^n {n\choose i}(q-1)^i x_{i,0}^{0,0}\quad \text{subject to}\\
x_{0,0}^{0,0}=1, \text{and conditions (\ref{linconstraint}) and (\ref{semidefinite})}\nonumber
\end{eqnarray}
is an upper bound on $A_q(n,d)$.
\end{theorem}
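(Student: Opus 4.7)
The plan is to exhibit, for any code $C\subseteq\E$ of minimum distance at least $d$, an explicit feasible solution to the semidefinite program (\ref{primesdp}) whose objective value equals $|C|$. This gives $A_q(n,d)\leq$ the optimum immediately. The candidate point is the rescaled version of the coefficients coming from the matrix $M'$ built in the text: with $M'=\sum_{(i,j,t,p)} x_{i,j}^{t,p} M_{i,j}^{t,p}$ as in (\ref{Mcoefficients}), set
\begin{equation*}
\tilde{x}_{i,j}^{t,p}:=\frac{q^n}{|C|}\cdot x_{i,j}^{t,p}=\frac{\lambda_{i,j}^{t,p}}{|C|\cdot \gamma_{i,j}^{t,p}},
\end{equation*}
where the second equality is Proposition \ref{expressxijtp}. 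In particular $\tilde{x}_{0,0}^{0,0}=\lambda_{0,0}^{0,0}/|C|=1$, so the normalizing constraint of (\ref{primesdp}) holds.

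Next I would verify the linear constraints (\ref{linconstraint}). Condition (ii) reduces to the symmetry statement $\lambda_{i,j}^{t,p}=\lambda_{i',j'}^{t',p'}$ whenever the unordered multiset of pairwise distances $\{d(\u,\v),d(\u,\w),d(\v,\w)\}=\{i,j,i+j-t-p\}$ and the quantity $t-p=|\{h:\u_h,\v_h,\w_h\text{ pairwise distinct}\}|$ agree; this is immediate because $(C\times C\times C)$ is symmetric under permuting the three coordinates. Condition (iii) follows from the hypothesis on minimum distance: any triple in $C\times C\times C\cap X_{i,j,t,p}$ with $i\in\{1,\dots,d-1\}$ (or $j$ or $i+j-t-p$ in that range) would witness two distinct code words at forbidden distance, so $\lambda_{i,j}^{t,p}=0$. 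Condition (i) is entrywise: since $(M_{\sigma C})_{\u,\v}=\mathbf{1}[\u\in\sigma C]\mathbf{1}[\v\in\sigma C]\leq \mathbf{1}[\u\in\sigma C]=(M_{\sigma C})_{\u,\zero}$ for every $\sigma$ with $\zero\in\sigma C$, averaging gives $(M')_{\u,\v}\leq (M')_{\u,\zero}$, i.e.\ $x_{i,j}^{t,p}\leq x_{i,0}^{0,0}$, and the same bound is preserved after rescaling.

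For the semidefinite conditions (\ref{semidefinite}), the point is that $M'=|\aut|^{-1}\sum_{\sigma:\zero\in\sigma C}\chi^{\sigma C}(\chi^{\sigma C})^{\transp}$ and $M''=|\aut|^{-1}\sum_{\sigma:\zero\notin\sigma C}\chi^{\sigma C}(\chi^{\sigma C})^{\transp}$ are nonnegative combinations of rank-one positive semidefinite matrices, hence positive semidefinite. Applying Theorem \ref{blokform1} to $M'$ yields the first family of block conditions; applying it to $M''$, whose expansion in the basis $\{M_{i,j}^{t,p}\}$ is given by (\ref{defineM}), yields the second. Scaling by the positive constant $q^n/|C|$ preserves positive semidefiniteness, so the tilde-variables meet (\ref{semidefinite}).

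Finally, the objective at the feasible point is computed from $\gamma_{i,0}^{0,0}={n\choose i}(q-1)^i$ and $\lambda_{i,0}^{0,0}=|\{(\u,\v)\in C^2:d(\u,\v)=i\}|$ (since $d(\u,\w)=0$ forces $\w=\u$ and hence $t=p=0$), giving
\begin{equation*}
\sum_{i=0}^n{n\choose i}(q-1)^i\,\tilde{x}_{i,0}^{0,0}=\frac{1}{|C|}\sum_{i=0}^n \lambda_{i,0}^{0,0}=\frac{|C|^2}{|C|}=|C|,
\end{equation*}
which completes the proof. The only delicate step in the plan is the verification of condition (ii), where one must carefully match the combinatorial symmetries of ordered triples in $C^3$ with the algebraic parametrization $(i,j,t,p)\leftrightarrow(i',j',t',p')$; everything else is either a direct combinatorial count, an entrywise comparison of $0$-$1$ matrices, or a direct invocation of the block-diagonalisation result of the previous chapter.
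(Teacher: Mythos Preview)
Your proposal is correct and follows essentially the same route as the paper: exhibit, for a code $C$ of minimum distance at least $d$, the rescaled coefficients of $M'$ as a feasible point with objective value $|C|$. The paper's own proof is just a terse version of this, pointing back to the preceding propositions for feasibility and noting that the constraints are scaling-invariant. One small remark: in your verification of condition (ii) you reduce to $\lambda_{i,j}^{t,p}=\lambda_{i',j'}^{t',p'}$, which is only equivalent to $\tilde{x}_{i,j}^{t,p}=\tilde{x}_{i',j'}^{t',p'}$ once one also observes that $\gamma_{i,j}^{t,p}=\gamma_{i',j'}^{t',p'}$ under the same hypothesis; this holds for the same reason (the map permuting the three words gives a bijection $X_{i,j,t,p}\to X_{i',j',t',p'}$), so the argument is fine.
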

\begin{proof}
We first remark that conditions (\ref{linconstraint}) and (\ref{semidefinite}) are invariant under scaling the numbers $x_{i,j}^{t,p}$ with a common positive factor. The constraint $x_{0,0}^{0,0}=1$ serves as a normalisation. 
If $C\subseteq \E$ is a code of minimum distance $d$. Setting
\begin{equation}
x_{i,j}^{t,p}:=q^n\cdot \lambda_{i,j}^{t,p}\gamma_{i,j}^{t,p}
\end{equation}
gives a feasible solution with objective value $|C|$. 
\end{proof}

This is a semidefinite programming problem with $O(n^4)$ variables, and can be solved in time polynomial in $n$. This semidefinite programming bound is at least as strong as the Delsarte bound. Indeed, the Delsarte bound is equal to the maximum of $\sum_{i=0}^n x_{i,0}^{0,0}{n\choose i}(q-1)^i$ subject to the conditions $x_{0,0}^{0,0}=1$, $x_{1,0}^{0,0}=\cdots=x_{d-1,0}^{0,0}=0$, $x_{i,0}^{0,0}\geq 0$ for all $i=d,\ldots,n$ and \begin{equation}\label{psddelsarte}
\sum_{i=0}^n x_{i,0}^{0,0} A_i\quad\text{is positive semidefinite},
\end{equation} 
as was shown in the previous section. This last constraint is equivalent to
\begin{equation}
\sum_{i,j,t,p} x_{i+j-t-p,0}^{0,0}M_{i,j}^{t,p}\quad\text{is positive semidefinite},
\end{equation} 
since $A_k=\sum_{i+j-t-p=k} M_{i,j}^{t,p}$. It follows that (\ref{psddelsarte}) is implied by the condition that $M'$ and $M''$ be positive semidefinite, that is condition (\ref{semidefinite}).

\subsection{Variations}
There are a number of obvious variations to the semidefinite program (\ref{primesdp}), altering the objective function and the constraint $x_{0,0}^{0,0}=1$. For convenience we will optimize over matrices 
\begin{equation}
M:=\sum_{i,j,t,p} x_{i,j}^{t,p}M_{i,j}^{t,p}
\end{equation}
in the Terwilliger algebra. Observe that the numbers $x_{i,j}^{t,p}$ are uniquely determined by $M$ and vice versa. The semidefinite program (\ref{primesdp}) can be rewritten as
\begin{equation}\label{original}
\text{maximize } \trace{M}\quad \text{subject to (\ref{linconstraint}), (\ref{semidefinite}) and $x_{0,0}^{0,0}=1$.}
\end{equation}
Consider the following two variations 
\begin{eqnarray}\label{variation1}
\text{maximize}&&x_{0,0}^{0,0}\\
\text{subject to}&&\text{(\ref{linconstraint}), (\ref{semidefinite}) and }
\begin{pmatrix}1&x_{0,0}^{0,0}\\x_{0,0}^{0,0}&\trace M\end{pmatrix}\psd,\nonumber
\end{eqnarray}
and
\begin{eqnarray}\label{variation2}
\text{maximize}&&\one^{\transp}M\one\\
\text{subject to}&&\text{(\ref{linconstraint}), (\ref{semidefinite}) and $\trace{M}=1.$}\nonumber
\end{eqnarray}

The idea behind variation (\ref{variation1}) is that for a code $C$, setting 
\begin{equation}
x_{i,j}^{t,p}:=\lambda_{i,j}^{t,p}\cdot q^n(\gamma_{i,j}^{t,p})^{-1},
\end{equation}
we obtain a feasible solution with $x_{0,0}^{0,0}=|C|$ and $\trace{M}=|C|^{2}$. If $M'=\sum_{i,j,t,p}y_{i,j}^{t,p}M_{i,j}^{t,p}$ is a feasible solution to (\ref{variation1}), then $M:=(y_{0,0}^{0,0})^{-1}M'$ is a feasible solution to (\ref{original}) with $\trace{M}=\trace M'\cdot (y_{0,0}^{0,0})^{-1}\geq y_{0,0}^{0,0}$. Conversely, if $M'=\sum_{i,j,t,p}y_{i,j}^{t,p}M_{i,j}^{t,p}$ is a feasible solution to (\ref{original}), then setting $x_{i,j}^{t,p}:=y_{i,j}^{t,p}\cdot \trace{M'}$ gives a feasible solution to (\ref{variation1}) with $x_{0,0}^{0,0}=1\cdot\trace{M'}$. Hence both semidefinite programs yield the same value.

The validity of variation (\ref{variation2}) can be seen by setting $x_{i,j}^{t,p}:= \lambda_{i,j}^{t,p}\cdot q^n(\gamma_{i,j}^{t,p})^{-1}|C|^{-2}$ for a given code $C$. Then $\trace{M}=1$ and $\one^{\transp}M\one=|C|$. For any feasible solution $M'$ to (\ref{original}), we have $\one^{\transp}M'\one\geq (\trace{M'})^2$, hence $M:=(\trace{M'})^{-1}M'$ is a feasible solution to (\ref{variation2}) with $\one^{\transp}M\one\geq \trace{M'}$. It follows that the optimum value in (\ref{variation2}) is at least the optimum value in (\ref{original}). We do not know if the reverse inequality holds.

\subsection{A strengthening}
It was observed by Laurent (see \cite{Monique}) that not only is the matrix $M''$ defined in (\ref{defineM}) positive semidefinite, also the following stronger property holds: 
\begin{equation}\label{RM} 
\begin{pmatrix}
1-x_{0,0}^{0,0}&(\diag (M''))^{\transp}\\
\diag (M'')& M''
\end{pmatrix}\quad\text{is positive semidefinite}.
\end{equation} 
This follows from the fact that for a code $C$ and $\sigma\in\aut$ the matrix
\begin{equation}
\begin{pmatrix}
1&(\chi^{\sigma C})^{\transp}\\
\chi^{\sigma C}&\chi^{\sigma C}(\chi^{\sigma C})^{\transp}
\end{pmatrix}={1\choose \chi^{\sigma C}}{1\choose \chi^{\sigma C}}^{\transp}\quad\text{is positive semidefinite}
\end{equation}
and the fact that semidefiniteness is preserved under taking nonnegative linear combinations.
This yields the stronger semidefinite programming bound 
\begin{equation}\label{moniquesbound}
\text{maximize } q^n\cdot x_{0,0}^{0,0}\quad \text{subject to (\ref{linconstraint}), (\ref{semidefinite}) and (\ref{RM}),}
\end{equation} 
where 
\begin{equation}
M'':=\sum_{i,j,t,p}(x_{i+j-t-p,0}^{0,0}-x_{i,j}^{t,p})M_{i,j}^{t,p}.
\end{equation}
Observe that condition (\ref{RM}) can be checked in time polynomial in $n$ by Theorem \ref{blokform2}. 
The bound obtained is as least as good as the one obtained from (\ref{original}). Indeed, given a feasible solution to (\ref{moniquesbound}), the matrix $M:=M'+M''$ satisfies: $R(M)$ is positive semidefinite. Hence 
\begin{equation}
q^n\trace{M'}=\one^{\transp}M\one\geq \trace{M}^2=(q^n\cdot x_{0,0}^{0,0})^2.
\end{equation}
This implies that $N':=\frac{1}{x_{0,0}^{0,0}}M'$ is a feasible solution to (\ref{original}) with $\trace{N'}\geq q^n\cdot x_{0,0}^{0,0}$. Hence the optimum in(\ref{moniquesbound}) is at most the optimum in (\ref{original}). In the binary case, this yields an improved bound when $n=25$ and $d=6$. We did not find new improvements using this strengthening in the range $q=3, n\leq 16$, $q=4, n\leq 12$ or $q=5, n\leq 11$.
 
\section{Computational results}\label{Sec:comp1}
The semidefinite programming method was successfully applied to binary codes in \cite{Lexcodes} where a large number of upper bounds were improved. In this section we describe the computational results obtained in the nonbinary case. Apart from the binary case, tables of bounds on $A_{q}(n,d)$ are maintained for $q=3,4,5$. We have limited the computations to these three cases and computed the semidefinite programming bound for the range $n\leq 16$, $n\leq 12$ and $n\leq 11$, respectively. The instances in which we found an improvement over the best upper bound that was known, are summarized in Tables \ref{table1}, \ref{table2} and \ref{table3} below. As a reference we have used the tables given by Brouwer, H\"am\"al\"ainen, \"Osterg\aa rd and Sloane \cite{threecodes} and by Bogdanova, Brouwer, Kapralov and \"Osterg\aa rd \cite{fourcodes} for the cases $q=3$ and $q=4$, along with subsequent improvements recorded on the website of Brouwer \cite{brouwersite} and the table by Bogdanova and \"Osterg\aa rd \cite{fivecodes} for the case $q=5$. 

\begin{table}[ht]\caption{New upper bounds on $A_3(n,d)$\label{table1}}
\begin{center}
\begin{tabular}{|r|r|r|r|r|r|}
\hline &&best&&best upper&\\
&&lower&new&bound&\\
&&bound&upper&previously&Delsarte\\
$n$&$d$&known&bound&known&bound\\
\hline
12&4&4374&6839&7029&7029\\
13&4&8019&19270&19682&19683\\
14&4&24057&54774&59046&59049\\
15&4&72171&149585&153527&153527\\
16&4&216513&424001&434815&434815\\
\hline
12&5&729&1557&1562&1562\\
13&5&2187&4078&4163&4163\\
14&5&6561&10624&10736&10736\\
15&5&6561&29213&29524&29524\\
\hline
13&6&729&1449&1562&1562\\
14&6&2187&3660&3885&4163\\
15&6&2187&9904&10736&10736\\
16&6&6561&27356&29524&29524\\
\hline
14&7&243&805&836&836\\
15&7&729&2204&2268&2268\\
16&7&729&6235&6643&6643\\
\hline
13&8&42&95&103&103\\
15&8&243&685&711&712\\
16&8&297&1923&2079&2079\\
\hline
14&9&31&62&66&81\\
15&9&81&165&166&166\\
\hline
16&10&54&114&117&127\\
\hline
\end{tabular}
\end{center} 
\end{table}

\begin{table}[ht]\caption{New upper bounds on $A_4(n,d)$\label{table2}}
\begin{center}
\begin{tabular}{|r|r|r|r|r|r|}
\hline &&best&&best upper&\\
&&lower&new&bound&\\
&&bound&upper&previously&Delsarte\\
$n$&$d$&known&bound&known&bound\\
\hline 7&4&128&169&179&179\\
8&4&320&611&614&614\\
9&4&1024&2314&2340&2340\\
10&4&4096&8951&9360&9362\\
\hline 10&5&1024&2045&2048&2145\\
\hline 10&6&256&496&512&512\\
11&6&1024&1780&2048&2048\\
12&6&4096&5864&6241&6241\\
\hline 12&7&256&1167&1280&1280\\
\hline
\end{tabular}
\end{center}
\end{table}

\begin{table}[ht]\caption{New upper bounds on $A_5(n,d)$\label{table3}}
\begin{center}
\begin{tabular}{|r|r|r|r|r|r|}
\hline &&best&&best upper&\\
&&lower&new&bound&\\
&&bound&upper&previously&Delsarte\\
$n$&$d$&known&bound&known&bound\\
\hline
7&4&250&545&554&625\\
\hline
7&5&53&108&125&125\\
8&5&160&485&554&625\\
9&5&625&2152&2291&2291\\
10&5&3125&9559&9672&9672\\
11&5&15625&44379&44642&44642\\
\hline
10&6&625&1855&1875&1875\\
11&6&3125&8840&9375&9375\\
\hline
\end{tabular}
\end{center}
\end{table}

\chapter{Covering codes}\label{CH:coveringcodes}
Consider the following combinatorial problem. Given integers $q$, $n$ and $r$, what is the smallest number of Hamming spheres of radius $r$ that cover the Hamming space consisting of all $q$-ary words of length $n$? This covering problem is the dual of the packing problem from the previous chapter. Apart from being an aesthetically appealing combinatorial problem, it has several technical applications, for example to write-once memories and data compression. Another, down to earth, application is to betting systems. In many countries a popular game is played that involves forecasting the outcomes of a set of $n$ (football)matches. Each match can end in three ways: a loss, a tie or a win for the hosting club. The goal is to find an efficient set of bets that is guaranteed to have a forecast with at most one wrong outcome. For this reason the covering problem in the case $q=3$ and $r=1$ is widely known as the \notion{football pool problem}, see \cite{Footballpool}.

In this chapter we show how the method of matrix cuts from Chapter \ref{CH:matrixcuts} can be applied to obtain new lower bounds on the minimum size of covering codes. For a survey of results on covering codes as well as many applications, the reader is referred to \cite{Coveringcodes}.

\section{Definitions and notation}
Let $q\geq 2$ and $n\geq 1$ be integers. Let $\E:=\q^n$ be the Hamming space consisting of all words of length $n$ over the alphabet $\q:=\{0,1,\ldots,q-1\}$. Recall that the Hamming distance $d(\mathbf{u},\mathbf{v})$ of two words $\mathbf{u}, \mathbf{v}\in \E$ is defined as the number of positions in which $\mathbf{u}$ and $\mathbf{v}$ differ. We define $\dd(\mathbf{u},\mathbf{v}):=(i,j,t)$ where $i=d(\mathbf{u},\zero)$, $j=d(\mathbf{v},\zero)$ and $2t=i+j-d(\mathbf{u},\mathbf{v})$. For a word $\mathbf{u}\in \E$, we denote the \emph{support} of $\mathbf{u}$ by $S(\mathbf{u}):=\{i\mid \mathbf{u}_i\not=0\}$. Note that $|S(\mathbf{u})|=d(\mathbf{u},\zero)$, where $\zero$ is the all-zero word. Denote by
\begin{eqnarray}
B_r(\mathbf{u})&:=&\{\mathbf{v}\in\E\mid d(\mathbf{u},\mathbf{v})\leq r\}\quad\text{and}\\
S_r(\mathbf{u})&:=&\{v\in\E\mid d(\mathbf{u},\mathbf{v})=r\}\nonumber
\end{eqnarray}
the ball and the sphere respectively, with center $\mathbf{u}\in\E$ and radius $r$. They are generally referred to as the \notion{Hamming sphere} and the \notion{Hamming ring} with center $\mathbf{u}$ and radius $r$ in the literature. The \notion{covering radius} of a code $C\subseteq E$ is the smallest integer $r$ for which
\begin{equation}
\bigcup_{\mathbf{u}\in C} B_r(\mathbf{u})=\E.
\end{equation} 
A code $C\subseteq \E$ is called an $(n,K,q)r$ code if $|C|=K$ and the covering radius of $C$ is $r$. We denote
\begin{equation}
K_q(n,r):=\min\{K\mid \text{there exists an $(n,K,q)r$ code}\}.
\end{equation}
In this chapter we will be interested in lower bounds on $K_q(n,r)$.

\section{Method of linear inequalities}
An important tool used in deriving lower bounds on $K_q(n,r)$ is the method of linear inequalities. Let $C\subseteq \E$ be a code and denote
\begin{equation}
A_i(\mathbf{u}):=|C\cap S_i(\mathbf{u})|
\end{equation} 
for $\mathbf{u}\in\E$ and $i=0,\ldots,n$. 
We consider linear inequalities of a code. That is, valid inequalities of the form
\begin{equation}\label{lambdabeta}
\sum_{i=0}^n \lambda_i A_i(\mathbf{u})\geq \beta\quad\text{for all $\mathbf{u}\in\E$},
\end{equation}
where $\lambda_0,\ldots,\lambda_n\geq 0$ and $\beta>0$.
Such a set of inequalities is denoted by $(\lambda_0,\ldots,\lambda_n)\beta$ and leads to a lower bound on $K_q(n,r)$ by the following proposition.
\begin{proposition}\label{linineqbound}
If any $(n,K,q)r$ code satisfies $(\lambda_0,\ldots,\lambda_n)\beta$ then
\begin{equation}
K\geq \frac{\beta q^n}{\sum_{i=0}^n \lambda_i{n\choose i}(q-1)^i}.
\end{equation}
\end{proposition}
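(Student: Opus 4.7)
The plan is a straightforward double counting argument: sum the linear inequality over all words $\mathbf{u}\in\E$ and evaluate each side.

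First I would take the inequality $\sum_{i=0}^n\lambda_iA_i(\mathbf{u})\geq\beta$, which by hypothesis holds for every $\mathbf{u}\in\E$, and sum it over $\mathbf{u}\in\E$. The right-hand side immediately becomes $\beta q^n$, since $|\E|=q^n$. For the left-hand side, I would swap the order of summation and reinterpret the inner sum combinatorially:
\begin{equation}
\sum_{\mathbf{u}\in\E}\sum_{i=0}^n\lambda_iA_i(\mathbf{u})=\sum_{i=0}^n\lambda_i\sum_{\mathbf{u}\in\E}|C\cap S_i(\mathbf{u})|=\sum_{i=0}^n\lambda_i\,|\{(\mathbf{u},\mathbf{c})\in\E\times C\mid d(\mathbf{u},\mathbf{c})=i\}|.
\end{equation}

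Next I would evaluate the size of that incidence set by counting the other way: for each fixed $\mathbf{c}\in C$, the number of $\mathbf{u}\in\E$ at Hamming distance exactly $i$ from $\mathbf{c}$ is $|S_i(\mathbf{c})|={n\choose i}(q-1)^i$, a quantity independent of the center $\mathbf{c}$. Hence the incidence count equals $K\cdot{n\choose i}(q-1)^i$, and combining this with the previous display gives
\begin{equation}
K\sum_{i=0}^n\lambda_i{n\choose i}(q-1)^i\geq\beta q^n.
\end{equation}
Since $\lambda_0,\ldots,\lambda_n\geq 0$ and $\beta>0$, dividing by the positive quantity $\sum_{i=0}^n\lambda_i{n\choose i}(q-1)^i$ yields the claimed lower bound on $K$.

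There is essentially no obstacle here: the only things to check are that the ``sphere size'' ${n\choose i}(q-1)^i$ is the correct uniform count (which follows from the transitivity of $\aut$ on $\E$, or directly by choosing the $i$ positions where $\mathbf{u}$ and $\mathbf{c}$ differ and the $(q-1)$ possible symbols at each), and that the denominator is strictly positive so that the division is legitimate (which holds as long as at least one $\lambda_i$ is nonzero, a harmless assumption since otherwise the hypothesis $(\lambda_0,\ldots,\lambda_n)\beta$ is vacuously violated by $\beta>0$).
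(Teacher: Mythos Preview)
Your proof is correct and follows essentially the same double-counting argument as the paper: sum the inequality over all $\mathbf{u}\in\E$, swap the order of summation, and use that $\sum_{\mathbf{u}\in\E}A_i(\mathbf{u})=\sum_{\mathbf{c}\in C}|S_i(\mathbf{c})|=|C|\binom{n}{i}(q-1)^i$. Your added remark about the positivity of the denominator is a reasonable extra check that the paper leaves implicit.
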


\begin{proof}
Summing \ref{lambdabeta} over all $\mathbf{u}\in\E$ we obtain
\begin{eqnarray}
\beta q^n\leq \sum_{\mathbf{u}\in\E}\sum_{i=0}^n \lambda_iA_i(\mathbf{u})&=&\sum_{i=0}^n \lambda_i \sum_{\mathbf{u}\in\E} A_i(\mathbf{u})\\
&=&\sum_{i=0}^n \lambda_i \sum_{\mathbf{v}\in C} |S_i(\mathbf{v})|\nonumber\\
&=&|C|\sum_{i=0}^n \lambda_i {n\choose i}(q-1)^i.\nonumber
\end{eqnarray}
\end{proof}

The basic \notion{sphere covering inequalities}
\begin{equation}
\sum_{i=0}^r A_i(\mathbf{u})\geq 1\quad\text{for all $\mathbf{u}\in\E$}
\end{equation}
give the \notion{sphere covering bound}
\begin{equation}
K_q(n,r)\geq \frac{q^n}{\sum_{i=0}^r {n \choose i}(q-1)^i}.
\end{equation}

Many other valid inequalities have been obtained, in particular in the binary case ($q=2$), by studying the way the elements in $B_s(\mathbf{u})$ can be covered for $s=1,2,3$. In the case $s=1$ this gives the van Wee inequalities \cite{VanWee,VanWeeThesis}:
\begin{equation}
\sum_{i=0}^{r-1} \roundup{\frac{n+1}{r+1}}A_i(\mathbf{u})+A_r(\mathbf{u})+A_{r+1}(\mathbf{u})\geq \roundup{\frac{n+1}{r+1}}
\end{equation}
which improve upon the sphere covering bound whenever $r+1$ does not divide $n+1$.

The case $s=2$ leads to the \emph{pair covering inequalities} found by Johnson \cite{Johnson} and Zhang \cite{Zhang}:
\begin{equation}
\sum_{i=0}^{r-2} m_0A_i(\mathbf{u})+m_1(A_{r-1}(\mathbf{u})+A_r(\mathbf{u}))+A_{r+1}(\mathbf{u})+A_{r+2}(\mathbf{u})\geq m_0,
\end{equation}
where 
\begin{eqnarray}
m_1&=\max_{i\geq 2} \frac{F(n-r+1,r+2)-F(n-iR+1,R+2)}{i-1},\\
m_0&=m_1+F(n-r+1,r+2),\nonumber
\end{eqnarray} 
and $F(m,k)$ is the minimum number of $k$-sets needed to cover all pairs of an $m$-set.
Other inequalities can be found in \cite{ZhangLo}.

Starting from a set of inequalities for a code, new inequalities can be obtained by taking nonnegative linear combinations. Also by summing the inequality $(\lambda_0,\ldots,\lambda_n)\beta$ over $S_i(\mathbf{u})$, we obtain the \emph{induced} inequality
$(\lambda'_0,\ldots,\lambda'_n)\beta'$, where 
\begin{eqnarray}
\lambda'_k&:=&\sum_{j=0}^n \lambda_j \alpha_{i,j}^k\\
\beta'&:=&{n\choose i}(q-1)^i\beta\nonumber,
\end{eqnarray}
and 
\begin{equation}
\alpha_{i,j}^k:=|\{\mathbf{v}\mid d(\zero,\mathbf{v})=i, d(\mathbf{v},\mathbf{u})=j\}|
\end{equation}
when $d(\zero,\mathbf{u})=k$.
The numbers $\alpha_{i,j}^k$ can be expressed as
\begin{equation}
\alpha_{i,j}^k=\begin{cases}
\sum_{\substack{p,t\\t+p=k+i-j}} {k\choose t-p,p}{n-k\choose i-t}(q-1)^{i-t}(q-2)^{t-p}&\text{if $q\geq 3$}\\ \\
\sum_{\substack{t\\2t=k+i-j}} {k\choose t}{n-k\choose i-t}& \text{if $q=2$.}
\end{cases}
\end{equation} 
Note that the bound obtained from an induced inequality is equal to the bound obtained from the original one. Using the fact that the $A_i(\mathbf{u})$ are integers, the inequality $(\lambda_0,\ldots,\lambda_n)\beta$ implies the inequality $(\roundup{\lambda_0},\ldots,\roundup{\lambda_n})\roundup{\beta}$. This way the van Wee inequalities, for example, can be derived from the sphere covering inequalities as follows. Starting from the sphere covering inequalities, we obtain
\begin{equation}
\sum_{i=0}^{r-1}(n+1)A_i(\mathbf{u})+(r+1)(A_{r}(\mathbf{u})+A_{r+1}(\mathbf{u}))\geq n+1\quad \text{for every $\mathbf{u}\in\E$}
\end{equation}
by summing the sphere covering inequalities over $B_1(\mathbf{u})$. Then dividing by $r+1$ and rounding up the coefficients, the van Wee inequalities are obtained.

Using this method, Habsieger and Plagne obtained many new lower bounds in the binary and ternary case, by computer search see \cite{HabsiegerPlagne}.
 
\section{Semidefinite programming bounds}
The bound from Proposition \ref{linineqbound} may be viewed as a linear programming bound as follows. Given $\lambda\in\Real^{n+1}$ and $\beta\in\Real$, define the polyhedron
\begin{equation}
P_{\lambda,\beta}:=\{x\in\Real^{\E}\mid \sum_{i=0}^n \lambda_i x(S_i(\mathbf{u}))\geq \beta\quad \text{for all $\mathbf{u}\in\E$ }\}.
\end{equation}
We have the following proposition.
\begin{proposition}
\begin{equation}
\min \{\one^{\transp}x\mid x\in P_{\lambda,\beta}\}=\frac{\beta q^n}{\sum_{i=0}^n \lambda_i {n\choose i}(q-1)^i}.
\end{equation}
\end{proposition}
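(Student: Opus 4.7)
The plan is to establish the equality by proving both inequalities. The right-hand side is symmetric in the coordinates of $\E$, reflecting the fact that the LP is invariant under the automorphism group $\aut$, so I would expect the optimum to be attained by a vector of the form $c\cdot \one$. This suggests the following two-step strategy.

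For the upper bound $\min\{\one^\transp x\mid x\in P_{\lambda,\beta}\}\leq \beta q^n/\sum_i\lambda_i{n\choose i}(q-1)^i$, I would exhibit a feasible point. Setting
\begin{equation}
c:=\beta\Bigl(\sum_{i=0}^n\lambda_i{n\choose i}(q-1)^i\Bigr)^{-1}
\end{equation}
and $x:=c\one$, note that $x(S_i(\mathbf{u}))=c\cdot|S_i(\mathbf{u})|=c\cdot{n\choose i}(q-1)^i$ for every $\mathbf{u}\in\E$, so each defining inequality of $P_{\lambda,\beta}$ is met with equality. The objective value is $\one^\transp x=c\cdot q^n$, matching the asserted right-hand side.

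For the lower bound, I would use a double counting / averaging argument exactly paralleling the proof of Proposition~\ref{linineqbound}. Let $x\in P_{\lambda,\beta}$. Summing the inequality $\sum_i\lambda_i x(S_i(\mathbf{u}))\geq\beta$ over all $\mathbf{u}\in\E$ and swapping the order of summation gives
\begin{equation}
\beta q^n\leq \sum_{i=0}^n\lambda_i\sum_{\mathbf{u}\in\E}x(S_i(\mathbf{u}))=\sum_{i=0}^n\lambda_i\sum_{\mathbf{v}\in\E}x_{\mathbf{v}}\cdot|\{\mathbf{u}\in\E\mid \mathbf{v}\in S_i(\mathbf{u})\}|.
\end{equation}
Since $\mathbf{v}\in S_i(\mathbf{u})$ iff $\mathbf{u}\in S_i(\mathbf{v})$, the inner cardinality equals $|S_i(\mathbf{v})|={n\choose i}(q-1)^i$, independent of $\mathbf{v}$. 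Therefore $\beta q^n\leq\bigl(\sum_i\lambda_i{n\choose i}(q-1)^i\bigr)\one^\transp x$, and the lower bound follows after division.

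The two bounds match, proving the equality. No real obstacle is anticipated: the argument is essentially the LP-duality version of the combinatorial proof already given for Proposition~\ref{linineqbound}, with the symmetric feasible vector $c\one$ certifying tightness. Strictly speaking, since $P_{\lambda,\beta}$ imposes no nonnegativity, one must also check that the LP is bounded below; this is immediate from the averaging inequality itself, which supplies a finite lower bound on $\one^\transp x$.
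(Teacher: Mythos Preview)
Your proof is correct. It differs from the paper's only in presentation: the paper argues that for any $x\in P_{\lambda,\beta}$ the symmetrized vector $\overline{x}:=|\aut|^{-1}\sum_{\sigma\in\aut}\sigma(x)$ is again in $P_{\lambda,\beta}$, is a constant vector with the same objective value, and then computes the minimum over constant vectors directly. Your averaging step for the lower bound is exactly this symmetrization written out as a sum over~$\mathbf{u}$, and your feasible point $c\one$ is the optimal constant vector the paper finds; so the two arguments are essentially the same, with yours phrased as a two-sided estimate rather than a reduction.
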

\begin{proof}
Observe that for any $x\in P_{\lambda,\beta}$ also 
\begin{equation}
\overline{x}:=\frac{1}{|\aut|}\sum_{\sigma\in\aut} \sigma(x)\in P_{\lambda,\beta}
\end{equation}
and $\overline{x}=c\one$ where $\one^{\transp}c\one=\one^{\transp}x$. Hence
\begin{eqnarray}
\min \{\one^{\transp}x\mid x\in P_{\lambda,\beta}\}&=&\min \{\one^{\transp}c\one\mid c\one\in P_{\lambda,\beta}\}\\
&=& \min \{q^nc\mid \sum_{i=0}^n \lambda_i c|S_i(\zero)|\geq \beta\}\nonumber\\
&=& \min \{q^nc\mid c\geq \frac{\beta}{\sum_{i=0}^n \lambda_i {n\choose i}(q-1)^i}\}\nonumber\\
&=& \frac{\beta q^n}{\sum_{i=0}^n \lambda_i{n\choose i}(q-1)^i}.\nonumber
\end{eqnarray}
\end{proof}

Clearly, replacing $P_{\lambda,\beta}$ by $P_{\lambda,\beta}\cap \{0,1\}^{\E}$ and considering the $0$--$1$ optimization problem, can be expected to give a better lower bound. In fact, when $(\lambda_0,\ldots,\lambda_n)\beta$ corresponds to the sphere covering inequalities, this $0$--$1$ program gives the exact value $K_q(n,r)$\footnote{In general there may be solutions that do not have covering radius $\leq r$, for example when $n=3,r=1$, the code $\{100,010,001\}$ has covering radius $2$ but satisfies the van Wee inequalities.}. This motivates to replace the linear relaxation $P_{\lambda,\beta}$ by a tighter (semidefinite) relaxation using the method of matrix cuts from Chapter \ref{CH:matrixcuts}. We will pursue this idea in the following.

\subsection{The first SDP bound}
In this section we derive a semidefinite programming lower bound on $K_q(n,r)$ with $O(n)$ variables and $O(n)$ constraints. This bound is equal to the value obtained by minimizing $\one^{\transp}x$ over $N_+(P_{\lambda,\beta})$, see Chapter \ref{CH:matrixcuts}. 

To any code $C\subseteq \E$, we associate the symmetric $0$--$1$ matrix $M_C$ defined by:
\begin{equation}
(M_C)_{\mathbf{u},\mathbf{v}}:=\begin{cases}
1&\text{if $\mathbf{u},\mathbf{v}\in C$,}\\
0&\text{otherwise.}
\end{cases}
\end{equation}
Let $C\subseteq E$ be a code. Define the matrix
\begin{equation}
M:=|\aut|^{-1}\sum_{\sigma\in\aut} M_{\sigma C}. 
\end{equation}
By construction, the matrix $M$ is invariant under permutations of the rows and columns by any $\sigma\in\aut$. Hence $M$ is an element of the Bose--Mesner algebra of the Hamming scheme and we write 
\begin{equation}
M=\sum_{i=0}^n x_i A_i,
\end{equation}
where $A_i$ is the $i$-th basis matrix of the Bose--Mesner algebra and $x_0,\ldots,x_n\in \Real$.
\begin{proposition}\label{absconstraints}
The matrix $M$ satisfies the following.
\begin{eqnarray}
\mathrm{(i)}&&\trace M=|C|,\\
\mathrm{(ii)}&&M\geq 0\text{\ and\ }R(M)\psd,\nonumber\\
\mathrm{(iii)}&&\text{If $C$ satisfies $(\lambda_0,\ldots,\lambda_n)\beta$, then}\nonumber\\
&& M_\mathbf{u}\in M_{\mathbf{u},\mathbf{u}} P_{\lambda,\beta}\quad\text{and}\quad \diag(M)-M_\mathbf{u}\in (1-M_{\mathbf{u},\mathbf{u}})P_{\lambda,\beta}\nonumber\\
&&\text{for every $\mathbf{u}\in\E$.}\nonumber
\end{eqnarray}
\end{proposition}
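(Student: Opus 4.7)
The strategy is uniform: verify each property first for the generator matrices $M_{\sigma C}$, $\sigma \in \aut$, and then pass to the average $M$ by linearity. Recall that $M_{\sigma C} = \chi^{\sigma C}(\chi^{\sigma C})^{\transp}$, so $M_{\sigma C}$ is a rank-one $0$--$1$ psd matrix with $\diag(M_{\sigma C}) = \chi^{\sigma C}$.

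For (i), $\trace M_{\sigma C} = |\sigma C| = |C|$, so averaging over $\sigma$ gives $\trace M = |C|$. For (ii), both entrywise nonnegativity and positive semidefiniteness are preserved under nonnegative linear combinations, hence are inherited by $M$. For $R(M) \psd$, let $v_\sigma$ denote the column vector obtained by prepending a $1$ to $\chi^{\sigma C}$; then $R(M_{\sigma C}) = v_\sigma v_\sigma^{\transp} \psd$. Since the top-left entry of $R(A)$ is the constant $1$ while the rest is linear in $A$, the map $A \mapsto R(A)$ is affine and hence commutes with the convex averaging $M = |\aut|^{-1}\sum_\sigma M_{\sigma C}$; thus $R(M) = |\aut|^{-1}\sum_\sigma R(M_{\sigma C}) \psd$.

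The substantive part is (iii). A preliminary observation is that the system $(\lambda_0,\ldots,\lambda_n)\beta$ is $\aut$-invariant: since every $\sigma \in \aut$ preserves Hamming distance, $A_i^{\sigma C}(\mathbf{w}) = A_i^{C}(\sigma^{-1}\mathbf{w})$, and the inequalities are universally quantified over $\mathbf{w} \in \E$; hence $\sigma C$ satisfies $(\lambda_0,\ldots,\lambda_n)\beta$ whenever $C$ does. Now fix $\mathbf{u} \in \E$ and a single $\sigma$. If $\mathbf{u} \in \sigma C$, the column $(M_{\sigma C})_\mathbf{u}$ equals $\chi^{\sigma C}$ and $(M_{\sigma C})_{\mathbf{u},\mathbf{u}} = 1$, and the required inclusion $\chi^{\sigma C} \in 1 \cdot P_{\lambda,\beta}$ is exactly the statement that $\sigma C$ satisfies $(\lambda_0,\ldots,\lambda_n)\beta$. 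If $\mathbf{u} \notin \sigma C$, both the column and the diagonal entry are zero, and $0 \in 0 \cdot P_{\lambda,\beta}$ is trivial. So in either case $(M_{\sigma C})_\mathbf{u} \in (M_{\sigma C})_{\mathbf{u},\mathbf{u}} \, P_{\lambda,\beta}$.

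The final step is to average this inclusion. The key is the elementary observation that if $y_\sigma \in s_\sigma P_{\lambda,\beta}$ with $s_\sigma \geq 0$ for every $\sigma$, then summing the defining inequalities $\sum_i \lambda_i y_\sigma(S_i(\mathbf{w})) \geq s_\sigma \beta$ and rescaling shows $|\aut|^{-1}\sum_\sigma y_\sigma \in \bigl(|\aut|^{-1}\sum_\sigma s_\sigma\bigr) P_{\lambda,\beta}$. Applied to $y_\sigma := (M_{\sigma C})_\mathbf{u}$ and $s_\sigma := (M_{\sigma C})_{\mathbf{u},\mathbf{u}}$, this yields $M_\mathbf{u} \in M_{\mathbf{u},\mathbf{u}} P_{\lambda,\beta}$. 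The second inclusion follows from the same argument applied to $y_\sigma := \chi^{\sigma C} - (M_{\sigma C})_\mathbf{u}$ and $s_\sigma := 1 - (M_{\sigma C})_{\mathbf{u},\mathbf{u}}$: when $\mathbf{u} \in \sigma C$ both are zero, while when $\mathbf{u} \notin \sigma C$ they reduce to $\chi^{\sigma C}$ and $1$, and the inclusion again reduces to $\sigma C \in P_{\lambda,\beta}$. I foresee no real obstacle; the content of the proposition is essentially that $\aut$-symmetrisation of a code matrix propagates the psd structure and the row-wise constraint system in a compatible way, and the $\mathbf{u}$-th column inequality is enforced per $\sigma$ by a zero/one dichotomy on whether $\mathbf{u} \in \sigma C$.
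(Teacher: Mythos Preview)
Your proof is correct and follows essentially the same approach as the paper: verify each constraint for the rank-one matrices $M_{\sigma C}$ and then pass to the convex average. The paper compresses your case analysis for (iii) into the single identities $(M_{\sigma C})_\mathbf{u}=(M_{\sigma C})_{\mathbf{u},\mathbf{u}}\chi^{\sigma C}$ and $\diag(M_{\sigma C})-(M_{\sigma C})_\mathbf{u}=(1-(M_{\sigma C})_{\mathbf{u},\mathbf{u}})\chi^{\sigma C}$, but the content is identical.
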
 
\begin{proof}
Since $M$ is a convex combination of the $M_{\sigma C}$, $\sigma\in\aut$, it suffices to observe that the contraints hold for each $M_{\sigma C}$. Clearly, $\trace M_{\sigma C}=|C|$ and $M_{\sigma C}\geq 0$. As $R(M_{\sigma C})={1\choose \chi^{\sigma C}} {1\choose \chi^{\sigma C}}^{\transp}$, $R(M_{\sigma C})$ is positive semidefinite. Finally, for any $\mathbf{u}\in\E$ 
\begin{equation}
(M_{\sigma C})_\mathbf{u}=(M_{\sigma C})_{\mathbf{u},\mathbf{u}}\chi^{\sigma C}
\end{equation}
and
\begin{equation}
\diag(M_{\sigma C})-(M_{\sigma C})_\mathbf{u}=(1-(M_{\sigma C})_{\mathbf{u},\mathbf{u}})\chi^{\sigma C}
\end{equation}
and hence (iii) follows from the fact that $\sigma C$ satisfies $(\lambda_0,\ldots,\lambda_n)\beta$ for every $\sigma\in\aut$.
\end{proof}
Below, we will make these constraints more explicit by expressing them in terms of the variables $x_i$.

\begin{proposition}
$R(M)\psd$ is equivalent to 
\begin{eqnarray}
&&\sum_{i=0}^n x_iP_j(i)\geq 0\quad\text{for every }j=0,\ldots,n\\
\text{and}\nonumber\\
&&\begin{pmatrix}
q^n&q^nx_0\\q^nx_0&\sum_{i=0}^n x_i{n\choose i}(q-1)^i
\end{pmatrix}\psd.\nonumber
\end{eqnarray}
\end{proposition}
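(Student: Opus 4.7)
The plan is to reduce the positive semidefiniteness of $R(M)$ to the two stated conditions via Proposition \ref{R(A)PSD} (the criterion for $R(A)\psd$), and then to translate the $M\psd$ clause via Proposition \ref{Krawtchouk} (the Krawtchouk reformulation of PSD in the Bose--Mesner algebra of the Hamming scheme).

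First I would compute the basic invariants of $M=\sum_{i=0}^n x_i A_i$ explicitly. Since $A_0=I$ and $A_i$ has zero diagonal for $i\geq 1$, one has $\diag(M)=x_0\one$. Since each $A_i$ has constant row sum $\binom{n}{i}(q-1)^i$, one also has $M\one=\bigl(\sum_{i=0}^n x_i\binom{n}{i}(q-1)^i\bigr)\one$. Consequently $\diag(M)$ is a scalar multiple of $M\one$, so the hypothesis of Proposition \ref{R(A)PSD} is satisfied (when $M\one\neq 0$; the degenerate case $M\one=0$, which forces also $\diag(M)=0$ and hence $x_0=0$, is easily handled directly). Moreover, $\trace M=q^n x_0$ and $\one^{\transp}M\one=q^n\sum_{i=0}^n x_i\binom{n}{i}(q-1)^i$.

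Next, applying Proposition \ref{R(A)PSD} gives that $R(M)\psd$ is equivalent to the conjunction of
\begin{equation*}
M\psd\quad\text{and}\quad \one^{\transp}M\one\geq (\trace M)^2.
\end{equation*}
By Proposition \ref{Krawtchouk}, the first clause is equivalent to $\sum_{i=0}^n x_i P_j(i)\geq 0$ for every $j=0,\ldots,n$ (writing $P_j(i)=K_j(i)$ for the Krawtchouk eigenvalues of the Hamming scheme). Substituting the values of $\trace M$ and $\one^{\transp}M\one$ computed above, the second clause becomes
\begin{equation*}
q^n\sum_{i=0}^n x_i\binom{n}{i}(q-1)^i\geq (q^n x_0)^2,
\end{equation*}
which, combined with the positivity of $q^n$, is exactly the statement that the $2\times 2$ matrix
\begin{equation*}
\begin{pmatrix}q^n & q^n x_0\\ q^n x_0 & \sum_{i=0}^n x_i\binom{n}{i}(q-1)^i\end{pmatrix}
\end{equation*}
is positive semidefinite (the upper-left entry is positive, so PSD is equivalent to a nonnegative determinant, which implies the other diagonal entry is also nonnegative).

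There is no real obstacle: the whole argument is a direct unpacking of Proposition \ref{R(A)PSD} combined with the standard Krawtchouk diagonalisation recalled in Proposition \ref{Krawtchouk}. The only mildly delicate point is verifying the proportionality hypothesis $\diag(M)=c\cdot M\one$ (and handling the trivial edge case where $M\one=0$), which I would dispose of at the start.
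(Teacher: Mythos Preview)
Your argument is correct and follows exactly the paper's route: compute $\trace M=q^nx_0$ and $\one^{\transp}M\one=q^n\sum_i x_i\binom{n}{i}(q-1)^i$, invoke Proposition~\ref{R(A)PSD} to split $R(M)\psd$ into $M\psd$ together with the scalar inequality (equivalently the $2\times2$ condition), and then use Proposition~\ref{Krawtchouk} to rewrite $M\psd$. You are in fact more careful than the paper in explicitly checking the proportionality hypothesis $\diag(M)=c\cdot M\one$.

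One small correction to your parenthetical: $M\one=0$ does not by itself force $\diag(M)=0$ (take for instance $M=A_0-\frac{1}{q^n}J$). What is true in that edge case is that both sides of the claimed equivalence fail: the $2\times2$ determinant is $-\,(q^nx_0)^2<0$, and $\one^{\transp}(M-x_0^2J)\one=-x_0^2q^{2n}<0$ shows $R(M)$ is not PSD either. So the equivalence still holds, just not for the reason you stated.
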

\begin{proof}
Since $\trace M=q^nx_0$ and $\sumentries{M}=q^n\sum_{i=0}^n x_i{n\choose i}(q-1)^i$, it follows from Proposition \ref{R(A)PSD} that $R(M)\psd$ if and only if $M\psd$ and
\begin{equation*}
\begin{pmatrix}q^n&q^nx_0\\q^nx_0& \sum_{i=0}^n x_i{n\choose i}(q-1)^i\end{pmatrix}
\end{equation*} is positive semidefinite. By Proposition \ref{Krawtchouk} it follows that $M=\sum_{i=0}^n x_iA_i$ is positive semidefinite if and only if $\sum_{i=0}^n x_iP_j(i)\geq 0$ for every $j=0,\ldots,n$.
\end{proof}

\begin{proposition}\label{rowinequality1}
Let $x=\sum_{i=0}^n x_i\chi^{S_i(\zero)}\in\Real^{\E}$. Then the following are equivalent:
\begin{eqnarray}
\mathrm{(i)}&&\sum_{i=0}^n \lambda_i x(S_i(\mathbf{u}))\geq \beta\quad\text{for every $\mathbf{u}\in\E$},\\
\mathrm{(ii)}&&\sum_{j=0}^n x_j\cdot \sum_{i=0}^n \lambda_i\alpha_{i,j}^k\geq \beta\quad\text{for every $k=0,\ldots,n$}.\nonumber
\end{eqnarray} 
\end{proposition}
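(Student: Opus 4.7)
The plan is to exploit the fact that the coefficient vector $x$ is constant on the spheres $S_j(\zero)$; equivalently, $x_\mathbf{v}$ depends only on the Hamming weight $d(\zero,\mathbf{v})$. The proposition should then follow by a direct computation showing that the left-hand side of (i), evaluated at $\mathbf{u}$, depends only on $k:=d(\zero,\mathbf{u})$, and in fact equals the left-hand side of (ii).

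First I would rewrite $x(S_i(\mathbf{u}))$ by partitioning the sphere $S_i(\mathbf{u})$ according to the distance of its elements to $\zero$. For a fixed $\mathbf{u}$ with $d(\zero,\mathbf{u})=k$, this gives
\begin{equation*}
x(S_i(\mathbf{u}))=\sum_{\mathbf{v}\in S_i(\mathbf{u})} x_{d(\zero,\mathbf{v})}=\sum_{j=0}^n x_j\cdot |\{\mathbf{v}\mid d(\mathbf{u},\mathbf{v})=i,\ d(\zero,\mathbf{v})=j\}|=\sum_{j=0}^n x_j\,\alpha_{i,j}^k,
\end{equation*}
where in the last step I use the definition of $\alpha_{i,j}^k$ (and crucially the fact that this cardinality depends on $\mathbf{u}$ only through $k$, which is clear since $\aut$ acts transitively on words of a given weight and preserves Hamming distance).

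Substituting into (i), interchanging the finite sums over $i$ and $j$, and noting that $\mathbf{u}$ enters only through $k=d(\zero,\mathbf{u})$, the inequality (i) at $\mathbf{u}$ becomes exactly the inequality (ii) at $k=d(\zero,\mathbf{u})$. Since as $\mathbf{u}$ ranges over $\E$ its weight $k$ ranges precisely over $\{0,1,\ldots,n\}$, requiring (i) for every $\mathbf{u}$ is equivalent to requiring (ii) for every $k$. No obstacle of substance arises; the only point worth flagging is making explicit that the count $\alpha_{i,j}^k$ is well defined (independent of the particular representative $\mathbf{u}$ of weight $k$), which is immediate from the transitivity of $\stab$ on each sphere $S_k(\zero)$ combined with the distance-preserving property of $\aut$.
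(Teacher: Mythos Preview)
Your proposal is correct and follows essentially the same approach as the paper: both partition $S_i(\mathbf{u})$ according to distance to $\zero$, identify the resulting count as $\alpha_{i,j}^k$, and interchange the sums over $i$ and $j$. If anything, you are slightly more explicit than the paper about why the equivalence follows (noting that $k=d(\zero,\mathbf{u})$ ranges over all of $\{0,\ldots,n\}$ and that $\alpha_{i,j}^k$ is well defined), but the argument is the same.
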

\begin{proof}
If $d(\mathbf{u},\zero)=k$ then 
\begin{eqnarray}
\sum_{i=0}^n \lambda_i x(S_i(\mathbf{u}))&=&
\sum_{i=0}^n \lambda_i \sum_{j=0}^n\sum_{\substack{\mathbf{v}\in\E\\d(\zero,\mathbf{v})=j\\d(\mathbf{u},\mathbf{v})=i}} x_j\\
&=&\sum_{i=0}^n \lambda_i \sum_{j=0}^n \alpha_{i,j}^k x_j\nonumber\\
&=&\sum_{j=0}^n x_j\sum_{i=0}^n \lambda_i\alpha_{i,j}^k.\nonumber
\end{eqnarray}
\end{proof}

\begin{proposition}

The following are equivalent
\begin{eqnarray}
\mathrm{(i)}&&M_\mathbf{u}\in M_{\mathbf{u},\mathbf{u}} P_{\lambda,\beta}\quad\text{and}\\
&&\diag(M)-M_\mathbf{u}\in (1-M_{\mathbf{u},\mathbf{u}})P_{\lambda,\beta}\nonumber\\
&&\text{for every $\mathbf{u}\in\E$},\nonumber\\
\mathrm{(ii)}&&\sum_{j=0}^n x_j\cdot \sum_{i=0}^n \lambda_i\alpha_{i,j}^k\geq x_0\beta\\
&&\sum_{j=0}^n (x_0-x_j)\cdot \sum_{i=0}^n \lambda_i\alpha_{i,j}^k\geq (1-x_0)\beta\nonumber\\
&&\text{for every $k=0,\ldots,n$.}\nonumber
\end{eqnarray}
\end{proposition}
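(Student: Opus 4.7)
The plan is to unpack the two set inclusions in (i) into their defining scalar inequalities and then apply Proposition \ref{rowinequality1} (centered at $\mathbf{u}$ rather than $\zero$) to each family.

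First I would use the fact that $M=\sum_{i=0}^n x_i A_i$ lies in the Bose--Mesner algebra. This gives $M_{\mathbf{u},\mathbf{v}}=x_{d(\mathbf{u},\mathbf{v})}$, so in particular $M_{\mathbf{u},\mathbf{u}}=x_0$ for every $\mathbf{u}\in\E$, the $\mathbf{u}$-th row can be written as
\[
M_\mathbf{u}=\sum_{j=0}^n x_j\,\chi^{S_j(\mathbf{u})}\in\Real^{\E},
\]
and $\diag(M)=x_0\one$, so $\diag(M)-M_\mathbf{u}=\sum_{j=0}^n (x_0-x_j)\chi^{S_j(\mathbf{u})}$.

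Next I would unwind the definition of $P_{\lambda,\beta}$. For a scalar $c\geq 0$, the inclusion $y\in c\cdot P_{\lambda,\beta}$ says exactly that $\sum_{i=0}^n \lambda_i\, y(S_i(\mathbf{v}))\geq c\beta$ for every $\mathbf{v}\in\E$. So (i) is equivalent to the two statements that for every $\mathbf{u},\mathbf{v}\in\E$,
\[
\sum_{i=0}^n \lambda_i M_\mathbf{u}(S_i(\mathbf{v}))\geq x_0\beta\quad\text{and}\quad\sum_{i=0}^n \lambda_i(\diag(M)-M_\mathbf{u})(S_i(\mathbf{v}))\geq(1-x_0)\beta.
\]

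The key observation is that Proposition \ref{rowinequality1}, although stated with center $\zero$, holds with $\zero$ replaced by any fixed $\mathbf{u}\in\E$: the numbers $\alpha_{i,j}^k$ depend only on the Hamming distances involved, and $\aut$ acts transitively on $\E$ while preserving those distances. Applying this translated version to the vector $M_\mathbf{u}=\sum_{j=0}^n x_j\chi^{S_j(\mathbf{u})}$ shows that the first family of inequalities (as $\mathbf{v}$ varies) is equivalent to
\[
\sum_{j=0}^n x_j\sum_{i=0}^n \lambda_i\alpha_{i,j}^k\geq x_0\beta\quad\text{for every $k=0,\ldots,n$,}
\]
where $k=d(\mathbf{u},\mathbf{v})$. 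Applying it again with the coefficients $x_0-x_j$ in place of $x_j$ gives the second line of (ii). Both translations are bijective as the quantifier over $\mathbf{v}$ only sees the distance $k=d(\mathbf{u},\mathbf{v})$, so the equivalence with (ii) drops out.

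There is no real obstacle: the work is essentially bookkeeping, and the one substantive point to notice is the translation invariance of Proposition \ref{rowinequality1}, which lets one replace the row-inequality quantified over all centers $\mathbf{u}$ by a single set of $n+1$ inequalities indexed by $k$.
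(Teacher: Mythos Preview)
Your proposal is correct and follows exactly the approach the paper intends: the paper's own proof is the single line ``Directly from Proposition \ref{rowinequality1}'', and you have simply unpacked that invocation, including the one nontrivial observation that Proposition \ref{rowinequality1} may be recentered at any $\mathbf{u}$ by the transitivity of $\aut$.
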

\begin{proof}
Directly from Proposition \ref{rowinequality1}
\end{proof}

Collecting all the propositions, we obtain the following theorem.
\begin{theorem}
If every code $C\subseteq \E$ with covering radius $r$ satisfies $(\lambda_0,\ldots,\lambda_n)\beta$, we have
\begin{equation}
K_q(n,r)\geq \min_x q^n x_0,
\end{equation}
where the minimum ranges over all $x=(x_0,x_1,\ldots,x_n)^{\transp}\in\Real^{n+1}$ satisfying 
\begin{eqnarray}
\mathrm{(i)} &&x_k\geq 0,\\
\mathrm{(ii)}&&\sum_{i=0}^n x_iP_k(i)\geq 0,\nonumber\\
\mathrm{(iii)}&&\sum_{i=0}^n x_i\cdot \sum_{j=0}^n \lambda_j\alpha_{i,j}^k\geq \beta x_0,\nonumber\\
\mathrm{(iv)}&&\sum_{i=0}^n (x_0-x_i)\cdot\sum_{j=0}^n \lambda_j\alpha_{i,j}^k\geq \beta (1-x_0),\nonumber\\
\mathrm{(v)}&&\begin{pmatrix}q^n&q^nx_0\\q^nx_0& \sum_{i=0}^n x_i{n\choose i}(q-1)^i\end{pmatrix}\psd\nonumber
\end{eqnarray}
for all $k=0,\ldots,n$.
\end{theorem}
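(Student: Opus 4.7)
The plan is to promote the construction of Proposition~\ref{absconstraints} into a feasible SDP solution. Given any code $C\subseteq\E$ of covering radius $r$ satisfying $(\lambda_0,\ldots,\lambda_n)\beta$, I form the averaged matrix $M:=|\aut|^{-1}\sum_{\sigma\in\aut}M_{\sigma C}$ and read off its Bose--Mesner coefficients as the candidate point $x\in\Real^{n+1}$. The aim is to verify that $x$ satisfies (i)--(v) and that $q^n x_0=|C|$; the theorem then follows by minimizing over feasible $x$.

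First I would observe that $M$ is invariant under simultaneous row/column permutation by any $\pi\in\aut$, so $M$ lies in the Bose--Mesner algebra and may be written $M=\sum_{i=0}^n x_i A_i$. Since the $A_i$ have disjoint supports, $x_i$ is the common value of the entries $M_{\mathbf{u},\mathbf{v}}$ for $d(\mathbf{u},\mathbf{v})=i$, and each such entry is a nonnegative average of $0$--$1$ numbers; this yields (i). A direct count shows that $M_{\mathbf{u},\mathbf{u}}=|\{\sigma\in\aut:\mathbf{u}\in\sigma C\}|/|\aut|=|C|/q^n$, so $x_0=|C|/q^n$ and the SDP objective equals $q^n x_0=|C|$.

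The remaining constraints now come from Proposition~\ref{absconstraints}. Positive semidefiniteness of $M$ (immediate from $M=|\aut|^{-1}\sum_\sigma \chi^{\sigma C}(\chi^{\sigma C})^{\transp}$) combined with Proposition~\ref{Krawtchouk} is exactly (ii). The semidefiniteness of $R(M)$ together with Proposition~\ref{R(A)PSD}, applied to the constant-diagonal, constant-row-sum matrix $M$, yields $\one^{\transp} M\one\geq(\trace M)^2$; computing $\one^{\transp} M\one=q^n\sum_i x_i{n\choose i}(q-1)^i$ and $\trace M=q^n x_0$ turns this into the $2\times2$ inequality (v). Finally, (iii) and (iv) are the entry-wise unfolding of the containments $M_\mathbf{u}\in M_{\mathbf{u},\mathbf{u}}P_{\lambda,\beta}$ and $\diag(M)-M_\mathbf{u}\in(1-M_{\mathbf{u},\mathbf{u}})P_{\lambda,\beta}$ from Proposition~\ref{absconstraints}(iii): applying Proposition~\ref{rowinequality1} to the row vector $M_\mathbf{u}$ (whose entries are $M_{\mathbf{u},\mathbf{v}}=x_{d(\mathbf{u},\mathbf{v})}$) and using that $\alpha_{i,j}^k$ counts the words $\mathbf{w}$ with $d(\mathbf{v},\mathbf{w})=i$ and $d(\mathbf{u},\mathbf{w})=j$ when $d(\mathbf{u},\mathbf{v})=k$, converts the polyhedral membership into precisely the inequalities written in (iii) and (iv).

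The main obstacle is not conceptual but notational: keeping the triple indexing $(i,j,k)$ straight together with the scaling factors $M_{\mathbf{u},\mathbf{u}}=x_0$ and $1-x_0$ when translating vector containments into coordinate inequalities. Once this bookkeeping is settled, each SDP constraint is either an earlier proposition applied verbatim or a routine rewriting of entries in the Bose--Mesner basis, and the theorem follows from $|C|=q^n x_0\geq \min_x q^n x_0$.
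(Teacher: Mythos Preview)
Your proposal is correct and follows exactly the approach the paper intends: the paper itself leaves the proof environment empty, since the theorem is assembled directly from Proposition~\ref{absconstraints} (for (i), (iii), (iv) and the semidefiniteness input), the proposition equating $R(M)\psd$ with (ii) and (v), and Proposition~\ref{rowinequality1} together with its corollary translating the row/column containments into the inequalities (iii) and (iv). Your write-up is simply a faithful unpacking of these references, with the objective value identified via $\trace M=q^n x_0=|C|$.
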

\begin{proof}
\end{proof}
Observe that if we relax the semidefinite program by only requiring $M$ to be positive semidefinite instead of $R(M)$ (that is: delete condition (v)), we obtain for a linear program in $O(n)$ variables and inequalities that is a lower bound on $K_q(n,r)$.
 
\subsection{The second SDP bound}
In this section we describe a stronger semidefinite programming relaxation that uses more of the symmetry of the Hamming space, but requires $O(n^3)$ variables in the binary case and $O(n^4)$ variables in the nonbinary case. In this section we will focus on the binary case. The nonbinary case is very similar, although more complicated and it will be adressed in the next section.   

Restricting ourselves to the binary case, we have $\E=\{0,1\}^n$, the $n$-dimensional Hamming cube. 
Let $C\subseteq \E$ be any code and define the matrices $M'$ and $M''$ by:
\begin{eqnarray}
M'&:=&|\autbin|^{-1}\sum_{\substack{\sigma\in\autbin\\ \zero\in \sigma C}} M_{\sigma C}\\
M''&:=&|\autbin|^{-1}\sum_{\substack{\sigma\in\autbin\\ \zero\not\in \sigma C}} M_{\sigma C}\nonumber.
\end{eqnarray}
By construction, the matrices $M'$ and $M''$ are invariant under permutations $\sigma\in\stabbin$ of the rows and columns, that fix the element $\zero$. Hence $M'$ and $M''$ are elements of the algebra $\mathcal{A}_{2,n}$. Write 
\begin{equation}
M'=\sum_{(i,j,t)} x_{i,j}^{t} M_{i,j}^{t},
\end{equation}
where the matrices $M_{i,j}^t$ are the zero--one basis matrices of $\mathcal{A}_{2,n}$. The matrix $M''$ can be expressed in terms of the coefficients $x_{i,j}^{t}$ as follows.
\begin{proposition}
The matrix $M''$ satisfies
\begin{equation}
M''=\sum_{(i,j,t)}(x_{i+j-2t,0}^{0,0}-x_{i,j}^{t}) M_{i,j}^{t}.
\end{equation}
\end{proposition}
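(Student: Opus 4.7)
My plan is to adapt the argument used in the analogous Proposition in Chapter \ref{CH:errorcodes} (equation (\ref{defineM})), which handled the nonbinary case with the extra index $p$. The binary case here is cleaner because the parameter $p$ coincides with $t$, so the only bookkeeping that remains is relating $M'+M''$ to the Bose--Mesner algebra.

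First I would consider the sum
\begin{equation}
M:=M'+M''=|\autbin|^{-1}\sum_{\sigma\in\autbin} M_{\sigma C}.
\end{equation}
Because the full group $\autbin$ acts on the summation index, $M$ is invariant under simultaneous permutation of rows and columns by every $\sigma\in\autbin$. Hence $M$ lies in the Bose--Mesner algebra of the binary Hamming scheme, and I can write $M=\sum_{k=0}^n y_k A_k$ for suitable coefficients $y_k\in\Real$.

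Next I would identify the $y_k$. Fix $\mathbf{u}\in\E$ with $d(\mathbf{u},\zero)=k$. Then $y_k=M_{\mathbf{u},\zero}$, while by the very definition of $M''$ (which sums only over $\sigma$ with $\zero\notin\sigma C$) we have $(M'')_{\mathbf{u},\zero}=0$. Therefore
\begin{equation}
y_k=M_{\mathbf{u},\zero}=(M')_{\mathbf{u},\zero}+0=x_{k,0}^{0},
\end{equation}
using that $(\mathbf{u},\zero)$ lies in the support of $M_{k,0}^{0}$ and only in that basis matrix.

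Finally, I would combine this with the partition identity $A_k=\sum_{i+j-2t=k} M_{i,j}^{t}$ (the binary analogue of (\ref{subpartition})) to obtain
\begin{equation}
M=\sum_k x_{k,0}^{0}A_k=\sum_{(i,j,t)} x_{i+j-2t,0}^{0}\,M_{i,j}^{t},
\end{equation}
and then simply subtract $M'=\sum_{(i,j,t)}x_{i,j}^{t}M_{i,j}^{t}$ to conclude
\begin{equation}
M''=M-M'=\sum_{(i,j,t)}\bigl(x_{i+j-2t,0}^{0}-x_{i,j}^{t}\bigr)M_{i,j}^{t}.
\end{equation}
No step is really an obstacle; the only point that needs a moment of care is the assertion $(M'')_{\mathbf{u},\zero}=0$, which uses that every matrix $M_{\sigma C}$ appearing in $M''$ has a zero in its $\zero$-th column (since $\zero\notin\sigma C$ forces $(\chi^{\sigma C})_{\zero}=0$, and entries of $M_{\sigma C}=\chi^{\sigma C}(\chi^{\sigma C})^{\transp}$ involving $\zero$ all vanish).
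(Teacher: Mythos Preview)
Your proof is correct and follows essentially the same argument as the paper: form $M=M'+M''$, observe it lies in the Bose--Mesner algebra, read off the coefficients $y_k=x_{k,0}^{0}$ from the $(\mathbf{u},\zero)$ entries using $(M'')_{\mathbf{u},\zero}=0$, expand $A_k$ in the $M_{i,j}^t$ basis, and subtract. Your added justification that $M_{\sigma C}=\chi^{\sigma C}(\chi^{\sigma C})^{\transp}$ has a zero $\zero$-column when $\zero\notin\sigma C$ is a nice explicit touch that the paper leaves implicit.
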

\begin{proof}
The matrix
\begin{equation}
M:=M'+M''=|\autbin|^{-1}\sum_{\sigma\in\autbin} M_{\sigma C}
\end{equation}
is invariant under permutation of the rows and columns by any permutation $\sigma\in\autbin$, and hence is an element of the Bose--Mesner algebra, say
\begin{equation}
M=\sum_{k} y_k A_k.
\end{equation}
Observe that for any $\mathbf{u}\in\E$ with $d(\mathbf{u},\zero)=k$, we have 
\begin{equation}
y_k=(M)_{\mathbf{u},\zero}=(M')_{\mathbf{u},\zero}=x_{k,0}^{0},
\end{equation}
since $(M'')_{\mathbf{u},\zero}=0$. Hence we have
\begin{eqnarray}
M''&=&M-M'\\
&=&\sum_{k} x_{k,0}^{0} A_k-\sum_{(i,j,t)} x_{i,j}^{t} M_{i,j}^{t}\nonumber\\ &=&\sum_{k}\sum_{i+j-2t=k} x_{k,0}^{0} M_{i,j}^t -\sum_{(i,j,t)} x_{i,j}^{t} M_{i,j}^t\nonumber\\
&=&\sum_{(i,j,t)} (x_{i+j-2t,0}^{0}-x_{i,j}^{t}) M_{i,j}^{t},\nonumber
\end{eqnarray}
which proves the proposition.
\end{proof}

\begin{proposition}\label{psd}
The matrices 
\begin{equation}
M'\quad\text{and}\quad \begin{pmatrix}1-x_{0,0}^0&(\diag (M''))^{\transp}\\ \diag(M'')& M''\end{pmatrix}
\end{equation}
are positive semidefinite.
\end{proposition}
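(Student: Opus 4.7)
The plan is to exhibit both matrices as nonnegative combinations of rank-one positive semidefinite matrices, using the fact that $M_{\sigma C}=\chi^{\sigma C}(\chi^{\sigma C})^{\transp}$ for every permutation $\sigma\in\autbin$.

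For the first matrix, this is immediate: $M'$ is by definition the nonnegative combination $|\autbin|^{-1}\sum_{\sigma:\,\zero\in\sigma C}\chi^{\sigma C}(\chi^{\sigma C})^{\transp}$, and the set of positive semidefinite matrices is closed under nonnegative linear combinations.

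The key observation for the second matrix is that for each $\sigma\in\autbin$ with $\zero\notin\sigma C$, the vector $\binom{1}{\chi^{\sigma C}}$ has the feature that the entry of $\chi^{\sigma C}$ at coordinate $\zero$ vanishes, and that $\diag\bigl(\chi^{\sigma C}(\chi^{\sigma C})^{\transp}\bigr)=\chi^{\sigma C}$ since $\chi^{\sigma C}$ is a $0$--$1$ vector. Hence the rank-one matrix
\begin{equation*}
\binom{1}{\chi^{\sigma C}}\binom{1}{\chi^{\sigma C}}^{\transp}
=\begin{pmatrix}1&(\chi^{\sigma C})^{\transp}\\ \chi^{\sigma C}&\chi^{\sigma C}(\chi^{\sigma C})^{\transp}\end{pmatrix}
\end{equation*}
is positive semidefinite and has the diagonal of its lower-right block equal to its first row (minus the leading $1$). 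Averaging over the $\sigma$ with $\zero\notin\sigma C$, I would check block by block that
\begin{equation*}
|\autbin|^{-1}\!\!\sum_{\substack{\sigma\in\autbin\\ \zero\notin\sigma C}}\binom{1}{\chi^{\sigma C}}\binom{1}{\chi^{\sigma C}}^{\transp}
=\begin{pmatrix}1-x_{0,0}^0&(\diag M'')^{\transp}\\ \diag M''&M''\end{pmatrix}.
\end{equation*}
The bottom-right block is $M''$ by definition; the off-diagonal block is $|\autbin|^{-1}\sum_{\sigma:\,\zero\notin\sigma C}\chi^{\sigma C}$, which equals $\diag M''$ since the diagonal of each summand $\chi^{\sigma C}(\chi^{\sigma C})^{\transp}$ is $\chi^{\sigma C}$; and the top-left scalar counts the fraction of $\sigma\in\autbin$ with $\zero\notin\sigma C$, which is exactly $1-x_{0,0}^{0}$ because $x_{0,0}^{0}=M'_{\zero,\zero}$ is the complementary fraction.

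Since the right-hand side is a nonnegative combination of rank-one positive semidefinite matrices, it is positive semidefinite, which finishes the proof. There is no real obstacle here; the only point to be careful about is identifying $x_{0,0}^{0}$ correctly as the proportion of automorphisms sending a codeword to $\zero$, and remembering that $\chi^{\sigma C}$ vanishes at $\zero$ precisely in the summands considered so that no $\delta$-at-$\zero$ contribution leaks into the diagonal.
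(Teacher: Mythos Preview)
Your proof is correct and follows essentially the same approach as the paper's. The paper phrases it slightly differently---it observes that $R((x_{0,0}^0)^{-1}M')$ and $R((1-x_{0,0}^0)^{-1}M'')$ are \emph{convex} combinations of the rank-one matrices $R(M_{\sigma C})=\binom{1}{\chi^{\sigma C}}\binom{1}{\chi^{\sigma C}}^{\transp}$, then scales back---but this is exactly your nonnegative-combination argument with an intermediate normalisation; your block-by-block verification is in fact a bit more explicit than the paper's.
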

\begin{proof}
Clearly, $R(M_{\sigma C})={1\choose \chi^{\sigma C}}{1\choose \chi^{\sigma C}}^{\transp}$ is positive semidefinite for each $\sigma\in\autbin$. Hence $R((x_{0,0}^0)^{-1}M')$ and $R((1-x_{0,0}^0)^{-1}M'')$ are positive semidefinite as they are convex combinations of the $R(M_{\sigma C})$. This implies the statement in the proposition. 
\end{proof}

Using the block diagonalisation of $\mathcal{A}_{2,n}$, Proposition \ref{psd} is equivalent to
the following matrices being positive semidefinite
\begin{eqnarray}\label{constraint3}
\left(\sum_{t=0}^n \beta_{i,j,k}^t x_{i,j}^t\right)_{i,j=k}^{n-k},\ \left(\sum_{t=0}^n \beta_{i,j,k}^t (x_{i+j-2t,0}^0-x_{i,j}^t)\right)_{i,j=k}^{n-k}\nonumber
\end{eqnarray}
for each $k=1,\ldots,\rounddown{\frac{n}{2}}$,
\begin{equation}
\left(\sum_{t=0}^n \beta_{i,j,0}^t x_{i,j}^t\right)_{i,j=0}^n,\ \begin{pmatrix}
1-x_{0,0}^0 & x^{\transp}\\x&L\end{pmatrix}\psd\nonumber
\end{equation}
where
\begin{eqnarray}
L:=\left(\sum_{t=0}^n \beta_{i,j,0}^t (x_{i+j-2t,0}^0-x_{i,j}^t)\right)_{i,j=0}^n,\nonumber\\
x_i:=(x_{0,0}^0-x_{i,0}^0){n\choose i},\text{ for $i=0,\ldots,n$}.\nonumber
\end{eqnarray}
 
\begin{proposition}
The coefficients $x_{i,j}^t$ satisfy the following:
\begin{equation}
2^nx_{0,0}^0=|C|,
\end{equation}
and for any $i,j,t$
\begin{eqnarray}\label{constraint1}
\mathrm{(i)}&&0\leq x_{i,j}^t\leq x_{i,i}^i,\\
\mathrm{(ii)}&&x_{i,0}^0+x_{i+j-2t,0}^0-x_{0,0}^0\leq x_{i,j}^t\leq x_{i+j-2t,0}^0,\nonumber\\
\mathrm{(iii)}&&x_{i,j}^t=x_{i',j'}^{t'}\quad\text{if $(i,j,i+j-2t)$ is a permutation}\nonumber\\
&&\text{of $(i',j',i'+j'-2t')$.}\nonumber
\end{eqnarray}
\end{proposition}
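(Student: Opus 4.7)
All four statements will follow by averaging simple pointwise identities on the Boolean functions $\sigma\mapsto[\{\zero,\mathbf{u},\mathbf{v}\}\subseteq\sigma C]$ over $\sigma\in\autbin$ in the definition of $M'$. An orbit--theoretic reformulation is useful:
\[
(M')_{\mathbf{u},\mathbf{v}}=\frac{|\{\sigma\in\autbin:\{\zero,\mathbf{u},\mathbf{v}\}\subseteq\sigma C\}|}{|\autbin|}=\frac{|C^3\cap O(\zero,\mathbf{u},\mathbf{v})|}{|O(\zero,\mathbf{u},\mathbf{v})|},
\]
where $O(\zero,\mathbf{u},\mathbf{v})$ denotes the $\autbin$-orbit of the ordered triple. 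Also, because $(M'')_{\zero,\mathbf{w}}=0$ for every $\mathbf{w}$, the decomposition $M=M'+M''$ lies in the Bose--Mesner algebra with coefficients $y_k=x_{k,0}^0$ (read off from the $\zero$-column), so $(M)_{\mathbf{u},\mathbf{v}}=x_{d(\mathbf{u},\mathbf{v}),0}^0$ for every $\mathbf{u},\mathbf{v}\in\E$.

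\textbf{Normalization and symmetry.} The equality $2^n x_{0,0}^0=|C|$ is the orbit--stabilizer formula: $x_{0,0}^0=(M')_{\zero,\zero}=|\{\sigma:\zero\in\sigma C\}|/|\autbin|=|C|\cdot|\stabbin|/|\autbin|=|C|/2^n$, since $\autbin$ acts transitively on $\E=\{0,1\}^n$. For (iii), I would use the binary specialisation of Proposition~\ref{orbits}: the $\autbin$-orbit of an ordered triple in $\E^3$ is determined by its three pairwise Hamming distances. For any $\pi\in S_3$, permuting the coordinates of an ordered triple by $\pi$ is a bijection on $C^3$ and carries $O(\zero,\mathbf{u},\mathbf{v})$ onto an orbit whose distance profile is the $\pi$-reordering; hence $|C^3\cap O|/|O|$ depends only on the multiset $\{i,j,i+j-2t\}$.

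\textbf{The easy inequalities.} In (i), nonnegativity is immediate since $M'$ is a convex combination of $0$--$1$ matrices. The upper bound $x_{i,j}^t\leq x_{i,i}^i$ comes from the pointwise inequality $(M_{\sigma C})_{\mathbf{u},\mathbf{v}}\leq(M_{\sigma C})_{\mathbf{u},\mathbf{u}}$ for each summand. The upper bound in (ii) is equally quick: $M'+M''=M$ with $M''\geq 0$ gives $x_{i,j}^t=(M')_{\mathbf{u},\mathbf{v}}\leq(M)_{\mathbf{u},\mathbf{v}}=x_{i+j-2t,0}^0$.

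\textbf{The lower bound in (ii) and the main obstacle.} For $x_{i,j}^t\geq x_{i,0}^0+x_{i+j-2t,0}^0-x_{0,0}^0$ I would invoke the pointwise Boolean identity
\[
[\{\zero,\mathbf{u},\mathbf{v}\}\subseteq\sigma C]\geq[\{\zero,\mathbf{u}\}\subseteq\sigma C]+[\{\mathbf{u},\mathbf{v}\}\subseteq\sigma C]-[\mathbf{u}\in\sigma C],
\]
which reduces to the elementary fact $pqr\geq q(p+r-1)$ for $p,q,r\in\{0,1\}$ (verified by splitting on the value of $q$). Summing over $\sigma\in\autbin$ and dividing by $|\autbin|$ identifies the left-hand side with $x_{i,j}^t$ and the three right-hand terms with the entries $(M)_{\mathbf{u},\zero}=x_{i,0}^0$, $(M)_{\mathbf{u},\mathbf{v}}=x_{i+j-2t,0}^0$, and $(M)_{\mathbf{u},\mathbf{u}}=x_{0,0}^0$. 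The only real bookkeeping subtlety is choosing $\mathbf{u}$ (rather than $\zero$) as the pivot for the inclusion--exclusion: pivoting on $\zero$ would yield only the weaker estimate $x_{i,j}^t\geq x_{i,0}^0+x_{j,0}^0-x_{0,0}^0$, which cannot be upgraded to the claimed bound without first invoking (iii).
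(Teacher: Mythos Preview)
Your proof is correct and follows essentially the same route as the paper. The paper handles (i) via $(M')_{\mathbf{u},\mathbf{u}}\geq (M')_{\mathbf{u},\mathbf{v}}$ and (ii) via $(M'')_{\mathbf{u},\mathbf{v}}\geq 0$ together with $(M'')_{\mathbf{u},\mathbf{u}}\geq (M'')_{\mathbf{u},\mathbf{v}}$; your Boolean inequality $pqr\geq pq+qr-q$ is exactly the pointwise form of the latter (it rewrites as $(1-p)q(1-r)\geq 0$), and by pivoting on $\mathbf{u}$ you obtain $x_{i,0}^0$ directly rather than $x_{i,i}^i$, which spares the appeal to (iii) that the paper needs implicitly. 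Your orbit-counting argument for (iii) is likewise the same idea as the paper's, just made a bit more explicit via the $S_3$-action on ordered triples.
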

\begin{proof}
Since for any $\mathbf{u}\in\E$
\begin{equation}
|\{\sigma\in\autbin\mid \sigma \mathbf{u}=\zero\}|=|\stabbin|,
\end{equation}
we obtain 
\begin{equation}
x_{0,0}^0=\frac{|\{\sigma\in\autbin\mid \zero\in\sigma C\}|}{|\autbin|}=|C|\frac{|\stabbin|}{|\autbin|}=2^{-n}|C|.
\end{equation}
Inequalities (i) and (ii) follow from the fact that $(M')_{\mathbf{u},\mathbf{u}}\geq (M')_{\mathbf{u},\mathbf{v}}$ and $(M'')_{\mathbf{u},\mathbf{u}}\geq (M'')_{\mathbf{u},\mathbf{v}}$ for any $\mathbf{u},\mathbf{v}\in\E$ respectively. The truth of (iii) can be seen as follows. Let $\mathbf{u},\mathbf{v}\in\E$ be such that $\dd(\mathbf{u},\mathbf{v})=(i,j,t)$ and let $(i',j',t')$ be such that $(i,j,i+j-2t)$ is a permutation of $(i',j',i'+j'-2t')$. It can be seen that in that case there is a $\sigma\in\autbin$ such that $\sigma \{\zero,\mathbf{u},\mathbf{v}\}=\{\zero, \mathbf{u}',\mathbf{v}'\}$ with $\dd(\mathbf{u}',\mathbf{v}')=(i',j',t')$. Hence 
\begin{equation}
x_{i,j}^t=(M')_{\mathbf{u},\mathbf{v}}=(M')_{\mathbf{u}',\mathbf{v}'}=x_{i',j'}^{t'}.
\end{equation}
\end{proof}

Given two words $\mathbf{u},\mathbf{v}\in\E$ with $\dd(\mathbf{u},\mathbf{v})=(i,j,t)$, we denote by $\alpha_{(i,j',t'),d}^{(i,j,t)}$ the number of words $\mathbf{w}\in\E$ with $\dd(\mathbf{u},\mathbf{w})=(i,j',t')$ and $d(\mathbf{v},\mathbf{w})=d$. This number is well-defined, and indeed we have the following proposition.
\begin{proposition}
The numbers $\alpha_{(i,j',t'),d}^{(i,j,t)}$ are given by
\begin{equation}
\alpha_{(i,j',t'),d}^{(i,j,t)}=\sum_{a_{00},a_{01},a_{10},a_{11}} {i-t\choose a_{10}}{j-t\choose a_{01}}{t\choose a_{11}}{n+t-i-j\choose a_{00}},
\end{equation}
where the indices $a_{00},a_{01},a_{10}$ and $a_{11}$ range over the nonnegative integers that satisfy
\begin{eqnarray}
j'&=&a_{00}+a_{01}+a_{10}+a_{11}\\
t'&=&a_{10}+a_{11}\nonumber\\
d-j&=&a_{00}+a_{10}-a_{01}-a_{11}\nonumber.
\end{eqnarray}
\end{proposition}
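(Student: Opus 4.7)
I would prove the proposition by a direct combinatorial counting argument, after first observing that the quantity is indeed independent of the chosen pair $\mathbf{u},\mathbf{v}$. The latter follows from (the binary analogue of) Proposition \ref{orbits}: since $\autbin$ acts transitively on ordered pairs of a prescribed $\dd$-type, the number of $\mathbf{w}\in\E$ with $\dd(\mathbf{u},\mathbf{w})=(i,j',t')$ and $d(\mathbf{v},\mathbf{w})=d$ depends only on $\dd(\mathbf{u},\mathbf{v})$, not on $\mathbf{u},\mathbf{v}$ themselves.

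To get the explicit formula, I would fix any representative pair $\mathbf{u},\mathbf{v}$ with $\dd(\mathbf{u},\mathbf{v})=(i,j,t)$ and partition the coordinate set $\{1,\dots,n\}$ into four blocks according to whether each coordinate lies in $S(\mathbf{u})$, in $S(\mathbf{v})$, in both, or in neither:
\begin{align*}
A_{11}&:=S(\mathbf{u})\cap S(\mathbf{v)},&|A_{11}|&=t,\\
A_{10}&:=S(\mathbf{u})\setminus S(\mathbf{v)},&|A_{10}|&=i-t,\\
A_{01}&:=S(\mathbf{v})\setminus S(\mathbf{u)},&|A_{01}|&=j-t,\\
A_{00}&:=\overline{S(\mathbf{u})\cup S(\mathbf{v)}},&|A_{00}|&=n+t-i-j.
\end{align*}
For a candidate word $\mathbf{w}\in\{0,1\}^n$ I introduce the four parameters $a_{pq}:=|S(\mathbf{w})\cap A_{pq}|$ for $(p,q)\in\{0,1\}^2$.

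Next I translate the three defining conditions into linear relations among the $a_{pq}$. The total weight $j'=d(\mathbf{w},\zero)$ is simply $a_{00}+a_{01}+a_{10}+a_{11}$. The overlap with $\mathbf{u}$ is $|S(\mathbf{u})\cap S(\mathbf{w})|=a_{10}+a_{11}$; in the binary case $|S(\mathbf{u})\cap S(\mathbf{w})|=t'$ is precisely the third coordinate of $\dd(\mathbf{u},\mathbf{w})$. Finally, since the Hamming distance in the binary case is $d(\mathbf{v},\mathbf{w})=|S(\mathbf{v})|+|S(\mathbf{w})|-2|S(\mathbf{v})\cap S(\mathbf{w})|=j+j'-2(a_{01}+a_{11})$, the condition $d(\mathbf{v},\mathbf{w})=d$ rewrites as
\begin{equation*}
d-j=j'-2(a_{01}+a_{11})=a_{00}+a_{10}-a_{01}-a_{11},
\end{equation*}
matching exactly the third constraint in the statement.

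Finally, for each quadruple $(a_{00},a_{01},a_{10},a_{11})$ of nonnegative integers satisfying these three equations, the number of binary words $\mathbf{w}$ realising these intersection sizes is obtained by independently choosing the $1$-positions inside each of the four blocks, giving the product of binomial coefficients $\binom{i-t}{a_{10}}\binom{j-t}{a_{01}}\binom{t}{a_{11}}\binom{n+t-i-j}{a_{00}}$. Summing over all valid quadruples yields the claimed expression. The main (and only) subtlety is the translation of the distance condition $d(\mathbf{v},\mathbf{w})=d$ into the stated linear equation; once that is done everything else is bookkeeping.
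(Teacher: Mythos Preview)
Your proof is correct and follows essentially the same approach as the paper: partition the coordinates according to membership in $S(\mathbf{u})$ and $S(\mathbf{v})$, record the intersection sizes $a_{pq}$ with $S(\mathbf{w})$, and count. Your write-up is in fact more explicit than the paper's, which simply defines the sets $A_{pq}$ and states that the formula follows by summing over all possibilities, without spelling out the verification of the three linear constraints.
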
 
\begin{proof}
Partition the support of the words $\mathbf{w}$ into four sets $A_{00},A_{01},A_{10}$ and $A_{11}$ as follows:
\begin{eqnarray}
A_{00}&:=&\{k\in S(\mathbf{w})\mid \mathbf{u}_k=0,\mathbf{v}_k=0\}\\
A_{01}&:=&\{k\in S(\mathbf{w})\mid \mathbf{u}_k=0,\mathbf{v}_k\not=0\}\nonumber\\
A_{10}&:=&\{k\in S(\mathbf{w})\mid \mathbf{u}_k\not=0, \mathbf{v}_k=0\}\nonumber\\
A_{11}&:=&\{k\in S(\mathbf{w})\mid \mathbf{u}_k\not=0,\mathbf{v}_k\not=0\}\nonumber.
\end{eqnarray}
If we denote the sizes of these four sets by $a_{00},a_{01},a_{10}$ and $a_{11}$ respectively, we obtain the claimed result by summing over all possible sets $A_{00},A_{01},A_{10}$ and $A_{11}$.
\end{proof}

\begin{proposition}\label{rowinequalitybin}
Let $\mathbf{u}\in\E$ be a word with $d(\mathbf{u},\zero)=i$ and let $x\in\Real^{\E}$ be such that $x_\mathbf{v}$ only depends on $\dd(\mathbf{u},\mathbf{v})$, say $x_\mathbf{v}=x_{i,j}^{t}$, when $\dd(\mathbf{u},\mathbf{v})=(i,j,t)$. Then 
\begin{equation}
\sum_{d=0}^n\lambda_d x(S_d(\mathbf{v}))\geq \beta\quad\text{for all $\mathbf{v}\in\E$}
\end{equation}
is equivalent to
\begin{equation}
\sum_{j',t'} x_{i,j'}^{t'}\cdot \sum_{d=0}^n \lambda_d \alpha_{(i,j',t'),d}^{(i,j,t)}\geq \beta\quad\text{for all $j,t$}.
\end{equation} 
\end{proposition}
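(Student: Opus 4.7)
The plan is to rewrite the left-hand side by grouping the sum $x(S_d(\mathbf{v}))$ according to how each $\mathbf{w}\in S_d(\mathbf{v})$ relates to the fixed reference word $\mathbf{u}$. Since $x_\mathbf{w}$ depends only on the triple $\dd(\mathbf{u},\mathbf{w})$, and since the first coordinate of that triple is always $d(\mathbf{u},\zero)=i$ by hypothesis, it is natural to partition the summation over $\mathbf{w}\in S_d(\mathbf{v})$ by the remaining two coordinates $(j',t')$ of $\dd(\mathbf{u},\mathbf{w})$.

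First I would fix $\mathbf{v}$ with $\dd(\mathbf{u},\mathbf{v})=(i,j,t)$ and rewrite
\begin{equation}
x(S_d(\mathbf{v}))=\sum_{\mathbf{w}\in S_d(\mathbf{v})} x_\mathbf{w}=\sum_{j',t'} x_{i,j'}^{t'}\cdot |\{\mathbf{w}\in\E\mid \dd(\mathbf{u},\mathbf{w})=(i,j',t'),\ d(\mathbf{v},\mathbf{w})=d\}|.\notag
\end{equation}
By the very definition of $\alpha_{(i,j',t'),d}^{(i,j,t)}$ (which the preceding proposition showed depends only on the three parameters $(i,j,t)$, $(i,j',t')$, and $d$, and not on the particular pair $(\mathbf{u},\mathbf{v})$), the cardinality in the last expression equals $\alpha_{(i,j',t'),d}^{(i,j,t)}$. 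Multiplying by $\lambda_d$, summing over $d$, and swapping the order of summation then yields
\begin{equation}
\sum_{d=0}^n \lambda_d x(S_d(\mathbf{v}))=\sum_{j',t'} x_{i,j'}^{t'}\sum_{d=0}^n \lambda_d \alpha_{(i,j',t'),d}^{(i,j,t)}.\notag
\end{equation}

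From this identity the equivalence is immediate: the left-hand side of (i) depends on $\mathbf{v}$ only through $(j,t)=(d(\mathbf{v},\zero),\tfrac{i+d(\mathbf{v},\zero)-d(\mathbf{u},\mathbf{v})}{2})$, so ``for all $\mathbf{v}\in\E$'' on the left translates into ``for all $(j,t)$ realized by some $\mathbf{v}$'' on the right. Conversely, every admissible pair $(j,t)$ is realized (choose $\mathbf{v}$ with the prescribed support structure relative to $\mathbf{u}$), so the quantifier sets match.

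The step requiring the most care is the very first one: verifying that the count of $\mathbf{w}\in\E$ with prescribed $\dd(\mathbf{u},\mathbf{w})$ and prescribed $d(\mathbf{v},\mathbf{w})$ depends only on $\dd(\mathbf{u},\mathbf{v})$. This is exactly the content of the previous proposition (the explicit partition into the four classes $A_{00},A_{01},A_{10},A_{11}$), so I would simply invoke it rather than recount. Beyond that, the argument is a routine exchange of summation and a quantifier-matching observation.
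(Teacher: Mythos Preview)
Your proposal is correct and follows essentially the same route as the paper: fix $\mathbf{v}$ with $\dd(\mathbf{u},\mathbf{v})=(i,j,t)$, partition the words $\mathbf{w}\in S_d(\mathbf{v})$ according to $\dd(\mathbf{u},\mathbf{w})=(i,j',t')$, replace the resulting count by $\alpha_{(i,j',t'),d}^{(i,j,t)}$, and swap the order of summation. If anything, you are slightly more explicit than the paper about the quantifier-matching step (that every admissible $(j,t)$ is realized by some $\mathbf{v}$), which the paper leaves tacit.
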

\begin{proof}
Let $\mathbf{v}\in\E$ and let $\dd(\mathbf{u},\mathbf{v})=(i,j,t)$. Then we have the following equalities.
\begin{eqnarray}
\sum_{d=0}^n \lambda_d x(S_d(\mathbf{v}))&=&\sum_{d=0}^n\lambda_d\sum_{\substack{\mathbf{w}\in\E\\d(\mathbf{v},\mathbf{w})=d}} x_{\mathbf{w}}\\
&=&\sum_{d=0}^n\lambda_d\sum_{j',t'} \sum_{\substack{\mathbf{w}\in\E\\d(\mathbf{v},\mathbf{w})=d)\\ \dd(\mathbf{u},\mathbf{w})=(i,j',t')}}x_{\mathbf{w}}\nonumber\\
&=&\sum_{d=0}^n \lambda_d\sum_{j',t'} \alpha_{(i,j',t'),d}^{(i,j,t)} x_{i,j'}^{t'}\nonumber\\
&=&\sum_{j',t'} x_{i,j'}^{t'} \cdot \sum_{d=0}^n \lambda_d \alpha_{(i,j',t'),d}^{(i,j,t)}.\nonumber
\end{eqnarray}
\end{proof}

\begin{proposition}
If the code $C$ satisfies the set of inequalities $(\lambda_0,\ldots,\lambda_n)\beta$, then the variables $x_{i,j}^{t}$ satisfy the following set of inequalities.
For every tuple $(i,j,t)$
\begin{eqnarray}\label{constraint2}
\sum_{j',t'} x_{i,j'}^{t'}\cdot \lambda_{j',t'}^{i,j,t}&\geq& x_{i,0}^{0}\beta\\
\sum_{j',t'} (x_{j',0}^{0}-x_{i,j'}^{t'})\cdot \lambda_{j',t'}^{i,j,t}&\geq&(x_{0,0}^{0}-x_{i,0}^{0})\beta\nonumber\\
\sum_{j',t'} (x_{i+j-2t,0}^{0}-x_{i,j}^{t})\cdot \lambda_{j',t'}^{i,j,t}&\geq&(x_{0,0}^{0}-x_{i,0}^{0})\beta\nonumber\\
\sum_{j',t'} (x_{0,0}^{0}-x_{j',0}^{0}-x_{i+j'-2t',0}^{0}+x_{i,j'}^{t'})\cdot \lambda_{j',t'}^{i,j,t}&\geq&(1-2x_{0,0}^{0}+x_{i,0}^{0})\beta,\nonumber
\end{eqnarray}
where we use the shorthand notation
\begin{equation}
\lambda_{j',t'}^{i,j,t}:=\sum_{d=0}^n \lambda_d \alpha_{(i,j',t'),d}^{(i,j,t)}.
\end{equation}
\end{proposition}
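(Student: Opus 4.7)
The plan is to derive each of the four inequalities separately by partitioning $\autbin$ according to which of $\zero$ and $\mathbf{u}$ lie in $\sigma C$, for a fixed auxiliary word $\mathbf{u}\in\E$ with $d(\mathbf{u},\zero)=i$, and then averaging. The starting observation is that every translate $\sigma C$ has the same covering radius as $C$, so it still satisfies $(\lambda_0,\ldots,\lambda_n)\beta$.

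First I would fix $\mathbf{u}\in\E$ with $d(\mathbf{u},\zero)=i$ and $\mathbf{v}\in\E$ with $\dd(\mathbf{u},\mathbf{v})=(i,j,t)$, and split $\autbin$ into four sets $\Sigma_{ab}$ ($a,b\in\{0,1\}$), where $\sigma\in\Sigma_{ab}$ iff $a=1$ precisely when $\zero\in\sigma C$ and $b=1$ precisely when $\mathbf{u}\in\sigma C$. For each $\sigma\in\Sigma_{ab}$, the inequality $(\lambda_0,\ldots,\lambda_n)\beta$ applied to $\sigma C$ at the word $\mathbf{v}$ yields $\sum_d\lambda_d|(\sigma C)\cap S_d(\mathbf{v})|\geq\beta$. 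Summing over $\sigma\in\Sigma_{ab}$, dividing by $|\autbin|$, and swapping the order of summation gives
\[
\sum_{d=0}^n\lambda_d\sum_{\mathbf{w}\in S_d(\mathbf{v})}p_{ab}(\mathbf{w})\;\geq\;\frac{|\Sigma_{ab}|}{|\autbin|}\,\beta,
\]
where $p_{ab}(\mathbf{w}):=|\autbin|^{-1}|\{\sigma\in\Sigma_{ab}:\mathbf{w}\in\sigma C\}|$.

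Next I would identify the coefficients on both sides. Since $\Sigma_{ab}$ is $\stabbin$-invariant, $p_{ab}(\mathbf{w})$ depends on $\mathbf{w}$ only through $\dd(\mathbf{u},\mathbf{w})=(i,j',t')$. By inclusion--exclusion on the three events $\{\zero\in\sigma C\}$, $\{\mathbf{u}\in\sigma C\}$, $\{\mathbf{w}\in\sigma C\}$, each $p_{ab}(\mathbf{w})$ is a $\pm 1$ combination of the five quantities $(M')_{\zero,\zero}=x_{0,0}^0$, $(M')_{\mathbf{u},\zero}=x_{i,0}^0$, $(M')_{\mathbf{w},\zero}=x_{j',0}^0$, $M_{\mathbf{u},\mathbf{w}}=x_{i+j'-2t',0}^0$, and $(M')_{\mathbf{u},\mathbf{w}}=x_{i,j'}^{t'}$, which reproduces exactly the four row-coefficients appearing in the statement. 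Analogously, $|\Sigma_{ab}|/|\autbin|$ works out to $x_{i,0}^0$, $x_{0,0}^0-x_{i,0}^0$, $x_{0,0}^0-x_{i,0}^0$ (using that $|\{\sigma:\mathbf{u}\in\sigma C\}|=|\{\sigma:\zero\in\sigma C\}|$ by transitivity of $\autbin$), and $1-2x_{0,0}^0+x_{i,0}^0$ respectively. Finally, Proposition~\ref{rowinequalitybin} converts the double sum on the left-hand side into $\sum_{j',t'}p_{ab}(\mathbf{w})\,\lambda_{j',t'}^{i,j,t}$, giving exactly the four inequalities claimed. The main bookkeeping obstacle is to keep straight which entries depend on the running index $(j',t')$ and which are fixed by $(i,j,t)$; once that is sorted, everything reduces to the three-event inclusion--exclusion identity applied to $\sigma C$.
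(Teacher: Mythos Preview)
Your argument is correct and, at its core, coincides with the paper's proof. The paper proceeds by noting that each $M_{\sigma C}$ satisfies the two matrix-cut conditions
\[
(M_{\sigma C})_\mathbf{u}\in (M_{\sigma C})_{\mathbf{u},\mathbf{u}}\,P_{\lambda,\beta},\qquad \diag(M_{\sigma C})-(M_{\sigma C})_\mathbf{u}\in\bigl(1-(M_{\sigma C})_{\mathbf{u},\mathbf{u}}\bigr)P_{\lambda,\beta},
\]
passes to the convex combinations $(x_{0,0}^0)^{-1}M'$ and $(1-x_{0,0}^0)^{-1}M''$, and then applies Proposition~\ref{rowinequalitybin}. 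Your four-fold partition $\Sigma_{ab}$ is precisely the unpacking of these two conditions applied to $M'$ and to $M''$: $\Sigma_{11}$ yields the $\mathbf{u}$-th row of $M'$, $\Sigma_{10}$ yields $\diag(M')-M'_\mathbf{u}$, and $\Sigma_{01},\Sigma_{00}$ give the analogous pieces of $M''$. Your inclusion--exclusion identifications of $p_{ab}(\mathbf{w})$ and of $|\Sigma_{ab}|/|\autbin|$ are exactly right and reproduce the stated coefficients. The only difference is one of packaging: the paper phrases the argument through the matrix-cut abstraction (row and diagonal-minus-row of $M'$ and $M''$), while you go straight to the two-event partition on $\autbin$; neither route gains anything over the other here.

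One minor imprecision: the claim that $p_{ab}(\mathbf{w})$ depends only on $\dd(\mathbf{u},\mathbf{w})$ is not a consequence of ``$\Sigma_{ab}$ being $\stabbin$-invariant'' (you need invariance under the stabilizer of \emph{both} $\zero$ and $\mathbf{u}$, acting on the left). But this point is moot, since your explicit inclusion--exclusion computation of $p_{ab}(\mathbf{w})$ in terms of $x_{0,0}^0$, $x_{j',0}^0$, $x_{i+j'-2t',0}^0$, $x_{i,j'}^{t'}$ already exhibits the required dependence directly.
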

\begin{proof}
For any $\sigma\in\autbin$, the matrix $M:=M_{\sigma C}$ satisfies
\begin{eqnarray}\label{mcbin}
M_\mathbf{u}&\in & M_{\mathbf{u},\mathbf{u}}P_{\lambda,\beta},\\
\diag(M)-M_\mathbf{u}&\in& (1-M_{\mathbf{u},\mathbf{u}})P_{\lambda,\beta}\nonumber\\
\text{for every $\mathbf{u}\in\E$.}\nonumber
\end{eqnarray}
This implies that also the matrices $\frac{1}{x_{0,0}^{0}}M'$ and $\frac{1}{1-x_{0,0}^{0}}M''$ satisfy (\ref{mcbin}) as they are convex combinations of the matrices $M_{\sigma C}$. Now using Proposition \ref{rowinequalitybin} gives a proof of the claim.
\end{proof}

This leads to the following semidefinite programming bound on $K_2(n,r)$.
\begin{theorem}
If any code $C\subseteq \E$ with covering radius $r$ satisfies $(\lambda_0,\ldots,\lambda_n)\beta$, we have
\begin{equation}
K_2(n,r)\geq \min_x 2^nx_{0,0}^0,
\end{equation}
where the minimum ranges over all $x=(x_{i,j}^t)$ satisfying (\ref{constraint3}), (\ref{constraint1}) and (\ref{constraint2}).
\end{theorem}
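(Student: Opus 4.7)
The plan is to show that from any code $C\subseteq \E$ with covering radius $r$ we can manufacture a feasible solution $x=(x_{i,j}^t)$ of the stated semidefinite program whose objective value $2^n x_{0,0}^0$ equals $|C|$. Granted this, taking $C$ to be an optimal covering code of covering radius $r$ (so $|C|=K_2(n,r)$) gives
\begin{equation}
K_2(n,r)=|C|=2^n x_{0,0}^0 \geq \min_x 2^n x_{0,0}^0,
\end{equation}
which is the asserted bound.

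The feasible solution is simply the one coming from the construction developed in the previous subsection. Namely, given $C$, form the matrices
\begin{equation}
M'=|\autbin|^{-1}\!\!\sum_{\substack{\sigma\in\autbin\\ \zero\in\sigma C}} M_{\sigma C}, \qquad M''=|\autbin|^{-1}\!\!\sum_{\substack{\sigma\in\autbin\\ \zero\not\in\sigma C}} M_{\sigma C},
\end{equation}
both of which lie in $\mathcal{A}_{2,n}$, and read off the coefficients $x_{i,j}^t$ from the expansion $M'=\sum_{(i,j,t)} x_{i,j}^t M_{i,j}^t$. The identity $2^n x_{0,0}^0=|C|$ was already verified in the proposition on the coefficients of $M'$, so the objective value is $|C|$ on the nose.

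It remains to check the three families of constraints in turn. The linear inequalities (\ref{constraint1}) are exactly what was established in that same proposition using the combinatorial interpretation of $(M')_{\mathbf{u},\mathbf{v}}$ and the symmetries of the triples $(\zero,\mathbf{u},\mathbf{v})$ under $\autbin$. The semidefiniteness constraints (\ref{constraint3}) come directly from Proposition \ref{psd}: $M'$ and the bordered matrix with blocks $M''$ and $\diag(M'')$ are both positive semidefinite as nonnegative convex combinations of the rank-one matrices $R(M_{\sigma C})={1\choose \chi^{\sigma C}}{1\choose \chi^{\sigma C}}^{\transp}$, and the block diagonalisation of $\mathcal{A}_{2,n}$ from Section \ref{Sec:blockdiag} (invoking Theorem \ref{blokform1} for $M'$ and Theorem \ref{blokform2} for the bordered version of $M''$) translates this semidefiniteness into precisely the block matrices listed in (\ref{constraint3}).

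The main work is the inequality system (\ref{constraint2}). Here I would argue as follows: for any $\sigma\in\autbin$ the code $\sigma C$ has covering radius $r$ and therefore satisfies $(\lambda_0,\ldots,\lambda_n)\beta$, which when written for $M_{\sigma C}$ and any $\mathbf{u}\in\E$ reads $(M_{\sigma C})_\mathbf{u}\in (M_{\sigma C})_{\mathbf{u},\mathbf{u}}P_{\lambda,\beta}$ and $\diag(M_{\sigma C})-(M_{\sigma C})_\mathbf{u}\in (1-(M_{\sigma C})_{\mathbf{u},\mathbf{u}})P_{\lambda,\beta}$, as in Proposition \ref{absconstraints}(iii). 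Averaging these relations over $\sigma$ with $\zero\in\sigma C$ (and separately over $\sigma$ with $\zero\notin \sigma C$), the normalised matrices $\frac{1}{x_{0,0}^0}M'$ and $\frac{1}{1-x_{0,0}^0}M''$ inherit the same containment. Expressed in terms of a fixed $\mathbf{u}$ with $d(\mathbf{u},\zero)=i$ and an arbitrary $\mathbf{v}$ with $\dd(\mathbf{u},\mathbf{v})=(i,j,t)$, Proposition \ref{rowinequalitybin} translates each of the four inclusions (for the row of $M'$, the row of $M''$, the complementary diagonal of $M'$, and the complementary diagonal of $M''$) into one of the four lines of (\ref{constraint2}). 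The bookkeeping step I expect to be the main obstacle is keeping track of which complementary diagonal goes with which row; this is controlled by the identities $(M')_{\mathbf{u},\mathbf{u}}=x_{i,0}^0$, $(M'')_{\mathbf{u},\mathbf{u}}=x_{0,0}^0-x_{i,0}^0$, together with the relation $M''=\sum(x_{i+j-2t,0}^0-x_{i,j}^t)M_{i,j}^t$ proved earlier, and once these substitutions are made the four inequalities of (\ref{constraint2}) drop out. This completes the verification of feasibility and hence the theorem.
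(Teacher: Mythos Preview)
Your argument is correct and follows exactly the same route as the paper: the theorem is a summary of the preceding propositions, and its proof amounts to noting that for any covering code $C$ the coefficients $x_{i,j}^t$ extracted from $M'$ give a feasible point with objective value $2^n x_{0,0}^0=|C|$. The paper's own proof environment is in fact left empty, since the work is already done in the propositions you cite; your write-up simply makes the assembly explicit.
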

\begin{proof}
\end{proof}

\section{Nonbinary case}
In this section we consider the nonbinary case, that is $q\geq 3$. The nonbinary case is very similar to the binary case described in the previous section and we will skip some of the details in the proofs. 

Again define the matrices $M'$ and $M''$ by 
\begin{eqnarray}
M'&:=&|\autbin|^{-1}\sum_{\substack{\sigma\in\autbin\\ \zero\in \sigma C}} M_{\sigma C}\\
M''&:=&|\autbin|^{-1}\sum_{\substack{\sigma\in\autbin\\ \zero\not\in \sigma C}} M_{\sigma C}\nonumber.
\end{eqnarray}
The matrices $M'$ and $M''$ are invariant under permutations of the rows and columns by permutations $\sigma\in\stab$. Hence $M'$ and $M''$ are elements of the algebra $\mathcal{A}_{q,n}$. We write
\begin{equation}
M'=\sum_{(i,j,t,p)}x_{i,j}^{t,p}M_{i,j}^{t,p} 
\end{equation}
where the $M_{i,j}^{t,p}$ are the $0$--$1$ basis matrices of the algebra $\mathcal{A}_{q,n}$.
The matrix $M''$ can be expressed in terms of the coefficients $x_{i,j}^{t,p}$ as follows.
\begin{proposition}
The matrix $M''$ is given by
\begin{equation}
M''=\sum_{(i,j,t,p)}(x_{i+j-t-p,0}^{0,0}-x_{i,j}^{t,p}) M_{i,j}^{t,p}.
\end{equation}
\end{proposition}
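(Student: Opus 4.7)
The plan is to mimic exactly the proof given for the binary case a few pages earlier, replacing the triples $(i,j,t)$ by the four-tuples $(i,j,t,p)$ and invoking the non-binary subpartition identity (\ref{subpartition}) in place of its binary counterpart.

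First I would observe that $M := M' + M'' = |\aut|^{-1}\sum_{\sigma \in \aut} M_{\sigma C}$ is invariant under permuting rows and columns by any $\sigma \in \aut$ (not merely by the stabilizer $\stab$ of $\zero$). Hence $M$ lies in the Bose--Mesner algebra of the Hamming scheme $H(n,q)$, so we may write $M = \sum_k y_k A_k$ for some scalars $y_0, \ldots, y_n$.

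Next I would identify the coefficients $y_k$. Since every $M_{\sigma C}$ in the sum defining $M''$ has $\zero \notin \sigma C$, its $\zero$-th row vanishes, and therefore $(M'')_{\mathbf{u},\zero} = 0$ for every $\mathbf{u} \in \E$. Consequently, for any $\mathbf{u}$ with $d(\mathbf{u}, \zero) = k$,
\begin{equation}
y_k = M_{\mathbf{u},\zero} = (M')_{\mathbf{u},\zero} = x_{k,0}^{0,0},
\end{equation}
where the last equality uses that the entry in row $\mathbf{u}$ and column $\zero$ of $M_{i,j}^{t,p}$ is nonzero only for $(i,j,t,p) = (k,0,0,0)$.

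Finally I would subtract: using the subpartition (\ref{subpartition}), namely $A_k = \sum_{i+j-t-p=k} M_{i,j}^{t,p}$, we get
\begin{equation}
M'' = M - M' = \sum_k x_{k,0}^{0,0} \sum_{i+j-t-p=k} M_{i,j}^{t,p} - \sum_{(i,j,t,p)} x_{i,j}^{t,p} M_{i,j}^{t,p} = \sum_{(i,j,t,p)} (x_{i+j-t-p,0}^{0,0} - x_{i,j}^{t,p}) M_{i,j}^{t,p},
\end{equation}
which is the claimed identity. There is essentially no obstacle here: the only thing to be careful about is that (\ref{subpartition}) is stated for general $q$ and hence applies verbatim; the argument does not require any new input beyond what was used in the binary case.
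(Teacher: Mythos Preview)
Your proof is correct and follows essentially the same approach as the paper: both show that $M=M'+M''$ lies in the Bose--Mesner algebra, identify its coefficients $y_k$ as $x_{k,0}^{0,0}$ via the vanishing of the $\zero$-column of $M''$, and then subtract using the identity $A_k=\sum_{i+j-t-p=k}M_{i,j}^{t,p}$.
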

\begin{proof}
The matrix
\begin{equation}
M:=M'+M''=|\aut|^{-1}\sum_{\sigma\in\aut} M_{\sigma C}
\end{equation}
is invariant under permutation of the rows and columns by permutations $\sigma\in\aut$, and hence is an element of the Bose--Mesner algebra, say
\begin{equation}
M=\sum_{k} y_k A_k.
\end{equation}
Note that for any $\mathbf{u}\in\E$ with $|S(\mathbf{u})|=k$, we have 
\begin{equation}
y_k=(M)_{\mathbf{u},\zero}=(M')_{\mathbf{u},\zero}=x_{k,0}^{0,0},
\end{equation}
since $(M'')_{\mathbf{u},\zero}=0$. Hence we have
\begin{eqnarray}
M''&=&M-M'\\
&=&\sum_{k} x_{k,0}^{0,0} A_k-\sum_{(i,j,t,p)} x_{i,j}^{t,p} M_{i,j}^{t,p}\nonumber\\ &=&\sum_{k}\sum_{i+j-t-p=k} (x_{k,0}^{0,0}-x_{i,j}^{t,p}) M_{i,j}^{t,p}\nonumber\\
&=&\sum_{(i,j,t,p)} (x_{i+j-t-p,0}^{0,0}-x_{i,j}^{t,p}) M_{i,j}^{t,p},\nonumber
\end{eqnarray}
which proves the proposition.
\end{proof}
\begin{proposition}
The matrices 
\begin{equation}\label{psd2}
M'\quad\text{and}\quad \begin{pmatrix}1-x_{0,0}^{0,0}&(\diag(M''))^{\transp}\\ \diag(M'')& M''\end{pmatrix}
\end{equation}
are positive semidefinite.
\end{proposition}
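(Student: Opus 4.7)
My plan is to mirror the proof of the analogous binary result almost verbatim: both matrices will be displayed as nonnegative combinations of matrices of the form $R(M_{\sigma C})$ for $\sigma \in \aut$, each of which is manifestly positive semidefinite.

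For the first matrix $M'$, I observe that every summand in the definition $M' = |\aut|^{-1} \sum_{\sigma:\zero\in\sigma C} M_{\sigma C}$ satisfies $M_{\sigma C} = \chi^{\sigma C}(\chi^{\sigma C})^{\transp}$, hence is positive semidefinite of rank one. Therefore $M'$ is positive semidefinite as a nonnegative combination of positive semidefinite matrices.

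For the second (bordered) matrix, I will use that for every $\sigma \in \aut$, $\diag(M_{\sigma C}) = \chi^{\sigma C}$, so
\begin{equation*}
R(M_{\sigma C}) = \begin{pmatrix} 1 & (\chi^{\sigma C})^{\transp} \\ \chi^{\sigma C} & \chi^{\sigma C}(\chi^{\sigma C})^{\transp} \end{pmatrix} = {1\choose \chi^{\sigma C}}{1\choose \chi^{\sigma C}}^{\transp}
\end{equation*}
is a rank-one positive semidefinite matrix. I will then average $R(M_{\sigma C})$ over the $\sigma \in \aut$ with $\zero \notin \sigma C$ (and divide by $|\aut|$): the bottom-right block yields $M''$ by definition, the off-diagonal strip yields $\diag(M'')$ (since $\chi^{\sigma C} = \diag(M_{\sigma C})$), and the top-left entry evaluates to $|\aut|^{-1}|\{\sigma \in \aut : \zero \notin \sigma C\}|$, which equals $1 - x_{0,0}^{0,0}$ because $x_{0,0}^{0,0} = (M')_{\zero,\zero} = |\aut|^{-1}|\{\sigma \in \aut : \zero \in \sigma C\}|$. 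This identifies the bordered matrix as a nonnegative combination of positive semidefinite matrices, and hence itself as positive semidefinite.

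There is no real obstacle to this argument: it is purely formal and does not use any structure specific to the nonbinary alphabet beyond what was already used in the binary case. The only point to verify carefully is the counting identity $|\{\sigma \in \aut : \zero \notin \sigma C\}| = (1 - x_{0,0}^{0,0})|\aut|$, which is immediate from the definition of $x_{0,0}^{0,0}$ together with the fact that the $(\zero,\zero)$-entry of $M''$ is zero by construction.
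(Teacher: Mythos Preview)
Your proof is correct and follows essentially the same approach as the paper's (binary) proof of the analogous statement: both rest on the observation that $R(M_{\sigma C})={1\choose \chi^{\sigma C}}{1\choose \chi^{\sigma C}}^{\transp}$ is rank-one positive semidefinite, and then exhibit $M'$ and the bordered matrix as nonnegative (or, after normalisation, convex) combinations of these. The paper phrases it via $R((1-x_{0,0}^{0,0})^{-1}M'')$ being a convex combination and then scaling, whereas you compute the average directly; this is a cosmetic difference only.
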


Using the block diagonalisation of $\mathcal{A}_{q,n}$, the positive semidefiniteness of $R$ and $R'$ is equivalent to:
\begin{gather}\label{constr3}
\text{for all $a,k$ with $0\leq a\leq k\leq n+a-k$, $k\not=0$ the matrices}\\ 
\nonumber\\
\left(\sum_{t,p}\alpha(i,j,t,p,a,k)x_{i,j}^{t,p}\right)_{i,j=k}^{n+a-k}\notag\\
\text{and}\notag\\ \left(\sum_{t,p}\alpha(i,j,t,p,a,k)(x_{i+j-t-p,0}^{0,0}-x_{i,j}^{t,p})\right)_{i,j=k}^{n+a-k}\notag\\
\text{are positive semidefinite, and}\notag\\
\left(\sum_{t,p}\alpha(i,j,t,p,0,0)x_{i,j}^{t,p}\right)_{i,j=0}^{n}\notag\\
\text{and}\notag\\
\begin{pmatrix}1-x_{0,0}^{0,0}&x^{\transp}\\x&L\end{pmatrix}\notag\\
\text{are positive semidefinite, where}\notag\\
L:=\left(\sum_{t,p}\alpha(i,j,t,p,0,0)(x_{i+j-t-p,0}^{0,0}-x_{i,j}^{t,p})\right)_{i,j=0}^{n},\notag\\
x_i:=(x_{0,0}^{0,0}-x_{i,i}^{i,i}){n\choose i}(q-1)^i\text{ for $i=0,\ldots,n$}.
\end{gather}

\begin{proposition}
The coefficients $x_{i,j}^{t,p}$ satisfy the following.
\begin{equation}
q^nx_{0,0}^0=|C|,
\end{equation}

and for any $i,j,t,p$
\begin{eqnarray}\label{constr1}
\text{(i)}&&0\leq x_{i,j}^{t,p}\leq x_{i,i}^{i,i},\\
\text{(ii)}&&x_{i,0}^{0,0}+x_{i+j-t-p,0}^{0,0}-x_{0,0}^{0,0}\leq x_{i,j}^{t,p}\leq x_{i+j-t-p,0}^{0,0},\nonumber\\
\text{(iii)}&&x_{i,j}^{t,p}=x_{i',j'}^{t',p'}\quad\text{if $(i,j,i+j-t-p)$ is a permutation of}\nonumber\\
&&\text{$(i',j',i'+j'-t'-p')$ and $t-p$=$t'-p'$.}\nonumber
\end{eqnarray}
\end{proposition}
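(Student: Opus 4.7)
The normalization $q^n x_{0,0}^{0,0}=|C|$ comes from the computation $x_{0,0}^{0,0}=(M')_{\zero,\zero}=|\aut|^{-1}\#\{\sigma\in\aut:\zero\in\sigma C\}$, which by double-counting pairs $(\mathbf{u},\sigma)\in C\times\aut$ with $\sigma\mathbf{u}=\zero$ equals $|\aut|^{-1}|C||\stab|=|C|/q^n$ via the orbit-stabilizer theorem for the transitive action of $\aut$ on $\E$. For (i), I would fix representatives $\mathbf{u},\mathbf{v}\in\E$ with $d(\zero,\mathbf{u},\mathbf{v})=(i,j,t,p)$, so that $x_{i,j}^{t,p}=(M')_{\mathbf{u},\mathbf{v}}$ and $(M')_{\mathbf{u},\mathbf{u}}=x_{i,i}^{i,i}$. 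Each summand $M_{\sigma C}$ in the definition of $M'$ is a zero-one matrix satisfying the entrywise bound $0\leq(M_{\sigma C})_{\mathbf{u},\mathbf{v}}\leq(M_{\sigma C})_{\mathbf{u},\mathbf{u}}$ (a one in position $(\mathbf{u},\mathbf{v})$ forces $\mathbf{u}\in\sigma C$, giving a one in position $(\mathbf{u},\mathbf{u})$), and averaging yields (i).

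For (ii) I would introduce $M:=M'+M''=|\aut|^{-1}\sum_\sigma M_{\sigma C}$, which lies in the Bose-Mesner algebra of the Hamming scheme; hence $(M)_{\mathbf{u},\mathbf{v}}=x_{i+j-t-p,0}^{0,0}$ and $(M)_{\mathbf{u},\mathbf{u}}=x_{0,0}^{0,0}$. The upper bound in (ii) follows from $(M'')_{\mathbf{u},\mathbf{v}}\geq 0$, and applying the entrywise inequality of the previous paragraph to $M''$ in place of $M'$ gives the lower bound $x_{i,j}^{t,p}\geq x_{i,i}^{i,i}+x_{i+j-t-p,0}^{0,0}-x_{0,0}^{0,0}$. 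This matches the claim after one uses the identity $x_{i,i}^{i,i}=x_{i,0}^{0,0}$, which holds because both sides equal $|\aut|^{-1}\#\{\sigma\in\aut:\zero,\mathbf{w}\in\sigma C\}$ for any $\mathbf{w}$ of weight $i$ (invoke the transitivity of $\aut$ on ordered pairs at a fixed Hamming distance).

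Property (iii) is the main substance. Observe that $d_1:=d(\zero,\mathbf{u})=i$, $d_2:=d(\zero,\mathbf{v})=j$, $d_3:=d(\mathbf{u},\mathbf{v})=i+j-t-p$, and $c:=t-p=|\{h:\zero_h,\mathbf{u}_h,\mathbf{v}_h\text{ are pairwise distinct}\}|$ are invariants of the \emph{unordered} triple $\{\zero,\mathbf{u},\mathbf{v}\}$; conversely, once an ordering is picked, $(i,j,t,p)$ is recovered by $t=\frac{1}{2}(i+j-d_3+c)$ and $p=\frac{1}{2}(i+j-d_3-c)$. Combining this with Proposition \ref{orbits} shows that the $\aut$-orbit of the unordered triple is determined precisely by the multiset $\{d_1,d_2,d_3\}$ together with $c$. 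Under the hypothesis of (iii) these data agree for $\{\zero,\mathbf{u},\mathbf{v}\}$ and $\{\zero,\mathbf{u}',\mathbf{v}'\}$, so there exists $\tau\in\aut$ with $\tau\{\zero,\mathbf{u},\mathbf{v}\}=\{\zero,\mathbf{u}',\mathbf{v}'\}$. The left-translation $\sigma\mapsto\tau\sigma$ then gives a bijection of $\{\sigma:\{\zero,\mathbf{u},\mathbf{v}\}\subseteq\sigma C\}$ onto $\{\sigma:\{\zero,\mathbf{u}',\mathbf{v}'\}\subseteq\sigma C\}$, and equating cardinalities proves $x_{i,j}^{t,p}=x_{i',j'}^{t',p'}$. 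The delicate verification is this orbit-classification of unordered triples; everything else is routine averaging.
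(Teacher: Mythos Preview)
Your proof is correct and follows the same approach as the paper (which gives an explicit proof only for the binary analogue and leaves the nonbinary statement unproved): orbit--stabilizer for the normalization, the entrywise diagonal dominance $(M')_{\mathbf{u},\mathbf{u}}\geq(M')_{\mathbf{u},\mathbf{v}}$ and $(M'')_{\mathbf{u},\mathbf{u}}\geq(M'')_{\mathbf{u},\mathbf{v}}$ for (i) and (ii), and for (iii) the existence of an automorphism carrying the unordered triple $\{\zero,\mathbf{u},\mathbf{v}\}$ to $\{\zero,\mathbf{u}',\mathbf{v}'\}$. You are in fact slightly more explicit than the paper on two points---the left-translation bijection $\sigma\mapsto\tau\sigma$ that justifies $(M')_{\mathbf{u},\mathbf{v}}=(M')_{\mathbf{u}',\mathbf{v}'}$, and the identity $x_{i,i}^{i,i}=x_{i,0}^{0,0}$ needed to put the lower bound in (ii) into the stated form---but these are elaborations of the same argument rather than a different route.
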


\begin{proposition}
Let $\mathbf{u},\mathbf{v}\in\E$ be words with $d(\mathbf{u},\mathbf{v})=(i,j,t,p)$ and let $(i,j',t',p')$ and $d$ be given. Then the number $\alpha_{(i,j',t',p'),d}^{(i,j,t,p)}$ of words $\mathbf{w}\in\E$ with $d(\mathbf{u},\mathbf{w})=(i,j',t,'p')$ and $d(\mathbf{v},\mathbf{w})=d$ is given by
\begin{multline}
\alpha_{(i,j',t',p'),d}^{(i,j,t,p)}=\sum_{\substack{a_1,a_2\\b_1,b_2\\c_1,c_2\\d_1,d_2,d_3\\e}} {i-t\choose a_1,a_2}{j-t\choose b_1,b_2}{p\choose c_1,c_2}{t-p\choose d_1,d_2,d_3}\\
\cdot {n+t-i-j\choose e}(q-1)^e(q-2)^{a_2+b_2+c_2}(q-3)^{d_3}, 
\end{multline}
where the indices $a_1,a_2,b_1,b_2,c_1,c_2,d_1,d_2,d_3$ and $e$ range over the nonnegative integers that satisfy
\begin{eqnarray}
j'&=&a_1+a_2+b_1+b_2+c_1+c_2+d_1+d_2+d_3+e\\
t'&=&a_1+a_2+c_1+c_2+d_1+d_2+d_3\nonumber\\
p'&=&a_1+c_1+d_1\nonumber\\
d &=&a_1+a_2+e+j-b_1-c_1-d_2.\nonumber
\end{eqnarray}
\end{proposition}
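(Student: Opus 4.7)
The proof will proceed by a careful enumeration of the coordinates of $\mathbf{w}$ based on a five-way partition of $\{1,\ldots,n\}$ determined by the joint behaviour of $\mathbf{u}$ and $\mathbf{v}$. Since $d(\mathbf{u},\mathbf{v}) = (i,j,t,p)$, the coordinates fall into five classes according to the pair $(\mathbf{u}_k,\mathbf{v}_k)$: the set $U:=\{k : \mathbf{u}_k\neq 0, \mathbf{v}_k=0\}$ of size $i-t$; the set $V:=\{k : \mathbf{u}_k=0, \mathbf{v}_k\neq 0\}$ of size $j-t$; the set $P:=\{k : \mathbf{u}_k=\mathbf{v}_k\neq 0\}$ of size $p$; the set $Q:=\{k : \mathbf{u}_k\neq \mathbf{v}_k, \text{both nonzero}\}$ of size $t-p$; and the set $Z:=\{k : \mathbf{u}_k=\mathbf{v}_k=0\}$ of size $n+t-i-j$.

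I would next classify, within each of these classes, the possible values of $\mathbf{w}_k$ according to whether $\mathbf{w}_k$ is $0$, equal to $\mathbf{u}_k$, equal to $\mathbf{v}_k$, or a fourth option (none of the above and nonzero). This gives rise to the ten counting parameters in the formula: $a_1,a_2$ count positions in $U$ on which $\mathbf{w}_k=\mathbf{u}_k$, respectively $\mathbf{w}_k\neq 0,\mathbf{u}_k$; $b_1,b_2$ do the same in $V$ with $\mathbf{v}_k$ in place of $\mathbf{u}_k$; $c_1,c_2$ do so in $P$; in $Q$, where both $\mathbf{u}_k$ and $\mathbf{v}_k$ are distinct nonzero values, we record $d_1,d_2,d_3$ for $\mathbf{w}_k=\mathbf{u}_k$, $\mathbf{w}_k=\mathbf{v}_k$, and $\mathbf{w}_k\notin\{0,\mathbf{u}_k,\mathbf{v}_k\}$; and $e$ counts positions in $Z$ where $\mathbf{w}_k\neq 0$. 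The number of words $\mathbf{w}$ realizing a given choice of these parameters is obtained by choosing which positions in each class play which role (a multinomial factor for each of $U,V,P,Q,Z$) and then choosing the nonzero values freely when no value is prescribed — giving factors $(q-2)^{a_2+b_2+c_2}$, $(q-3)^{d_3}$, and $(q-1)^e$.

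To conclude, I would verify that the four linear conditions on the parameters are exactly the requirements $|S(\mathbf{w})|=j'$, $|S(\mathbf{u})\cap S(\mathbf{w})|=t'$, $|\{k:\mathbf{u}_k=\mathbf{w}_k\neq 0\}|=p'$, and $d(\mathbf{v},\mathbf{w})=d$. The first three are immediate by summing the contributions: every variable except those encoding $\mathbf{w}_k=0$ contributes to $j'$; only variables in classes where $\mathbf{u}_k\neq 0$ (namely $a_1,a_2,c_1,c_2,d_1,d_2,d_3$) contribute to $t'$; and only $a_1,c_1,d_1$ yield agreement $\mathbf{u}_k=\mathbf{w}_k\neq 0$. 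The condition on $d$ is the slightly more delicate one: in each class one must subtract the positions where $\mathbf{v}_k=\mathbf{w}_k$. A direct count yields contributions $a_1+a_2$ from $U$, $(j-t)-b_1$ from $V$, $p-c_1$ from $P$, $(t-p)-d_2$ from $Q$, and $e$ from $Z$, which sum to $a_1+a_2+e+j-b_1-c_1-d_2$.

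The main obstacle is bookkeeping: one must be careful in class $Q$, where there are four admissible values of $\mathbf{w}_k$ and the multiplicative factor is $(q-3)^{d_3}$ rather than $(q-2)^{d_3}$ because both $\mathbf{u}_k$ and $\mathbf{v}_k$ must be excluded. Likewise, one must check that the contribution of $\mathbf{w}_k=\mathbf{u}_k$ (which equals $\mathbf{v}_k$) in class $P$ correctly increments $p'$ but not $d$, while in class $Q$ the choice $\mathbf{w}_k=\mathbf{u}_k$ increments both $p'$ and $d$. Once these cases are disentangled, summing over the ten parameters subject to the four linear relations yields the stated formula.
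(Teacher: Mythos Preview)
Your proposal is correct and follows essentially the same approach as the paper: both partition the coordinates according to the five-way classification of $(\mathbf{u}_k,\mathbf{v}_k)$ and then refine each class by the value of $\mathbf{w}_k$, yielding exactly the ten parameters $a_1,a_2,b_1,b_2,c_1,c_2,d_1,d_2,d_3,e$. The paper's proof is terser (it simply lists the ten subsets of $S(\mathbf{w})$ and says one obtains the formula by summing over all choices), whereas you spell out the verification of the four linear constraints and the multiplicity factors in more detail, but the arguments are identical in substance.
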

Note that in the case $q=3$ we adopt the convention that $0^0=1$. 

\begin{proof}
Partition the support of the word $w$ into sets $A_1$, $A_2$, $B_1$, $B_2$, $C_1$, $C_2$, $D_1$, $D_2$, $D_3$ and $E$ as follows
\begin{eqnarray}
A_1&:=&\{k\in S(\mathbf{w})\mid \mathbf{u}_k\not=0, \mathbf{v}_k=0, \mathbf{w}_k=\mathbf{u}_k\}\\
A_2&:=&\{k\in S(\mathbf{w})\mid \mathbf{u}_k\not=0, \mathbf{v}_k=0, \mathbf{w}_k\not=\mathbf{u}_k\}\nonumber\\
B_1&:=&\{k\in S(\mathbf{w})\mid \mathbf{u}_k=0, \mathbf{v}_k\not=0, \mathbf{w}_k=\mathbf{v}_k\}\nonumber\\
B_2&:=&\{k\in S(\mathbf{w})\mid \mathbf{u}_k=0, \mathbf{v}_k\not=0, w_k\not=\mathbf{v}_k\}\nonumber\\
C_1&:=&\{k\in S(\mathbf{w})\mid \mathbf{u}_k\not=0, \mathbf{v}_k=\mathbf{u}_k, w_k=\mathbf{u}_k\}\nonumber\\
C_2&:=&\{k\in S(\mathbf{w})\mid \mathbf{u}_k\not=0, \mathbf{v}_k=\mathbf{u}_k, w_k\not=\mathbf{u}_k\}\nonumber\\
D_1&:=&\{k\in S(\mathbf{w})\mid \mathbf{u}_k\not=0, \mathbf{v}_k\not=0,\mathbf{u}_k, w_k=\mathbf{u}_k\}\nonumber\\
D_2&:=&\{k\in S(\mathbf{w})\mid \mathbf{u}_k\not=0, \mathbf{v}_k\not=0,\mathbf{u}_k, w_k=\mathbf{v}_k\}\nonumber\\
D_3&:=&\{k\in S(\mathbf{w})\mid \mathbf{u}_k\not=0, \mathbf{v}_k\not=0,\mathbf{u}_k, w_k\not=\mathbf{u}_k,\mathbf{v}_k\}\nonumber\\
E&:=&\{k\in S(\mathbf{w})\mid \mathbf{u}_k=0, \mathbf{v}_k=0\}.\nonumber
\end{eqnarray}
If we denote the sizes by $a_1,a_2,b_1,b_2,c_1,c_2,d_1,d_2,d_3$ and $e$ respectively, we obtain the proposition by summing over all possible sets $A_1$, $A_2$, $B_1$, $B_2$, $C_1$, $C_2$, $D_1$, $D_2$, $D_3$ and $E$.
\end{proof}

\begin{proposition}\label{rowinequality}
Let $\mathbf{u}\in\E$ be a word with $|S(\mathbf{u})|=i$ and let $x\in\Real^{\E}$ be such that $x_{\mathbf{v}}$ only depends on $d(\mathbf{u},\mathbf{v})$, say $x_{\mathbf{v}}=x_{i,j}^{t,p}$, when $d(\mathbf{u},\mathbf{v})=(i,j,t,p)$. Then 
\begin{equation}
\sum_{d=0}^n\lambda_d x(S_d(\mathbf{v}))\geq \beta\quad\text{for all $\mathbf{v}\in\E$}
\end{equation}
is equivalent to
\begin{equation}
\sum_{j',t',p'} x_{i,j'}^{t',p'}\cdot \sum_{d=0}^n \lambda_d \alpha_{(i,j',t',p'),d}^{(i,j,t,p)}\geq \beta\quad\text{for every $j,t,p$}.
\end{equation} 
\end{proposition}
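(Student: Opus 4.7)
The plan is to mimic the proof of the binary analogue (Proposition \ref{rowinequalitybin}), using the enumeration of triples provided by the preceding proposition. The key point is that because $x_{\mathbf{w}}$ depends only on the four-tuple $d(\mathbf{u},\mathbf{w})$ (and the first coordinate of this tuple is forced to be $i=|S(\mathbf{u})|$), we can reorganise the sum $\sum_d \lambda_d x(S_d(\mathbf{v}))$ by grouping the words $\mathbf{w}$ according to their type with respect to $\mathbf{u}$ rather than according to their distance to $\mathbf{v}$.

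Concretely, fix $\mathbf{v}\in\E$ and let $(i,j,t,p):=d(\mathbf{u},\mathbf{v})$. First I would write
\begin{equation*}
\sum_{d=0}^n \lambda_d\, x(S_d(\mathbf{v}))
=\sum_{d=0}^n \lambda_d\!\!\sum_{\substack{\mathbf{w}\in\E\\ d(\mathbf{v},\mathbf{w})=d}} x_{\mathbf{w}}.
\end{equation*}
Next, partition the inner sum over $\mathbf{w}$ by the type $(i,j',t',p')=d(\mathbf{u},\mathbf{w})$, noting that the first coordinate is necessarily $i$. Since $x_{\mathbf{w}}=x_{i,j'}^{t',p'}$ on each such class, and since the cardinality of the class $\{\mathbf{w}:d(\mathbf{u},\mathbf{w})=(i,j',t',p'),\ d(\mathbf{v},\mathbf{w})=d\}$ is precisely $\alpha_{(i,j',t',p'),d}^{(i,j,t,p)}$ (by the previous proposition, which guarantees this count depends only on the types of the pairs), we obtain
\begin{equation*}
\sum_{d=0}^n \lambda_d\, x(S_d(\mathbf{v}))
=\sum_{d=0}^n \lambda_d \sum_{j',t',p'} \alpha_{(i,j',t',p'),d}^{(i,j,t,p)}\, x_{i,j'}^{t',p'}
=\sum_{j',t',p'} x_{i,j'}^{t',p'}\cdot \sum_{d=0}^n \lambda_d\, \alpha_{(i,j',t',p'),d}^{(i,j,t,p)}.
\end{equation*}

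Finally, to conclude the equivalence, I would observe that as $\mathbf{v}$ ranges over $\E$, the type $(i,j,t,p)=d(\mathbf{u},\mathbf{v})$ ranges over all tuples in $\mathcal{I}(q,n)$ with first coordinate $i$, and for each such type some $\mathbf{v}$ realises it. Hence the collection of inequalities indexed by $\mathbf{v}$ reduces to the collection indexed by $(j,t,p)$, proving both directions simultaneously.

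There is no real obstacle here: the only point requiring care is that $\alpha_{(i,j',t',p'),d}^{(i,j,t,p)}$ is well-defined as a function of the two types alone, but this is exactly the content of the preceding proposition (which gives a closed-form expression depending only on $i,j,t,p,j',t',p',d$). Once that is invoked, the argument is a routine double-counting as in the binary case.
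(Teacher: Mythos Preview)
Your proof is correct and follows essentially the same approach as the paper: fix $\mathbf{v}$, expand $\sum_d \lambda_d x(S_d(\mathbf{v}))$ as a double sum over $d$ and $\mathbf{w}$, regroup the words $\mathbf{w}$ by their type $d(\mathbf{u},\mathbf{w})=(i,j',t',p')$, and use that the cardinality of each class is $\alpha_{(i,j',t',p'),d}^{(i,j,t,p)}$. Your write-up is in fact a bit more complete than the paper's, since you explicitly note that every admissible $(j,t,p)$ is realised by some $\mathbf{v}$, which is what makes the two families of inequalities genuinely equivalent rather than one merely implying the other.
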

\begin{proof}
Let $\mathbf{v}\in\E$ and let $d(\mathbf{u},\mathbf{v})=(i,j,t,p)$. Then we have the following equality.
\begin{eqnarray}
\sum_{d=0}^n \lambda_d x(S_d(\mathbf{v}))&=&\sum_{d=0}^n\sum_{\substack{\mathbf{w}\in\E\\d(\mathbf{v},\mathbf{w})=d}} x_{\mathbf{w}}\\
&=&\sum_{d=0}^n\lambda_d\sum_{j',t',p'} x_{i,j'}^{t',p'} \alpha_{(i,j',t',p'),d}^{(i,j,t,p)}\\
&=&\sum_{j',t',p'} x_{i,j'}^{t',p'} \cdot \sum_{d=0}^n \lambda_d \alpha_{(i,j',t',p'),d}^{(i,j,t,p)}.
\end{eqnarray}
\end{proof}

\begin{proposition}
If the code $C$ satisfies the set of inequalities $(\lambda,\beta)$, then the variables $x_{i,j}^{t,p}$ satisfy the following set of inequalities.
For every tuple $(i,j,t,p)$
\begin{eqnarray}\label{constr2}
\sum_{j',t',p'} x_{i,j'}^{t',p'}\cdot \lambda_{(i,j',t',p')}^{(i,j,t,p)}&\geq& x_{i,0}^{0,0}\beta\\
\sum_{j',t',p'} (x_{j',0}^{0,0}-x_{i,j'}^{t',p'})\cdot \lambda_{(i,j',t',p')}^{(i,j,t,p)}&\geq&(x_{0,0}^{0,0}-x_{i,0}^{0,0})\beta\nonumber\\
\sum_{j',t',p'} (x_{i+j-t-p,0}^{0,0}-x_{i,j}^{t,p})\cdot \lambda_{(i,j',t',p')}^{(i,j,t,p)}&\geq&(x_{0,0}^{0,0}-x_{i,0}^{0,0})\beta\nonumber\\
\sum_{j',t',p'} (x_{0,0}^{0,0}-x_{j',0}^{0,0}-x_{i+j'-t'-p',0}^{0,0}+x_{i,j'}^{t',p'})\cdot \lambda_{(i,j',t',p')}^{(i,j,t,p)}&\geq&(1-2x_{0,0}^{0,0}+x_{i,0}^{0,0})\beta,\nonumber
\end{eqnarray}
where we have used the shorthand notation
\begin{equation}
\lambda_{(i,j',t',p')}^{(i,j,t,p)}:=\sum_{d=0}^n \lambda_d \alpha_{(i,j',t',p'),d}^{(i,j,t,p)}.
\end{equation}
\end{proposition}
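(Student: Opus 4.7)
The plan is to derive the four inequalities by applying Proposition \ref{rowinequality} to the rows of $M'$, $\diag(M')-M'_{\mathbf{u}}$, $M''$ and $\diag(M'')-M''_{\mathbf{u}}$ at an arbitrary word $\mathbf{u}$ with $|S(\mathbf{u})|=i$. The argument is a direct nonbinary analogue of the one used in the binary case to obtain the inequalities (\ref{constraint2}), and it relies on exactly the same two ingredients: a single-permutation inequality for $M_{\sigma C}$, and translation into the $x_{i,j}^{t,p}$ variables via Proposition \ref{rowinequality}.

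First I would establish the single-$\sigma$ version. For every $\sigma\in\aut$ the matrix $M:=M_{\sigma C}$ satisfies
\begin{equation*}
M_{\mathbf{u}}=M_{\mathbf{u},\mathbf{u}}\,\chi^{\sigma C},\qquad \diag(M)-M_{\mathbf{u}}=(1-M_{\mathbf{u},\mathbf{u}})\,\chi^{\sigma C},
\end{equation*}
and $\sigma C$ is a permutation of $C$, so it too satisfies $(\lambda_0,\ldots,\lambda_n)\beta$ and consequently $\chi^{\sigma C}\in P_{\lambda,\beta}$. This gives $\sum_d\lambda_d M_{\mathbf{u}}(S_d(\mathbf{v}))\geq \beta M_{\mathbf{u},\mathbf{u}}$ and $\sum_d\lambda_d(\diag(M)-M_{\mathbf{u}})(S_d(\mathbf{v}))\geq \beta(1-M_{\mathbf{u},\mathbf{u}})$ for every $\mathbf{v}\in\E$. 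Both inequalities are linear in the entries of $M$, hence preserved under nonnegative combinations.

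Next I would average separately over $\{\sigma:\zero\in\sigma C\}$ and $\{\sigma:\zero\notin\sigma C\}$, whose $|\aut|^{-1}$-weights are $x_{0,0}^{0,0}$ and $1-x_{0,0}^{0,0}$ respectively. The relevant diagonals are read off directly: $(M')_{\mathbf{u},\mathbf{u}}=(M')_{\mathbf{u},\zero}=x_{i,0}^{0,0}$ (both sides count, up to $|\aut|^{-1}$, the $\sigma$ with $\{\zero,\mathbf{u}\}\subseteq \sigma C$), and $(M'+M'')_{\mathbf{u},\mathbf{u}}=x_{0,0}^{0,0}$ because $M'+M''$ lies in the Bose--Mesner algebra with $A_0$-coefficient $x_{0,0}^{0,0}$; hence $(M'')_{\mathbf{u},\mathbf{u}}=x_{0,0}^{0,0}-x_{i,0}^{0,0}$. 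The four scalar right-hand sides that come out of the averaging are therefore $\beta x_{i,0}^{0,0}$, $\beta(x_{0,0}^{0,0}-x_{i,0}^{0,0})$, $\beta(x_{0,0}^{0,0}-x_{i,0}^{0,0})$ and $\beta(1-2x_{0,0}^{0,0}+x_{i,0}^{0,0})$, matching the right-hand sides of the four target inequalities.

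Finally I would invoke Proposition \ref{rowinequality} with $x_{\mathbf{w}}$ set, in turn, to the entries of the four $\stab$-invariant row-vectors. At $\mathbf{w}$ with refined distance $(i,j',t',p')$ from $\mathbf{u}$ these entries are
\begin{equation*}
x_{i,j'}^{t',p'},\; x_{j',0}^{0,0}-x_{i,j'}^{t',p'},\; x_{i+j'-t'-p',0}^{0,0}-x_{i,j'}^{t',p'},\; x_{0,0}^{0,0}-x_{j',0}^{0,0}-x_{i+j'-t'-p',0}^{0,0}+x_{i,j'}^{t',p'},
\end{equation*}
and each of these depends on $\mathbf{w}$ only through the refined distance, so the hypothesis of Proposition \ref{rowinequality} applies. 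Absorbing the resulting $\sum_d\lambda_d\alpha_{(i,j',t',p'),d}^{(i,j,t,p)}$ into the abbreviation $\lambda_{(i,j',t',p')}^{(i,j,t,p)}$ then yields the four inequalities (\ref{constr2}). The only real obstacle is the bookkeeping: keeping the primed summation labels $(j',t',p')$ distinct from the fixed tuple $(j,t,p)$ and correctly identifying the diagonal of $M''$; conceptually nothing beyond the binary proof is required.
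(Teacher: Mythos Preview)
Your proposal is correct and follows essentially the same approach as the paper: establish that each $M_{\sigma C}$ satisfies $M_{\mathbf{u}}\in M_{\mathbf{u},\mathbf{u}}P_{\lambda,\beta}$ and $\diag(M)-M_{\mathbf{u}}\in (1-M_{\mathbf{u},\mathbf{u}})P_{\lambda,\beta}$, pass to the averages $M'$ and $M''$ (equivalently, to the convex combinations $\tfrac{1}{x_{0,0}^{0,0}}M'$ and $\tfrac{1}{1-x_{0,0}^{0,0}}M''$), and then invoke Proposition~\ref{rowinequality}. Your write-up is in fact more explicit than the paper's, which compresses all of this into three sentences; your identification of the diagonals $(M')_{\mathbf{u},\mathbf{u}}=x_{i,0}^{0,0}$ and $(M'')_{\mathbf{u},\mathbf{u}}=x_{0,0}^{0,0}-x_{i,0}^{0,0}$ and of the four row-entry expressions is exactly what is needed to make the paper's sketch rigorous.
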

\begin{proof}
For any $\sigma\in\aut$, the matrix $M:=M_{\sigma C}$ satisfies
\begin{equation}\label{matcut}
M_\mathbf{u}\in M_{\mathbf{u},\mathbf{u}}P_{\lambda,\beta}\quad\text{and}\quad\diag(M)-M_\mathbf{u}\in (1-M_{\mathbf{u},\mathbf{u}})P_{\lambda,\beta}\quad \text{for every $\mathbf{u}\in\E$}.
\end{equation}
This implies that also the matrices $\frac{1}{x_{0,0}^{0,0}M'}$ and $\frac{1}{1-x_{0,0}^{0,0}}$ satisfy (\ref{matcut}) as they are convex combinations of the matrices $M_{\sigma C}$. Now using Proposition \ref{rowinequality} gives a proof of the claim.
\end{proof}

\begin{theorem}
If any code $C\subseteq \E$ with covering radius $r$ satisfies $(\lambda_0,\ldots,\lambda_n)\beta$, we have
\begin{equation}
K_q(n,r)\geq \min_x q^nx_{0,0}^{0,0},
\end{equation}
where the minimum ranges over all $x=(x_{i,j}^{t,p})$ satisfying (\ref{constr3}), (\ref{constr1}) and (\ref{constr2}).
\end{theorem}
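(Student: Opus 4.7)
The plan is to exhibit, for every $(n,K,q)r$ code $C$ satisfying $(\lambda_0,\ldots,\lambda_n)\beta$, a feasible solution to the semidefinite program whose objective value equals $|C|$. Applying this to an optimal covering code gives the stated lower bound on $K_q(n,r)$. This mirrors precisely the structure of the proof of the binary analogue carried out a few pages earlier, so most of the work has already been done in the preceding propositions of this section; what remains is to assemble the pieces.

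Concretely, I would proceed as follows. Fix an optimal code $C\subseteq \E$ with $|C|=K_q(n,r)$ satisfying $(\lambda_0,\ldots,\lambda_n)\beta$, and form the matrices
\begin{equation}
M':=|\aut|^{-1}\sum_{\substack{\sigma\in\aut\\ \zero\in\sigma C}}M_{\sigma C},\qquad M'':=|\aut|^{-1}\sum_{\substack{\sigma\in\aut\\ \zero\not\in\sigma C}}M_{\sigma C},
\end{equation}
both of which lie in $\mathcal{A}_{q,n}$ (by the $\stab$-invariance that is built into their definition). Expand $M'=\sum_{(i,j,t,p)}x_{i,j}^{t,p}M_{i,j}^{t,p}$ to obtain the candidate values of the variables. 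The expression for $M''$ in terms of these same coefficients is given by the proposition immediately following the definition of $M'$.

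Next I verify the three groups of constraints. The linear constraints (\ref{constr1}) are exactly the content of the proposition bounding the $x_{i,j}^{t,p}$ (nonnegativity, comparison with diagonal entries, and symmetry under permutations of the three words), which follows from the elementary inequalities $(M')_{\mathbf{u},\mathbf{u}}\geq (M')_{\mathbf{u},\mathbf{v}}$ and $(M'')_{\mathbf{u},\mathbf{u}}\geq (M'')_{\mathbf{u},\mathbf{v}}$ together with the invariance under $\stab$. The semidefiniteness constraints (\ref{constr3}) follow from the positive semidefiniteness of $M'$ and of the bordered matrix in (\ref{psd2})---both are convex combinations of the rank-one PSD matrices $R(M_{\sigma C})={1\choose \chi^{\sigma C}}{1\choose \chi^{\sigma C}}^{\transp}$---after invoking the block diagonalisation of $\mathcal{A}_{q,n}$ from Theorems \ref{blokform1} and \ref{blokform2}. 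Finally the row inequalities (\ref{constr2}) are the matrix-cut constraints applied to the rescaled matrices $(x_{0,0}^{0,0})^{-1}M'$ and $(1-x_{0,0}^{0,0})^{-1}M''$, both of which are convex combinations of matrices $M_{\sigma C}$ satisfying $M_{\mathbf{u}}\in M_{\mathbf{u},\mathbf{u}}P_{\lambda,\beta}$ and $\diag(M)-M_{\mathbf{u}}\in (1-M_{\mathbf{u},\mathbf{u}})P_{\lambda,\beta}$; translating these polyhedral memberships into the claimed inequalities on the $x_{i,j}^{t,p}$ is the content of Proposition \ref{rowinequality} above.

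To conclude, I compute the objective value at this feasible point. Exactly as in the binary calculation, since the number of $\sigma\in\aut$ with $\sigma\mathbf{u}=\zero$ is $|\stab|$ for every $\mathbf{u}\in\E$, we have $x_{0,0}^{0,0}=|\{\sigma\in\aut:\zero\in\sigma C\}|/|\aut|=|C|\cdot |\stab|/|\aut|=q^{-n}|C|$, so $q^n x_{0,0}^{0,0}=|C|=K_q(n,r)$. Taking the minimum over all feasible $x$ can therefore only decrease this value, yielding the desired inequality. The only mildly delicate step is book-keeping in the verification of (\ref{constr2}), where one must match the scaling factors for $M'$ and $M''$ correctly so that each of the four inequalities comes out with the right right-hand side; I expect no genuine obstacle beyond this bookkeeping, since all the hard structural work (the block diagonalisation and the counting of the numbers $\alpha_{(i,j',t',p'),d}^{(i,j,t,p)}$) has already been discharged.
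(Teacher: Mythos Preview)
Your proposal is correct and is precisely the approach the paper takes: the paper leaves the proof environment empty, because the theorem is immediate once the preceding propositions of the section have been established, and your write-up is exactly that assembly of pieces. Your identification of which proposition handles each constraint group, and the computation $q^n x_{0,0}^{0,0}=|C|$, match the paper's development verbatim.
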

\begin{proof}
\end{proof}

\section{Computational results}
Using the sphere covering inequalities, we obtained a number of explicit new upper bounds in the case $q=4$ and $q=5$. The results\footnote{In the instance $R=1$, $n=11$ we were unable to solve the second SDP. The given number is the bound obtained from the first SDP.} are shown in table \ref{covtable1} and \ref{covtable2} below. The upper bounds and previous lower bounds are taken from the website of G. K\'eri (\cite{keri}), who maintains an updated table of upper and lower bounds on covering codes. In the binary and ternary case, no new lower bounds were found.

\begin{table}[ht]\caption{New lower bounds on $K_4(n,R)$\label{covtable1}}
\begin{center}
\begin{tabular}{|r|r|r|r|r|r|}
\hline
&&best&&best lower&\\
&&upper&new&bound&Sphere\\
&&bound &lower&previously&covering\\
$n$ & $R$ &known &bound &known & bound\\ 
\hline
7&1&1008&762&752&745\\
11&1&131072&123846&123362&123362\\
\hline
9&2&1024&748&747&745\\
10&2&4096&2412&2408&2405\\
11&2&16128&7942&7929&7929\\
\hline
11&3&2048&843&842&842\\
\hline
9&4&64&22&21&21\\
11&4&512&134&133&133\\
\hline
11&5&128&31&30&30\\
\hline
11&6&32&10&9&9\\
\hline
\end{tabular}
\end{center}
\end{table}

\begin{table}[ht]\caption{New lower bounds on $K_5(n,R)$\label{covtable2}}
\begin{center}
\begin{tabular}{|r|r|r|r|r|r|}
\hline
&&best&&best lower&\\
&&upper&new&bound&Sphere\\
&&bound &lower&previously&covering\\
$n$ & $R$ &known &bound &known &bound\\ 
\hline
7&1&3125&2722&2702&2694\\
8&1&15625&11945&11887&11838\\
9&1&78125&53138&52800&52788\\
10&1&390625&238993&238200&238186\\
\hline
11&2&115000&52842&52788&52788\\
\hline
11&3&21875&4253&4252&4252\\
\hline
11&4&3125&510&509&509\\
\hline
11&5&625&87&86&86\\
\hline
11&6&125&21&20&20\\
\hline

\end{tabular}
\end{center}
\end{table}

\chapter{Matrix cuts}\label{CH:matrixcuts}
In Chapter \ref{CH:errorcodes} we discussed the problem of finding good upper bounds on the maximum size of a code with certain distance constraints. This is a special case of the general problem to find bounds for the stability number of a graph. There exist general methods for bounding the stability number. In this chapter we explore the relationship between these general methods, when applied to codes, and the method from Chapter \ref{CH:errorcodes}.
 
Recall that for any symmetric matrix $A\in\Real^{n\times n}$, the matrix $R(A)$ is defined by:
\begin{equation}
R(A):=\begin{pmatrix}1&a^{\transp}\\a&A\end{pmatrix},
\end{equation}
where $a:=\diag(A)$ is the vector of diagonal elements of $A$. We will index the extra row and column of $R(A)$ by $0$. Denote the convex set of symmetric matrices
\begin{equation}
\mathcal{R}_n:=\{A\in \Real^{n\times n}\mid R(A)\psd\}.
\end{equation}
Observe that for $A\in\mathcal{R}_n$, the entries of $A$ belong to $[-1,1]$. Indeed, let $i,j\in \{0,\ldots,n\}$. The principal submatrix 
\begin{equation}
\begin{pmatrix}
1& A_{i,i}\\
A_{i,i}& A_{i,i}
\end{pmatrix}
\end{equation}
of $R(A)$ indexed by $0$ and $i$ is positive semidefinite. This is equivalent to $A_{i,i}^2\leq 1\cdot A_{i,i}$, which implies that $A_{i,i}\in [0,1]$. For $i\leq j$ the semidefiniteness of the principal submatrix of $R(A)$ indexed by $i$ and $j$ 
\begin{equation}
\begin{pmatrix}
A_{i,i}& A_{i,j}\\
A_{i,j}& A_{j,j}
\end{pmatrix}
\end{equation}
implies that $A_{i,j}^2\leq A_{i,i}A_{j,j}\leq 1$ and hence $A_{i,j}\in [-1,1]$.
We define the projection $p(\mathcal{M})$ of a set $\mathcal{M}\subseteq \mathcal{R}_n$ and the lift $l(K)$ of a set $K\subseteq [0,1]^n$ by
\begin{eqnarray}
p(\mathcal{M})&:=&\{\diag(A)\mid A\in \mathcal{M}\}\\
l(K)&:=&\{A\in\mathcal{R}_n\mid \diag(A)\in K\}.\nonumber
\end{eqnarray}    
By the previous remarks we see that $p(\mathcal{M})\subseteq [0,1]^n$ for $\mathcal{M}\subseteq \mathcal{R}_n$. Observe that for any $K\subseteq [0,1]^n$ we have $p(l(K))=K$. Indeed, if $x\in [0,1]^n$, the matrix
\begin{equation}
{1\choose x}{1\choose x}^{\transp}+\Diag (0,x_1-x_1^2,\ldots,x_n-x_n^2)
\end{equation}
is a positive semidefinite matrix of the form $R(A)$ with diagonal $x$.
Conversely, we only have $l(p(\mathcal{M}))\supseteq \mathcal{M}$ for $\mathcal{M}\subseteq \mathcal{R}_n$.

In the following, the idea will be for a given convex set $K$, to find approximations of the convex hull of the $0$--$1$ points in $K$. The method will be to describe these approximations as the projection of set in the larger space $\mathcal{R}_n$. The most prominent example is the so-called theta body of a graph, and the associated \notion{Lov\'asz theta number}.
  
\section{The theta body $\mathrm{TH}(G)$}
Let $G=(V,E)$ be a graph. We will assume that the vertex set is given by $V=\{1,\ldots,n\}$. Define the set $\mathcal{M}(G)$ by  
\begin{equation}\label{thetabody1}
\mathcal{M}(G):=\{A\in\mathcal{R}_n\mid A_{i,j}=0\text{ if $\{i,j\}\in E$}\}.
\end{equation}
The projection 
\begin{equation}\label{thetabody2}
\mathrm{TH}(G):=p(\mathcal{M}(G))=\{\diag(A)\mid A\in \mathcal{M}(G)\}
\end{equation}
was defined in \cite{GLS} and is referred to as the \notion{theta body} of $G$. The number
\begin{equation}
\vartheta (G):=\max\{\one^{\transp}x\mid x\in\mathrm{TH}(G)\}
\end{equation}
was introduced by Lov\'asz in \cite{lovasztheta} as an upper bound on the Shannon capacity of the graph $G$. Although we will not be concerned with Shannon capacities, the following two properties of $\vartheta(G)$ are relevant to our discussion: the number $\vartheta (G)$ can be approximated in polynomial time, and gives an (often close) upper bound on the stability number $\alpha(G)$. This last fact follows since for every stable set $S\subseteq V$ in the graph $G$, the matrix $\chi^{S}(\chi^{S})^{\transp}$ belongs to $\mathcal{M}(G)$. The theta body gives a good approximation of the stable set polytope. In particular, for perfect graphs $G$, equality holds, implying that the stability number can be calculated in polynomial time for perfect graphs.

The following strengthening of the theta body was given by Schrijver in \cite{modifiedtheta}. Define 
\begin{equation}
\mathcal{M}'(G):=\{A\in\mathcal{R}_n\mid A\geq 0, A_{i,j}=0\text{ if $\{i,j\}\in E$}\},
\end{equation}
and 
\begin{equation}
\mathrm{TH}'(G):=p(\mathcal{M}'(G)).
\end{equation}
Again the number
\begin{equation}
\vartheta '(G):= \max\{\one^{\transp}x\mid x\in\mathrm{TH}'(G)\}
\end{equation} 
gives an upper bound on $\alpha (G)$ and clearly $\vartheta'(G)\leq \vartheta (G)$. We note that $\vartheta '(G)$ can be alternatively defined by
\begin{eqnarray}\label{alttheta}
\vartheta '(G)=\max\{&\one^{\transp}A\one\mid A\in \Real_{\geq 0}^{n\times n}, \trace A=1,\\
&A_{i,j}=0\text{ when $\{i,j\}\in E$}\},\nonumber
\end{eqnarray}
and similarly for $\vartheta (G)$. The equivalence of the two definitions follows from Propositions \ref{symoptimum} and \ref{twooptima} in Chapter \ref{CH:Preliminaries} (see also \cite{CombOptB}).

It was shown in \cite{modifiedtheta} that for association schemes, the number $\vartheta'(G)$ corresponds to the Delsarte bound. Given a scheme $(X,R)$ with adjacency matrices $I=A_0,A_1,\ldots, A_n$ and $M\subseteq \{1,\ldots,n\}$ we are interested in the maximum size of an $M$-clique, that is a subset $S\subseteq X$ with the property that $(A_i)_{x,y}=0$ for all $x,y\in X$ and $i\not\in M$. Consider the graph $G=(X,E)$, where $E=\{\{x,y\}\mid (A_i)_{x,y}=1\text{ for some $i\not\in M$}\}$. Then the stable sets of $G$ are precisely the $M$-cliques of the scheme $(X,R)$. By (\ref{alttheta}), the upper bound $\vartheta '(G)$ on the maximum size of a stable set in $G$ is given by
\begin{equation}
\max\{\one^{\transp}A\one\mid A\in \Real_{\geq 0}^{X\times X}, \trace A=1, A_{i,j}=0\text{ when $\{i,j\}\in E$}\}.
\end{equation} 
We will sketch a proof that this maximum equals the Delsarte bound. The proof consists of two ideas. 
\begin{proof}
First, we may restrict the range of $A$ in the program to the matrices in the Bose--Mesner algebra, without decreasing the maximum. Indeed, let $\pi$ denote the orthogonal projection onto the Bose--Mesner algebra (as a subspace of $\Real^{X\times X}$) given by
\begin{equation}
\pi(A):=\sum_{i=0}^n \frac{\left<A,E_i\right>}{\left<E_i,E_i\right>}\cdot E_i,
\end{equation}
where the matrices $E_0,\ldots, E_n$ are the orthogonal idempotents of the scheme. Since the $E_i$ have eigenvalues $0$ and $1$, they are positive semidefinite. Hence for positive semidefinite $A$ the projection $\pi(A)$ is a nonnegative combination of positive semidefinite matrices, and hence again positive semidefinite. Furthermore, $\pi$ preserves the inner product with matrices in the Bose-Mesner algebra. In particular 
\begin{eqnarray}
\trace \pi(A)=\left<I,\pi(A)\right>&=&\left<I,A\right>=\trace A\\
\one^{\transp}\pi(A)\one=\left<J,\pi(A)\right>&=&\left<J,A\right>=\one^{\transp}A\one\nonumber\\
\left<A_i,\pi(A)\right>&=&\left<A_i,A\right>=0\quad\text{for $i\not\in M$}\nonumber\\
\left<A_i,\pi(A)\right>&=&\left<A_i,A\right>\geq 0\quad\text{for $i=0,\ldots,n$}.\nonumber
\end{eqnarray}
It follows that $\pi(A)$ is a feasible point with the same objective value as $A$.

Secondly, writing
\begin{equation}
A=\sum_{i=0}^n x_i \widetilde{A}_i,
\end{equation} 
where $\widetilde{A}_i:=\left<A_i,A_i\right>^{-1} A_i$,
the program becomes
\begin{equation}
\max\{\sum_{i\in M} x_i\mid x_0=1, x_i\geq 0 \text{ for $i\in M$}, \sum_{i\in M} x_i \widetilde{A}_i\psd\}.
\end{equation} 
Since
$\widetilde{A}_i=\sum_{j=0}^n Q_{j,i} \left<E_j,E_j\right>^{-1} E_j$, where $Q$ is the second eigenmatrix of the scheme, the positive semidefinite constraint reduces to linear constraints
\begin{equation}
\sum_{i\in M} x_i Q_{j,i} \geq 0\quad \text{for $j=0,\ldots, n$.}
\end{equation}
\end{proof}

We remark that when the Bose--Mesner algebra is the centralizer algebra of its automorphism group, for example in the case of the Hamming schemes and the Johnson schemes, the orthogonal projection $\pi$ satisfies
\begin{equation}
\pi (A)=|\Gamma |^{-1}\sum_{\sigma\in \Gamma} \sigma A,
\end{equation} 
where $\Gamma$ denotes the automorphism group of the scheme.

\section{Matrix cuts}
In \cite{matrixcuts}, Lov\'asz and Schrijver introduced a general \notion{lift and project method} for strengthening approximations of $0$--$1$ polytopes. Given a convex body $K$ contained in the unit cube $[0,1]^n$, a convex body $N_+(K)$ is constructed such that 
\begin{equation}
K\supseteq N_+(K)\supseteq N_+(N_+(K))\supseteq\cdots\supseteq N_+^{(n)}(K)=K\cap\{0,1\}^n.
\end{equation}
An important property of the operator $N_+$ is that for a family $\mathcal{K}$ of convex bodies, if one can optimize in polynomial time over $K$ for each $K\in\mathcal{K}$, then also the optimization problem over $N_+(K)$ is polynomial time solvable for $K\in\mathcal{K}$. An important instance is when $G=(V,E)$ is a perfect graph and $K=\mathrm{FRAC}(G)$ is the fractional stable set polytope of $G$. In that case one iteration of the $N_+$ operator suffices to obtain the stable set polytope $\mathrm{STAB}(G):=\mathrm{FRAC}(G)\cap\{0,1\}^V$. 

We start by describing the lift-and-project-method of Lov\'asz and Schrijver and prove some of the basic properties of the operator $N_+$. The idea is to lift a convex set $K\subseteq [0,1]^n$ to a convex set in the space of symmetric positive semidefinite $n\times n$ matrices and then to project it back into $[0,1]^n$.

For $\mathcal{M}\subseteq \mathcal{R}_n$, define the set $N(\mathcal{M})$ by
\begin{eqnarray}
N(\mathcal{M})&:=\{A\in \mathcal{R}_m\mid &\text{for $i=1,\ldots,n$ there are $U,V\in \mathcal{M}$}\\
&&\text{ such that $A_i=A_{i,i}\cdot\diag(U)$,}\nonumber\\
&&\diag(A)-A_i=(1-A_{i,i})\diag(V)\}.\nonumber
\end{eqnarray}
The operator $N_+$ is now defined as
\begin{equation}
N_+(K):=p(N(l(K))).
\end{equation}
Clearly 
\begin{equation}
N_+(K)\subseteq [0,1]^n,
\end{equation}
since if $R(A)$ is positive semidefinite $\diag(A)\in [0,1]^n$ as we have seen before. Furthermore, we have:
\begin{equation}
N_+(K)\subseteq K.
\end{equation}
Indeed, if $A\in N(l(K))$, then for any $i=1,\ldots,n$ we have:
\begin{equation}
\diag(A)=A_i+(\diag(A)-A_i)\in A_{i,i}\cdot K+(1-A_{i,i})\cdot K
\end{equation}
and hence $\diag(A)\in K$, since $A_{i,i}\in [0,1]$. Note that this argument shows that in fact
\begin{equation}
N_+(K)\subseteq \mathrm{conv.hull}\{x\in K\mid x_i\in\{0,1\}\}
\end{equation}
for $i=1,\ldots n$ since for each $i$ we have 
\begin{equation}
A_i\in A_{i,i}\cdot \{x\in K\mid x_i=1\}, \quad \diag(A)-A_i\in (1-A_{i,i})\cdot \{x\in K\mid x_i=0\}. 
\end{equation}
By induction it then follows that
\begin{equation} 
N_+^n(K)\subseteq \{0,1\}^n\cap K.
\end{equation}
On the other hand, $N_+$ does not cut off any integer points:  
\begin{equation}
\{0,1\}^n\cap K\subseteq N_+(K),
\end{equation}
since for any $x\in \{0,1\}^n\cap K$ the matrix $xx^{\transp}$ belongs to $N(l(K))$.
The operator $N_+$ was introduced in \cite{matrixcuts}, see also \cite{CombOptB}. 

Let $G=(V,E)$ be a graph and let $\mathrm{FRAC}(G)$ denote the fractional stable set polytope of $G$, that is:
\begin{equation}
\mathrm{FRAC}(G):=\{x\in \Real^V\mid x\geq 0, x_i+x_j\leq 1 \text{ for any edge $\{i,j\}\in E$}\}.
\end{equation} 
We observe that $N_+(\mathrm{FRAC}(G))$ is contained in the modified theta body $\mathrm{TH}'(G)$. Indeed, if $A\in N(l(\mathrm{FRAC}(G)))$, then $A_{i,j}=0$ for any edge $\{i,j\}$ since $A_i\in A_{i,i}\cdot\mathrm{FRAC}(G)$ implies that
\begin{equation}
A_{i,i}+A_{i,j}\leq A_{i,i}\cdot 1.
\end{equation}

We will also consider the operator $\widetilde{N}$ given by
\begin{eqnarray}
\widetilde{N}(\mathcal{M})&:=\{A\in \mathcal{R}_n\mid &\text{for $i=1,\ldots,n$ there are}\\
&& U\in A_{i,i}\mathcal{M}, V\in (1-A_{i,i})\mathcal{M}\nonumber\\
&&\text{ such that $U_{i,i}=A_{i,i}$ and $A=U+V$}\}.\nonumber
\end{eqnarray}
Clearly $\widetilde{N}(\mathcal{M})\subseteq N(\mathcal{M})$ for any $\mathcal{M}\subseteq \mathcal{R}_n$. We show that any $x\in p(\mathcal{M})\cap \{0,1\}^n$ belongs to $p(N(\mathcal{M}))$. Let $A\in\mathcal{M}$ have diagonal $x\in\{0,1\}^n$. Then for $i=1,\ldots,n$ we have $A=A_{i,i}U+(1-A_{i,i})V$, where we take $U=A,V=0$ if $A_{i,i}=1$ and $U=0, V=A$ if $A_{i,i}=0$. 
The set $\widetilde{N}(\mathcal{M})$ can alternatively be described
by:
\begin{equation}
\widetilde{N}(\mathcal{M})=\{A\mid A\in\text{conv.hull} \{M\in \mathcal{M}\mid M_{i,i}\in\{0,1\}\}\text{ for each $i=1,\ldots,n$}\}.
\end{equation}

\begin{proof}
Let $A\in \widetilde{N}(\mathcal{M})$. Let $i\in\{1,\ldots,n\}$ and let $U,V$ be as in the definition. Observe that $R(A)$ is positive semidefinite, since $A=U+V\in A_{i,i}\mathcal{M}+(1-A_{i,i})\mathcal{M}=\mathcal{M}$. We prove that 
\begin{equation}\label{claim}
A_i=\diag(U) \text{ and }\diag(A)-A_i=\diag(V).
\end{equation}
If $A_{i,i}=0$ we have $A_i=0$ since $A$ is positive semidefinite, and (\ref{claim}) follows. Hence we may assume that $A_{i,i}>0$. Notice that $V_{i,i}=0$ and hence $V_i=0$. Since $A_{i,i}^{-1}U_{i,i}=1$ it follows from the positive semidefiniteness of $R(A_{i,i}^{-1}U)$ that $U_i=\diag(U)$. 
Hence $A_i=U_i+V_i=\diag(U)$ and $\diag(A) -A_i=\diag(U)\ U_i+\diag(V)+V_i=\diag(V)$.
\end{proof}

\section{Bounds for codes using matrix cuts}
Fix integers $1\leq d\leq n$ and $q\geq 2$, and fix an alphabet $\q=\{0,1,\ldots,q-1\}$. The Hamming distance $d(x,y)$ of two words $x$ and $y$ is defined as the number of positions in which $x$ and $y$ differ. Let $G=(V,E)$ be the graph with $V=\q^n$, where two different \emph{words} $x,y\in V$ are joined by an edge if $x$ and $y$ differ in at most $d-1$ position. The stable sets in $G$ are precisely the $q$-ary codes of length $n$ and minimum distance at most $d$. The stability number of $G$ equals $A_q(n,d)$. 
Define
\begin{equation}
\mathcal{M}':=\{A\in\mathcal{R}_V\mid A\geq 0, A_{x,y}=0 \text{ if } \{x,y\}\in E\},
\end{equation}
and let
\begin{equation}
\mathrm{TH'}(G):=p(\mathcal{M}')=\{\diag(A)\mid A\in\mathcal{M}'\}
\end{equation}
denote the modified theta body of $G$. Maximizing the all-one vector over $\mathrm{TH'}(G)$ gives an upper bound on $A_q(n,d)$, which we have seen, equals the Delsarte bound. A tighter upper bound can be found by maximizing the all-one vector over the smaller convex set $N_+(\mathrm{TH'}(G))$:
\begin{equation}\label{mc}
\max \{\trace A\mid A\in N(\mathcal{M})\}.
\end{equation} 
Using the symmetries of the graph $G$, this can be made more explicit as follows.

Denote by $\aut$ the set of permutations of $\q^n$ that preserve the Hamming distance. It is not hard to see that $\aut$ consists of the permutations of $\q^n$ obtained by permuting the $n$ coordinates followed by independently permuting the alphabet $\q$ at each of the $n$ coordinates. The group $\aut$ acts on the set of $V\times V$ matrices in the following way. For $\sigma\in\aut$ and $A\in \Real^{V\times V}$ define $\sigma(A)$ by
\begin{equation}
(\sigma (A))_{\sigma x,\sigma y}=A_{x,y}.
\end{equation}
The matrices in $\Real^{V\times V}$ that are invariant under this action of $\aut$ are precisely the adjacency matrices $A_0,A_1,\ldots,A_n$ of the Hamming scheme $H(n,q)$ defined by
\begin{equation}
(A_i)_{x,y}:\begin{cases}
1&\text{if $d(x,y)=i$,}\\
0&\text{otherwise,}
\end{cases}
\end{equation}
for $i=0,1,\ldots,n$ and the Bose--Mesner algebra of the Hamming scheme.

In the following calculations, it will be convenient do define for a square matrix $A$ and a positive real number $c$ the matrix $R(c;A)$ by
\begin{equation}
R(c;A):=\begin{pmatrix}c & (\diag(A))^{\transp}\\ \diag(A)&A\end{pmatrix}.
\end{equation}
Observe that $R(1;A)=R(A)$ and $R(c;A)$ is positive semidefinite if and only if $R(c^{-1}A)$ is positive semidefinite.
Since $G$ is invariant under the permutations $\sigma\in\aut$, also $\mathcal{M}'$, $ N(\mathcal{M}')$ and $N_+(\mathrm{TH'}(G))$ are invariant under the action of $\aut$. Hence if $A\in N(\mathcal{M}')$ maximizes $\trace M$ over all $M\in N(\mathcal{M}')$, also 
\begin{equation}
\frac{1}{|\aut|}\sum_{\sigma\in\aut} \sigma (A)\in  N(\mathcal{M}')
\end{equation}
is a maximizer. Hence the maximum in (\ref{mc}) is equal to
\begin{equation}
\max \{\trace A\mid A\in  N(\mathcal{M}')\text { is in the Bose--Mesner algebra}\}.
\end{equation}   
If $A$ is a matrix in the Bose-Mesner algebra, all rows of $A$ are equal up to permuting by elements of $\aut$. Hence since $\mathcal{M}'$ is invariant under these permutations, the maximum is equal to
\begin{equation}\label{mc2}
\begin{split}
\max \{q^n\cdot x_0\mid &R(\sum_{i=0}^{n} x_iA_i)\text { is positive semidefinite}\\
&\text{and there exist $U\in x_0\cdot\mathcal{M}'$ and $V\in (1-x_0)\cdot \mathcal{M}'$ such that}\\
&U_{u,u}=x_i\text{ if $d(u,0)=i$ and}\\
&V_{u,u}=x_0-x_i\text{ if $d(u,0)=i$}\}.
\end{split}
\end{equation}   
Note that if $U$ and $V$ are as in (\ref{mc2}), and $\sigma\in\aut$ fixes the zero word, then $\sigma(U)$ and $\sigma (V)$ satisfy the same constraints. Hence $U$ may be replaced by 
\begin{equation}
\frac{1}{|\stab|}\sum_{\sigma\in\stab} \sigma(U)
\end{equation}
and similarly for $V$. Here $\stab$ denotes the set $\{\sigma\in\aut\mid \sigma(0)=0\}$. Hence we may impose that $U$ and $V$ are elements of the Terwilliger algebra without changing the maximum. We obtain
\begin{equation}
\begin{split}
\max\{q^n\cdot x_0\mid &R(\sum_{i=0}^{n} x_iA_i)\text { is positive semidefinite},\\
&R(\sum_{i,j,t,p} y_{i,j}^{t,p} M_{i,j}^{t,p}), R(\sum_{i,j,t,p} z_{i,j}^{t,p}M_{i,j}^{t,p})\text{ are positive semidefinite},\\
&x_i=x_0y_{i,i}^{i,i},\quad x_0-x_i=(1-x_0)z_{i,i}^{i,i}\quad (i=0,\ldots,n),\\
&y_{i,j}^{t,p}, z_{i,j}^{t,p}\geq 0,\\
&y_{i,j}^{t,p}=z_{i,j}^{t,p}=0 \text{ if $i+j-t-p\in\{1,\ldots,d-1\}$}\}.
\end{split}
\end{equation}  
Since $x_i=x_0\cdot y_{i,i}^{i,i}$ we can eliminate the variables $x_i$ from this program by substituting 
\begin{eqnarray}
\widetilde{y}_{i,j}^{t,p}&:=&x_0\cdot y_{i,j}^{t,p}\\
\widetilde{z}_{i,j}^{t,p}&:=&(1-x_0)z_{i,j}^{t,p}.\nonumber
\end{eqnarray}
We obtain the following semidefinite program (where we have dropped all the tilde's from the variables):
\begin{equation}\label{matrixcut}
\begin{split}
\max\{q^n\cdot y_{0,0}^{0,0}\mid & R(\sum_{i=0}^{n} y_{i,i}^{i,i} A_i)\text { is positive semidefinite},\\
&R(y_{0,0}^{0,0};\sum_{i,j,t,p} y_{i,j}^{t,p}M_{i,j}^{t,p}), R(1-y_{0,0}^{0,0};\sum_{i,j}^{t,p} z_{i,j}^{t,p} M_{i,j}^{t,p}) \psd\\
&y_{0,0}^{0,0}-y_{i,i}^{i,i}=z_{i,i}^{i,i}\quad (i=0,\ldots,n),\\
&y_{i,j}^{t,p}, z_{i,j}^{t,p}\geq 0,\\
&y_{i,j}^{t,p}=z_{i,j}^{t,p}=0 \text{ if $i+j-t-p\in\{1,\ldots,d-1\}$}\}.
\end{split}
\end{equation}  

If we use the stronger operator $\widetilde{N}$ in stead of $N$ we arrive in a similar fashion at the program
\begin{equation}
\begin{split}
\max\{q^n\cdot x_0\mid &R(x_0;\sum_{i,j,t,p} y_{i,j}^{t,p} M_{i,j}^{t,p}), R(1-x_0;\sum_{i,j,t,p} z_{i,j}^{t,p}M_{i,j}^{t,p})\psd,\\
&y_{0,0}^{0,0}=x_0, y_{i,j}^{t,p}+z_{i,j}^{t,p}=x_{i+j-t-p}\\
&y_{i,j}^{t,p},z_{i,j}^{t,p}\geq 0, y_{i,j}^{t,p}=z_{i,j}^{t,p}=0\text{ if $i+j-t-p\in\{1,\ldots,d-1\}$}\}
\end{split}
\end{equation}
it follows from $y_{0,0}^{0,0}+z_{0,0}^{0,0}=x_0$ and $y_{0,0}^{0,0}=x_0$ that $z_{0,0}^{0,0}=0$. Since for each feasible solution the matrix
\begin{equation}
M:=\sum_{i,j}^{t,p} z_{i,j}^{t,p} M_{i,j}^{t,p}
\end{equation}

is positive semidefinite, it follows from $M_{\zero,\zero}=z_{0,0}^{0,0}=0$ that $0=M_{u,\zero}=z_{i,0}^{0,0}$ when $\mathbf{u}$ has weight $i$. Hence
\begin{equation}
x_{i}=y_{i,0}^{0,0}+z_{i,0}^{0,0}=y_{i,0}^{0,0}.
\end{equation}
This implies that we can eliminate the variables $x_i$ and $z_{i,j}^{t,p}$ by using
\begin{equation}
x_i=y_{i,0}^{0,0},\quad z_{i,j}^{t,p}=y_{i+j-t-p,0}^{0,0}-y_{i,j}^{t,p}
\end{equation}
for all $i,j,t,p$. We obtain the following semidefinite program:

\begin{equation}
\begin{split}
\max\{q^n\cdot y_{0,0}^{0,0}\mid &R(y_{0,0}^{0,0};\sum_{i,j,t,p} y_{i,j}^{t,p} M_{i,j}^{t,p}), R(1-y_{0,0}^{0,0};\sum_{i,j,t,p} (y_{i+j-t-p,0}^{0,0}-y_{i,j}^{t,p})M_{i,j}^{t,p})\psd,\\
&y_{i,j}^{t,p},y_{i+j-t-p,0}^{0,0}-y_{i,j}^{t,p}\geq 0,\\
&y_{i,j}^{t,p}=y_{i+j-t-p,0}^{0,0}=0\text{ if $i+j-t-p\in\{1,\ldots,d-1\}$}\}
\end{split}
\end{equation}
This program may be further simplified by observing that for a matrix $A$ with $A_{1,1}=1$ 
\begin{equation}
R(A)\psd \quad\text{if and only if}\quad A\psd, A_1=\diag(A).
\end{equation}
We finally obtain
\begin{equation}\label{newcut}
\begin{split}
\max\{q^n\cdot y_{0,0}^{0,0}\mid &\sum_{i,j,t,p} y_{i,j}^{t,p} M_{i,j}^{t,p}, R(1-y_{0,0}^{0,0};\sum_{i,j,t,p} (y_{i+j-t-p,0}^{0,0}-y_{i,j}^{t,p})M_{i,j}^{t,p})\psd,\\
&y_{i,j}^{t,p},y_{i+j-t-p,0}^{0,0}-y_{i,j}^{t,p}\geq 0,\\
&y_{i,0}^{0,0}=y_{i,i}^{i,i},\\
&y_{i,j}^{t,p}=y_{i+j-t-p,0}^{0,0}=0\text{ if $i+j-t-p\in\{1,\ldots,d-1\}$}\}
\end{split}
\end{equation}
This bound is very similar to the bound (\ref{moniquesbound}) derived in Chapter \ref{CH:errorcodes}, that is to say the impoved version of Schrijver's bound given by Laurent. The main difference is that in (\ref{newcut}) the symmetry conditions 
\begin{equation}
\begin{split}
y_{i,j}^{t,p}=y_{i',j'}^{t',p'}\quad \text{when }& \text{$t-p=t'-p'$ and $(i,j,i+j-t-p)$ is a}\\
&\text{\quad permutation of $(i',j',i'+j'-t'-p')$}
\end{split}
\end{equation}
are lacking. It can be seen from the computational results in given in the next section, that these conditions make a huge difference in the resulting bound. 

\section{Computational results}
In this section we give some computational results on the different bounds we obtain and compare them to the bound proposed by Schrijver (see \cite{Lexcodes}, \cite{nonbincodes}) with the improvement of Laurent. That is, the bound obtained from (\ref{moniquesbound}).  Each of the bounds can be computed in polynomial time in $n$ by block diagonalising the Terwilliger algebra of the Hamming scheme for each $q$ and $n$.

From the tables below it follows that we can have the strict inequality
\begin{equation}
\widetilde{N}(\mathcal{M})\subset  N(\mathcal{M}).
\end{equation}

\begin{table}[ht]\caption{Bounds on $A_3(n,d)$\label{tab3}}
\begin{center}
\begin{tabular}{|r|r|r|r||r|r|r|r|}
\hline
&&best&best upper&&&&\\
&&lower&bound&&bound&bound&bound\\
&&bound &previously&Delsarte&from&from&from\\
$n$ & $d$ &known &known & bound&(\ref{matrixcut})&(\ref{newcut})&(\ref{moniquesbound})\\ 
\hline
6&3&38&38&48&48&48&46\\
7&3&99&111&145&145&144&136\\
8&3&243&333&340&340&340&340\\
9&3&729&937&937&937&937&937\\
\hline
7&4&33&33&48&48&48&44\\
8&4&99&99&139&139&139&121\\
9&4&243&297&340&340&339&324\\
10&4&729&891&937&937&937&914\\
11&4&1458&2561&2811&2811&2805&2583\\
12&4&4374&7029&7029&7029&7029&6839\\
\hline
6&5&4&4&5&5&4&4\\
7&5&10&10&15&15&14&13\\
8&5&27&27&41&41&41&33\\
9&5&81&81&90&90&90&86\\
10&5&243&243&243&243&243&243\\
11&5&729&729&729&729&729&729\\
12&5&729&1562&1562&1562&1562&1557\\
\hline
7&6&3&3&4&4&4&4\\
\hline
9&7&6&6&7&7&7&7\\
10&7&14&14&21&21&21&21\\
11&7&36&36&63&63&62&49\\
12&7&54&108&138&138&138&131\\
\hline
\end{tabular}
\end{center}
\end{table}

\begin{table}[ht]\caption{Bounds on $A_4(n,d)$\label{tab2}}
\begin{center}
\begin{tabular}{|r|r|r|r||r|r|r|r|}
\hline
&&best&best upper&&&&\\
&&lower&bound&&bound&bound&bound\\
&&bound &previously&Delsarte&from&from&from\\
$n$ & $d$ &known &known & bound&(\ref{matrixcut})&(\ref{newcut})&(\ref{moniquesbound})\\ 
\hline
7&4&128&179&179&179&179&169\\
8&4&320&614&614&614&614&611\\
9&4&1024&2340&2340&2340&2340&2314\\
10&4&4096&9360&9362&9362&9360&8951\\
\hline
7&5&32&32&40&40&40&39\\
8&5&70&128&160&160&160&147\\
9&5&256&512&614&614&614&579\\
10&5&1024&2048&2145&2145&2145&2045\\
\hline
10&6&256&512&512&512&511&496\\
11&6&1024&2048&2048&2048&2047&1780\\
12&6&4096&6241&6241&6241&6241&5864\\
\hline
10&7&40&80&112&112&111&106\\
12&7&256&1280&1280&1280&1280&1167\\
\hline
\end{tabular}
\end{center}
\end{table}

\begin{table}[ht]\caption{Bounds on $A_5(n,d)$\label{tab1}}
\begin{center}
\begin{tabular}{|r|r|r|r||r|r|r|r|}
\hline
&&best&best upper&&&&\\
&&lower&bound&&bound&bound&bound\\
&&bound &previously&Delsarte&from&from&from\\
$n$ & $d$ &known &known & bound&(\ref{matrixcut})&(\ref{newcut})&(\ref{moniquesbound})\\ 
\hline
6&4&125&125&125&125&125&125\\
7&4&250&554&625&625&623&545\\
8&4&1125&2291&2291&2291&2291&2291\\
9&4&3750&9672&9672&9672&9672&9672\\
10&4&15625&44642&44642&44642&44642&44642\\
11&4&78125&217013&217013&217013&217013&217013\\
\hline
7&5&53&125&125&125&124&108\\
8&5&160&554&625&625&623&485\\
9&5&625&2291&2291&2291&2291&2152\\
10&5&3125&9672&9672&9672&9672&9559\\
11&5&15625&44642&44642&44642&44642&44379\\
\hline
8&6&45&75&75&75&75&75\\
9&6&135&375&375&375&375&375\\
10&6&625&1875&1875&1875&1875&1855\\
11&6&3125&9375&9375&9375&9375&8840\\
\hline
11&9&25&35&45&45&45&43\\
\hline
\end{tabular}
\end{center}
\end{table}
Remark: We calculate the Delsarte bound by maximizing $x_0\cdot q^n$ (the trace of $\sum_{i}x_iM_i$) under the condition that the $x_i$ are nonnegative and $R(\sum_i x_iM_i)$ is positive semidefinite. This turns out to give a more stable semidefinite program than setting $x_0=1$ and maximizing $\sum_i x_i {n\choose i}(q-1)^i$.

\chapter{Further discussion}
In this chapter, we present some further observations, and notes related to the methods from previous chapters.

\section{Bounds for affine caps}
Let $\mathrm{AG}(k,q)$ be the $k$-dimensional affine space over the field $\mathrm{GF}_q$. A subset $A\subseteq \mathrm{AG}(k,q)$ is called an \notion{affine cap} if no three elements of $A$ are on an affine line, that is, any three different vectors in $\{ {1\choose a}\mid a\in A\}$ are linearly independent. We denote by $C_k(q)$ the maximum cardinality of an affine cap in $\mathrm{AG}(k,q)$.

The effectiveness of the semidefinite programming approach for error correcting codes, suggested that we could, more generally, find good bounds for the size of a code where we forbid the occurence of triples of code words in some prescribed configuration. Indeed the variables $x_{i,j}^{t,p}$ in the semidefinite program correspond exactly to the number of tripes in a code, for each equivalence class under automorphisms of the Hamming space. One may be led to wonder if setting those variables that correspond to forbidden configurations to zero, would yield good upper bounds in general. This is an appealing idea. Unfortunately, it turned out to be false in general. 

A prominent structure that might be approached this way are affine caps over the field of three elements. The only known values are $C_1(3)=2, C_2(3)=4, C_3(3)=9, C_4(3)=20$ and $C_5(3)=45$. In \cite{Bierbrauer} the general bound $C_k(3)\leq 3^k\frac{k+1}{k^2}$ was shown. A code $A\subseteq \mathrm{AG}(k,3)$ is an affine cap if and only if for any three elements $\mathbf{u},\mathbf{v},\mathbf{w}\in A$ we have $d(\mathbf{u},\mathbf{v},\mathbf{w})\not=(i,i,i,0)$ for every $i=1,\ldots,k$. Recall that 
\begin{eqnarray}
d(\mathbf{u},\mathbf{v},\mathbf{w})&:=&(i,j,t,p),\text{\ where}\\
&&i:=d(\mathbf{u},\mathbf{v}),\nonumber\\
&&j:=d(\mathbf{u},\mathbf{w}),\nonumber\\
&&t:=|\{i\mid \mathbf{u}_i\not=\mathbf{v}_i\text{\ and\ }\mathbf{u}_i\not=\mathbf{w}_i\}|,\nonumber\\
&&p:=|\{i\mid \mathbf{u}_i\not=\mathbf{v}_i=\mathbf{w}_i\}|\nonumber.
\end{eqnarray}

Consider the following semidefinite program.

\begin{eqnarray}\label{affinecap}
&&\text{maximize } \sum_{i=0}^n {n\choose i}2^i x_{i,0}^{0,0}\quad \text{subject to}\\
\text{(i)}&&x_{0,0}^{0,0}=1\nonumber\\
\text{(ii)}&&0\leq x_{i,j}^{t,p}\leq x_{i,0}^{0,0}\nonumber\\
\text{(iii)}&&x_{i,j}^{t,p}=x_{i',j'}^{t',p'}\text{\ if\ } t-p=t'-p'\text{\ and}\nonumber\\
&&(i,j,i+j-t-p)\text{\ is a permutation of\ } (i',j',i'+j'-t'-p')\nonumber\\
\text{(iv)}&&x_{i,i}^{i,0}=0 \text{\ for $i=1,\ldots,n$.\ }\nonumber\\
\text{(v)}&&\sum_{i,j,t,p}x_{i,j}^{t,p}M_{i,j}^{t,p}, \sum_{i,j,t,p}(x_{i+j-t-p,0}^{0,0}-x_{i,j}^{t,p})M_{i,j}^{t,p}\text{\ are positive semidefinite}.\nonumber
\end{eqnarray}

Clearly, this gives an upper bound on $C_n(3)$. We have the following result.
\begin{proposition}
The maximum in (\ref{affinecap}) equals $1+\frac{3^n-1}{2}$.
\end{proposition}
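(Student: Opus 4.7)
I will prove matching upper and lower bounds, both equal to $(3^n+1)/2$. For the upper bound, the key is that the permutation matrix $P\in\mathbb{R}^{\E\times\E}$ of the involution $\mathbf{u}\mapsto-\mathbf{u}$ on $\mathrm{GF}_3^n$ lies in $\mathcal{A}_{3,n}$ (any $\sigma\in\stab$ commutes with coordinate-wise negation) and satisfies $P^2=I$, so $I-P\succeq 0$. Condition (iv) yields $M'_{\mathbf{u},-\mathbf{u}}=0$ for $\mathbf{u}\neq\zero$, hence $M''_{\mathbf{u},-\mathbf{u}}=M_{\mathbf{u},-\mathbf{u}}=x_{|\mathbf{u}|,0}^{0,0}$; and (iii) gives $x_{i,i}^{i,i}=x_{i,0}^{0,0}$, so $M''_{\mathbf{u},\mathbf{u}}=1-x_{|\mathbf{u}|,0}^{0,0}$ for $\mathbf{u}\neq\zero$ while $M''_{\zero,\zero}=0$. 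Writing $\mu:=\sum_{i}\binom{n}{i}2^i x_{i,0}^{0,0}$ for the objective, summation gives $\trace M''=3^n-\mu$ and $\trace(M''P)=\mu-1$, so
\[
0\leq \langle M'',I-P\rangle=3^n+1-2\mu,
\]
which yields $\mu\leq(3^n+1)/2$.

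For the lower bound I will exhibit the explicit feasible matrix
\[
M':=\tfrac{1}{4}\bigl(I+J+\chi^{\{\zero\}}\one^{\transp}+\one(\chi^{\{\zero\}})^{\transp}+\chi^{\{\zero\}}(\chi^{\{\zero\}})^{\transp}-P\bigr).
\]
A direct entry-by-entry computation shows $M'_{\zero,\zero}=1$, $M'_{\mathbf{u},\mathbf{u}}=M'_{\mathbf{u},\zero}=M'_{\zero,\mathbf{u}}=\tfrac{1}{2}$ and $M'_{\mathbf{u},-\mathbf{u}}=0$ for $\mathbf{u}\neq\zero$, with every remaining off-diagonal entry equal to $\tfrac{1}{4}$. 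Since this value $\tfrac{1}{4}$ is constant across all orbits of non-pair, non-affine triples, conditions (i)--(iv) follow at once, and the objective reads $1+(3^n-1)\cdot\tfrac{1}{2}=(3^n+1)/2$.

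The main obstacle is to verify the two positive-semidefiniteness conditions in (v), which I plan to handle by decomposing $\mathbb{R}^{\E}=V_+\oplus V_-$ into the $\pm 1$-eigenspaces of $P$. On $V_-$ every $v$ satisfies $v_\zero=0$ and $\one^{\transp}v=0$, so the four rank-one summands kill and $M'|_{V_-}=\tfrac{1}{2}I|_{V_-}$. On $V_+$, where $P$ acts as the identity, the two $I$-terms cancel and the remaining four pieces factor as
\[
M'|_{V_+}=\tfrac{1}{4}(\one+\chi^{\{\zero\}})(\one+\chi^{\{\zero\}})^{\transp},
\]
a rank-one positive-semidefinite matrix. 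The parallel computation for $M''=\tfrac{1}{2}(I+J)-M'$ gives $M''|_{V_-}=0$ and $M''|_{V_+}=\tfrac{1}{2}\bigl(I-\chi^{\{\zero\}}(\chi^{\{\zero\}})^{\transp}\bigr)+\tfrac{1}{4}(\one-\chi^{\{\zero\}})(\one-\chi^{\{\zero\}})^{\transp}$, a sum of two positive-semidefinite matrices, which completes the verification and proves the proposition.
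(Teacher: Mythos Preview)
Your proof is correct, and both halves take a somewhat different route from the paper's.

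For the upper bound, the paper works with $M'$ and extracts the $3\times 3$ principal submatrix indexed by $\zero,\mathbf{u},-\mathbf{u}$; positive semidefiniteness of that submatrix forces $M'_{\mathbf{u},\mathbf{u}}+M'_{-\mathbf{u},-\mathbf{u}}\leq 1$, and summing over representatives gives the bound on $\trace M'$. Your argument instead uses $M''$ together with the global dual witness $I-P\succeq 0$, reading off the bound from $\langle M'',I-P\rangle\geq 0$. The two arguments encode the same obstruction (the pair $\mathbf{u},-\mathbf{u}$), but yours packages it as a single inner-product inequality rather than a family of local determinant conditions; this is slicker and makes transparent that only the semidefiniteness of $M''$ and constraints (iii), (iv) are needed.

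For the lower bound, the paper simply lists the values $x_{i,j}^{t,p}\in\{1,\tfrac12,0,\tfrac14\}$ and asserts feasibility. Your closed-form expression $M'=\tfrac14(I+J+\chi^{\{\zero\}}\one^{\transp}+\one(\chi^{\{\zero\}})^{\transp}+\chi^{\{\zero\}}(\chi^{\{\zero\}})^{\transp}-P)$ defines exactly the same point, but your eigenspace decomposition under $P$ supplies the explicit verification of condition (v) that the paper omits: $M'$ commutes with $P$, acts as $\tfrac12 I$ on $V_-$ and as the rank-one matrix $\tfrac14(\one+\chi^{\{\zero\}})(\one+\chi^{\{\zero\}})^{\transp}$ on $V_+$, and the analogous split shows $M''\succeq 0$. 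So your lower-bound argument is more complete than the paper's, at the cost of a short extra computation.
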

\begin{proof}
Setting the variables $x_{i,j}^{t,p}$ as follows:
\begin{equation}
x_{i,j}^{t,p}:=\begin{cases}
1&\text{if $i=j=t=p=0$,}\\
\frac{1}{2}&\text{if $i=j=t=p\not=0$ or exactly one of $i,j$ is zero,}\\
0&\text{if $i=j=t\not=0$ and $p=0$,}\\
\frac{1}{4}&\text{otherwise}
\end{cases}
\end{equation}
gives a feasible solution with objective value equal to $1+\frac{3^n-1}{2}$.

On the other hand, let any feasible solution be given. Then the matrix $M':=\sum_{i,j,t,p} x_{i,j}^{t,p} M_{i,j}^{t,p}$ is positive semidefinite. Hence for any nonzero word $\mathbf{u}$ the $3\times 3$ principal submatrix indexed by the words $\zero,\mathbf{u},-\mathbf{u}$ is positive semidefinite and equals
\begin{equation}
\begin{pmatrix}
1&M_{\mathbf{u},\mathbf{u}}&M_{-\mathbf{u},-\mathbf{u}}\\
M_{\mathbf{u},\mathbf{u}}&M_{\mathbf{u},\mathbf{u}}&0\\
M_{-\mathbf{u},-\mathbf{u}}&0&M_{-\mathbf{u},-\mathbf{u}}.
\end{pmatrix}
\end{equation}
This implies that $(M_{\mathbf{u},\mathbf{u}}-M_{\mathbf{u},\mathbf{u}}^2)(M_{-\mathbf{u},-\mathbf{u}}-M_{-\mathbf{u},-\mathbf{u}}^2)\geq (M_{\mathbf{u},\mathbf{u}}M_{-\mathbf{u},-\mathbf{u}})^2$. Hence $M_{\mathbf{u},\mathbf{u}}+M_{-\mathbf{u},-\mathbf{u}}\leq 1$. Since the objective function equals the trace of $M$, the value is at most $1+\frac{3^n-1}{2}$.
\end{proof}

This bound is very poor. The same bound already follows from the fact that if $\zero\in A$, for every nonzero word $\mathbf{u}$ not both $\mathbf{u}$ and $-\mathbf{u}$ can belong to $A$.

\section{Notes on computational results}
The computational results from Chapters \ref{CH:errorcodes} and \ref{CH:coveringcodes} have been obtained by using CSDP version 4.7 (see \cite{csdp}) and SDPT3 (see \cite{sdpt3}). Both are \notion{SDP-solvers} and can be accessed also through the NEOS server, see 
\begin{center}
\texttt{www-neos.mcs.anl.gov/}
\end{center}. The semidefinite programs were generated by perl scripts in the sparse SDPA format, which allows for explicit block structure in the constraint matrices to be exploited by the solver.

In the case of error correcting codes (tables 1,2,3 from Chapter \ref{CH:errorcodes}), all solutions produced by the solvers have been examined by a perl script to ensure that the produced numbers really do give valid upper bounds on error correcting codes. In none of the instances this has made a diference for the final bound obtained. This was done as follows.

The original problem was to maximize $\sum_{i}{n\choose i}(q-1)^i x_{i,0}^{0,0}$, given certain constraints on the variables $x_{i,j}^{t,p}$ (\ref{primesdp}). By changing the sign of the objective vector, we obtain a semidefinite program of the following form:
\begin{eqnarray}
\text{minimize}&&x_1c_1+\cdots+x_mc_m\\
\text{subject to}&&x_1F_1+\cdots+x_mF_m-F_0=:X\psd,\nonumber
\end{eqnarray}
where we minimize over $x^{\transp}=(x_1,\ldots,x_m)$, $c^{\transp}=(c_1,\ldots,c_m)$ is the objective vector and $F_0,\ldots,F_m$ are given symmetric matrices. The SDP solver not only returns a solution to this (primal) problem, but also to its dual:
\begin{eqnarray}
\text{maximize}&&\left<F_0,Y\right>\\
\text{subject to}&&\left<F_i,Y\right>=c_i,\quad i=1,\ldots, m,\nonumber\\
&&Y\psd\nonumber.
\end{eqnarray}
Any genuine feasible matrix $Y$ for the dual problem gives a lower bound on the minimum in the primal problem, and hence an upper bound for our coding problem. However, the produced dual solutions $Y$ usually do not exactly satisfy the linear constraint, but satisfy
\begin{equation}
\left<F_i,Y\right>=c_i+\epsilon_i,
\end{equation}
for small numbers $\epsilon_1,\ldots,\epsilon_m$. In all cases we did find that $Y\psd$ was satisfied. This yields a lower bound on the optimum of the primal program as follows. Let $(x,X)$ be an optimal solution for the primal program with value $O$. Then we obtain:
\begin{eqnarray}
\left<F_0,Y\right>&=&\left<x_1F_1+\cdots+x_mF_m-X,Y\right>\\
&\leq&\left<x_1F_1+\cdots+x_mF_m,Y\right>\nonumber\\
&=&x_1\left<F_1,Y\right>+\cdots+x_m\left<F_m,Y\right>\nonumber\\
&=&x_1(c_1+\epsilon_1)+\cdots+x_m(c_m+\epsilon_m)\nonumber\\
&=&O+(x_1\epsilon_1+\cdots+x_m\epsilon_m).\nonumber
\end{eqnarray}
The numbers $\epsilon_1,\ldots,\epsilon_m$ are easily calculated from the solution $Y$. Although the  numbers $x_1,\ldots,x_m$ are not known, we can say that $x_i\in [0,1]$ in this case, which allows us to bound the error term by
\begin{equation}
(x_1\epsilon_1+\cdots+x_m\epsilon_m)\leq \max \{0,\epsilon_1\}+\cdots+\max \{0,\epsilon_m\}.
\end{equation}
This gives lower bound on $O$, and hence an upper bound on the maximum size of a code.

\cleardoublepage
\bibliographystyle{abbrv}
\addcontentsline{toc}{chapter}{Bibliography}
\bibliography{auxil,errorcodes,coveringcodes}

\begin{thebibliography}{10}

\bibitem{Bailey}
R.~A. Bailey.
\newblock {\em Association schemes}, volume~84 of {\em Cambridge Studies in
  Advanced Mathematics}.
\newblock Cambridge University Press, Cambridge, 2004.
\newblock Designed experiments, algebra and combinatorics.

\bibitem{BannaiIto}
E.~Bannai and T.~Ito.
\newblock {\em Algebraic combinatorics. {I}}.
\newblock The Benjamin/Cummings Publishing Co. Inc., Menlo Park, CA, 1984.
\newblock Association schemes.

\bibitem{bdref}
G.~P. Barker, L.~Q. Eifler, and T.~P. Kezlan.
\newblock A non-commutative spectral theorem.
\newblock {\em Linear Algebra and Appl.}, 20(2):95--100, 1978.

\bibitem{Bierbrauer}
J.~Bierbrauer and Y.~Edel.
\newblock Bounds on affine caps.
\newblock {\em J. Combin. Des.}, 10(2):111--115, 2002.

\bibitem{fourcodes}
G.~T. Bogdanova, A.~E. Brouwer, S.~N. Kapralov, and P.~R.~J. {\"O}sterg{\aa}rd.
\newblock Error-correcting codes over an alphabet of four elements.
\newblock {\em Des. Codes Cryptogr.}, 23(3):333--342, 2001.

\bibitem{fivecodes}
G.~T. Bogdanova and P.~R.~J. {\"O}sterg{\aa}rd.
\newblock Bounds on codes over an alphabet of five elements.
\newblock {\em Discrete Math.}, 240(1-3):13--19, 2001.

\bibitem{csdp}
B.~Borchers.
\newblock C{SDP}, a {C} library for semidefinite programming.
\newblock {\em Optim. Methods Softw.}, 11/12(1-4):613--623, 1999.
\newblock Interior point methods.

\bibitem{Bose}
R.~C. Bose and T.~Shimamoto.
\newblock Classification and analysis of partially balanced incomplete block
  designs with two associate classes.
\newblock {\em J. Amer. Statist. Assoc.}, 47:151--184, 1952.

\bibitem{brouwersite}
A.~E. Brouwer.
\newblock Website: \texttt{http://www.win.tue.nl/$\sim$aeb}.

\bibitem{Distreggraph}
A.~E. Brouwer, A.~M. Cohen, and A.~Neumaier.
\newblock {\em Distance-regular graphs}, volume~18 of {\em Ergebnisse der
  Mathematik und ihrer Grenzgebiete (3) [Results in Mathematics and Related
  Areas (3)]}.
\newblock Springer-Verlag, Berlin, 1989.

\bibitem{threecodes}
A.~E. Brouwer, H.~O. H{\"a}m{\"a}l{\"a}inen, P.~R.~J. {\"O}sterg{\aa}rd, and
  N.~J.~A. Sloane.
\newblock Bounds on mixed binary/ternary codes.
\newblock {\em IEEE Trans. Inform. Theory}, 44(1):140--161, 1998.

\bibitem{coherent}
P.~J. Cameron.
\newblock Coherent configurations, association schemes and permutation groups.
\newblock In {\em Groups, combinatorics \& geometry (Durham, 2001)}, pages
  55--71. World Sci. Publishing, River Edge, NJ, 2003.

\bibitem{ChenHonkala}
W.~D. Chen and I.~S. Honkala.
\newblock Lower bounds for {$q$}-ary covering codes.
\newblock {\em IEEE Trans. Inform. Theory}, 36(3):664--671, 1990.

\bibitem{Coveringcodes}
G.~Cohen, I.~Honkala, S.~Litsyn, and A.~Lobstein.
\newblock {\em Covering codes}, volume~54 of {\em North-Holland Mathematical
  Library}.
\newblock North-Holland Publishing Co., Amsterdam, 1997.

\bibitem{Delsarte}
P.~Delsarte.
\newblock An algebraic approach to the association schemes of coding theory.
\newblock {\em Philips Res. Rep. Suppl.}, (10):vi+97, 1973.

\bibitem{nonbincodes}
D.~Gijswijt, A.~Schrijver, and H.~Tanaka.
\newblock New upper bounds for nonbinary codes based on the terwilliger algebra
  and semidefinite programming.
\newblock Submitted, November 2004.

\bibitem{GLS}
M.~Gr{\"o}tschel, L.~Lov{\'a}sz, and A.~Schrijver.
\newblock {\em Geometric algorithms and combinatorial optimization}, volume~2
  of {\em Algorithms and Combinatorics}.
\newblock Springer-Verlag, Berlin, second edition, 1993.

\bibitem{Habsieger}
L.~Habsieger.
\newblock Some new lower bounds for ternary covering codes.
\newblock {\em Electron. J. Combin.}, 3(2):Research Paper 23, approx.\ 14 pp.\
  (electronic), 1996.
\newblock The Foata Festschrift.

\bibitem{HabsiegerPlagne}
L.~Habsieger and A.~Plagne.
\newblock New lower bounds for covering codes.
\newblock {\em Discrete Math.}, 222(1-3):125--149, 2000.

\bibitem{Footballpool}
H.~H{\"a}m{\"a}l{\"a}inen, I.~Honkala, S.~Litsyn, and P.~{\"O}sterg{\aa}rd.
\newblock Football pools---a game for mathematicians.
\newblock {\em Amer. Math. Monthly}, 102(7):579--588, 1995.

\bibitem{HonkalaIiro}
I.~S. Honkala.
\newblock Lower bounds for binary covering codes.
\newblock {\em IEEE Trans. Inform. Theory}, 34(2):326--329, 1988.

\bibitem{HornJohnson}
R.~A. Horn and C.~R. Johnson.
\newblock {\em Matrix analysis}.
\newblock Cambridge University Press, Cambridge, 1990.
\newblock Corrected reprint of the 1985 original.

\bibitem{Johnson}
S.~M. Johnson.
\newblock A new lower bound for coverings by rook domains.
\newblock {\em Utilitas Math.}, 1:121--140, 1972.

\bibitem{keri}
G.~K\'eri.
\newblock Website: \texttt{http://www.sztaki.hu/$\sim$keri/codes}.

\bibitem{Keri2}
G.~K{\'e}ri and P.~R.~J. {\"O}sterg{\aa}rd.
\newblock Bounds for covering codes over large alphabets.
\newblock {\em Des. Codes Cryptogr.}, 37(1):45--60, 2005.

\bibitem{orthogonalitygraph}
{\noopsort{Klerk de Pasechnik}}{E. de Klerk and D.V. Pasechnik}.
\newblock A note on the stability number of an orthogonality graph.
\newblock ArXiv:math.CO/0505038, May 2005.

\bibitem{Langlinalg}
S.~Lang.
\newblock {\em Linear algebra}.
\newblock Undergraduate Texts in Mathematics. Springer-Verlag, New York, third
  edition, 1989.

\bibitem{Monique}
M.~Laurent.
\newblock Strengthened semidefinite bounds for codes.
\newblock Januari 2005.

\bibitem{LiChenWen}
D.~F. Li and W.~D. Chen.
\newblock New lower bounds for binary covering codes.
\newblock {\em IEEE Trans. Inform. Theory}, 40(4):1122--1129, 1994.

\bibitem{Lint}
{\noopsort{Lint van}}{J. H. van Lint}.
\newblock {\em Introduction to coding theory}, volume~86 of {\em Graduate Texts
  in Mathematics}.
\newblock Springer-Verlag, Berlin, third edition, 1999.

\bibitem{lovasztheta}
L.~Lov{\'a}sz.
\newblock On the {S}hannon capacity of a graph.
\newblock {\em IEEE Trans. Inform. Theory}, 25(1):1--7, 1979.

\bibitem{matrixcuts}
L.~Lov{\'a}sz and A.~Schrijver.
\newblock Cones of matrices and set-functions and {$0$}-{$1$} optimization.
\newblock {\em SIAM J. Optim.}, 1(2):166--190, 1991.

\bibitem{Sloane}
F.~J. MacWilliams and N.~J.~A. Sloane.
\newblock {\em The theory of error-correcting codes. {I, II}}.
\newblock North-Holland Publishing Co., Amsterdam, 1977.
\newblock North-Holland Mathematical Library, Vol. 16.

\bibitem{nesterov}
Y.~Nesterov and A.~Nemirovskii.
\newblock {\em Interior-point polynomial algorithms in convex programming},
  volume~13 of {\em SIAM Studies in Applied Mathematics}.
\newblock Society for Industrial and Applied Mathematics (SIAM), Philadelphia,
  PA, 1994.

\bibitem{Roerdam}
M.~R{\o}rdam, F.~Larsen, and N.~Laustsen.
\newblock {\em An introduction to {$K$}-theory for {$C\sp *$}-algebras},
  volume~49 of {\em London Mathematical Society Student Texts}.
\newblock Cambridge University Press, Cambridge, 2000.

\bibitem{modifiedtheta}
A.~Schrijver.
\newblock A comparison of the {D}elsarte and {L}ov\'asz bounds.
\newblock {\em IEEE Trans. Inform. Theory}, 25(4):425--429, 1979.

\bibitem{CombOptB}
A.~Schrijver.
\newblock {\em Combinatorial optimization. {P}olyhedra and efficiency. {V}ol.
  {B}}, volume~24 of {\em Algorithms and Combinatorics}.
\newblock Springer-Verlag, Berlin, 2003.
\newblock Matroids, trees, stable sets, Chapters 39--69.

\bibitem{Lexcodes}
A.~Schrijver.
\newblock New code upper bounds from the terwilliger algebra.
\newblock {\em IEEE Trans. Inform. Theory}, To appear.

\bibitem{Terwilliger}
P.~Terwilliger.
\newblock The subconstituent algebra of an association scheme. {I}.
\newblock {\em J. Algebraic Combin.}, 1(4):363--388, 1992.

\bibitem{TerwilligerIII}
P.~Terwilliger.
\newblock The subconstituent algebra of an association scheme. {III}.
\newblock {\em J. Algebraic Combin.}, 2(2):177--210, 1993.

\bibitem{Todd}
M.~J. Todd.
\newblock Semidefinite optimization.
\newblock {\em Acta Numer.}, 10:515--560, 2001.

\bibitem{sdpt3}
K.~C. Toh, M.~J. Todd, and R.~H. T{\"u}t{\"u}nc{\"u}.
\newblock S{DPT}3---a {MATLAB} software package for semidefinite programming,
  version 1.3.
\newblock {\em Optim. Methods Softw.}, 11/12(1-4):545--581, 1999.
\newblock Interior point methods.

\bibitem{VanWee}
G.~J.~M. van Wee.
\newblock Improved sphere bounds on the covering radius of codes.
\newblock {\em IEEE Trans. Inform. Theory}, 34(2):237--245, 1988.

\bibitem{VanWeeThesis}
G.~J.~M. van Wee.
\newblock {\em Covering codes, perfect codes, and codes from algebraic curves}.
\newblock Technische Universiteit Eindhoven, Eindhoven, 1991.
\newblock Dissertation, Technische Universiteit Eindhoven, Eindhoven, 1991,
  With a Dutch summary.

\bibitem{wedderburn}
J.~H.~M. Wedderburn.
\newblock {\em Lectures on matrices}.
\newblock Dover Publications Inc., New York, 1964.

\bibitem{Zhang}
Z.~Zhang.
\newblock Linear inequalities for covering codes. {I}. {P}air covering
  inequalities.
\newblock {\em IEEE Trans. Inform. Theory}, 37(3, part 1):573--582, 1991.

\bibitem{ZhangLo}
Z.~Zhang and C.~Lo.
\newblock Linear inequalities for covering codes. {II}. {T}riple covering
  inequalities.
\newblock {\em IEEE Trans. Inform. Theory}, 38(6):1648--1662, 1992.

\end{thebibliography}

\cleardoublepage
\addcontentsline{toc}{chapter}{Index}
\printindex
\chapter*{Samenvatting}
\markboth{\textsl{SAMENVATTING}}{}
\addcontentsline{toc}{chapter}{Samenvatting}
Dit proefschrift gaat over foutcorrigerende codes en overdekkingscodes. Een code is een collectie woorden van dezelfde lengte $n$ met letters uit een alfabet $\mathbf{q}=\{0,1,\ldots, q-1\}$ bestaande uit een $q$-tal symbolen. In het geval dat $q=2$ bestaat elk woord uit een rijtje van $n$ nullen en enen. We spreken in dat geval van een binaire code. Voor $q\geq 3$ spreken we van niet-binaire codes. De Hamming afstand $d(\mathbf{x},\mathbf{y})$ tussen twee woorden $\mathbf{x}$ en $\mathbf{y}$ is gedefinieerd als het aantal posities waarin zij verschillen. Zo krijgt de verzameling $\mathbf{q}^n$ van alle woorden de structuur van een metrische ruimte, de Hamming ruimte. 

Een centrale vraag in de theorie van foutcorrigerende codes is:
\begin{quote}
Gegeven een `minimum afstand' $d$, wat is het maximale aantal woorden in een code als we eisen dat van elk tweetal woorden de onderlinge afstand ten minste $d$ moet zijn? 
\end{quote}
Dit maximum, aangegeven met $A_q(n,d)$ heeft een mooie `meetkundige' interpretatie in het geval dat $d=2e+1$ oneven is. Het getal $A_q(n,d)$ is dan precies het aantal bollen van straal $e$ dat binnen de Hammingruimte kan worden gestapeld. De foutcorrigerende eigenschappen van zo'n code volgen uit het feit dat wanneer een codewoord in hoogstens $e$ posities wordt gewijzigd, het originele woord weer terug wordt gevonden door het dichtsbijzijnde codewoord te nemen.

Een twee vraag, die duaal is aan de vorige, speelt een rol in onder andere datacompressie:
\begin{quote}
Gegeven een `overdekkings straal' $r$, wat is het minimale aantal woorden in een code als we eisen dat ieder woord afstand ten hoogste $r$ tot een woord in de code heeft?
\end{quote}
Dit minimum, aangegeven met $K_q(n,r)$ is het aantal bollen van straal $r$ dat nodig is om de hele Hamming ruimte te bedekken.

In het algemeen zijn de getallen $A_q(n,d)$ en $K_q(n,r)$ erg moeilijk te bepalen en slechts weinig waarden zijn bekend. Daarom is het interessant om goede onder- en bovengrenzen te vinden voor deze getallen. Het meetkundige beeld van het stapelen van en overdekken met bollen geeft al een bovengrens voor $A_q(n,2r+1)$ en een ondergrens voor $K_q(n,r)$ door het volume van de gehele Hamming ruimte te delen door het volume van een bol van straal $r$.

In dit proefschrift geven we nieuwe bovengrenzen voor $A_q(n,d)$ en nieuwe ondergrenzen voor $K_q(n,r)$ met behulp van semidefiniete programmering. Een belangrijke rol wordt gespeeld door een expliciete blokdiagonalisatie van de Terwilliger algebra van het Hamming schema. Deze maakt het mogelijk om de grote symmetriegroep van de Hamming ruimte te benutten, zowel voor het verkrijgen van scherpere grenzen, als voor het efficient kunnen bepalen van deze grenzen. De beschreven methode voor het begrenzen van $A_q(n,d)$ werd door Schrijver geintroduceerd voor het binaire geval in \cite{Lexcodes}. In hetzelfde artikel werd ook een blokdiagonalisatie gegeven voor de Terwilliger algebra van het binaire Hamming schema. Een centraal resultaat uit dit proefschrift is een expliciete blokdiagonalisatie van de Terwilliger algebra van het niet-binaire Hamming schema.

In hoofdstuk 2 brengen we de benodigde theorie in herinnering. In het bijzonder stippen we de krachtige methode van Delsarte \cite{Delsarte} aan, waarmee met behulp van associatieschemas bovengrenzen voor $A_q(n,d)$ te verkrijgen zijn door middel van lineaire programmering. Het idee is om te kijken naar de af\-stands\-ver\-de\-ling $1=x_0,x_1,\ldots,x_n$ van een code, waar $x_i$ het gemiddeld aantal codewoorden op afstand $i$ van een codewoord is. De getallen $x_i$ voldoen aan bepaalde lineaire on\-ge\-lijk\-heden. De eerste soort on\-ge\-lijk\-heden heeft een directe combinatorische betekenis: de getallen $x_i$ zijn niet-negatief en $x_i=0$ als er geen twee woorden zijn op afstand $i$. De andere on\-ge\-lijk\-heden, met coefficienten gegeven door de Krawtchouk polynomen, hebben een diepere betekenis. Zij weerspiegelen het feit dat de corresponderende lineaire combinatie $A:=x_0A_0+\cdots+x_n A_n$ van associatiematrices van het Hamming schema positief semidefiniet is. Dat het positief semidefiniet zijn van $A$ kan worden teruggebracht tot een $n+1$ tal lineaire on\-ge\-lijk\-heden, is het plezierige gevolg van het feit dat de Bose--Mesner algebra behorende bij het Hamming schema commutatief is, en daardoor in diagonaalvorm kan worden gebracht.

Een van de ideeen achter het onderhavige werk, is om naar de verdeling van \emph{drietallen} codewoorden te kijken in plaats van naar paren. Dit leidt tot de bestudering van een verfijning van de Bose--Mesner algebra in Hoofdstuk 3. De algebra bestaat uit alle matrices die invariant zijn onder die automorfismen van het Hamming schema $H(n,q)$, die een gekozen woord vasthouden. Er is een basis van $0$--$1$ matrices die geparametriseerd wordt door de mogelijke configuraties van drietallen woorden modulo automorfismen. We laten zien dat de algebra overeenkomt met de Terwilliger algebra \cite{Terwilliger} van het Hamming schema. Deze Terwilliger algebra is niet langer commutatief en kan daarom niet worden gediagonaliseerd. Het analogon voor niet-commutatieve algebras is een blokdiagonalisatie, waarbij de algebra bestaat uit alle matrices met gegeven blok-diagonaal structuur. Een centraal resultaat van dit proefschrift is een expliciete blokdiagonalisatie van de Terwilliger algebra behorende bij het niet-binaire Hamming schema. Hoewel het positief semidefiniet zijn van een matrix in de Terwilliger algebra niet langer kan worden geformuleerd door een klein aantal lineaire ongelijkheden, geeft de blokdiagonalisatie toch een handzame formulering in termen van het positief semidefinitiet zijn van een klein aantal kleine matrices (het aantal is $O(n^2)$ en de grootte $O(n)$).

In hoofdstuk 4 geven we een verscherping van de Delsarte grens voor codes. Met behulp van de expliciete  blokdiagonalisatie van de Terwilliger algebra uit hoofdstuk 3, kan deze grens efficient worden bepaald middels semidefinite programmering. Voor $q=3,4,5$ levert dit computationeel een reeks verscherpingen op voor bekende bovengrenzen voor $A_q(n,d)$.

In hoofdstuk 5 beschouwen we overdekkingscodes en geven we nieuwe ondergrenzen voor $K_q(n,r)$. Veel bestaande grenzen voor $K_q(n,r)$ zijn gebaseerd op de af\-stands\-ver\-de\-ling $A_0(\mathbf{x}),\ldots, A_n(\mathbf{x})$ van de code $C$ gezien vanuit een woord $\mathbf{x}$. Hier is $A_i(\mathbf{x})$ het aantal woorden in $C$ op afstand $i$ van $\mathbf{x}$. Iedere lineaire ongelijkheid in $A_0,\ldots, A_n$ die voor de af\-stands\-ver\-de\-ling vanuit ieder woord $\mathbf{x}$ geldt, geeft een ondergrens voor $K_q(n,r)$. De voor de hand liggende ongelijkheid
\begin{equation}
A_0+A_1+\cdots+A_r\geq 1
\end{equation}
leidt op deze manier tot dezelfde grens (`sphere covering bound') als het volume argument als boven. Vanuit polyhedraal oogpunt optimaliseren we een lineare functie over een een polytoop $P\subseteq [0,1]^{\mathbf{q}^n}$ binnen de eenheidskubus, met een groot aantal symmetrieen, namelijk de symmetrieen van de Hamming ruimte $\mathbf{q}^n$. Met behulp van de theorie van matrix snedes \cite{matrixcuts} kunnen we $P$ vervangen door een kleinere convexe verzameling, en daarmee scherpere grenzen voor $K_q(n,r)$ vinden. Om deze grenzen efficient te kunnen bepalen met lineaire en semidefiniete programmering, is wederom de blokdiagonalisatie van de Terwilliger algebra van het Hamming schema van groot belang. Computationeel levert deze methode voor $q=3$ en $q=4$ een aantal verscherpingen op ten opzichte van de ondergrenzen voor $K_q(n,r)$ uit de literatuur.

In hoofdstuk 6 brengen we deze theorie van matrix-snedes in herinnering en bestuderen we de relatie tussen de nieuwe grenzen voor $A_q(n,d)$ en deze theorie van matrix snedes. In het bijzonder blijkt dat de grenzen voor $A_q(n,d)$ uit hoofdstuk 4 scherper zijn dan die afkomstig van het toepassen van matrix-snedes op het `theta-body'. Dit is (vooral) te danken aan extra relaties die voortvloeien uit de aanwezige symmetrieen.   

\chapter*{Dankwoord}
\markboth{}{}
Met groot genoegen maak ik hier van de mogelijkheid gebruik om een ieder te bedanken die mij gedurende mijn promotietijd heeft gesteund, met dit proefschrift als resultaat. 

Zonder mijn promotor, Lex Schrijver, was dit proefschrift er niet geweest, en zou ik geen promotieonderzoek hebben gedaan in de combinatorische optimalisatie. Graag wil ik hem bedanken voor zijn continue steun en vertrouwen, ook wanneer ik dat laatste schier verloren had. Ik ben blij dat ik zo veel van hem heb kunnen leren.

Dit boekje heb ik opgedragen aan Violeta, mijn partner en moeder van onze twee kinderen Amber en Mark. Hoewel ik soms te weinig tijd voor hen vrij maakte, heeft Violeta mij altijd gesteund. Mark en Amber hebben mij altijd weer weten te verleiden om mijn werk even opzij te schuiven.

Veel heb ik ook te danken aan mijn ouders, en ook mijn schoonouders. Zij hebben mij op zoveel verschillende manieren beinvloed en ondersteund.

Voor de prettige werksfeer dank ik mijn collegas aan de UvA, in het bijzonder mijn kamergenoot Pia Pfluger. Ook aan het CWI heb ik het enorm getroffen met fijne collegas. Ik bedank Monique Laurent voor de gesprekken over het onderwerp van dit proefschrift. Ook wil ik mijn kamergenoot Gabor Maroti bedanken. Zonder hem was het vast niet gelukt om \emph{Theory of Integer and Linear Programming} zo grondig te bestuderen. Ook ging er geen keer voorbij dat hij geen \emph{nice puzzle} had. Ik vermoed dat zijn Hongaarse achtergrond hier deels verantwoordelijk voor was.

Graag wil ik ook Chris Zaal noemen. Ik herinner mij goed de SET-workshop die we aan het APS gegeven hebben, de opnamen van de Nationale Wetenschaps Quiz en diverse andere creativiteiten op het gebied van wiskunde-promotie. In het bijzonder heeft hij mij geintroduceerd bij Pythagoras, waar ik nog steeds de problemenrubriek mag vullen.

Ik dank ook Marco Zwaan. Via hem heb ik mogen proeven hoe het is om wiskundeleraren te onderwijzen aan de eerstegraads opleiding, een bijzondere ervaring.

Tenslotte dank ik de iedereen die ik niet met name heb genoemd, maar aan wie ik niettemin fijne herinneringen dank. Bedankt!
\addcontentsline{toc}{chapter}{Dankwoord}
 
\chapter*{Curriculum Vitae}
Dion Camilo Gijswijt is geboren op 20 maart 1978 te Bunschoten. Al vroeg had hij grote interesse in natuurkunde en andere exacte wetenschappen. Tijdens de eerste jaren van zijn middelbareschool-tijd groeide zijn interesse voor kunstmatige intelligentie en wiskunde. Het boek \emph{G\"odel, Escher, Bach} van D.R. Hofstadter had daarin een groot aandeel. Uiteindelijk gaf zijn deelname aan de International Wiskunde Olympiade in de laatste jaren van de middelbare school de doorslag om wiskunde te gaan studeren. 

In 1996 slaagde hij voor het eindexamen VWO aan het Goois Lyceum in Bussum en begon aan een dubbele studie wiskunde en natuurkunde aan de Universiteit van Amsterdam. Na in 1997 zijn propedeuses wiskunde en natuurkunde cum laude te hebben behaald, besloot hij zich te concentreren op de studie wiskunde. Tijdens zijn studie besteede hij een groot deel van zijn tijd aan het tijdschrift Pythagoras, dat juist onder de bezielende leiding van Chris Zaal nieuw leven was ingeblazen. Nadat hij in 1996 begon met het vullen van de problemenrubriek, werd hij in 1998 tevens redacteur.

In augustus 2001 behaalde hij zijn doctoraaldiploma (cum laude) na het schrijven van een scriptie getiteld \emph{The Colin de Verdi\`ere Graph Parameter $\mu$} onder begeleiding van Lex Schrijver. In september 2001 begon hij, eveneens onder supervisie van Lex Schrijver, aan zijn promotie-onderzoek aan het KdVI aan de Universiteit van Amsterdam. Dit onderzoek, met als thema \emph{Spectral Methods for Graph Optimization and Embedding}, mondde in juli 2005 uit in het onderhavige proefschrift.

Gedurende deze vier jaar als promovendus was hij ook in de gelegenheid andere activiteiten te ontplooien. Naast het met plezier begeleiden van diverse werkcolleges, heeft hij vooral veel geleerd van het doceren van een college grafentheorie aan eerstejaars wiskunde studenten, en later aan leraren aan de eerstegraads lerarenopleiding. Ook het begeleiden van twee studenten bij het schrijven van hun kandidaatsscriptie was een grote ervaring. 

Daarnaast heeft hij zich beziggehouden met activiteiten ter promotie van de wiskunde. Naast zijn werk voor Pythagoras, was hij actief als lid van de vraagstukkencommissie voor de Nederlandse Wiskunde Olympiade, en was betrokken bij diverse promotionele activiteiten waaronder: de Leve de Wiskunde dag, Nationale Wetenschapsdag, voorlichtingsdagen van de UvA, mastercourse voor wiskundeleraren en workshops voor scholieren en leraren.

\addcontentsline{toc}{chapter}{Curriculum Vitae}

\end{document}